\journal{{\tt Journal on Scientific Computing}}
\pgfplotsset{compat=newest}       
\newtheorem{theorem}{Theorem}[section]
\newtheorem{definition}[theorem]{Definition}
\newtheorem{remark}[theorem]{Remark}
\newtheorem{example}[theorem]{Example}
\newtheorem{assumption}[theorem]{Assumption}
\newcounter{tikzsubfigcounter}[figure]
\renewcommand{\thetikzsubfigcounter}{\the\numexpr\value{figure}+1\relax\alph{tikzsubfigcounter}}
\newcounter{tikzsubfigcounterinvisible}[figure]
\renewcommand{\thetikzsubfigcounterinvisible}{\the\numexpr\value{figure}+1\relax\alph{tikzsubfigcounterinvisible}}
\numberwithin{equation}{section}
\title{Oscillation Mitigation of Hyperbolicity-Preserving Intrusive Uncertainty Quantification Methods for Systems of Conservation Laws}
\author[jk]{Jonas Kusch}
\address[jk]{Computational Science and Mathematical Methods, Karlsruhe Institute of Technology, Englerstraße 2, 76128 Karlsruhe, {\tt jonas.kusch@kit.edu}}
\author[ls]{Louisa Schlachter}
\address[ls]{Fachbereich Mathematik, TU Kaiserslautern, Erwin-Schr\"odinger-Str., 67663 Kaiserslautern, Germany, {\tt schlacht@mathematik.uni-kl.de}}
\date{}
\definecolor{greenyellow}   {cmyk}{0.15, 0   , 0.69, 0   }
\definecolor{yellow}        {cmyk}{0   , 0   , 1   , 0   }
\definecolor{goldenrod}     {cmyk}{0   , 0.10, 0.84, 0   }
\definecolor{dandelion}     {cmyk}{0   , 0.29, 0.84, 0   }
\definecolor{apricot}       {cmyk}{0   , 0.32, 0.52, 0   }
\definecolor{peach}         {cmyk}{0   , 0.50, 0.70, 0   }
\definecolor{melon}         {cmyk}{0   , 0.46, 0.50, 0   }
\definecolor{yelloworange}  {cmyk}{0   , 0.42, 1   , 0   }
\definecolor{orange}        {cmyk}{0   , 0.61, 0.87, 0   }
\definecolor{burntorange}   {cmyk}{0   , 0.51, 1   , 0   }
\definecolor{bittersweet}   {cmyk}{0   , 0.75, 1   , 0.24}
\definecolor{redorange}     {cmyk}{0   , 0.77, 0.87, 0   }
\definecolor{mahogany}      {cmyk}{0   , 0.85, 0.87, 0.35}
\definecolor{maroon}        {cmyk}{0   , 0.87, 0.68, 0.32}
\definecolor{brickred}      {cmyk}{0   , 0.89, 0.94, 0.28}
\definecolor{red}           {cmyk}{0   , 1   , 1   , 0   }
\definecolor{orangered}     {cmyk}{0   , 1   , 0.50, 0   }
\definecolor{rubinered}     {cmyk}{0   , 1   , 0.13, 0   }
\definecolor{wildstrawberry}{cmyk}{0   , 0.96, 0.39, 0   }
\definecolor{salmon}        {cmyk}{0   , 0.53, 0.38, 0   }
\definecolor{carnationpink} {cmyk}{0   , 0.63, 0   , 0   }
\definecolor{magenta}       {cmyk}{0   , 1   , 0   , 0   }
\definecolor{violetred}     {cmyk}{0   , 0.81, 0   , 0   }
\definecolor{rhodamine}     {cmyk}{0   , 0.82, 0   , 0   }
\definecolor{mulberry}      {cmyk}{0.34, 0.90, 0   , 0.02}
\definecolor{redviolet}     {cmyk}{0.07, 0.90, 0   , 0.34}
\definecolor{fuchsia}       {cmyk}{0.47, 0.91, 0   , 0.08}
\definecolor{lavender}      {cmyk}{0   , 0.48, 0   , 0   }
\definecolor{thistle}       {cmyk}{0.12, 0.59, 0   , 0   }
\definecolor{orchid}        {cmyk}{0.32, 0.64, 0   , 0   }
\definecolor{darkorchid}    {cmyk}{0.40, 0.80, 0.20, 0   }
\definecolor{purple}        {cmyk}{0.45, 0.86, 0   , 0   }
\definecolor{plum}          {cmyk}{0.50, 1   , 0   , 0   }
\definecolor{violet}        {cmyk}{0.79, 0.88, 0   , 0   }
\definecolor{royalpurple}   {cmyk}{0.75, 0.90, 0   , 0   }
\definecolor{blueviolet}    {cmyk}{0.86, 0.91, 0   , 0.04}
\definecolor{periwinkle}    {cmyk}{0.57, 0.55, 0   , 0   }
\definecolor{cadetblue}     {cmyk}{0.62, 0.57, 0.23, 0   }
\definecolor{cornflowerblue}{cmyk}{0.65, 0.13, 0   , 0   }
\definecolor{midnightblue}  {cmyk}{0.98, 0.13, 0   , 0.43}
\definecolor{navyblue}      {cmyk}{0.94, 0.54, 0   , 0   }
\definecolor{royalblue}     {cmyk}{1   , 0.50, 0   , 0   }
\definecolor{blue}          {cmyk}{1   , 1   , 0   , 0   }
\definecolor{cerulean}      {cmyk}{0.94, 0.11, 0   , 0   }
\definecolor{cyan}          {cmyk}{1   , 0   , 0   , 0   }
\definecolor{processblue}   {cmyk}{0.96, 0   , 0   , 0   }
\definecolor{skyblue}       {cmyk}{0.62, 0   , 0.12, 0   }
\definecolor{turquoise}     {cmyk}{0.85, 0   , 0.20, 0   }
\definecolor{tealblue}      {cmyk}{0.86, 0   , 0.34, 0.02}
\definecolor{aquamarine}    {cmyk}{0.82, 0   , 0.30, 0   }
\definecolor{bluegreen}     {cmyk}{0.85, 0   , 0.33, 0   }
\definecolor{emerald}       {cmyk}{1   , 0   , 0.50, 0   }
\definecolor{junglegreen}   {cmyk}{0.99, 0   , 0.52, 0   }
\definecolor{seagreen}      {cmyk}{0.69, 0   , 0.50, 0   }
\definecolor{green}         {cmyk}{1   , 0   , 1   , 0   }
\definecolor{forestgreen}   {cmyk}{0.91, 0   , 0.88, 0.12}
\definecolor{pinegreen}     {cmyk}{0.92, 0   , 0.59, 0.25}
\definecolor{limegreen}     {cmyk}{0.50, 0   , 1   , 0   }
\definecolor{yellowgreen}   {cmyk}{0.44, 0   , 0.74, 0   }
\definecolor{springgreen}   {cmyk}{0.26, 0   , 0.76, 0   }
\definecolor{olivegreen}    {cmyk}{0.64, 0   , 0.95, 0.40}
\definecolor{rawsienna}     {cmyk}{0   , 0.72, 1   , 0.45}
\definecolor{sepia}         {cmyk}{0   , 0.83, 1   , 0.70}
\definecolor{brown}         {cmyk}{0   , 0.81, 1   , 0.60}
\definecolor{tan}           {cmyk}{0.14, 0.42, 0.56, 0   }
\definecolor{gray}          {cmyk}{0   , 0   , 0   , 0.50}
\definecolor{black}         {cmyk}{0   , 0   , 0   , 1   }
\definecolor{white}         {cmyk}{0   , 0   , 0   , 0   } 
\pgfplotsset{
	colormap={jet}{
rgb(0.000000 pt)=(0.000000,0.000000,0.504000);
rgb(1.000000 pt)=(0.000000,0.000000,0.508000);
rgb(2.000000 pt)=(0.000000,0.000000,0.512000);
rgb(3.000000 pt)=(0.000000,0.000000,0.516000);
rgb(4.000000 pt)=(0.000000,0.000000,0.520000);
rgb(5.000000 pt)=(0.000000,0.000000,0.524000);
rgb(6.000000 pt)=(0.000000,0.000000,0.528000);
rgb(7.000000 pt)=(0.000000,0.000000,0.532000);
rgb(8.000000 pt)=(0.000000,0.000000,0.536000);
rgb(9.000000 pt)=(0.000000,0.000000,0.540000);
rgb(10.000000 pt)=(0.000000,0.000000,0.544000);
rgb(11.000000 pt)=(0.000000,0.000000,0.548000);
rgb(12.000000 pt)=(0.000000,0.000000,0.552000);
rgb(13.000000 pt)=(0.000000,0.000000,0.556000);
rgb(14.000000 pt)=(0.000000,0.000000,0.560000);
rgb(15.000000 pt)=(0.000000,0.000000,0.564000);
rgb(16.000000 pt)=(0.000000,0.000000,0.568000);
rgb(17.000000 pt)=(0.000000,0.000000,0.572000);
rgb(18.000000 pt)=(0.000000,0.000000,0.576000);
rgb(19.000000 pt)=(0.000000,0.000000,0.580000);
rgb(20.000000 pt)=(0.000000,0.000000,0.584000);
rgb(21.000000 pt)=(0.000000,0.000000,0.588000);
rgb(22.000000 pt)=(0.000000,0.000000,0.592000);
rgb(23.000000 pt)=(0.000000,0.000000,0.596000);
rgb(24.000000 pt)=(0.000000,0.000000,0.600000);
rgb(25.000000 pt)=(0.000000,0.000000,0.604000);
rgb(26.000000 pt)=(0.000000,0.000000,0.608000);
rgb(27.000000 pt)=(0.000000,0.000000,0.612000);
rgb(28.000000 pt)=(0.000000,0.000000,0.616000);
rgb(29.000000 pt)=(0.000000,0.000000,0.620000);
rgb(30.000000 pt)=(0.000000,0.000000,0.624000);
rgb(31.000000 pt)=(0.000000,0.000000,0.628000);
rgb(32.000000 pt)=(0.000000,0.000000,0.632000);
rgb(33.000000 pt)=(0.000000,0.000000,0.636000);
rgb(34.000000 pt)=(0.000000,0.000000,0.640000);
rgb(35.000000 pt)=(0.000000,0.000000,0.644000);
rgb(36.000000 pt)=(0.000000,0.000000,0.648000);
rgb(37.000000 pt)=(0.000000,0.000000,0.652000);
rgb(38.000000 pt)=(0.000000,0.000000,0.656000);
rgb(39.000000 pt)=(0.000000,0.000000,0.660000);
rgb(40.000000 pt)=(0.000000,0.000000,0.664000);
rgb(41.000000 pt)=(0.000000,0.000000,0.668000);
rgb(42.000000 pt)=(0.000000,0.000000,0.672000);
rgb(43.000000 pt)=(0.000000,0.000000,0.676000);
rgb(44.000000 pt)=(0.000000,0.000000,0.680000);
rgb(45.000000 pt)=(0.000000,0.000000,0.684000);
rgb(46.000000 pt)=(0.000000,0.000000,0.688000);
rgb(47.000000 pt)=(0.000000,0.000000,0.692000);
rgb(48.000000 pt)=(0.000000,0.000000,0.696000);
rgb(49.000000 pt)=(0.000000,0.000000,0.700000);
rgb(50.000000 pt)=(0.000000,0.000000,0.704000);
rgb(51.000000 pt)=(0.000000,0.000000,0.708000);
rgb(52.000000 pt)=(0.000000,0.000000,0.712000);
rgb(53.000000 pt)=(0.000000,0.000000,0.716000);
rgb(54.000000 pt)=(0.000000,0.000000,0.720000);
rgb(55.000000 pt)=(0.000000,0.000000,0.724000);
rgb(56.000000 pt)=(0.000000,0.000000,0.728000);
rgb(57.000000 pt)=(0.000000,0.000000,0.732000);
rgb(58.000000 pt)=(0.000000,0.000000,0.736000);
rgb(59.000000 pt)=(0.000000,0.000000,0.740000);
rgb(60.000000 pt)=(0.000000,0.000000,0.744000);
rgb(61.000000 pt)=(0.000000,0.000000,0.748000);
rgb(62.000000 pt)=(0.000000,0.000000,0.752000);
rgb(63.000000 pt)=(0.000000,0.000000,0.756000);
rgb(64.000000 pt)=(0.000000,0.000000,0.760000);
rgb(65.000000 pt)=(0.000000,0.000000,0.764000);
rgb(66.000000 pt)=(0.000000,0.000000,0.768000);
rgb(67.000000 pt)=(0.000000,0.000000,0.772000);
rgb(68.000000 pt)=(0.000000,0.000000,0.776000);
rgb(69.000000 pt)=(0.000000,0.000000,0.780000);
rgb(70.000000 pt)=(0.000000,0.000000,0.784000);
rgb(71.000000 pt)=(0.000000,0.000000,0.788000);
rgb(72.000000 pt)=(0.000000,0.000000,0.792000);
rgb(73.000000 pt)=(0.000000,0.000000,0.796000);
rgb(74.000000 pt)=(0.000000,0.000000,0.800000);
rgb(75.000000 pt)=(0.000000,0.000000,0.804000);
rgb(76.000000 pt)=(0.000000,0.000000,0.808000);
rgb(77.000000 pt)=(0.000000,0.000000,0.812000);
rgb(78.000000 pt)=(0.000000,0.000000,0.816000);
rgb(79.000000 pt)=(0.000000,0.000000,0.820000);
rgb(80.000000 pt)=(0.000000,0.000000,0.824000);
rgb(81.000000 pt)=(0.000000,0.000000,0.828000);
rgb(82.000000 pt)=(0.000000,0.000000,0.832000);
rgb(83.000000 pt)=(0.000000,0.000000,0.836000);
rgb(84.000000 pt)=(0.000000,0.000000,0.840000);
rgb(85.000000 pt)=(0.000000,0.000000,0.844000);
rgb(86.000000 pt)=(0.000000,0.000000,0.848000);
rgb(87.000000 pt)=(0.000000,0.000000,0.852000);
rgb(88.000000 pt)=(0.000000,0.000000,0.856000);
rgb(89.000000 pt)=(0.000000,0.000000,0.860000);
rgb(90.000000 pt)=(0.000000,0.000000,0.864000);
rgb(91.000000 pt)=(0.000000,0.000000,0.868000);
rgb(92.000000 pt)=(0.000000,0.000000,0.872000);
rgb(93.000000 pt)=(0.000000,0.000000,0.876000);
rgb(94.000000 pt)=(0.000000,0.000000,0.880000);
rgb(95.000000 pt)=(0.000000,0.000000,0.884000);
rgb(96.000000 pt)=(0.000000,0.000000,0.888000);
rgb(97.000000 pt)=(0.000000,0.000000,0.892000);
rgb(98.000000 pt)=(0.000000,0.000000,0.896000);
rgb(99.000000 pt)=(0.000000,0.000000,0.900000);
rgb(100.000000 pt)=(0.000000,0.000000,0.904000);
rgb(101.000000 pt)=(0.000000,0.000000,0.908000);
rgb(102.000000 pt)=(0.000000,0.000000,0.912000);
rgb(103.000000 pt)=(0.000000,0.000000,0.916000);
rgb(104.000000 pt)=(0.000000,0.000000,0.920000);
rgb(105.000000 pt)=(0.000000,0.000000,0.924000);
rgb(106.000000 pt)=(0.000000,0.000000,0.928000);
rgb(107.000000 pt)=(0.000000,0.000000,0.932000);
rgb(108.000000 pt)=(0.000000,0.000000,0.936000);
rgb(109.000000 pt)=(0.000000,0.000000,0.940000);
rgb(110.000000 pt)=(0.000000,0.000000,0.944000);
rgb(111.000000 pt)=(0.000000,0.000000,0.948000);
rgb(112.000000 pt)=(0.000000,0.000000,0.952000);
rgb(113.000000 pt)=(0.000000,0.000000,0.956000);
rgb(114.000000 pt)=(0.000000,0.000000,0.960000);
rgb(115.000000 pt)=(0.000000,0.000000,0.964000);
rgb(116.000000 pt)=(0.000000,0.000000,0.968000);
rgb(117.000000 pt)=(0.000000,0.000000,0.972000);
rgb(118.000000 pt)=(0.000000,0.000000,0.976000);
rgb(119.000000 pt)=(0.000000,0.000000,0.980000);
rgb(120.000000 pt)=(0.000000,0.000000,0.984000);
rgb(121.000000 pt)=(0.000000,0.000000,0.988000);
rgb(122.000000 pt)=(0.000000,0.000000,0.992000);
rgb(123.000000 pt)=(0.000000,0.000000,0.996000);
rgb(124.000000 pt)=(0.000000,0.000000,1.000000);
rgb(125.000000 pt)=(0.000000,0.004000,1.000000);
rgb(126.000000 pt)=(0.000000,0.008000,1.000000);
rgb(127.000000 pt)=(0.000000,0.012000,1.000000);
rgb(128.000000 pt)=(0.000000,0.016000,1.000000);
rgb(129.000000 pt)=(0.000000,0.020000,1.000000);
rgb(130.000000 pt)=(0.000000,0.024000,1.000000);
rgb(131.000000 pt)=(0.000000,0.028000,1.000000);
rgb(132.000000 pt)=(0.000000,0.032000,1.000000);
rgb(133.000000 pt)=(0.000000,0.036000,1.000000);
rgb(134.000000 pt)=(0.000000,0.040000,1.000000);
rgb(135.000000 pt)=(0.000000,0.044000,1.000000);
rgb(136.000000 pt)=(0.000000,0.048000,1.000000);
rgb(137.000000 pt)=(0.000000,0.052000,1.000000);
rgb(138.000000 pt)=(0.000000,0.056000,1.000000);
rgb(139.000000 pt)=(0.000000,0.060000,1.000000);
rgb(140.000000 pt)=(0.000000,0.064000,1.000000);
rgb(141.000000 pt)=(0.000000,0.068000,1.000000);
rgb(142.000000 pt)=(0.000000,0.072000,1.000000);
rgb(143.000000 pt)=(0.000000,0.076000,1.000000);
rgb(144.000000 pt)=(0.000000,0.080000,1.000000);
rgb(145.000000 pt)=(0.000000,0.084000,1.000000);
rgb(146.000000 pt)=(0.000000,0.088000,1.000000);
rgb(147.000000 pt)=(0.000000,0.092000,1.000000);
rgb(148.000000 pt)=(0.000000,0.096000,1.000000);
rgb(149.000000 pt)=(0.000000,0.100000,1.000000);
rgb(150.000000 pt)=(0.000000,0.104000,1.000000);
rgb(151.000000 pt)=(0.000000,0.108000,1.000000);
rgb(152.000000 pt)=(0.000000,0.112000,1.000000);
rgb(153.000000 pt)=(0.000000,0.116000,1.000000);
rgb(154.000000 pt)=(0.000000,0.120000,1.000000);
rgb(155.000000 pt)=(0.000000,0.124000,1.000000);
rgb(156.000000 pt)=(0.000000,0.128000,1.000000);
rgb(157.000000 pt)=(0.000000,0.132000,1.000000);
rgb(158.000000 pt)=(0.000000,0.136000,1.000000);
rgb(159.000000 pt)=(0.000000,0.140000,1.000000);
rgb(160.000000 pt)=(0.000000,0.144000,1.000000);
rgb(161.000000 pt)=(0.000000,0.148000,1.000000);
rgb(162.000000 pt)=(0.000000,0.152000,1.000000);
rgb(163.000000 pt)=(0.000000,0.156000,1.000000);
rgb(164.000000 pt)=(0.000000,0.160000,1.000000);
rgb(165.000000 pt)=(0.000000,0.164000,1.000000);
rgb(166.000000 pt)=(0.000000,0.168000,1.000000);
rgb(167.000000 pt)=(0.000000,0.172000,1.000000);
rgb(168.000000 pt)=(0.000000,0.176000,1.000000);
rgb(169.000000 pt)=(0.000000,0.180000,1.000000);
rgb(170.000000 pt)=(0.000000,0.184000,1.000000);
rgb(171.000000 pt)=(0.000000,0.188000,1.000000);
rgb(172.000000 pt)=(0.000000,0.192000,1.000000);
rgb(173.000000 pt)=(0.000000,0.196000,1.000000);
rgb(174.000000 pt)=(0.000000,0.200000,1.000000);
rgb(175.000000 pt)=(0.000000,0.204000,1.000000);
rgb(176.000000 pt)=(0.000000,0.208000,1.000000);
rgb(177.000000 pt)=(0.000000,0.212000,1.000000);
rgb(178.000000 pt)=(0.000000,0.216000,1.000000);
rgb(179.000000 pt)=(0.000000,0.220000,1.000000);
rgb(180.000000 pt)=(0.000000,0.224000,1.000000);
rgb(181.000000 pt)=(0.000000,0.228000,1.000000);
rgb(182.000000 pt)=(0.000000,0.232000,1.000000);
rgb(183.000000 pt)=(0.000000,0.236000,1.000000);
rgb(184.000000 pt)=(0.000000,0.240000,1.000000);
rgb(185.000000 pt)=(0.000000,0.244000,1.000000);
rgb(186.000000 pt)=(0.000000,0.248000,1.000000);
rgb(187.000000 pt)=(0.000000,0.252000,1.000000);
rgb(188.000000 pt)=(0.000000,0.256000,1.000000);
rgb(189.000000 pt)=(0.000000,0.260000,1.000000);
rgb(190.000000 pt)=(0.000000,0.264000,1.000000);
rgb(191.000000 pt)=(0.000000,0.268000,1.000000);
rgb(192.000000 pt)=(0.000000,0.272000,1.000000);
rgb(193.000000 pt)=(0.000000,0.276000,1.000000);
rgb(194.000000 pt)=(0.000000,0.280000,1.000000);
rgb(195.000000 pt)=(0.000000,0.284000,1.000000);
rgb(196.000000 pt)=(0.000000,0.288000,1.000000);
rgb(197.000000 pt)=(0.000000,0.292000,1.000000);
rgb(198.000000 pt)=(0.000000,0.296000,1.000000);
rgb(199.000000 pt)=(0.000000,0.300000,1.000000);
rgb(200.000000 pt)=(0.000000,0.304000,1.000000);
rgb(201.000000 pt)=(0.000000,0.308000,1.000000);
rgb(202.000000 pt)=(0.000000,0.312000,1.000000);
rgb(203.000000 pt)=(0.000000,0.316000,1.000000);
rgb(204.000000 pt)=(0.000000,0.320000,1.000000);
rgb(205.000000 pt)=(0.000000,0.324000,1.000000);
rgb(206.000000 pt)=(0.000000,0.328000,1.000000);
rgb(207.000000 pt)=(0.000000,0.332000,1.000000);
rgb(208.000000 pt)=(0.000000,0.336000,1.000000);
rgb(209.000000 pt)=(0.000000,0.340000,1.000000);
rgb(210.000000 pt)=(0.000000,0.344000,1.000000);
rgb(211.000000 pt)=(0.000000,0.348000,1.000000);
rgb(212.000000 pt)=(0.000000,0.352000,1.000000);
rgb(213.000000 pt)=(0.000000,0.356000,1.000000);
rgb(214.000000 pt)=(0.000000,0.360000,1.000000);
rgb(215.000000 pt)=(0.000000,0.364000,1.000000);
rgb(216.000000 pt)=(0.000000,0.368000,1.000000);
rgb(217.000000 pt)=(0.000000,0.372000,1.000000);
rgb(218.000000 pt)=(0.000000,0.376000,1.000000);
rgb(219.000000 pt)=(0.000000,0.380000,1.000000);
rgb(220.000000 pt)=(0.000000,0.384000,1.000000);
rgb(221.000000 pt)=(0.000000,0.388000,1.000000);
rgb(222.000000 pt)=(0.000000,0.392000,1.000000);
rgb(223.000000 pt)=(0.000000,0.396000,1.000000);
rgb(224.000000 pt)=(0.000000,0.400000,1.000000);
rgb(225.000000 pt)=(0.000000,0.404000,1.000000);
rgb(226.000000 pt)=(0.000000,0.408000,1.000000);
rgb(227.000000 pt)=(0.000000,0.412000,1.000000);
rgb(228.000000 pt)=(0.000000,0.416000,1.000000);
rgb(229.000000 pt)=(0.000000,0.420000,1.000000);
rgb(230.000000 pt)=(0.000000,0.424000,1.000000);
rgb(231.000000 pt)=(0.000000,0.428000,1.000000);
rgb(232.000000 pt)=(0.000000,0.432000,1.000000);
rgb(233.000000 pt)=(0.000000,0.436000,1.000000);
rgb(234.000000 pt)=(0.000000,0.440000,1.000000);
rgb(235.000000 pt)=(0.000000,0.444000,1.000000);
rgb(236.000000 pt)=(0.000000,0.448000,1.000000);
rgb(237.000000 pt)=(0.000000,0.452000,1.000000);
rgb(238.000000 pt)=(0.000000,0.456000,1.000000);
rgb(239.000000 pt)=(0.000000,0.460000,1.000000);
rgb(240.000000 pt)=(0.000000,0.464000,1.000000);
rgb(241.000000 pt)=(0.000000,0.468000,1.000000);
rgb(242.000000 pt)=(0.000000,0.472000,1.000000);
rgb(243.000000 pt)=(0.000000,0.476000,1.000000);
rgb(244.000000 pt)=(0.000000,0.480000,1.000000);
rgb(245.000000 pt)=(0.000000,0.484000,1.000000);
rgb(246.000000 pt)=(0.000000,0.488000,1.000000);
rgb(247.000000 pt)=(0.000000,0.492000,1.000000);
rgb(248.000000 pt)=(0.000000,0.496000,1.000000);
rgb(249.000000 pt)=(0.000000,0.500000,1.000000);
rgb(250.000000 pt)=(0.000000,0.504000,1.000000);
rgb(251.000000 pt)=(0.000000,0.508000,1.000000);
rgb(252.000000 pt)=(0.000000,0.512000,1.000000);
rgb(253.000000 pt)=(0.000000,0.516000,1.000000);
rgb(254.000000 pt)=(0.000000,0.520000,1.000000);
rgb(255.000000 pt)=(0.000000,0.524000,1.000000);
rgb(256.000000 pt)=(0.000000,0.528000,1.000000);
rgb(257.000000 pt)=(0.000000,0.532000,1.000000);
rgb(258.000000 pt)=(0.000000,0.536000,1.000000);
rgb(259.000000 pt)=(0.000000,0.540000,1.000000);
rgb(260.000000 pt)=(0.000000,0.544000,1.000000);
rgb(261.000000 pt)=(0.000000,0.548000,1.000000);
rgb(262.000000 pt)=(0.000000,0.552000,1.000000);
rgb(263.000000 pt)=(0.000000,0.556000,1.000000);
rgb(264.000000 pt)=(0.000000,0.560000,1.000000);
rgb(265.000000 pt)=(0.000000,0.564000,1.000000);
rgb(266.000000 pt)=(0.000000,0.568000,1.000000);
rgb(267.000000 pt)=(0.000000,0.572000,1.000000);
rgb(268.000000 pt)=(0.000000,0.576000,1.000000);
rgb(269.000000 pt)=(0.000000,0.580000,1.000000);
rgb(270.000000 pt)=(0.000000,0.584000,1.000000);
rgb(271.000000 pt)=(0.000000,0.588000,1.000000);
rgb(272.000000 pt)=(0.000000,0.592000,1.000000);
rgb(273.000000 pt)=(0.000000,0.596000,1.000000);
rgb(274.000000 pt)=(0.000000,0.600000,1.000000);
rgb(275.000000 pt)=(0.000000,0.604000,1.000000);
rgb(276.000000 pt)=(0.000000,0.608000,1.000000);
rgb(277.000000 pt)=(0.000000,0.612000,1.000000);
rgb(278.000000 pt)=(0.000000,0.616000,1.000000);
rgb(279.000000 pt)=(0.000000,0.620000,1.000000);
rgb(280.000000 pt)=(0.000000,0.624000,1.000000);
rgb(281.000000 pt)=(0.000000,0.628000,1.000000);
rgb(282.000000 pt)=(0.000000,0.632000,1.000000);
rgb(283.000000 pt)=(0.000000,0.636000,1.000000);
rgb(284.000000 pt)=(0.000000,0.640000,1.000000);
rgb(285.000000 pt)=(0.000000,0.644000,1.000000);
rgb(286.000000 pt)=(0.000000,0.648000,1.000000);
rgb(287.000000 pt)=(0.000000,0.652000,1.000000);
rgb(288.000000 pt)=(0.000000,0.656000,1.000000);
rgb(289.000000 pt)=(0.000000,0.660000,1.000000);
rgb(290.000000 pt)=(0.000000,0.664000,1.000000);
rgb(291.000000 pt)=(0.000000,0.668000,1.000000);
rgb(292.000000 pt)=(0.000000,0.672000,1.000000);
rgb(293.000000 pt)=(0.000000,0.676000,1.000000);
rgb(294.000000 pt)=(0.000000,0.680000,1.000000);
rgb(295.000000 pt)=(0.000000,0.684000,1.000000);
rgb(296.000000 pt)=(0.000000,0.688000,1.000000);
rgb(297.000000 pt)=(0.000000,0.692000,1.000000);
rgb(298.000000 pt)=(0.000000,0.696000,1.000000);
rgb(299.000000 pt)=(0.000000,0.700000,1.000000);
rgb(300.000000 pt)=(0.000000,0.704000,1.000000);
rgb(301.000000 pt)=(0.000000,0.708000,1.000000);
rgb(302.000000 pt)=(0.000000,0.712000,1.000000);
rgb(303.000000 pt)=(0.000000,0.716000,1.000000);
rgb(304.000000 pt)=(0.000000,0.720000,1.000000);
rgb(305.000000 pt)=(0.000000,0.724000,1.000000);
rgb(306.000000 pt)=(0.000000,0.728000,1.000000);
rgb(307.000000 pt)=(0.000000,0.732000,1.000000);
rgb(308.000000 pt)=(0.000000,0.736000,1.000000);
rgb(309.000000 pt)=(0.000000,0.740000,1.000000);
rgb(310.000000 pt)=(0.000000,0.744000,1.000000);
rgb(311.000000 pt)=(0.000000,0.748000,1.000000);
rgb(312.000000 pt)=(0.000000,0.752000,1.000000);
rgb(313.000000 pt)=(0.000000,0.756000,1.000000);
rgb(314.000000 pt)=(0.000000,0.760000,1.000000);
rgb(315.000000 pt)=(0.000000,0.764000,1.000000);
rgb(316.000000 pt)=(0.000000,0.768000,1.000000);
rgb(317.000000 pt)=(0.000000,0.772000,1.000000);
rgb(318.000000 pt)=(0.000000,0.776000,1.000000);
rgb(319.000000 pt)=(0.000000,0.780000,1.000000);
rgb(320.000000 pt)=(0.000000,0.784000,1.000000);
rgb(321.000000 pt)=(0.000000,0.788000,1.000000);
rgb(322.000000 pt)=(0.000000,0.792000,1.000000);
rgb(323.000000 pt)=(0.000000,0.796000,1.000000);
rgb(324.000000 pt)=(0.000000,0.800000,1.000000);
rgb(325.000000 pt)=(0.000000,0.804000,1.000000);
rgb(326.000000 pt)=(0.000000,0.808000,1.000000);
rgb(327.000000 pt)=(0.000000,0.812000,1.000000);
rgb(328.000000 pt)=(0.000000,0.816000,1.000000);
rgb(329.000000 pt)=(0.000000,0.820000,1.000000);
rgb(330.000000 pt)=(0.000000,0.824000,1.000000);
rgb(331.000000 pt)=(0.000000,0.828000,1.000000);
rgb(332.000000 pt)=(0.000000,0.832000,1.000000);
rgb(333.000000 pt)=(0.000000,0.836000,1.000000);
rgb(334.000000 pt)=(0.000000,0.840000,1.000000);
rgb(335.000000 pt)=(0.000000,0.844000,1.000000);
rgb(336.000000 pt)=(0.000000,0.848000,1.000000);
rgb(337.000000 pt)=(0.000000,0.852000,1.000000);
rgb(338.000000 pt)=(0.000000,0.856000,1.000000);
rgb(339.000000 pt)=(0.000000,0.860000,1.000000);
rgb(340.000000 pt)=(0.000000,0.864000,1.000000);
rgb(341.000000 pt)=(0.000000,0.868000,1.000000);
rgb(342.000000 pt)=(0.000000,0.872000,1.000000);
rgb(343.000000 pt)=(0.000000,0.876000,1.000000);
rgb(344.000000 pt)=(0.000000,0.880000,1.000000);
rgb(345.000000 pt)=(0.000000,0.884000,1.000000);
rgb(346.000000 pt)=(0.000000,0.888000,1.000000);
rgb(347.000000 pt)=(0.000000,0.892000,1.000000);
rgb(348.000000 pt)=(0.000000,0.896000,1.000000);
rgb(349.000000 pt)=(0.000000,0.900000,1.000000);
rgb(350.000000 pt)=(0.000000,0.904000,1.000000);
rgb(351.000000 pt)=(0.000000,0.908000,1.000000);
rgb(352.000000 pt)=(0.000000,0.912000,1.000000);
rgb(353.000000 pt)=(0.000000,0.916000,1.000000);
rgb(354.000000 pt)=(0.000000,0.920000,1.000000);
rgb(355.000000 pt)=(0.000000,0.924000,1.000000);
rgb(356.000000 pt)=(0.000000,0.928000,1.000000);
rgb(357.000000 pt)=(0.000000,0.932000,1.000000);
rgb(358.000000 pt)=(0.000000,0.936000,1.000000);
rgb(359.000000 pt)=(0.000000,0.940000,1.000000);
rgb(360.000000 pt)=(0.000000,0.944000,1.000000);
rgb(361.000000 pt)=(0.000000,0.948000,1.000000);
rgb(362.000000 pt)=(0.000000,0.952000,1.000000);
rgb(363.000000 pt)=(0.000000,0.956000,1.000000);
rgb(364.000000 pt)=(0.000000,0.960000,1.000000);
rgb(365.000000 pt)=(0.000000,0.964000,1.000000);
rgb(366.000000 pt)=(0.000000,0.968000,1.000000);
rgb(367.000000 pt)=(0.000000,0.972000,1.000000);
rgb(368.000000 pt)=(0.000000,0.976000,1.000000);
rgb(369.000000 pt)=(0.000000,0.980000,1.000000);
rgb(370.000000 pt)=(0.000000,0.984000,1.000000);
rgb(371.000000 pt)=(0.000000,0.988000,1.000000);
rgb(372.000000 pt)=(0.000000,0.992000,1.000000);
rgb(373.000000 pt)=(0.000000,0.996000,1.000000);
rgb(374.000000 pt)=(0.000000,1.000000,1.000000);
rgb(375.000000 pt)=(0.004000,1.000000,0.996000);
rgb(376.000000 pt)=(0.008000,1.000000,0.992000);
rgb(377.000000 pt)=(0.012000,1.000000,0.988000);
rgb(378.000000 pt)=(0.016000,1.000000,0.984000);
rgb(379.000000 pt)=(0.020000,1.000000,0.980000);
rgb(380.000000 pt)=(0.024000,1.000000,0.976000);
rgb(381.000000 pt)=(0.028000,1.000000,0.972000);
rgb(382.000000 pt)=(0.032000,1.000000,0.968000);
rgb(383.000000 pt)=(0.036000,1.000000,0.964000);
rgb(384.000000 pt)=(0.040000,1.000000,0.960000);
rgb(385.000000 pt)=(0.044000,1.000000,0.956000);
rgb(386.000000 pt)=(0.048000,1.000000,0.952000);
rgb(387.000000 pt)=(0.052000,1.000000,0.948000);
rgb(388.000000 pt)=(0.056000,1.000000,0.944000);
rgb(389.000000 pt)=(0.060000,1.000000,0.940000);
rgb(390.000000 pt)=(0.064000,1.000000,0.936000);
rgb(391.000000 pt)=(0.068000,1.000000,0.932000);
rgb(392.000000 pt)=(0.072000,1.000000,0.928000);
rgb(393.000000 pt)=(0.076000,1.000000,0.924000);
rgb(394.000000 pt)=(0.080000,1.000000,0.920000);
rgb(395.000000 pt)=(0.084000,1.000000,0.916000);
rgb(396.000000 pt)=(0.088000,1.000000,0.912000);
rgb(397.000000 pt)=(0.092000,1.000000,0.908000);
rgb(398.000000 pt)=(0.096000,1.000000,0.904000);
rgb(399.000000 pt)=(0.100000,1.000000,0.900000);
rgb(400.000000 pt)=(0.104000,1.000000,0.896000);
rgb(401.000000 pt)=(0.108000,1.000000,0.892000);
rgb(402.000000 pt)=(0.112000,1.000000,0.888000);
rgb(403.000000 pt)=(0.116000,1.000000,0.884000);
rgb(404.000000 pt)=(0.120000,1.000000,0.880000);
rgb(405.000000 pt)=(0.124000,1.000000,0.876000);
rgb(406.000000 pt)=(0.128000,1.000000,0.872000);
rgb(407.000000 pt)=(0.132000,1.000000,0.868000);
rgb(408.000000 pt)=(0.136000,1.000000,0.864000);
rgb(409.000000 pt)=(0.140000,1.000000,0.860000);
rgb(410.000000 pt)=(0.144000,1.000000,0.856000);
rgb(411.000000 pt)=(0.148000,1.000000,0.852000);
rgb(412.000000 pt)=(0.152000,1.000000,0.848000);
rgb(413.000000 pt)=(0.156000,1.000000,0.844000);
rgb(414.000000 pt)=(0.160000,1.000000,0.840000);
rgb(415.000000 pt)=(0.164000,1.000000,0.836000);
rgb(416.000000 pt)=(0.168000,1.000000,0.832000);
rgb(417.000000 pt)=(0.172000,1.000000,0.828000);
rgb(418.000000 pt)=(0.176000,1.000000,0.824000);
rgb(419.000000 pt)=(0.180000,1.000000,0.820000);
rgb(420.000000 pt)=(0.184000,1.000000,0.816000);
rgb(421.000000 pt)=(0.188000,1.000000,0.812000);
rgb(422.000000 pt)=(0.192000,1.000000,0.808000);
rgb(423.000000 pt)=(0.196000,1.000000,0.804000);
rgb(424.000000 pt)=(0.200000,1.000000,0.800000);
rgb(425.000000 pt)=(0.204000,1.000000,0.796000);
rgb(426.000000 pt)=(0.208000,1.000000,0.792000);
rgb(427.000000 pt)=(0.212000,1.000000,0.788000);
rgb(428.000000 pt)=(0.216000,1.000000,0.784000);
rgb(429.000000 pt)=(0.220000,1.000000,0.780000);
rgb(430.000000 pt)=(0.224000,1.000000,0.776000);
rgb(431.000000 pt)=(0.228000,1.000000,0.772000);
rgb(432.000000 pt)=(0.232000,1.000000,0.768000);
rgb(433.000000 pt)=(0.236000,1.000000,0.764000);
rgb(434.000000 pt)=(0.240000,1.000000,0.760000);
rgb(435.000000 pt)=(0.244000,1.000000,0.756000);
rgb(436.000000 pt)=(0.248000,1.000000,0.752000);
rgb(437.000000 pt)=(0.252000,1.000000,0.748000);
rgb(438.000000 pt)=(0.256000,1.000000,0.744000);
rgb(439.000000 pt)=(0.260000,1.000000,0.740000);
rgb(440.000000 pt)=(0.264000,1.000000,0.736000);
rgb(441.000000 pt)=(0.268000,1.000000,0.732000);
rgb(442.000000 pt)=(0.272000,1.000000,0.728000);
rgb(443.000000 pt)=(0.276000,1.000000,0.724000);
rgb(444.000000 pt)=(0.280000,1.000000,0.720000);
rgb(445.000000 pt)=(0.284000,1.000000,0.716000);
rgb(446.000000 pt)=(0.288000,1.000000,0.712000);
rgb(447.000000 pt)=(0.292000,1.000000,0.708000);
rgb(448.000000 pt)=(0.296000,1.000000,0.704000);
rgb(449.000000 pt)=(0.300000,1.000000,0.700000);
rgb(450.000000 pt)=(0.304000,1.000000,0.696000);
rgb(451.000000 pt)=(0.308000,1.000000,0.692000);
rgb(452.000000 pt)=(0.312000,1.000000,0.688000);
rgb(453.000000 pt)=(0.316000,1.000000,0.684000);
rgb(454.000000 pt)=(0.320000,1.000000,0.680000);
rgb(455.000000 pt)=(0.324000,1.000000,0.676000);
rgb(456.000000 pt)=(0.328000,1.000000,0.672000);
rgb(457.000000 pt)=(0.332000,1.000000,0.668000);
rgb(458.000000 pt)=(0.336000,1.000000,0.664000);
rgb(459.000000 pt)=(0.340000,1.000000,0.660000);
rgb(460.000000 pt)=(0.344000,1.000000,0.656000);
rgb(461.000000 pt)=(0.348000,1.000000,0.652000);
rgb(462.000000 pt)=(0.352000,1.000000,0.648000);
rgb(463.000000 pt)=(0.356000,1.000000,0.644000);
rgb(464.000000 pt)=(0.360000,1.000000,0.640000);
rgb(465.000000 pt)=(0.364000,1.000000,0.636000);
rgb(466.000000 pt)=(0.368000,1.000000,0.632000);
rgb(467.000000 pt)=(0.372000,1.000000,0.628000);
rgb(468.000000 pt)=(0.376000,1.000000,0.624000);
rgb(469.000000 pt)=(0.380000,1.000000,0.620000);
rgb(470.000000 pt)=(0.384000,1.000000,0.616000);
rgb(471.000000 pt)=(0.388000,1.000000,0.612000);
rgb(472.000000 pt)=(0.392000,1.000000,0.608000);
rgb(473.000000 pt)=(0.396000,1.000000,0.604000);
rgb(474.000000 pt)=(0.400000,1.000000,0.600000);
rgb(475.000000 pt)=(0.404000,1.000000,0.596000);
rgb(476.000000 pt)=(0.408000,1.000000,0.592000);
rgb(477.000000 pt)=(0.412000,1.000000,0.588000);
rgb(478.000000 pt)=(0.416000,1.000000,0.584000);
rgb(479.000000 pt)=(0.420000,1.000000,0.580000);
rgb(480.000000 pt)=(0.424000,1.000000,0.576000);
rgb(481.000000 pt)=(0.428000,1.000000,0.572000);
rgb(482.000000 pt)=(0.432000,1.000000,0.568000);
rgb(483.000000 pt)=(0.436000,1.000000,0.564000);
rgb(484.000000 pt)=(0.440000,1.000000,0.560000);
rgb(485.000000 pt)=(0.444000,1.000000,0.556000);
rgb(486.000000 pt)=(0.448000,1.000000,0.552000);
rgb(487.000000 pt)=(0.452000,1.000000,0.548000);
rgb(488.000000 pt)=(0.456000,1.000000,0.544000);
rgb(489.000000 pt)=(0.460000,1.000000,0.540000);
rgb(490.000000 pt)=(0.464000,1.000000,0.536000);
rgb(491.000000 pt)=(0.468000,1.000000,0.532000);
rgb(492.000000 pt)=(0.472000,1.000000,0.528000);
rgb(493.000000 pt)=(0.476000,1.000000,0.524000);
rgb(494.000000 pt)=(0.480000,1.000000,0.520000);
rgb(495.000000 pt)=(0.484000,1.000000,0.516000);
rgb(496.000000 pt)=(0.488000,1.000000,0.512000);
rgb(497.000000 pt)=(0.492000,1.000000,0.508000);
rgb(498.000000 pt)=(0.496000,1.000000,0.504000);
rgb(499.000000 pt)=(0.500000,1.000000,0.500000);
rgb(500.000000 pt)=(0.504000,1.000000,0.496000);
rgb(501.000000 pt)=(0.508000,1.000000,0.492000);
rgb(502.000000 pt)=(0.512000,1.000000,0.488000);
rgb(503.000000 pt)=(0.516000,1.000000,0.484000);
rgb(504.000000 pt)=(0.520000,1.000000,0.480000);
rgb(505.000000 pt)=(0.524000,1.000000,0.476000);
rgb(506.000000 pt)=(0.528000,1.000000,0.472000);
rgb(507.000000 pt)=(0.532000,1.000000,0.468000);
rgb(508.000000 pt)=(0.536000,1.000000,0.464000);
rgb(509.000000 pt)=(0.540000,1.000000,0.460000);
rgb(510.000000 pt)=(0.544000,1.000000,0.456000);
rgb(511.000000 pt)=(0.548000,1.000000,0.452000);
rgb(512.000000 pt)=(0.552000,1.000000,0.448000);
rgb(513.000000 pt)=(0.556000,1.000000,0.444000);
rgb(514.000000 pt)=(0.560000,1.000000,0.440000);
rgb(515.000000 pt)=(0.564000,1.000000,0.436000);
rgb(516.000000 pt)=(0.568000,1.000000,0.432000);
rgb(517.000000 pt)=(0.572000,1.000000,0.428000);
rgb(518.000000 pt)=(0.576000,1.000000,0.424000);
rgb(519.000000 pt)=(0.580000,1.000000,0.420000);
rgb(520.000000 pt)=(0.584000,1.000000,0.416000);
rgb(521.000000 pt)=(0.588000,1.000000,0.412000);
rgb(522.000000 pt)=(0.592000,1.000000,0.408000);
rgb(523.000000 pt)=(0.596000,1.000000,0.404000);
rgb(524.000000 pt)=(0.600000,1.000000,0.400000);
rgb(525.000000 pt)=(0.604000,1.000000,0.396000);
rgb(526.000000 pt)=(0.608000,1.000000,0.392000);
rgb(527.000000 pt)=(0.612000,1.000000,0.388000);
rgb(528.000000 pt)=(0.616000,1.000000,0.384000);
rgb(529.000000 pt)=(0.620000,1.000000,0.380000);
rgb(530.000000 pt)=(0.624000,1.000000,0.376000);
rgb(531.000000 pt)=(0.628000,1.000000,0.372000);
rgb(532.000000 pt)=(0.632000,1.000000,0.368000);
rgb(533.000000 pt)=(0.636000,1.000000,0.364000);
rgb(534.000000 pt)=(0.640000,1.000000,0.360000);
rgb(535.000000 pt)=(0.644000,1.000000,0.356000);
rgb(536.000000 pt)=(0.648000,1.000000,0.352000);
rgb(537.000000 pt)=(0.652000,1.000000,0.348000);
rgb(538.000000 pt)=(0.656000,1.000000,0.344000);
rgb(539.000000 pt)=(0.660000,1.000000,0.340000);
rgb(540.000000 pt)=(0.664000,1.000000,0.336000);
rgb(541.000000 pt)=(0.668000,1.000000,0.332000);
rgb(542.000000 pt)=(0.672000,1.000000,0.328000);
rgb(543.000000 pt)=(0.676000,1.000000,0.324000);
rgb(544.000000 pt)=(0.680000,1.000000,0.320000);
rgb(545.000000 pt)=(0.684000,1.000000,0.316000);
rgb(546.000000 pt)=(0.688000,1.000000,0.312000);
rgb(547.000000 pt)=(0.692000,1.000000,0.308000);
rgb(548.000000 pt)=(0.696000,1.000000,0.304000);
rgb(549.000000 pt)=(0.700000,1.000000,0.300000);
rgb(550.000000 pt)=(0.704000,1.000000,0.296000);
rgb(551.000000 pt)=(0.708000,1.000000,0.292000);
rgb(552.000000 pt)=(0.712000,1.000000,0.288000);
rgb(553.000000 pt)=(0.716000,1.000000,0.284000);
rgb(554.000000 pt)=(0.720000,1.000000,0.280000);
rgb(555.000000 pt)=(0.724000,1.000000,0.276000);
rgb(556.000000 pt)=(0.728000,1.000000,0.272000);
rgb(557.000000 pt)=(0.732000,1.000000,0.268000);
rgb(558.000000 pt)=(0.736000,1.000000,0.264000);
rgb(559.000000 pt)=(0.740000,1.000000,0.260000);
rgb(560.000000 pt)=(0.744000,1.000000,0.256000);
rgb(561.000000 pt)=(0.748000,1.000000,0.252000);
rgb(562.000000 pt)=(0.752000,1.000000,0.248000);
rgb(563.000000 pt)=(0.756000,1.000000,0.244000);
rgb(564.000000 pt)=(0.760000,1.000000,0.240000);
rgb(565.000000 pt)=(0.764000,1.000000,0.236000);
rgb(566.000000 pt)=(0.768000,1.000000,0.232000);
rgb(567.000000 pt)=(0.772000,1.000000,0.228000);
rgb(568.000000 pt)=(0.776000,1.000000,0.224000);
rgb(569.000000 pt)=(0.780000,1.000000,0.220000);
rgb(570.000000 pt)=(0.784000,1.000000,0.216000);
rgb(571.000000 pt)=(0.788000,1.000000,0.212000);
rgb(572.000000 pt)=(0.792000,1.000000,0.208000);
rgb(573.000000 pt)=(0.796000,1.000000,0.204000);
rgb(574.000000 pt)=(0.800000,1.000000,0.200000);
rgb(575.000000 pt)=(0.804000,1.000000,0.196000);
rgb(576.000000 pt)=(0.808000,1.000000,0.192000);
rgb(577.000000 pt)=(0.812000,1.000000,0.188000);
rgb(578.000000 pt)=(0.816000,1.000000,0.184000);
rgb(579.000000 pt)=(0.820000,1.000000,0.180000);
rgb(580.000000 pt)=(0.824000,1.000000,0.176000);
rgb(581.000000 pt)=(0.828000,1.000000,0.172000);
rgb(582.000000 pt)=(0.832000,1.000000,0.168000);
rgb(583.000000 pt)=(0.836000,1.000000,0.164000);
rgb(584.000000 pt)=(0.840000,1.000000,0.160000);
rgb(585.000000 pt)=(0.844000,1.000000,0.156000);
rgb(586.000000 pt)=(0.848000,1.000000,0.152000);
rgb(587.000000 pt)=(0.852000,1.000000,0.148000);
rgb(588.000000 pt)=(0.856000,1.000000,0.144000);
rgb(589.000000 pt)=(0.860000,1.000000,0.140000);
rgb(590.000000 pt)=(0.864000,1.000000,0.136000);
rgb(591.000000 pt)=(0.868000,1.000000,0.132000);
rgb(592.000000 pt)=(0.872000,1.000000,0.128000);
rgb(593.000000 pt)=(0.876000,1.000000,0.124000);
rgb(594.000000 pt)=(0.880000,1.000000,0.120000);
rgb(595.000000 pt)=(0.884000,1.000000,0.116000);
rgb(596.000000 pt)=(0.888000,1.000000,0.112000);
rgb(597.000000 pt)=(0.892000,1.000000,0.108000);
rgb(598.000000 pt)=(0.896000,1.000000,0.104000);
rgb(599.000000 pt)=(0.900000,1.000000,0.100000);
rgb(600.000000 pt)=(0.904000,1.000000,0.096000);
rgb(601.000000 pt)=(0.908000,1.000000,0.092000);
rgb(602.000000 pt)=(0.912000,1.000000,0.088000);
rgb(603.000000 pt)=(0.916000,1.000000,0.084000);
rgb(604.000000 pt)=(0.920000,1.000000,0.080000);
rgb(605.000000 pt)=(0.924000,1.000000,0.076000);
rgb(606.000000 pt)=(0.928000,1.000000,0.072000);
rgb(607.000000 pt)=(0.932000,1.000000,0.068000);
rgb(608.000000 pt)=(0.936000,1.000000,0.064000);
rgb(609.000000 pt)=(0.940000,1.000000,0.060000);
rgb(610.000000 pt)=(0.944000,1.000000,0.056000);
rgb(611.000000 pt)=(0.948000,1.000000,0.052000);
rgb(612.000000 pt)=(0.952000,1.000000,0.048000);
rgb(613.000000 pt)=(0.956000,1.000000,0.044000);
rgb(614.000000 pt)=(0.960000,1.000000,0.040000);
rgb(615.000000 pt)=(0.964000,1.000000,0.036000);
rgb(616.000000 pt)=(0.968000,1.000000,0.032000);
rgb(617.000000 pt)=(0.972000,1.000000,0.028000);
rgb(618.000000 pt)=(0.976000,1.000000,0.024000);
rgb(619.000000 pt)=(0.980000,1.000000,0.020000);
rgb(620.000000 pt)=(0.984000,1.000000,0.016000);
rgb(621.000000 pt)=(0.988000,1.000000,0.012000);
rgb(622.000000 pt)=(0.992000,1.000000,0.008000);
rgb(623.000000 pt)=(0.996000,1.000000,0.004000);
rgb(624.000000 pt)=(1.000000,1.000000,0.000000);
rgb(625.000000 pt)=(1.000000,0.996000,0.000000);
rgb(626.000000 pt)=(1.000000,0.992000,0.000000);
rgb(627.000000 pt)=(1.000000,0.988000,0.000000);
rgb(628.000000 pt)=(1.000000,0.984000,0.000000);
rgb(629.000000 pt)=(1.000000,0.980000,0.000000);
rgb(630.000000 pt)=(1.000000,0.976000,0.000000);
rgb(631.000000 pt)=(1.000000,0.972000,0.000000);
rgb(632.000000 pt)=(1.000000,0.968000,0.000000);
rgb(633.000000 pt)=(1.000000,0.964000,0.000000);
rgb(634.000000 pt)=(1.000000,0.960000,0.000000);
rgb(635.000000 pt)=(1.000000,0.956000,0.000000);
rgb(636.000000 pt)=(1.000000,0.952000,0.000000);
rgb(637.000000 pt)=(1.000000,0.948000,0.000000);
rgb(638.000000 pt)=(1.000000,0.944000,0.000000);
rgb(639.000000 pt)=(1.000000,0.940000,0.000000);
rgb(640.000000 pt)=(1.000000,0.936000,0.000000);
rgb(641.000000 pt)=(1.000000,0.932000,0.000000);
rgb(642.000000 pt)=(1.000000,0.928000,0.000000);
rgb(643.000000 pt)=(1.000000,0.924000,0.000000);
rgb(644.000000 pt)=(1.000000,0.920000,0.000000);
rgb(645.000000 pt)=(1.000000,0.916000,0.000000);
rgb(646.000000 pt)=(1.000000,0.912000,0.000000);
rgb(647.000000 pt)=(1.000000,0.908000,0.000000);
rgb(648.000000 pt)=(1.000000,0.904000,0.000000);
rgb(649.000000 pt)=(1.000000,0.900000,0.000000);
rgb(650.000000 pt)=(1.000000,0.896000,0.000000);
rgb(651.000000 pt)=(1.000000,0.892000,0.000000);
rgb(652.000000 pt)=(1.000000,0.888000,0.000000);
rgb(653.000000 pt)=(1.000000,0.884000,0.000000);
rgb(654.000000 pt)=(1.000000,0.880000,0.000000);
rgb(655.000000 pt)=(1.000000,0.876000,0.000000);
rgb(656.000000 pt)=(1.000000,0.872000,0.000000);
rgb(657.000000 pt)=(1.000000,0.868000,0.000000);
rgb(658.000000 pt)=(1.000000,0.864000,0.000000);
rgb(659.000000 pt)=(1.000000,0.860000,0.000000);
rgb(660.000000 pt)=(1.000000,0.856000,0.000000);
rgb(661.000000 pt)=(1.000000,0.852000,0.000000);
rgb(662.000000 pt)=(1.000000,0.848000,0.000000);
rgb(663.000000 pt)=(1.000000,0.844000,0.000000);
rgb(664.000000 pt)=(1.000000,0.840000,0.000000);
rgb(665.000000 pt)=(1.000000,0.836000,0.000000);
rgb(666.000000 pt)=(1.000000,0.832000,0.000000);
rgb(667.000000 pt)=(1.000000,0.828000,0.000000);
rgb(668.000000 pt)=(1.000000,0.824000,0.000000);
rgb(669.000000 pt)=(1.000000,0.820000,0.000000);
rgb(670.000000 pt)=(1.000000,0.816000,0.000000);
rgb(671.000000 pt)=(1.000000,0.812000,0.000000);
rgb(672.000000 pt)=(1.000000,0.808000,0.000000);
rgb(673.000000 pt)=(1.000000,0.804000,0.000000);
rgb(674.000000 pt)=(1.000000,0.800000,0.000000);
rgb(675.000000 pt)=(1.000000,0.796000,0.000000);
rgb(676.000000 pt)=(1.000000,0.792000,0.000000);
rgb(677.000000 pt)=(1.000000,0.788000,0.000000);
rgb(678.000000 pt)=(1.000000,0.784000,0.000000);
rgb(679.000000 pt)=(1.000000,0.780000,0.000000);
rgb(680.000000 pt)=(1.000000,0.776000,0.000000);
rgb(681.000000 pt)=(1.000000,0.772000,0.000000);
rgb(682.000000 pt)=(1.000000,0.768000,0.000000);
rgb(683.000000 pt)=(1.000000,0.764000,0.000000);
rgb(684.000000 pt)=(1.000000,0.760000,0.000000);
rgb(685.000000 pt)=(1.000000,0.756000,0.000000);
rgb(686.000000 pt)=(1.000000,0.752000,0.000000);
rgb(687.000000 pt)=(1.000000,0.748000,0.000000);
rgb(688.000000 pt)=(1.000000,0.744000,0.000000);
rgb(689.000000 pt)=(1.000000,0.740000,0.000000);
rgb(690.000000 pt)=(1.000000,0.736000,0.000000);
rgb(691.000000 pt)=(1.000000,0.732000,0.000000);
rgb(692.000000 pt)=(1.000000,0.728000,0.000000);
rgb(693.000000 pt)=(1.000000,0.724000,0.000000);
rgb(694.000000 pt)=(1.000000,0.720000,0.000000);
rgb(695.000000 pt)=(1.000000,0.716000,0.000000);
rgb(696.000000 pt)=(1.000000,0.712000,0.000000);
rgb(697.000000 pt)=(1.000000,0.708000,0.000000);
rgb(698.000000 pt)=(1.000000,0.704000,0.000000);
rgb(699.000000 pt)=(1.000000,0.700000,0.000000);
rgb(700.000000 pt)=(1.000000,0.696000,0.000000);
rgb(701.000000 pt)=(1.000000,0.692000,0.000000);
rgb(702.000000 pt)=(1.000000,0.688000,0.000000);
rgb(703.000000 pt)=(1.000000,0.684000,0.000000);
rgb(704.000000 pt)=(1.000000,0.680000,0.000000);
rgb(705.000000 pt)=(1.000000,0.676000,0.000000);
rgb(706.000000 pt)=(1.000000,0.672000,0.000000);
rgb(707.000000 pt)=(1.000000,0.668000,0.000000);
rgb(708.000000 pt)=(1.000000,0.664000,0.000000);
rgb(709.000000 pt)=(1.000000,0.660000,0.000000);
rgb(710.000000 pt)=(1.000000,0.656000,0.000000);
rgb(711.000000 pt)=(1.000000,0.652000,0.000000);
rgb(712.000000 pt)=(1.000000,0.648000,0.000000);
rgb(713.000000 pt)=(1.000000,0.644000,0.000000);
rgb(714.000000 pt)=(1.000000,0.640000,0.000000);
rgb(715.000000 pt)=(1.000000,0.636000,0.000000);
rgb(716.000000 pt)=(1.000000,0.632000,0.000000);
rgb(717.000000 pt)=(1.000000,0.628000,0.000000);
rgb(718.000000 pt)=(1.000000,0.624000,0.000000);
rgb(719.000000 pt)=(1.000000,0.620000,0.000000);
rgb(720.000000 pt)=(1.000000,0.616000,0.000000);
rgb(721.000000 pt)=(1.000000,0.612000,0.000000);
rgb(722.000000 pt)=(1.000000,0.608000,0.000000);
rgb(723.000000 pt)=(1.000000,0.604000,0.000000);
rgb(724.000000 pt)=(1.000000,0.600000,0.000000);
rgb(725.000000 pt)=(1.000000,0.596000,0.000000);
rgb(726.000000 pt)=(1.000000,0.592000,0.000000);
rgb(727.000000 pt)=(1.000000,0.588000,0.000000);
rgb(728.000000 pt)=(1.000000,0.584000,0.000000);
rgb(729.000000 pt)=(1.000000,0.580000,0.000000);
rgb(730.000000 pt)=(1.000000,0.576000,0.000000);
rgb(731.000000 pt)=(1.000000,0.572000,0.000000);
rgb(732.000000 pt)=(1.000000,0.568000,0.000000);
rgb(733.000000 pt)=(1.000000,0.564000,0.000000);
rgb(734.000000 pt)=(1.000000,0.560000,0.000000);
rgb(735.000000 pt)=(1.000000,0.556000,0.000000);
rgb(736.000000 pt)=(1.000000,0.552000,0.000000);
rgb(737.000000 pt)=(1.000000,0.548000,0.000000);
rgb(738.000000 pt)=(1.000000,0.544000,0.000000);
rgb(739.000000 pt)=(1.000000,0.540000,0.000000);
rgb(740.000000 pt)=(1.000000,0.536000,0.000000);
rgb(741.000000 pt)=(1.000000,0.532000,0.000000);
rgb(742.000000 pt)=(1.000000,0.528000,0.000000);
rgb(743.000000 pt)=(1.000000,0.524000,0.000000);
rgb(744.000000 pt)=(1.000000,0.520000,0.000000);
rgb(745.000000 pt)=(1.000000,0.516000,0.000000);
rgb(746.000000 pt)=(1.000000,0.512000,0.000000);
rgb(747.000000 pt)=(1.000000,0.508000,0.000000);
rgb(748.000000 pt)=(1.000000,0.504000,0.000000);
rgb(749.000000 pt)=(1.000000,0.500000,0.000000);
rgb(750.000000 pt)=(1.000000,0.496000,0.000000);
rgb(751.000000 pt)=(1.000000,0.492000,0.000000);
rgb(752.000000 pt)=(1.000000,0.488000,0.000000);
rgb(753.000000 pt)=(1.000000,0.484000,0.000000);
rgb(754.000000 pt)=(1.000000,0.480000,0.000000);
rgb(755.000000 pt)=(1.000000,0.476000,0.000000);
rgb(756.000000 pt)=(1.000000,0.472000,0.000000);
rgb(757.000000 pt)=(1.000000,0.468000,0.000000);
rgb(758.000000 pt)=(1.000000,0.464000,0.000000);
rgb(759.000000 pt)=(1.000000,0.460000,0.000000);
rgb(760.000000 pt)=(1.000000,0.456000,0.000000);
rgb(761.000000 pt)=(1.000000,0.452000,0.000000);
rgb(762.000000 pt)=(1.000000,0.448000,0.000000);
rgb(763.000000 pt)=(1.000000,0.444000,0.000000);
rgb(764.000000 pt)=(1.000000,0.440000,0.000000);
rgb(765.000000 pt)=(1.000000,0.436000,0.000000);
rgb(766.000000 pt)=(1.000000,0.432000,0.000000);
rgb(767.000000 pt)=(1.000000,0.428000,0.000000);
rgb(768.000000 pt)=(1.000000,0.424000,0.000000);
rgb(769.000000 pt)=(1.000000,0.420000,0.000000);
rgb(770.000000 pt)=(1.000000,0.416000,0.000000);
rgb(771.000000 pt)=(1.000000,0.412000,0.000000);
rgb(772.000000 pt)=(1.000000,0.408000,0.000000);
rgb(773.000000 pt)=(1.000000,0.404000,0.000000);
rgb(774.000000 pt)=(1.000000,0.400000,0.000000);
rgb(775.000000 pt)=(1.000000,0.396000,0.000000);
rgb(776.000000 pt)=(1.000000,0.392000,0.000000);
rgb(777.000000 pt)=(1.000000,0.388000,0.000000);
rgb(778.000000 pt)=(1.000000,0.384000,0.000000);
rgb(779.000000 pt)=(1.000000,0.380000,0.000000);
rgb(780.000000 pt)=(1.000000,0.376000,0.000000);
rgb(781.000000 pt)=(1.000000,0.372000,0.000000);
rgb(782.000000 pt)=(1.000000,0.368000,0.000000);
rgb(783.000000 pt)=(1.000000,0.364000,0.000000);
rgb(784.000000 pt)=(1.000000,0.360000,0.000000);
rgb(785.000000 pt)=(1.000000,0.356000,0.000000);
rgb(786.000000 pt)=(1.000000,0.352000,0.000000);
rgb(787.000000 pt)=(1.000000,0.348000,0.000000);
rgb(788.000000 pt)=(1.000000,0.344000,0.000000);
rgb(789.000000 pt)=(1.000000,0.340000,0.000000);
rgb(790.000000 pt)=(1.000000,0.336000,0.000000);
rgb(791.000000 pt)=(1.000000,0.332000,0.000000);
rgb(792.000000 pt)=(1.000000,0.328000,0.000000);
rgb(793.000000 pt)=(1.000000,0.324000,0.000000);
rgb(794.000000 pt)=(1.000000,0.320000,0.000000);
rgb(795.000000 pt)=(1.000000,0.316000,0.000000);
rgb(796.000000 pt)=(1.000000,0.312000,0.000000);
rgb(797.000000 pt)=(1.000000,0.308000,0.000000);
rgb(798.000000 pt)=(1.000000,0.304000,0.000000);
rgb(799.000000 pt)=(1.000000,0.300000,0.000000);
rgb(800.000000 pt)=(1.000000,0.296000,0.000000);
rgb(801.000000 pt)=(1.000000,0.292000,0.000000);
rgb(802.000000 pt)=(1.000000,0.288000,0.000000);
rgb(803.000000 pt)=(1.000000,0.284000,0.000000);
rgb(804.000000 pt)=(1.000000,0.280000,0.000000);
rgb(805.000000 pt)=(1.000000,0.276000,0.000000);
rgb(806.000000 pt)=(1.000000,0.272000,0.000000);
rgb(807.000000 pt)=(1.000000,0.268000,0.000000);
rgb(808.000000 pt)=(1.000000,0.264000,0.000000);
rgb(809.000000 pt)=(1.000000,0.260000,0.000000);
rgb(810.000000 pt)=(1.000000,0.256000,0.000000);
rgb(811.000000 pt)=(1.000000,0.252000,0.000000);
rgb(812.000000 pt)=(1.000000,0.248000,0.000000);
rgb(813.000000 pt)=(1.000000,0.244000,0.000000);
rgb(814.000000 pt)=(1.000000,0.240000,0.000000);
rgb(815.000000 pt)=(1.000000,0.236000,0.000000);
rgb(816.000000 pt)=(1.000000,0.232000,0.000000);
rgb(817.000000 pt)=(1.000000,0.228000,0.000000);
rgb(818.000000 pt)=(1.000000,0.224000,0.000000);
rgb(819.000000 pt)=(1.000000,0.220000,0.000000);
rgb(820.000000 pt)=(1.000000,0.216000,0.000000);
rgb(821.000000 pt)=(1.000000,0.212000,0.000000);
rgb(822.000000 pt)=(1.000000,0.208000,0.000000);
rgb(823.000000 pt)=(1.000000,0.204000,0.000000);
rgb(824.000000 pt)=(1.000000,0.200000,0.000000);
rgb(825.000000 pt)=(1.000000,0.196000,0.000000);
rgb(826.000000 pt)=(1.000000,0.192000,0.000000);
rgb(827.000000 pt)=(1.000000,0.188000,0.000000);
rgb(828.000000 pt)=(1.000000,0.184000,0.000000);
rgb(829.000000 pt)=(1.000000,0.180000,0.000000);
rgb(830.000000 pt)=(1.000000,0.176000,0.000000);
rgb(831.000000 pt)=(1.000000,0.172000,0.000000);
rgb(832.000000 pt)=(1.000000,0.168000,0.000000);
rgb(833.000000 pt)=(1.000000,0.164000,0.000000);
rgb(834.000000 pt)=(1.000000,0.160000,0.000000);
rgb(835.000000 pt)=(1.000000,0.156000,0.000000);
rgb(836.000000 pt)=(1.000000,0.152000,0.000000);
rgb(837.000000 pt)=(1.000000,0.148000,0.000000);
rgb(838.000000 pt)=(1.000000,0.144000,0.000000);
rgb(839.000000 pt)=(1.000000,0.140000,0.000000);
rgb(840.000000 pt)=(1.000000,0.136000,0.000000);
rgb(841.000000 pt)=(1.000000,0.132000,0.000000);
rgb(842.000000 pt)=(1.000000,0.128000,0.000000);
rgb(843.000000 pt)=(1.000000,0.124000,0.000000);
rgb(844.000000 pt)=(1.000000,0.120000,0.000000);
rgb(845.000000 pt)=(1.000000,0.116000,0.000000);
rgb(846.000000 pt)=(1.000000,0.112000,0.000000);
rgb(847.000000 pt)=(1.000000,0.108000,0.000000);
rgb(848.000000 pt)=(1.000000,0.104000,0.000000);
rgb(849.000000 pt)=(1.000000,0.100000,0.000000);
rgb(850.000000 pt)=(1.000000,0.096000,0.000000);
rgb(851.000000 pt)=(1.000000,0.092000,0.000000);
rgb(852.000000 pt)=(1.000000,0.088000,0.000000);
rgb(853.000000 pt)=(1.000000,0.084000,0.000000);
rgb(854.000000 pt)=(1.000000,0.080000,0.000000);
rgb(855.000000 pt)=(1.000000,0.076000,0.000000);
rgb(856.000000 pt)=(1.000000,0.072000,0.000000);
rgb(857.000000 pt)=(1.000000,0.068000,0.000000);
rgb(858.000000 pt)=(1.000000,0.064000,0.000000);
rgb(859.000000 pt)=(1.000000,0.060000,0.000000);
rgb(860.000000 pt)=(1.000000,0.056000,0.000000);
rgb(861.000000 pt)=(1.000000,0.052000,0.000000);
rgb(862.000000 pt)=(1.000000,0.048000,0.000000);
rgb(863.000000 pt)=(1.000000,0.044000,0.000000);
rgb(864.000000 pt)=(1.000000,0.040000,0.000000);
rgb(865.000000 pt)=(1.000000,0.036000,0.000000);
rgb(866.000000 pt)=(1.000000,0.032000,0.000000);
rgb(867.000000 pt)=(1.000000,0.028000,0.000000);
rgb(868.000000 pt)=(1.000000,0.024000,0.000000);
rgb(869.000000 pt)=(1.000000,0.020000,0.000000);
rgb(870.000000 pt)=(1.000000,0.016000,0.000000);
rgb(871.000000 pt)=(1.000000,0.012000,0.000000);
rgb(872.000000 pt)=(1.000000,0.008000,0.000000);
rgb(873.000000 pt)=(1.000000,0.004000,0.000000);
rgb(874.000000 pt)=(1.000000,0.000000,0.000000);
rgb(875.000000 pt)=(0.996000,0.000000,0.000000);
rgb(876.000000 pt)=(0.992000,0.000000,0.000000);
rgb(877.000000 pt)=(0.988000,0.000000,0.000000);
rgb(878.000000 pt)=(0.984000,0.000000,0.000000);
rgb(879.000000 pt)=(0.980000,0.000000,0.000000);
rgb(880.000000 pt)=(0.976000,0.000000,0.000000);
rgb(881.000000 pt)=(0.972000,0.000000,0.000000);
rgb(882.000000 pt)=(0.968000,0.000000,0.000000);
rgb(883.000000 pt)=(0.964000,0.000000,0.000000);
rgb(884.000000 pt)=(0.960000,0.000000,0.000000);
rgb(885.000000 pt)=(0.956000,0.000000,0.000000);
rgb(886.000000 pt)=(0.952000,0.000000,0.000000);
rgb(887.000000 pt)=(0.948000,0.000000,0.000000);
rgb(888.000000 pt)=(0.944000,0.000000,0.000000);
rgb(889.000000 pt)=(0.940000,0.000000,0.000000);
rgb(890.000000 pt)=(0.936000,0.000000,0.000000);
rgb(891.000000 pt)=(0.932000,0.000000,0.000000);
rgb(892.000000 pt)=(0.928000,0.000000,0.000000);
rgb(893.000000 pt)=(0.924000,0.000000,0.000000);
rgb(894.000000 pt)=(0.920000,0.000000,0.000000);
rgb(895.000000 pt)=(0.916000,0.000000,0.000000);
rgb(896.000000 pt)=(0.912000,0.000000,0.000000);
rgb(897.000000 pt)=(0.908000,0.000000,0.000000);
rgb(898.000000 pt)=(0.904000,0.000000,0.000000);
rgb(899.000000 pt)=(0.900000,0.000000,0.000000);
rgb(900.000000 pt)=(0.896000,0.000000,0.000000);
rgb(901.000000 pt)=(0.892000,0.000000,0.000000);
rgb(902.000000 pt)=(0.888000,0.000000,0.000000);
rgb(903.000000 pt)=(0.884000,0.000000,0.000000);
rgb(904.000000 pt)=(0.880000,0.000000,0.000000);
rgb(905.000000 pt)=(0.876000,0.000000,0.000000);
rgb(906.000000 pt)=(0.872000,0.000000,0.000000);
rgb(907.000000 pt)=(0.868000,0.000000,0.000000);
rgb(908.000000 pt)=(0.864000,0.000000,0.000000);
rgb(909.000000 pt)=(0.860000,0.000000,0.000000);
rgb(910.000000 pt)=(0.856000,0.000000,0.000000);
rgb(911.000000 pt)=(0.852000,0.000000,0.000000);
rgb(912.000000 pt)=(0.848000,0.000000,0.000000);
rgb(913.000000 pt)=(0.844000,0.000000,0.000000);
rgb(914.000000 pt)=(0.840000,0.000000,0.000000);
rgb(915.000000 pt)=(0.836000,0.000000,0.000000);
rgb(916.000000 pt)=(0.832000,0.000000,0.000000);
rgb(917.000000 pt)=(0.828000,0.000000,0.000000);
rgb(918.000000 pt)=(0.824000,0.000000,0.000000);
rgb(919.000000 pt)=(0.820000,0.000000,0.000000);
rgb(920.000000 pt)=(0.816000,0.000000,0.000000);
rgb(921.000000 pt)=(0.812000,0.000000,0.000000);
rgb(922.000000 pt)=(0.808000,0.000000,0.000000);
rgb(923.000000 pt)=(0.804000,0.000000,0.000000);
rgb(924.000000 pt)=(0.800000,0.000000,0.000000);
rgb(925.000000 pt)=(0.796000,0.000000,0.000000);
rgb(926.000000 pt)=(0.792000,0.000000,0.000000);
rgb(927.000000 pt)=(0.788000,0.000000,0.000000);
rgb(928.000000 pt)=(0.784000,0.000000,0.000000);
rgb(929.000000 pt)=(0.780000,0.000000,0.000000);
rgb(930.000000 pt)=(0.776000,0.000000,0.000000);
rgb(931.000000 pt)=(0.772000,0.000000,0.000000);
rgb(932.000000 pt)=(0.768000,0.000000,0.000000);
rgb(933.000000 pt)=(0.764000,0.000000,0.000000);
rgb(934.000000 pt)=(0.760000,0.000000,0.000000);
rgb(935.000000 pt)=(0.756000,0.000000,0.000000);
rgb(936.000000 pt)=(0.752000,0.000000,0.000000);
rgb(937.000000 pt)=(0.748000,0.000000,0.000000);
rgb(938.000000 pt)=(0.744000,0.000000,0.000000);
rgb(939.000000 pt)=(0.740000,0.000000,0.000000);
rgb(940.000000 pt)=(0.736000,0.000000,0.000000);
rgb(941.000000 pt)=(0.732000,0.000000,0.000000);
rgb(942.000000 pt)=(0.728000,0.000000,0.000000);
rgb(943.000000 pt)=(0.724000,0.000000,0.000000);
rgb(944.000000 pt)=(0.720000,0.000000,0.000000);
rgb(945.000000 pt)=(0.716000,0.000000,0.000000);
rgb(946.000000 pt)=(0.712000,0.000000,0.000000);
rgb(947.000000 pt)=(0.708000,0.000000,0.000000);
rgb(948.000000 pt)=(0.704000,0.000000,0.000000);
rgb(949.000000 pt)=(0.700000,0.000000,0.000000);
rgb(950.000000 pt)=(0.696000,0.000000,0.000000);
rgb(951.000000 pt)=(0.692000,0.000000,0.000000);
rgb(952.000000 pt)=(0.688000,0.000000,0.000000);
rgb(953.000000 pt)=(0.684000,0.000000,0.000000);
rgb(954.000000 pt)=(0.680000,0.000000,0.000000);
rgb(955.000000 pt)=(0.676000,0.000000,0.000000);
rgb(956.000000 pt)=(0.672000,0.000000,0.000000);
rgb(957.000000 pt)=(0.668000,0.000000,0.000000);
rgb(958.000000 pt)=(0.664000,0.000000,0.000000);
rgb(959.000000 pt)=(0.660000,0.000000,0.000000);
rgb(960.000000 pt)=(0.656000,0.000000,0.000000);
rgb(961.000000 pt)=(0.652000,0.000000,0.000000);
rgb(962.000000 pt)=(0.648000,0.000000,0.000000);
rgb(963.000000 pt)=(0.644000,0.000000,0.000000);
rgb(964.000000 pt)=(0.640000,0.000000,0.000000);
rgb(965.000000 pt)=(0.636000,0.000000,0.000000);
rgb(966.000000 pt)=(0.632000,0.000000,0.000000);
rgb(967.000000 pt)=(0.628000,0.000000,0.000000);
rgb(968.000000 pt)=(0.624000,0.000000,0.000000);
rgb(969.000000 pt)=(0.620000,0.000000,0.000000);
rgb(970.000000 pt)=(0.616000,0.000000,0.000000);
rgb(971.000000 pt)=(0.612000,0.000000,0.000000);
rgb(972.000000 pt)=(0.608000,0.000000,0.000000);
rgb(973.000000 pt)=(0.604000,0.000000,0.000000);
rgb(974.000000 pt)=(0.600000,0.000000,0.000000);
rgb(975.000000 pt)=(0.596000,0.000000,0.000000);
rgb(976.000000 pt)=(0.592000,0.000000,0.000000);
rgb(977.000000 pt)=(0.588000,0.000000,0.000000);
rgb(978.000000 pt)=(0.584000,0.000000,0.000000);
rgb(979.000000 pt)=(0.580000,0.000000,0.000000);
rgb(980.000000 pt)=(0.576000,0.000000,0.000000);
rgb(981.000000 pt)=(0.572000,0.000000,0.000000);
rgb(982.000000 pt)=(0.568000,0.000000,0.000000);
rgb(983.000000 pt)=(0.564000,0.000000,0.000000);
rgb(984.000000 pt)=(0.560000,0.000000,0.000000);
rgb(985.000000 pt)=(0.556000,0.000000,0.000000);
rgb(986.000000 pt)=(0.552000,0.000000,0.000000);
rgb(987.000000 pt)=(0.548000,0.000000,0.000000);
rgb(988.000000 pt)=(0.544000,0.000000,0.000000);
rgb(989.000000 pt)=(0.540000,0.000000,0.000000);
rgb(990.000000 pt)=(0.536000,0.000000,0.000000);
rgb(991.000000 pt)=(0.532000,0.000000,0.000000);
rgb(992.000000 pt)=(0.528000,0.000000,0.000000);
rgb(993.000000 pt)=(0.524000,0.000000,0.000000);
rgb(994.000000 pt)=(0.520000,0.000000,0.000000);
rgb(995.000000 pt)=(0.516000,0.000000,0.000000);
rgb(996.000000 pt)=(0.512000,0.000000,0.000000);
rgb(997.000000 pt)=(0.508000,0.000000,0.000000);
rgb(998.000000 pt)=(0.504000,0.000000,0.000000);
rgb(999.000000 pt)=(0.500000,0.000000,0.000000);
}}
\pgfplotsset{
	colormap={myhot}{
		rgb(0pt)=(1.000000,1.000000,1.000000);
		rgb(1pt)=(1.000000,1.000000,0.996000);
		rgb(2pt)=(1.000000,1.000000,0.992000);
		rgb(3pt)=(1.000000,1.000000,0.988000);
		rgb(4pt)=(1.000000,1.000000,0.984000);
		rgb(5pt)=(1.000000,1.000000,0.980000);
		rgb(6pt)=(1.000000,1.000000,0.976000);
		rgb(7pt)=(1.000000,1.000000,0.972000);
		rgb(8pt)=(1.000000,1.000000,0.968000);
		rgb(9pt)=(1.000000,1.000000,0.964000);
		rgb(10pt)=(1.000000,1.000000,0.960000);
		rgb(11pt)=(1.000000,1.000000,0.956000);
		rgb(12pt)=(1.000000,1.000000,0.952000);
		rgb(13pt)=(1.000000,1.000000,0.948000);
		rgb(14pt)=(1.000000,1.000000,0.944000);
		rgb(15pt)=(1.000000,1.000000,0.940000);
		rgb(16pt)=(1.000000,1.000000,0.936000);
		rgb(17pt)=(1.000000,1.000000,0.932000);
		rgb(18pt)=(1.000000,1.000000,0.928000);
		rgb(19pt)=(1.000000,1.000000,0.924000);
		rgb(20pt)=(1.000000,1.000000,0.920000);
		rgb(21pt)=(1.000000,1.000000,0.916000);
		rgb(22pt)=(1.000000,1.000000,0.912000);
		rgb(23pt)=(1.000000,1.000000,0.908000);
		rgb(24pt)=(1.000000,1.000000,0.904000);
		rgb(25pt)=(1.000000,1.000000,0.900000);
		rgb(26pt)=(1.000000,1.000000,0.896000);
		rgb(27pt)=(1.000000,1.000000,0.892000);
		rgb(28pt)=(1.000000,1.000000,0.888000);
		rgb(29pt)=(1.000000,1.000000,0.884000);
		rgb(30pt)=(1.000000,1.000000,0.880000);
		rgb(31pt)=(1.000000,1.000000,0.876000);
		rgb(32pt)=(1.000000,1.000000,0.872000);
		rgb(33pt)=(1.000000,1.000000,0.868000);
		rgb(34pt)=(1.000000,1.000000,0.864000);
		rgb(35pt)=(1.000000,1.000000,0.860000);
		rgb(36pt)=(1.000000,1.000000,0.856000);
		rgb(37pt)=(1.000000,1.000000,0.852000);
		rgb(38pt)=(1.000000,1.000000,0.848000);
		rgb(39pt)=(1.000000,1.000000,0.844000);
		rgb(40pt)=(1.000000,1.000000,0.840000);
		rgb(41pt)=(1.000000,1.000000,0.836000);
		rgb(42pt)=(1.000000,1.000000,0.832000);
		rgb(43pt)=(1.000000,1.000000,0.828000);
		rgb(44pt)=(1.000000,1.000000,0.824000);
		rgb(45pt)=(1.000000,1.000000,0.820000);
		rgb(46pt)=(1.000000,1.000000,0.816000);
		rgb(47pt)=(1.000000,1.000000,0.812000);
		rgb(48pt)=(1.000000,1.000000,0.808000);
		rgb(49pt)=(1.000000,1.000000,0.804000);
		rgb(50pt)=(1.000000,1.000000,0.800000);
		rgb(51pt)=(1.000000,1.000000,0.796000);
		rgb(52pt)=(1.000000,1.000000,0.792000);
		rgb(53pt)=(1.000000,1.000000,0.788000);
		rgb(54pt)=(1.000000,1.000000,0.784000);
		rgb(55pt)=(1.000000,1.000000,0.780000);
		rgb(56pt)=(1.000000,1.000000,0.776000);
		rgb(57pt)=(1.000000,1.000000,0.772000);
		rgb(58pt)=(1.000000,1.000000,0.768000);
		rgb(59pt)=(1.000000,1.000000,0.764000);
		rgb(60pt)=(1.000000,1.000000,0.760000);
		rgb(61pt)=(1.000000,1.000000,0.756000);
		rgb(62pt)=(1.000000,1.000000,0.752000);
		rgb(63pt)=(1.000000,1.000000,0.748000);
		rgb(64pt)=(1.000000,1.000000,0.744000);
		rgb(65pt)=(1.000000,1.000000,0.740000);
		rgb(66pt)=(1.000000,1.000000,0.736000);
		rgb(67pt)=(1.000000,1.000000,0.732000);
		rgb(68pt)=(1.000000,1.000000,0.728000);
		rgb(69pt)=(1.000000,1.000000,0.724000);
		rgb(70pt)=(1.000000,1.000000,0.720000);
		rgb(71pt)=(1.000000,1.000000,0.716000);
		rgb(72pt)=(1.000000,1.000000,0.712000);
		rgb(73pt)=(1.000000,1.000000,0.708000);
		rgb(74pt)=(1.000000,1.000000,0.704000);
		rgb(75pt)=(1.000000,1.000000,0.700000);
		rgb(76pt)=(1.000000,1.000000,0.696000);
		rgb(77pt)=(1.000000,1.000000,0.692000);
		rgb(78pt)=(1.000000,1.000000,0.688000);
		rgb(79pt)=(1.000000,1.000000,0.684000);
		rgb(80pt)=(1.000000,1.000000,0.680000);
		rgb(81pt)=(1.000000,1.000000,0.676000);
		rgb(82pt)=(1.000000,1.000000,0.672000);
		rgb(83pt)=(1.000000,1.000000,0.668000);
		rgb(84pt)=(1.000000,1.000000,0.664000);
		rgb(85pt)=(1.000000,1.000000,0.660000);
		rgb(86pt)=(1.000000,1.000000,0.656000);
		rgb(87pt)=(1.000000,1.000000,0.652000);
		rgb(88pt)=(1.000000,1.000000,0.648000);
		rgb(89pt)=(1.000000,1.000000,0.644000);
		rgb(90pt)=(1.000000,1.000000,0.640000);
		rgb(91pt)=(1.000000,1.000000,0.636000);
		rgb(92pt)=(1.000000,1.000000,0.632000);
		rgb(93pt)=(1.000000,1.000000,0.628000);
		rgb(94pt)=(1.000000,1.000000,0.624000);
		rgb(95pt)=(1.000000,1.000000,0.620000);
		rgb(96pt)=(1.000000,1.000000,0.616000);
		rgb(97pt)=(1.000000,1.000000,0.612000);
		rgb(98pt)=(1.000000,1.000000,0.608000);
		rgb(99pt)=(1.000000,1.000000,0.604000);
		rgb(100pt)=(1.000000,1.000000,0.600000);
		rgb(101pt)=(1.000000,1.000000,0.596000);
		rgb(102pt)=(1.000000,1.000000,0.592000);
		rgb(103pt)=(1.000000,1.000000,0.588000);
		rgb(104pt)=(1.000000,1.000000,0.584000);
		rgb(105pt)=(1.000000,1.000000,0.580000);
		rgb(106pt)=(1.000000,1.000000,0.576000);
		rgb(107pt)=(1.000000,1.000000,0.572000);
		rgb(108pt)=(1.000000,1.000000,0.568000);
		rgb(109pt)=(1.000000,1.000000,0.564000);
		rgb(110pt)=(1.000000,1.000000,0.560000);
		rgb(111pt)=(1.000000,1.000000,0.556000);
		rgb(112pt)=(1.000000,1.000000,0.552000);
		rgb(113pt)=(1.000000,1.000000,0.548000);
		rgb(114pt)=(1.000000,1.000000,0.544000);
		rgb(115pt)=(1.000000,1.000000,0.540000);
		rgb(116pt)=(1.000000,1.000000,0.536000);
		rgb(117pt)=(1.000000,1.000000,0.532000);
		rgb(118pt)=(1.000000,1.000000,0.528000);
		rgb(119pt)=(1.000000,1.000000,0.524000);
		rgb(120pt)=(1.000000,1.000000,0.520000);
		rgb(121pt)=(1.000000,1.000000,0.516000);
		rgb(122pt)=(1.000000,1.000000,0.512000);
		rgb(123pt)=(1.000000,1.000000,0.508000);
		rgb(124pt)=(1.000000,1.000000,0.504000);
		rgb(125pt)=(1.000000,1.000000,0.500000);
		rgb(126pt)=(1.000000,1.000000,0.496000);
		rgb(127pt)=(1.000000,1.000000,0.492000);
		rgb(128pt)=(1.000000,1.000000,0.488000);
		rgb(129pt)=(1.000000,1.000000,0.484000);
		rgb(130pt)=(1.000000,1.000000,0.480000);
		rgb(131pt)=(1.000000,1.000000,0.476000);
		rgb(132pt)=(1.000000,1.000000,0.472000);
		rgb(133pt)=(1.000000,1.000000,0.468000);
		rgb(134pt)=(1.000000,1.000000,0.464000);
		rgb(135pt)=(1.000000,1.000000,0.460000);
		rgb(136pt)=(1.000000,1.000000,0.456000);
		rgb(137pt)=(1.000000,1.000000,0.452000);
		rgb(138pt)=(1.000000,1.000000,0.448000);
		rgb(139pt)=(1.000000,1.000000,0.444000);
		rgb(140pt)=(1.000000,1.000000,0.440000);
		rgb(141pt)=(1.000000,1.000000,0.436000);
		rgb(142pt)=(1.000000,1.000000,0.432000);
		rgb(143pt)=(1.000000,1.000000,0.428000);
		rgb(144pt)=(1.000000,1.000000,0.424000);
		rgb(145pt)=(1.000000,1.000000,0.420000);
		rgb(146pt)=(1.000000,1.000000,0.416000);
		rgb(147pt)=(1.000000,1.000000,0.412000);
		rgb(148pt)=(1.000000,1.000000,0.408000);
		rgb(149pt)=(1.000000,1.000000,0.404000);
		rgb(150pt)=(1.000000,1.000000,0.400000);
		rgb(151pt)=(1.000000,1.000000,0.396000);
		rgb(152pt)=(1.000000,1.000000,0.392000);
		rgb(153pt)=(1.000000,1.000000,0.388000);
		rgb(154pt)=(1.000000,1.000000,0.384000);
		rgb(155pt)=(1.000000,1.000000,0.380000);
		rgb(156pt)=(1.000000,1.000000,0.376000);
		rgb(157pt)=(1.000000,1.000000,0.372000);
		rgb(158pt)=(1.000000,1.000000,0.368000);
		rgb(159pt)=(1.000000,1.000000,0.364000);
		rgb(160pt)=(1.000000,1.000000,0.360000);
		rgb(161pt)=(1.000000,1.000000,0.356000);
		rgb(162pt)=(1.000000,1.000000,0.352000);
		rgb(163pt)=(1.000000,1.000000,0.348000);
		rgb(164pt)=(1.000000,1.000000,0.344000);
		rgb(165pt)=(1.000000,1.000000,0.340000);
		rgb(166pt)=(1.000000,1.000000,0.336000);
		rgb(167pt)=(1.000000,1.000000,0.332000);
		rgb(168pt)=(1.000000,1.000000,0.328000);
		rgb(169pt)=(1.000000,1.000000,0.324000);
		rgb(170pt)=(1.000000,1.000000,0.320000);
		rgb(171pt)=(1.000000,1.000000,0.316000);
		rgb(172pt)=(1.000000,1.000000,0.312000);
		rgb(173pt)=(1.000000,1.000000,0.308000);
		rgb(174pt)=(1.000000,1.000000,0.304000);
		rgb(175pt)=(1.000000,1.000000,0.300000);
		rgb(176pt)=(1.000000,1.000000,0.296000);
		rgb(177pt)=(1.000000,1.000000,0.292000);
		rgb(178pt)=(1.000000,1.000000,0.288000);
		rgb(179pt)=(1.000000,1.000000,0.284000);
		rgb(180pt)=(1.000000,1.000000,0.280000);
		rgb(181pt)=(1.000000,1.000000,0.276000);
		rgb(182pt)=(1.000000,1.000000,0.272000);
		rgb(183pt)=(1.000000,1.000000,0.268000);
		rgb(184pt)=(1.000000,1.000000,0.264000);
		rgb(185pt)=(1.000000,1.000000,0.260000);
		rgb(186pt)=(1.000000,1.000000,0.256000);
		rgb(187pt)=(1.000000,1.000000,0.252000);
		rgb(188pt)=(1.000000,1.000000,0.248000);
		rgb(189pt)=(1.000000,1.000000,0.244000);
		rgb(190pt)=(1.000000,1.000000,0.240000);
		rgb(191pt)=(1.000000,1.000000,0.236000);
		rgb(192pt)=(1.000000,1.000000,0.232000);
		rgb(193pt)=(1.000000,1.000000,0.228000);
		rgb(194pt)=(1.000000,1.000000,0.224000);
		rgb(195pt)=(1.000000,1.000000,0.220000);
		rgb(196pt)=(1.000000,1.000000,0.216000);
		rgb(197pt)=(1.000000,1.000000,0.212000);
		rgb(198pt)=(1.000000,1.000000,0.208000);
		rgb(199pt)=(1.000000,1.000000,0.204000);
		rgb(200pt)=(1.000000,1.000000,0.200000);
		rgb(201pt)=(1.000000,1.000000,0.196000);
		rgb(202pt)=(1.000000,1.000000,0.192000);
		rgb(203pt)=(1.000000,1.000000,0.188000);
		rgb(204pt)=(1.000000,1.000000,0.184000);
		rgb(205pt)=(1.000000,1.000000,0.180000);
		rgb(206pt)=(1.000000,1.000000,0.176000);
		rgb(207pt)=(1.000000,1.000000,0.172000);
		rgb(208pt)=(1.000000,1.000000,0.168000);
		rgb(209pt)=(1.000000,1.000000,0.164000);
		rgb(210pt)=(1.000000,1.000000,0.160000);
		rgb(211pt)=(1.000000,1.000000,0.156000);
		rgb(212pt)=(1.000000,1.000000,0.152000);
		rgb(213pt)=(1.000000,1.000000,0.148000);
		rgb(214pt)=(1.000000,1.000000,0.144000);
		rgb(215pt)=(1.000000,1.000000,0.140000);
		rgb(216pt)=(1.000000,1.000000,0.136000);
		rgb(217pt)=(1.000000,1.000000,0.132000);
		rgb(218pt)=(1.000000,1.000000,0.128000);
		rgb(219pt)=(1.000000,1.000000,0.124000);
		rgb(220pt)=(1.000000,1.000000,0.120000);
		rgb(221pt)=(1.000000,1.000000,0.116000);
		rgb(222pt)=(1.000000,1.000000,0.112000);
		rgb(223pt)=(1.000000,1.000000,0.108000);
		rgb(224pt)=(1.000000,1.000000,0.104000);
		rgb(225pt)=(1.000000,1.000000,0.100000);
		rgb(226pt)=(1.000000,1.000000,0.096000);
		rgb(227pt)=(1.000000,1.000000,0.092000);
		rgb(228pt)=(1.000000,1.000000,0.088000);
		rgb(229pt)=(1.000000,1.000000,0.084000);
		rgb(230pt)=(1.000000,1.000000,0.080000);
		rgb(231pt)=(1.000000,1.000000,0.076000);
		rgb(232pt)=(1.000000,1.000000,0.072000);
		rgb(233pt)=(1.000000,1.000000,0.068000);
		rgb(234pt)=(1.000000,1.000000,0.064000);
		rgb(235pt)=(1.000000,1.000000,0.060000);
		rgb(236pt)=(1.000000,1.000000,0.056000);
		rgb(237pt)=(1.000000,1.000000,0.052000);
		rgb(238pt)=(1.000000,1.000000,0.048000);
		rgb(239pt)=(1.000000,1.000000,0.044000);
		rgb(240pt)=(1.000000,1.000000,0.040000);
		rgb(241pt)=(1.000000,1.000000,0.036000);
		rgb(242pt)=(1.000000,1.000000,0.032000);
		rgb(243pt)=(1.000000,1.000000,0.028000);
		rgb(244pt)=(1.000000,1.000000,0.024000);
		rgb(245pt)=(1.000000,1.000000,0.020000);
		rgb(246pt)=(1.000000,1.000000,0.016000);
		rgb(247pt)=(1.000000,1.000000,0.012000);
		rgb(248pt)=(1.000000,1.000000,0.008000);
		rgb(249pt)=(1.000000,1.000000,0.004000);
		rgb(250pt)=(1.000000,1.000000,0.000000);
		rgb(251pt)=(1.000000,0.997333,0.000000);
		rgb(252pt)=(1.000000,0.994667,0.000000);
		rgb(253pt)=(1.000000,0.992000,0.000000);
		rgb(254pt)=(1.000000,0.989333,0.000000);
		rgb(255pt)=(1.000000,0.986667,0.000000);
		rgb(256pt)=(1.000000,0.984000,0.000000);
		rgb(257pt)=(1.000000,0.981333,0.000000);
		rgb(258pt)=(1.000000,0.978667,0.000000);
		rgb(259pt)=(1.000000,0.976000,0.000000);
		rgb(260pt)=(1.000000,0.973333,0.000000);
		rgb(261pt)=(1.000000,0.970667,0.000000);
		rgb(262pt)=(1.000000,0.968000,0.000000);
		rgb(263pt)=(1.000000,0.965333,0.000000);
		rgb(264pt)=(1.000000,0.962667,0.000000);
		rgb(265pt)=(1.000000,0.960000,0.000000);
		rgb(266pt)=(1.000000,0.957333,0.000000);
		rgb(267pt)=(1.000000,0.954667,0.000000);
		rgb(268pt)=(1.000000,0.952000,0.000000);
		rgb(269pt)=(1.000000,0.949333,0.000000);
		rgb(270pt)=(1.000000,0.946667,0.000000);
		rgb(271pt)=(1.000000,0.944000,0.000000);
		rgb(272pt)=(1.000000,0.941333,0.000000);
		rgb(273pt)=(1.000000,0.938667,0.000000);
		rgb(274pt)=(1.000000,0.936000,0.000000);
		rgb(275pt)=(1.000000,0.933333,0.000000);
		rgb(276pt)=(1.000000,0.930667,0.000000);
		rgb(277pt)=(1.000000,0.928000,0.000000);
		rgb(278pt)=(1.000000,0.925333,0.000000);
		rgb(279pt)=(1.000000,0.922667,0.000000);
		rgb(280pt)=(1.000000,0.920000,0.000000);
		rgb(281pt)=(1.000000,0.917333,0.000000);
		rgb(282pt)=(1.000000,0.914667,0.000000);
		rgb(283pt)=(1.000000,0.912000,0.000000);
		rgb(284pt)=(1.000000,0.909333,0.000000);
		rgb(285pt)=(1.000000,0.906667,0.000000);
		rgb(286pt)=(1.000000,0.904000,0.000000);
		rgb(287pt)=(1.000000,0.901333,0.000000);
		rgb(288pt)=(1.000000,0.898667,0.000000);
		rgb(289pt)=(1.000000,0.896000,0.000000);
		rgb(290pt)=(1.000000,0.893333,0.000000);
		rgb(291pt)=(1.000000,0.890667,0.000000);
		rgb(292pt)=(1.000000,0.888000,0.000000);
		rgb(293pt)=(1.000000,0.885333,0.000000);
		rgb(294pt)=(1.000000,0.882667,0.000000);
		rgb(295pt)=(1.000000,0.880000,0.000000);
		rgb(296pt)=(1.000000,0.877333,0.000000);
		rgb(297pt)=(1.000000,0.874667,0.000000);
		rgb(298pt)=(1.000000,0.872000,0.000000);
		rgb(299pt)=(1.000000,0.869333,0.000000);
		rgb(300pt)=(1.000000,0.866667,0.000000);
		rgb(301pt)=(1.000000,0.864000,0.000000);
		rgb(302pt)=(1.000000,0.861333,0.000000);
		rgb(303pt)=(1.000000,0.858667,0.000000);
		rgb(304pt)=(1.000000,0.856000,0.000000);
		rgb(305pt)=(1.000000,0.853333,0.000000);
		rgb(306pt)=(1.000000,0.850667,0.000000);
		rgb(307pt)=(1.000000,0.848000,0.000000);
		rgb(308pt)=(1.000000,0.845333,0.000000);
		rgb(309pt)=(1.000000,0.842667,0.000000);
		rgb(310pt)=(1.000000,0.840000,0.000000);
		rgb(311pt)=(1.000000,0.837333,0.000000);
		rgb(312pt)=(1.000000,0.834667,0.000000);
		rgb(313pt)=(1.000000,0.832000,0.000000);
		rgb(314pt)=(1.000000,0.829333,0.000000);
		rgb(315pt)=(1.000000,0.826667,0.000000);
		rgb(316pt)=(1.000000,0.824000,0.000000);
		rgb(317pt)=(1.000000,0.821333,0.000000);
		rgb(318pt)=(1.000000,0.818667,0.000000);
		rgb(319pt)=(1.000000,0.816000,0.000000);
		rgb(320pt)=(1.000000,0.813333,0.000000);
		rgb(321pt)=(1.000000,0.810667,0.000000);
		rgb(322pt)=(1.000000,0.808000,0.000000);
		rgb(323pt)=(1.000000,0.805333,0.000000);
		rgb(324pt)=(1.000000,0.802667,0.000000);
		rgb(325pt)=(1.000000,0.800000,0.000000);
		rgb(326pt)=(1.000000,0.797333,0.000000);
		rgb(327pt)=(1.000000,0.794667,0.000000);
		rgb(328pt)=(1.000000,0.792000,0.000000);
		rgb(329pt)=(1.000000,0.789333,0.000000);
		rgb(330pt)=(1.000000,0.786667,0.000000);
		rgb(331pt)=(1.000000,0.784000,0.000000);
		rgb(332pt)=(1.000000,0.781333,0.000000);
		rgb(333pt)=(1.000000,0.778667,0.000000);
		rgb(334pt)=(1.000000,0.776000,0.000000);
		rgb(335pt)=(1.000000,0.773333,0.000000);
		rgb(336pt)=(1.000000,0.770667,0.000000);
		rgb(337pt)=(1.000000,0.768000,0.000000);
		rgb(338pt)=(1.000000,0.765333,0.000000);
		rgb(339pt)=(1.000000,0.762667,0.000000);
		rgb(340pt)=(1.000000,0.760000,0.000000);
		rgb(341pt)=(1.000000,0.757333,0.000000);
		rgb(342pt)=(1.000000,0.754667,0.000000);
		rgb(343pt)=(1.000000,0.752000,0.000000);
		rgb(344pt)=(1.000000,0.749333,0.000000);
		rgb(345pt)=(1.000000,0.746667,0.000000);
		rgb(346pt)=(1.000000,0.744000,0.000000);
		rgb(347pt)=(1.000000,0.741333,0.000000);
		rgb(348pt)=(1.000000,0.738667,0.000000);
		rgb(349pt)=(1.000000,0.736000,0.000000);
		rgb(350pt)=(1.000000,0.733333,0.000000);
		rgb(351pt)=(1.000000,0.730667,0.000000);
		rgb(352pt)=(1.000000,0.728000,0.000000);
		rgb(353pt)=(1.000000,0.725333,0.000000);
		rgb(354pt)=(1.000000,0.722667,0.000000);
		rgb(355pt)=(1.000000,0.720000,0.000000);
		rgb(356pt)=(1.000000,0.717333,0.000000);
		rgb(357pt)=(1.000000,0.714667,0.000000);
		rgb(358pt)=(1.000000,0.712000,0.000000);
		rgb(359pt)=(1.000000,0.709333,0.000000);
		rgb(360pt)=(1.000000,0.706667,0.000000);
		rgb(361pt)=(1.000000,0.704000,0.000000);
		rgb(362pt)=(1.000000,0.701333,0.000000);
		rgb(363pt)=(1.000000,0.698667,0.000000);
		rgb(364pt)=(1.000000,0.696000,0.000000);
		rgb(365pt)=(1.000000,0.693333,0.000000);
		rgb(366pt)=(1.000000,0.690667,0.000000);
		rgb(367pt)=(1.000000,0.688000,0.000000);
		rgb(368pt)=(1.000000,0.685333,0.000000);
		rgb(369pt)=(1.000000,0.682667,0.000000);
		rgb(370pt)=(1.000000,0.680000,0.000000);
		rgb(371pt)=(1.000000,0.677333,0.000000);
		rgb(372pt)=(1.000000,0.674667,0.000000);
		rgb(373pt)=(1.000000,0.672000,0.000000);
		rgb(374pt)=(1.000000,0.669333,0.000000);
		rgb(375pt)=(1.000000,0.666667,0.000000);
		rgb(376pt)=(1.000000,0.664000,0.000000);
		rgb(377pt)=(1.000000,0.661333,0.000000);
		rgb(378pt)=(1.000000,0.658667,0.000000);
		rgb(379pt)=(1.000000,0.656000,0.000000);
		rgb(380pt)=(1.000000,0.653333,0.000000);
		rgb(381pt)=(1.000000,0.650667,0.000000);
		rgb(382pt)=(1.000000,0.648000,0.000000);
		rgb(383pt)=(1.000000,0.645333,0.000000);
		rgb(384pt)=(1.000000,0.642667,0.000000);
		rgb(385pt)=(1.000000,0.640000,0.000000);
		rgb(386pt)=(1.000000,0.637333,0.000000);
		rgb(387pt)=(1.000000,0.634667,0.000000);
		rgb(388pt)=(1.000000,0.632000,0.000000);
		rgb(389pt)=(1.000000,0.629333,0.000000);
		rgb(390pt)=(1.000000,0.626667,0.000000);
		rgb(391pt)=(1.000000,0.624000,0.000000);
		rgb(392pt)=(1.000000,0.621333,0.000000);
		rgb(393pt)=(1.000000,0.618667,0.000000);
		rgb(394pt)=(1.000000,0.616000,0.000000);
		rgb(395pt)=(1.000000,0.613333,0.000000);
		rgb(396pt)=(1.000000,0.610667,0.000000);
		rgb(397pt)=(1.000000,0.608000,0.000000);
		rgb(398pt)=(1.000000,0.605333,0.000000);
		rgb(399pt)=(1.000000,0.602667,0.000000);
		rgb(400pt)=(1.000000,0.600000,0.000000);
		rgb(401pt)=(1.000000,0.597333,0.000000);
		rgb(402pt)=(1.000000,0.594667,0.000000);
		rgb(403pt)=(1.000000,0.592000,0.000000);
		rgb(404pt)=(1.000000,0.589333,0.000000);
		rgb(405pt)=(1.000000,0.586667,0.000000);
		rgb(406pt)=(1.000000,0.584000,0.000000);
		rgb(407pt)=(1.000000,0.581333,0.000000);
		rgb(408pt)=(1.000000,0.578667,0.000000);
		rgb(409pt)=(1.000000,0.576000,0.000000);
		rgb(410pt)=(1.000000,0.573333,0.000000);
		rgb(411pt)=(1.000000,0.570667,0.000000);
		rgb(412pt)=(1.000000,0.568000,0.000000);
		rgb(413pt)=(1.000000,0.565333,0.000000);
		rgb(414pt)=(1.000000,0.562667,0.000000);
		rgb(415pt)=(1.000000,0.560000,0.000000);
		rgb(416pt)=(1.000000,0.557333,0.000000);
		rgb(417pt)=(1.000000,0.554667,0.000000);
		rgb(418pt)=(1.000000,0.552000,0.000000);
		rgb(419pt)=(1.000000,0.549333,0.000000);
		rgb(420pt)=(1.000000,0.546667,0.000000);
		rgb(421pt)=(1.000000,0.544000,0.000000);
		rgb(422pt)=(1.000000,0.541333,0.000000);
		rgb(423pt)=(1.000000,0.538667,0.000000);
		rgb(424pt)=(1.000000,0.536000,0.000000);
		rgb(425pt)=(1.000000,0.533333,0.000000);
		rgb(426pt)=(1.000000,0.530667,0.000000);
		rgb(427pt)=(1.000000,0.528000,0.000000);
		rgb(428pt)=(1.000000,0.525333,0.000000);
		rgb(429pt)=(1.000000,0.522667,0.000000);
		rgb(430pt)=(1.000000,0.520000,0.000000);
		rgb(431pt)=(1.000000,0.517333,0.000000);
		rgb(432pt)=(1.000000,0.514667,0.000000);
		rgb(433pt)=(1.000000,0.512000,0.000000);
		rgb(434pt)=(1.000000,0.509333,0.000000);
		rgb(435pt)=(1.000000,0.506667,0.000000);
		rgb(436pt)=(1.000000,0.504000,0.000000);
		rgb(437pt)=(1.000000,0.501333,0.000000);
		rgb(438pt)=(1.000000,0.498667,0.000000);
		rgb(439pt)=(1.000000,0.496000,0.000000);
		rgb(440pt)=(1.000000,0.493333,0.000000);
		rgb(441pt)=(1.000000,0.490667,0.000000);
		rgb(442pt)=(1.000000,0.488000,0.000000);
		rgb(443pt)=(1.000000,0.485333,0.000000);
		rgb(444pt)=(1.000000,0.482667,0.000000);
		rgb(445pt)=(1.000000,0.480000,0.000000);
		rgb(446pt)=(1.000000,0.477333,0.000000);
		rgb(447pt)=(1.000000,0.474667,0.000000);
		rgb(448pt)=(1.000000,0.472000,0.000000);
		rgb(449pt)=(1.000000,0.469333,0.000000);
		rgb(450pt)=(1.000000,0.466667,0.000000);
		rgb(451pt)=(1.000000,0.464000,0.000000);
		rgb(452pt)=(1.000000,0.461333,0.000000);
		rgb(453pt)=(1.000000,0.458667,0.000000);
		rgb(454pt)=(1.000000,0.456000,0.000000);
		rgb(455pt)=(1.000000,0.453333,0.000000);
		rgb(456pt)=(1.000000,0.450667,0.000000);
		rgb(457pt)=(1.000000,0.448000,0.000000);
		rgb(458pt)=(1.000000,0.445333,0.000000);
		rgb(459pt)=(1.000000,0.442667,0.000000);
		rgb(460pt)=(1.000000,0.440000,0.000000);
		rgb(461pt)=(1.000000,0.437333,0.000000);
		rgb(462pt)=(1.000000,0.434667,0.000000);
		rgb(463pt)=(1.000000,0.432000,0.000000);
		rgb(464pt)=(1.000000,0.429333,0.000000);
		rgb(465pt)=(1.000000,0.426667,0.000000);
		rgb(466pt)=(1.000000,0.424000,0.000000);
		rgb(467pt)=(1.000000,0.421333,0.000000);
		rgb(468pt)=(1.000000,0.418667,0.000000);
		rgb(469pt)=(1.000000,0.416000,0.000000);
		rgb(470pt)=(1.000000,0.413333,0.000000);
		rgb(471pt)=(1.000000,0.410667,0.000000);
		rgb(472pt)=(1.000000,0.408000,0.000000);
		rgb(473pt)=(1.000000,0.405333,0.000000);
		rgb(474pt)=(1.000000,0.402667,0.000000);
		rgb(475pt)=(1.000000,0.400000,0.000000);
		rgb(476pt)=(1.000000,0.397333,0.000000);
		rgb(477pt)=(1.000000,0.394667,0.000000);
		rgb(478pt)=(1.000000,0.392000,0.000000);
		rgb(479pt)=(1.000000,0.389333,0.000000);
		rgb(480pt)=(1.000000,0.386667,0.000000);
		rgb(481pt)=(1.000000,0.384000,0.000000);
		rgb(482pt)=(1.000000,0.381333,0.000000);
		rgb(483pt)=(1.000000,0.378667,0.000000);
		rgb(484pt)=(1.000000,0.376000,0.000000);
		rgb(485pt)=(1.000000,0.373333,0.000000);
		rgb(486pt)=(1.000000,0.370667,0.000000);
		rgb(487pt)=(1.000000,0.368000,0.000000);
		rgb(488pt)=(1.000000,0.365333,0.000000);
		rgb(489pt)=(1.000000,0.362667,0.000000);
		rgb(490pt)=(1.000000,0.360000,0.000000);
		rgb(491pt)=(1.000000,0.357333,0.000000);
		rgb(492pt)=(1.000000,0.354667,0.000000);
		rgb(493pt)=(1.000000,0.352000,0.000000);
		rgb(494pt)=(1.000000,0.349333,0.000000);
		rgb(495pt)=(1.000000,0.346667,0.000000);
		rgb(496pt)=(1.000000,0.344000,0.000000);
		rgb(497pt)=(1.000000,0.341333,0.000000);
		rgb(498pt)=(1.000000,0.338667,0.000000);
		rgb(499pt)=(1.000000,0.336000,0.000000);
		rgb(500pt)=(1.000000,0.333333,0.000000);
		rgb(501pt)=(1.000000,0.330667,0.000000);
		rgb(502pt)=(1.000000,0.328000,0.000000);
		rgb(503pt)=(1.000000,0.325333,0.000000);
		rgb(504pt)=(1.000000,0.322667,0.000000);
		rgb(505pt)=(1.000000,0.320000,0.000000);
		rgb(506pt)=(1.000000,0.317333,0.000000);
		rgb(507pt)=(1.000000,0.314667,0.000000);
		rgb(508pt)=(1.000000,0.312000,0.000000);
		rgb(509pt)=(1.000000,0.309333,0.000000);
		rgb(510pt)=(1.000000,0.306667,0.000000);
		rgb(511pt)=(1.000000,0.304000,0.000000);
		rgb(512pt)=(1.000000,0.301333,0.000000);
		rgb(513pt)=(1.000000,0.298667,0.000000);
		rgb(514pt)=(1.000000,0.296000,0.000000);
		rgb(515pt)=(1.000000,0.293333,0.000000);
		rgb(516pt)=(1.000000,0.290667,0.000000);
		rgb(517pt)=(1.000000,0.288000,0.000000);
		rgb(518pt)=(1.000000,0.285333,0.000000);
		rgb(519pt)=(1.000000,0.282667,0.000000);
		rgb(520pt)=(1.000000,0.280000,0.000000);
		rgb(521pt)=(1.000000,0.277333,0.000000);
		rgb(522pt)=(1.000000,0.274667,0.000000);
		rgb(523pt)=(1.000000,0.272000,0.000000);
		rgb(524pt)=(1.000000,0.269333,0.000000);
		rgb(525pt)=(1.000000,0.266667,0.000000);
		rgb(526pt)=(1.000000,0.264000,0.000000);
		rgb(527pt)=(1.000000,0.261333,0.000000);
		rgb(528pt)=(1.000000,0.258667,0.000000);
		rgb(529pt)=(1.000000,0.256000,0.000000);
		rgb(530pt)=(1.000000,0.253333,0.000000);
		rgb(531pt)=(1.000000,0.250667,0.000000);
		rgb(532pt)=(1.000000,0.248000,0.000000);
		rgb(533pt)=(1.000000,0.245333,0.000000);
		rgb(534pt)=(1.000000,0.242667,0.000000);
		rgb(535pt)=(1.000000,0.240000,0.000000);
		rgb(536pt)=(1.000000,0.237333,0.000000);
		rgb(537pt)=(1.000000,0.234667,0.000000);
		rgb(538pt)=(1.000000,0.232000,0.000000);
		rgb(539pt)=(1.000000,0.229333,0.000000);
		rgb(540pt)=(1.000000,0.226667,0.000000);
		rgb(541pt)=(1.000000,0.224000,0.000000);
		rgb(542pt)=(1.000000,0.221333,0.000000);
		rgb(543pt)=(1.000000,0.218667,0.000000);
		rgb(544pt)=(1.000000,0.216000,0.000000);
		rgb(545pt)=(1.000000,0.213333,0.000000);
		rgb(546pt)=(1.000000,0.210667,0.000000);
		rgb(547pt)=(1.000000,0.208000,0.000000);
		rgb(548pt)=(1.000000,0.205333,0.000000);
		rgb(549pt)=(1.000000,0.202667,0.000000);
		rgb(550pt)=(1.000000,0.200000,0.000000);
		rgb(551pt)=(1.000000,0.197333,0.000000);
		rgb(552pt)=(1.000000,0.194667,0.000000);
		rgb(553pt)=(1.000000,0.192000,0.000000);
		rgb(554pt)=(1.000000,0.189333,0.000000);
		rgb(555pt)=(1.000000,0.186667,0.000000);
		rgb(556pt)=(1.000000,0.184000,0.000000);
		rgb(557pt)=(1.000000,0.181333,0.000000);
		rgb(558pt)=(1.000000,0.178667,0.000000);
		rgb(559pt)=(1.000000,0.176000,0.000000);
		rgb(560pt)=(1.000000,0.173333,0.000000);
		rgb(561pt)=(1.000000,0.170667,0.000000);
		rgb(562pt)=(1.000000,0.168000,0.000000);
		rgb(563pt)=(1.000000,0.165333,0.000000);
		rgb(564pt)=(1.000000,0.162667,0.000000);
		rgb(565pt)=(1.000000,0.160000,0.000000);
		rgb(566pt)=(1.000000,0.157333,0.000000);
		rgb(567pt)=(1.000000,0.154667,0.000000);
		rgb(568pt)=(1.000000,0.152000,0.000000);
		rgb(569pt)=(1.000000,0.149333,0.000000);
		rgb(570pt)=(1.000000,0.146667,0.000000);
		rgb(571pt)=(1.000000,0.144000,0.000000);
		rgb(572pt)=(1.000000,0.141333,0.000000);
		rgb(573pt)=(1.000000,0.138667,0.000000);
		rgb(574pt)=(1.000000,0.136000,0.000000);
		rgb(575pt)=(1.000000,0.133333,0.000000);
		rgb(576pt)=(1.000000,0.130667,0.000000);
		rgb(577pt)=(1.000000,0.128000,0.000000);
		rgb(578pt)=(1.000000,0.125333,0.000000);
		rgb(579pt)=(1.000000,0.122667,0.000000);
		rgb(580pt)=(1.000000,0.120000,0.000000);
		rgb(581pt)=(1.000000,0.117333,0.000000);
		rgb(582pt)=(1.000000,0.114667,0.000000);
		rgb(583pt)=(1.000000,0.112000,0.000000);
		rgb(584pt)=(1.000000,0.109333,0.000000);
		rgb(585pt)=(1.000000,0.106667,0.000000);
		rgb(586pt)=(1.000000,0.104000,0.000000);
		rgb(587pt)=(1.000000,0.101333,0.000000);
		rgb(588pt)=(1.000000,0.098667,0.000000);
		rgb(589pt)=(1.000000,0.096000,0.000000);
		rgb(590pt)=(1.000000,0.093333,0.000000);
		rgb(591pt)=(1.000000,0.090667,0.000000);
		rgb(592pt)=(1.000000,0.088000,0.000000);
		rgb(593pt)=(1.000000,0.085333,0.000000);
		rgb(594pt)=(1.000000,0.082667,0.000000);
		rgb(595pt)=(1.000000,0.080000,0.000000);
		rgb(596pt)=(1.000000,0.077333,0.000000);
		rgb(597pt)=(1.000000,0.074667,0.000000);
		rgb(598pt)=(1.000000,0.072000,0.000000);
		rgb(599pt)=(1.000000,0.069333,0.000000);
		rgb(600pt)=(1.000000,0.066667,0.000000);
		rgb(601pt)=(1.000000,0.064000,0.000000);
		rgb(602pt)=(1.000000,0.061333,0.000000);
		rgb(603pt)=(1.000000,0.058667,0.000000);
		rgb(604pt)=(1.000000,0.056000,0.000000);
		rgb(605pt)=(1.000000,0.053333,0.000000);
		rgb(606pt)=(1.000000,0.050667,0.000000);
		rgb(607pt)=(1.000000,0.048000,0.000000);
		rgb(608pt)=(1.000000,0.045333,0.000000);
		rgb(609pt)=(1.000000,0.042667,0.000000);
		rgb(610pt)=(1.000000,0.040000,0.000000);
		rgb(611pt)=(1.000000,0.037333,0.000000);
		rgb(612pt)=(1.000000,0.034667,0.000000);
		rgb(613pt)=(1.000000,0.032000,0.000000);
		rgb(614pt)=(1.000000,0.029333,0.000000);
		rgb(615pt)=(1.000000,0.026667,0.000000);
		rgb(616pt)=(1.000000,0.024000,0.000000);
		rgb(617pt)=(1.000000,0.021333,0.000000);
		rgb(618pt)=(1.000000,0.018667,0.000000);
		rgb(619pt)=(1.000000,0.016000,0.000000);
		rgb(620pt)=(1.000000,0.013333,0.000000);
		rgb(621pt)=(1.000000,0.010667,0.000000);
		rgb(622pt)=(1.000000,0.008000,0.000000);
		rgb(623pt)=(1.000000,0.005333,0.000000);
		rgb(624pt)=(1.000000,0.002667,0.000000);
		rgb(625pt)=(1.000000,0.000000,0.000000);
		rgb(626pt)=(0.997333,0.000000,0.000000);
		rgb(627pt)=(0.994667,0.000000,0.000000);
		rgb(628pt)=(0.992000,0.000000,0.000000);
		rgb(629pt)=(0.989333,0.000000,0.000000);
		rgb(630pt)=(0.986667,0.000000,0.000000);
		rgb(631pt)=(0.984000,0.000000,0.000000);
		rgb(632pt)=(0.981333,0.000000,0.000000);
		rgb(633pt)=(0.978667,0.000000,0.000000);
		rgb(634pt)=(0.976000,0.000000,0.000000);
		rgb(635pt)=(0.973333,0.000000,0.000000);
		rgb(636pt)=(0.970667,0.000000,0.000000);
		rgb(637pt)=(0.968000,0.000000,0.000000);
		rgb(638pt)=(0.965333,0.000000,0.000000);
		rgb(639pt)=(0.962667,0.000000,0.000000);
		rgb(640pt)=(0.960000,0.000000,0.000000);
		rgb(641pt)=(0.957333,0.000000,0.000000);
		rgb(642pt)=(0.954667,0.000000,0.000000);
		rgb(643pt)=(0.952000,0.000000,0.000000);
		rgb(644pt)=(0.949333,0.000000,0.000000);
		rgb(645pt)=(0.946667,0.000000,0.000000);
		rgb(646pt)=(0.944000,0.000000,0.000000);
		rgb(647pt)=(0.941333,0.000000,0.000000);
		rgb(648pt)=(0.938667,0.000000,0.000000);
		rgb(649pt)=(0.936000,0.000000,0.000000);
		rgb(650pt)=(0.933333,0.000000,0.000000);
		rgb(651pt)=(0.930667,0.000000,0.000000);
		rgb(652pt)=(0.928000,0.000000,0.000000);
		rgb(653pt)=(0.925333,0.000000,0.000000);
		rgb(654pt)=(0.922667,0.000000,0.000000);
		rgb(655pt)=(0.920000,0.000000,0.000000);
		rgb(656pt)=(0.917333,0.000000,0.000000);
		rgb(657pt)=(0.914667,0.000000,0.000000);
		rgb(658pt)=(0.912000,0.000000,0.000000);
		rgb(659pt)=(0.909333,0.000000,0.000000);
		rgb(660pt)=(0.906667,0.000000,0.000000);
		rgb(661pt)=(0.904000,0.000000,0.000000);
		rgb(662pt)=(0.901333,0.000000,0.000000);
		rgb(663pt)=(0.898667,0.000000,0.000000);
		rgb(664pt)=(0.896000,0.000000,0.000000);
		rgb(665pt)=(0.893333,0.000000,0.000000);
		rgb(666pt)=(0.890667,0.000000,0.000000);
		rgb(667pt)=(0.888000,0.000000,0.000000);
		rgb(668pt)=(0.885333,0.000000,0.000000);
		rgb(669pt)=(0.882667,0.000000,0.000000);
		rgb(670pt)=(0.880000,0.000000,0.000000);
		rgb(671pt)=(0.877333,0.000000,0.000000);
		rgb(672pt)=(0.874667,0.000000,0.000000);
		rgb(673pt)=(0.872000,0.000000,0.000000);
		rgb(674pt)=(0.869333,0.000000,0.000000);
		rgb(675pt)=(0.866667,0.000000,0.000000);
		rgb(676pt)=(0.864000,0.000000,0.000000);
		rgb(677pt)=(0.861333,0.000000,0.000000);
		rgb(678pt)=(0.858667,0.000000,0.000000);
		rgb(679pt)=(0.856000,0.000000,0.000000);
		rgb(680pt)=(0.853333,0.000000,0.000000);
		rgb(681pt)=(0.850667,0.000000,0.000000);
		rgb(682pt)=(0.848000,0.000000,0.000000);
		rgb(683pt)=(0.845333,0.000000,0.000000);
		rgb(684pt)=(0.842667,0.000000,0.000000);
		rgb(685pt)=(0.840000,0.000000,0.000000);
		rgb(686pt)=(0.837333,0.000000,0.000000);
		rgb(687pt)=(0.834667,0.000000,0.000000);
		rgb(688pt)=(0.832000,0.000000,0.000000);
		rgb(689pt)=(0.829333,0.000000,0.000000);
		rgb(690pt)=(0.826667,0.000000,0.000000);
		rgb(691pt)=(0.824000,0.000000,0.000000);
		rgb(692pt)=(0.821333,0.000000,0.000000);
		rgb(693pt)=(0.818667,0.000000,0.000000);
		rgb(694pt)=(0.816000,0.000000,0.000000);
		rgb(695pt)=(0.813333,0.000000,0.000000);
		rgb(696pt)=(0.810667,0.000000,0.000000);
		rgb(697pt)=(0.808000,0.000000,0.000000);
		rgb(698pt)=(0.805333,0.000000,0.000000);
		rgb(699pt)=(0.802667,0.000000,0.000000);
		rgb(700pt)=(0.800000,0.000000,0.000000);
		rgb(701pt)=(0.797333,0.000000,0.000000);
		rgb(702pt)=(0.794667,0.000000,0.000000);
		rgb(703pt)=(0.792000,0.000000,0.000000);
		rgb(704pt)=(0.789333,0.000000,0.000000);
		rgb(705pt)=(0.786667,0.000000,0.000000);
		rgb(706pt)=(0.784000,0.000000,0.000000);
		rgb(707pt)=(0.781333,0.000000,0.000000);
		rgb(708pt)=(0.778667,0.000000,0.000000);
		rgb(709pt)=(0.776000,0.000000,0.000000);
		rgb(710pt)=(0.773333,0.000000,0.000000);
		rgb(711pt)=(0.770667,0.000000,0.000000);
		rgb(712pt)=(0.768000,0.000000,0.000000);
		rgb(713pt)=(0.765333,0.000000,0.000000);
		rgb(714pt)=(0.762667,0.000000,0.000000);
		rgb(715pt)=(0.760000,0.000000,0.000000);
		rgb(716pt)=(0.757333,0.000000,0.000000);
		rgb(717pt)=(0.754667,0.000000,0.000000);
		rgb(718pt)=(0.752000,0.000000,0.000000);
		rgb(719pt)=(0.749333,0.000000,0.000000);
		rgb(720pt)=(0.746667,0.000000,0.000000);
		rgb(721pt)=(0.744000,0.000000,0.000000);
		rgb(722pt)=(0.741333,0.000000,0.000000);
		rgb(723pt)=(0.738667,0.000000,0.000000);
		rgb(724pt)=(0.736000,0.000000,0.000000);
		rgb(725pt)=(0.733333,0.000000,0.000000);
		rgb(726pt)=(0.730667,0.000000,0.000000);
		rgb(727pt)=(0.728000,0.000000,0.000000);
		rgb(728pt)=(0.725333,0.000000,0.000000);
		rgb(729pt)=(0.722667,0.000000,0.000000);
		rgb(730pt)=(0.720000,0.000000,0.000000);
		rgb(731pt)=(0.717333,0.000000,0.000000);
		rgb(732pt)=(0.714667,0.000000,0.000000);
		rgb(733pt)=(0.712000,0.000000,0.000000);
		rgb(734pt)=(0.709333,0.000000,0.000000);
		rgb(735pt)=(0.706667,0.000000,0.000000);
		rgb(736pt)=(0.704000,0.000000,0.000000);
		rgb(737pt)=(0.701333,0.000000,0.000000);
		rgb(738pt)=(0.698667,0.000000,0.000000);
		rgb(739pt)=(0.696000,0.000000,0.000000);
		rgb(740pt)=(0.693333,0.000000,0.000000);
		rgb(741pt)=(0.690667,0.000000,0.000000);
		rgb(742pt)=(0.688000,0.000000,0.000000);
		rgb(743pt)=(0.685333,0.000000,0.000000);
		rgb(744pt)=(0.682667,0.000000,0.000000);
		rgb(745pt)=(0.680000,0.000000,0.000000);
		rgb(746pt)=(0.677333,0.000000,0.000000);
		rgb(747pt)=(0.674667,0.000000,0.000000);
		rgb(748pt)=(0.672000,0.000000,0.000000);
		rgb(749pt)=(0.669333,0.000000,0.000000);
		rgb(750pt)=(0.666667,0.000000,0.000000);
		rgb(751pt)=(0.664000,0.000000,0.000000);
		rgb(752pt)=(0.661333,0.000000,0.000000);
		rgb(753pt)=(0.658667,0.000000,0.000000);
		rgb(754pt)=(0.656000,0.000000,0.000000);
		rgb(755pt)=(0.653333,0.000000,0.000000);
		rgb(756pt)=(0.650667,0.000000,0.000000);
		rgb(757pt)=(0.648000,0.000000,0.000000);
		rgb(758pt)=(0.645333,0.000000,0.000000);
		rgb(759pt)=(0.642667,0.000000,0.000000);
		rgb(760pt)=(0.640000,0.000000,0.000000);
		rgb(761pt)=(0.637333,0.000000,0.000000);
		rgb(762pt)=(0.634667,0.000000,0.000000);
		rgb(763pt)=(0.632000,0.000000,0.000000);
		rgb(764pt)=(0.629333,0.000000,0.000000);
		rgb(765pt)=(0.626667,0.000000,0.000000);
		rgb(766pt)=(0.624000,0.000000,0.000000);
		rgb(767pt)=(0.621333,0.000000,0.000000);
		rgb(768pt)=(0.618667,0.000000,0.000000);
		rgb(769pt)=(0.616000,0.000000,0.000000);
		rgb(770pt)=(0.613333,0.000000,0.000000);
		rgb(771pt)=(0.610667,0.000000,0.000000);
		rgb(772pt)=(0.608000,0.000000,0.000000);
		rgb(773pt)=(0.605333,0.000000,0.000000);
		rgb(774pt)=(0.602667,0.000000,0.000000);
		rgb(775pt)=(0.600000,0.000000,0.000000);
		rgb(776pt)=(0.597333,0.000000,0.000000);
		rgb(777pt)=(0.594667,0.000000,0.000000);
		rgb(778pt)=(0.592000,0.000000,0.000000);
		rgb(779pt)=(0.589333,0.000000,0.000000);
		rgb(780pt)=(0.586667,0.000000,0.000000);
		rgb(781pt)=(0.584000,0.000000,0.000000);
		rgb(782pt)=(0.581333,0.000000,0.000000);
		rgb(783pt)=(0.578667,0.000000,0.000000);
		rgb(784pt)=(0.576000,0.000000,0.000000);
		rgb(785pt)=(0.573333,0.000000,0.000000);
		rgb(786pt)=(0.570667,0.000000,0.000000);
		rgb(787pt)=(0.568000,0.000000,0.000000);
		rgb(788pt)=(0.565333,0.000000,0.000000);
		rgb(789pt)=(0.562667,0.000000,0.000000);
		rgb(790pt)=(0.560000,0.000000,0.000000);
		rgb(791pt)=(0.557333,0.000000,0.000000);
		rgb(792pt)=(0.554667,0.000000,0.000000);
		rgb(793pt)=(0.552000,0.000000,0.000000);
		rgb(794pt)=(0.549333,0.000000,0.000000);
		rgb(795pt)=(0.546667,0.000000,0.000000);
		rgb(796pt)=(0.544000,0.000000,0.000000);
		rgb(797pt)=(0.541333,0.000000,0.000000);
		rgb(798pt)=(0.538667,0.000000,0.000000);
		rgb(799pt)=(0.536000,0.000000,0.000000);
		rgb(800pt)=(0.533333,0.000000,0.000000);
		rgb(801pt)=(0.530667,0.000000,0.000000);
		rgb(802pt)=(0.528000,0.000000,0.000000);
		rgb(803pt)=(0.525333,0.000000,0.000000);
		rgb(804pt)=(0.522667,0.000000,0.000000);
		rgb(805pt)=(0.520000,0.000000,0.000000);
		rgb(806pt)=(0.517333,0.000000,0.000000);
		rgb(807pt)=(0.514667,0.000000,0.000000);
		rgb(808pt)=(0.512000,0.000000,0.000000);
		rgb(809pt)=(0.509333,0.000000,0.000000);
		rgb(810pt)=(0.506667,0.000000,0.000000);
		rgb(811pt)=(0.504000,0.000000,0.000000);
		rgb(812pt)=(0.501333,0.000000,0.000000);
		rgb(813pt)=(0.498667,0.000000,0.000000);
		rgb(814pt)=(0.496000,0.000000,0.000000);
		rgb(815pt)=(0.493333,0.000000,0.000000);
		rgb(816pt)=(0.490667,0.000000,0.000000);
		rgb(817pt)=(0.488000,0.000000,0.000000);
		rgb(818pt)=(0.485333,0.000000,0.000000);
		rgb(819pt)=(0.482667,0.000000,0.000000);
		rgb(820pt)=(0.480000,0.000000,0.000000);
		rgb(821pt)=(0.477333,0.000000,0.000000);
		rgb(822pt)=(0.474667,0.000000,0.000000);
		rgb(823pt)=(0.472000,0.000000,0.000000);
		rgb(824pt)=(0.469333,0.000000,0.000000);
		rgb(825pt)=(0.466667,0.000000,0.000000);
		rgb(826pt)=(0.464000,0.000000,0.000000);
		rgb(827pt)=(0.461333,0.000000,0.000000);
		rgb(828pt)=(0.458667,0.000000,0.000000);
		rgb(829pt)=(0.456000,0.000000,0.000000);
		rgb(830pt)=(0.453333,0.000000,0.000000);
		rgb(831pt)=(0.450667,0.000000,0.000000);
		rgb(832pt)=(0.448000,0.000000,0.000000);
		rgb(833pt)=(0.445333,0.000000,0.000000);
		rgb(834pt)=(0.442667,0.000000,0.000000);
		rgb(835pt)=(0.440000,0.000000,0.000000);
		rgb(836pt)=(0.437333,0.000000,0.000000);
		rgb(837pt)=(0.434667,0.000000,0.000000);
		rgb(838pt)=(0.432000,0.000000,0.000000);
		rgb(839pt)=(0.429333,0.000000,0.000000);
		rgb(840pt)=(0.426667,0.000000,0.000000);
		rgb(841pt)=(0.424000,0.000000,0.000000);
		rgb(842pt)=(0.421333,0.000000,0.000000);
		rgb(843pt)=(0.418667,0.000000,0.000000);
		rgb(844pt)=(0.416000,0.000000,0.000000);
		rgb(845pt)=(0.413333,0.000000,0.000000);
		rgb(846pt)=(0.410667,0.000000,0.000000);
		rgb(847pt)=(0.408000,0.000000,0.000000);
		rgb(848pt)=(0.405333,0.000000,0.000000);
		rgb(849pt)=(0.402667,0.000000,0.000000);
		rgb(850pt)=(0.400000,0.000000,0.000000);
		rgb(851pt)=(0.397333,0.000000,0.000000);
		rgb(852pt)=(0.394667,0.000000,0.000000);
		rgb(853pt)=(0.392000,0.000000,0.000000);
		rgb(854pt)=(0.389333,0.000000,0.000000);
		rgb(855pt)=(0.386667,0.000000,0.000000);
		rgb(856pt)=(0.384000,0.000000,0.000000);
		rgb(857pt)=(0.381333,0.000000,0.000000);
		rgb(858pt)=(0.378667,0.000000,0.000000);
		rgb(859pt)=(0.376000,0.000000,0.000000);
		rgb(860pt)=(0.373333,0.000000,0.000000);
		rgb(861pt)=(0.370667,0.000000,0.000000);
		rgb(862pt)=(0.368000,0.000000,0.000000);
		rgb(863pt)=(0.365333,0.000000,0.000000);
		rgb(864pt)=(0.362667,0.000000,0.000000);
		rgb(865pt)=(0.360000,0.000000,0.000000);
		rgb(866pt)=(0.357333,0.000000,0.000000);
		rgb(867pt)=(0.354667,0.000000,0.000000);
		rgb(868pt)=(0.352000,0.000000,0.000000);
		rgb(869pt)=(0.349333,0.000000,0.000000);
		rgb(870pt)=(0.346667,0.000000,0.000000);
		rgb(871pt)=(0.344000,0.000000,0.000000);
		rgb(872pt)=(0.341333,0.000000,0.000000);
		rgb(873pt)=(0.338667,0.000000,0.000000);
		rgb(874pt)=(0.336000,0.000000,0.000000);
		rgb(875pt)=(0.333333,0.000000,0.000000);
		rgb(876pt)=(0.330667,0.000000,0.000000);
		rgb(877pt)=(0.328000,0.000000,0.000000);
		rgb(878pt)=(0.325333,0.000000,0.000000);
		rgb(879pt)=(0.322667,0.000000,0.000000);
		rgb(880pt)=(0.320000,0.000000,0.000000);
		rgb(881pt)=(0.317333,0.000000,0.000000);
		rgb(882pt)=(0.314667,0.000000,0.000000);
		rgb(883pt)=(0.312000,0.000000,0.000000);
		rgb(884pt)=(0.309333,0.000000,0.000000);
		rgb(885pt)=(0.306667,0.000000,0.000000);
		rgb(886pt)=(0.304000,0.000000,0.000000);
		rgb(887pt)=(0.301333,0.000000,0.000000);
		rgb(888pt)=(0.298667,0.000000,0.000000);
		rgb(889pt)=(0.296000,0.000000,0.000000);
		rgb(890pt)=(0.293333,0.000000,0.000000);
		rgb(891pt)=(0.290667,0.000000,0.000000);
		rgb(892pt)=(0.288000,0.000000,0.000000);
		rgb(893pt)=(0.285333,0.000000,0.000000);
		rgb(894pt)=(0.282667,0.000000,0.000000);
		rgb(895pt)=(0.280000,0.000000,0.000000);
		rgb(896pt)=(0.277333,0.000000,0.000000);
		rgb(897pt)=(0.274667,0.000000,0.000000);
		rgb(898pt)=(0.272000,0.000000,0.000000);
		rgb(899pt)=(0.269333,0.000000,0.000000);
		rgb(900pt)=(0.266667,0.000000,0.000000);
		rgb(901pt)=(0.264000,0.000000,0.000000);
		rgb(902pt)=(0.261333,0.000000,0.000000);
		rgb(903pt)=(0.258667,0.000000,0.000000);
		rgb(904pt)=(0.256000,0.000000,0.000000);
		rgb(905pt)=(0.253333,0.000000,0.000000);
		rgb(906pt)=(0.250667,0.000000,0.000000);
		rgb(907pt)=(0.248000,0.000000,0.000000);
		rgb(908pt)=(0.245333,0.000000,0.000000);
		rgb(909pt)=(0.242667,0.000000,0.000000);
		rgb(910pt)=(0.240000,0.000000,0.000000);
		rgb(911pt)=(0.237333,0.000000,0.000000);
		rgb(912pt)=(0.234667,0.000000,0.000000);
		rgb(913pt)=(0.232000,0.000000,0.000000);
		rgb(914pt)=(0.229333,0.000000,0.000000);
		rgb(915pt)=(0.226667,0.000000,0.000000);
		rgb(916pt)=(0.224000,0.000000,0.000000);
		rgb(917pt)=(0.221333,0.000000,0.000000);
		rgb(918pt)=(0.218667,0.000000,0.000000);
		rgb(919pt)=(0.216000,0.000000,0.000000);
		rgb(920pt)=(0.213333,0.000000,0.000000);
		rgb(921pt)=(0.210667,0.000000,0.000000);
		rgb(922pt)=(0.208000,0.000000,0.000000);
		rgb(923pt)=(0.205333,0.000000,0.000000);
		rgb(924pt)=(0.202667,0.000000,0.000000);
		rgb(925pt)=(0.200000,0.000000,0.000000);
		rgb(926pt)=(0.197333,0.000000,0.000000);
		rgb(927pt)=(0.194667,0.000000,0.000000);
		rgb(928pt)=(0.192000,0.000000,0.000000);
		rgb(929pt)=(0.189333,0.000000,0.000000);
		rgb(930pt)=(0.186667,0.000000,0.000000);
		rgb(931pt)=(0.184000,0.000000,0.000000);
		rgb(932pt)=(0.181333,0.000000,0.000000);
		rgb(933pt)=(0.178667,0.000000,0.000000);
		rgb(934pt)=(0.176000,0.000000,0.000000);
		rgb(935pt)=(0.173333,0.000000,0.000000);
		rgb(936pt)=(0.170667,0.000000,0.000000);
		rgb(937pt)=(0.168000,0.000000,0.000000);
		rgb(938pt)=(0.165333,0.000000,0.000000);
		rgb(939pt)=(0.162667,0.000000,0.000000);
		rgb(940pt)=(0.160000,0.000000,0.000000);
		rgb(941pt)=(0.157333,0.000000,0.000000);
		rgb(942pt)=(0.154667,0.000000,0.000000);
		rgb(943pt)=(0.152000,0.000000,0.000000);
		rgb(944pt)=(0.149333,0.000000,0.000000);
		rgb(945pt)=(0.146667,0.000000,0.000000);
		rgb(946pt)=(0.144000,0.000000,0.000000);
		rgb(947pt)=(0.141333,0.000000,0.000000);
		rgb(948pt)=(0.138667,0.000000,0.000000);
		rgb(949pt)=(0.136000,0.000000,0.000000);
		rgb(950pt)=(0.133333,0.000000,0.000000);
		rgb(951pt)=(0.130667,0.000000,0.000000);
		rgb(952pt)=(0.128000,0.000000,0.000000);
		rgb(953pt)=(0.125333,0.000000,0.000000);
		rgb(954pt)=(0.122667,0.000000,0.000000);
		rgb(955pt)=(0.120000,0.000000,0.000000);
		rgb(956pt)=(0.117333,0.000000,0.000000);
		rgb(957pt)=(0.114667,0.000000,0.000000);
		rgb(958pt)=(0.112000,0.000000,0.000000);
		rgb(959pt)=(0.109333,0.000000,0.000000);
		rgb(960pt)=(0.106667,0.000000,0.000000);
		rgb(961pt)=(0.104000,0.000000,0.000000);
		rgb(962pt)=(0.101333,0.000000,0.000000);
		rgb(963pt)=(0.098667,0.000000,0.000000);
		rgb(964pt)=(0.096000,0.000000,0.000000);
		rgb(965pt)=(0.093333,0.000000,0.000000);
		rgb(966pt)=(0.090667,0.000000,0.000000);
		rgb(967pt)=(0.088000,0.000000,0.000000);
		rgb(968pt)=(0.085333,0.000000,0.000000);
		rgb(969pt)=(0.082667,0.000000,0.000000);
		rgb(970pt)=(0.080000,0.000000,0.000000);
		rgb(971pt)=(0.077333,0.000000,0.000000);
		rgb(972pt)=(0.074667,0.000000,0.000000);
		rgb(973pt)=(0.072000,0.000000,0.000000);
		rgb(974pt)=(0.069333,0.000000,0.000000);
		rgb(975pt)=(0.066667,0.000000,0.000000);
		rgb(976pt)=(0.064000,0.000000,0.000000);
		rgb(977pt)=(0.061333,0.000000,0.000000);
		rgb(978pt)=(0.058667,0.000000,0.000000);
		rgb(979pt)=(0.056000,0.000000,0.000000);
		rgb(980pt)=(0.053333,0.000000,0.000000);
		rgb(981pt)=(0.050667,0.000000,0.000000);
		rgb(982pt)=(0.048000,0.000000,0.000000);
		rgb(983pt)=(0.045333,0.000000,0.000000);
		rgb(984pt)=(0.042667,0.000000,0.000000);
		rgb(985pt)=(0.040000,0.000000,0.000000);
		rgb(986pt)=(0.037333,0.000000,0.000000);
		rgb(987pt)=(0.034667,0.000000,0.000000);
		rgb(988pt)=(0.032000,0.000000,0.000000);
		rgb(989pt)=(0.029333,0.000000,0.000000);
		rgb(990pt)=(0.026667,0.000000,0.000000);
		rgb(991pt)=(0.024000,0.000000,0.000000);
		rgb(992pt)=(0.021333,0.000000,0.000000);
		rgb(993pt)=(0.018667,0.000000,0.000000);
		rgb(994pt)=(0.016000,0.000000,0.000000);
		rgb(995pt)=(0.013333,0.000000,0.000000);
		rgb(996pt)=(0.010667,0.000000,0.000000);
		rgb(997pt)=(0.008000,0.000000,0.000000);
		rgb(998pt)=(0.005333,0.000000,0.000000);
		rgb(999pt)=(0.002667,0.000000,0.000000);
}}
\newlength{\figureheight}
\newlength{\figurewidth}
\pgfplotsset{
	colormap={parula}{
		rgb(0pt)=(0.2081,0.1663,0.5292);
		rgb(1pt)=(0.208355,0.16778,0.532238);
		rgb(2pt)=(0.208611,0.169261,0.535275);
		rgb(3pt)=(0.208866,0.170741,0.538313);
		rgb(4pt)=(0.209121,0.172222,0.54135);
		rgb(5pt)=(0.209376,0.173702,0.544388);
		rgb(6pt)=(0.209632,0.175183,0.547425);
		rgb(7pt)=(0.209887,0.176663,0.550463);
		rgb(8pt)=(0.210134,0.178144,0.553505);
		rgb(9pt)=(0.210338,0.179624,0.556568);
		rgb(10pt)=(0.210542,0.181105,0.559631);
		rgb(11pt)=(0.210746,0.182585,0.562694);
		rgb(12pt)=(0.210944,0.184066,0.565763);
		rgb(13pt)=(0.211123,0.185546,0.568852);
		rgb(14pt)=(0.211302,0.187027,0.57194);
		rgb(15pt)=(0.21148,0.188507,0.575029);
		rgb(16pt)=(0.211642,0.189996,0.578117);
		rgb(17pt)=(0.21177,0.191502,0.581206);
		rgb(18pt)=(0.211897,0.193008,0.584295);
		rgb(19pt)=(0.212025,0.194514,0.587383);
		rgb(20pt)=(0.212132,0.19602,0.590472);
		rgb(21pt)=(0.212208,0.197526,0.59356);
		rgb(22pt)=(0.212285,0.199032,0.596649);
		rgb(23pt)=(0.212361,0.200538,0.599738);
		rgb(24pt)=(0.212413,0.202044,0.602839);
		rgb(25pt)=(0.212438,0.20355,0.605953);
		rgb(26pt)=(0.212464,0.205056,0.609067);
		rgb(27pt)=(0.212489,0.206562,0.612181);
		rgb(28pt)=(0.212471,0.208083,0.61531);
		rgb(29pt)=(0.21242,0.209614,0.61845);
		rgb(30pt)=(0.212368,0.211146,0.621589);
		rgb(31pt)=(0.212317,0.212677,0.624729);
		rgb(32pt)=(0.212216,0.214209,0.627868);
		rgb(33pt)=(0.212088,0.215741,0.631008);
		rgb(34pt)=(0.211961,0.217272,0.634148);
		rgb(35pt)=(0.211833,0.218804,0.637287);
		rgb(36pt)=(0.211668,0.220354,0.640446);
		rgb(37pt)=(0.211489,0.221911,0.643611);
		rgb(38pt)=(0.21131,0.223468,0.646776);
		rgb(39pt)=(0.211132,0.225025,0.649941);
		rgb(40pt)=(0.210848,0.226603,0.653107);
		rgb(41pt)=(0.210541,0.228186,0.656272);
		rgb(42pt)=(0.210235,0.229768,0.659437);
		rgb(43pt)=(0.209929,0.231351,0.662602);
		rgb(44pt)=(0.209553,0.232934,0.665767);
		rgb(45pt)=(0.20917,0.234516,0.668932);
		rgb(46pt)=(0.208787,0.236099,0.672098);
		rgb(47pt)=(0.208405,0.237681,0.675263);
		rgb(48pt)=(0.20787,0.239289,0.678453);
		rgb(49pt)=(0.207334,0.240897,0.681644);
		rgb(50pt)=(0.206798,0.242505,0.684835);
		rgb(51pt)=(0.206255,0.244114,0.688025);
		rgb(52pt)=(0.205617,0.245722,0.691216);
		rgb(53pt)=(0.204979,0.24733,0.694407);
		rgb(54pt)=(0.204341,0.248938,0.697597);
		rgb(55pt)=(0.203675,0.250554,0.700792);
		rgb(56pt)=(0.202858,0.252213,0.704008);
		rgb(57pt)=(0.202041,0.253872,0.707224);
		rgb(58pt)=(0.201225,0.255531,0.710441);
		rgb(59pt)=(0.200372,0.257184,0.713657);
		rgb(60pt)=(0.199402,0.258818,0.716873);
		rgb(61pt)=(0.198432,0.260452,0.720089);
		rgb(62pt)=(0.197462,0.262085,0.723305);
		rgb(63pt)=(0.196419,0.263735,0.726522);
		rgb(64pt)=(0.195219,0.26542,0.729738);
		rgb(65pt)=(0.19402,0.267105,0.732954);
		rgb(66pt)=(0.19282,0.268789,0.73617);
		rgb(67pt)=(0.191549,0.270474,0.739386);
		rgb(68pt)=(0.19017,0.272159,0.742603);
		rgb(69pt)=(0.188792,0.273843,0.745819);
		rgb(70pt)=(0.187414,0.275528,0.749035);
		rgb(71pt)=(0.1859,0.277237,0.752264);
		rgb(72pt)=(0.184241,0.278973,0.755505);
		rgb(73pt)=(0.182581,0.280709,0.758747);
		rgb(74pt)=(0.180922,0.282444,0.761989);
		rgb(75pt)=(0.179133,0.284209,0.765245);
		rgb(76pt)=(0.177244,0.285996,0.768512);
		rgb(77pt)=(0.175356,0.287783,0.77178);
		rgb(78pt)=(0.173467,0.289569,0.775047);
		rgb(79pt)=(0.171363,0.291406,0.778314);
		rgb(80pt)=(0.169142,0.293269,0.781581);
		rgb(81pt)=(0.166922,0.295132,0.784849);
		rgb(82pt)=(0.164701,0.296996,0.788116);
		rgb(83pt)=(0.162238,0.298934,0.791365);
		rgb(84pt)=(0.159686,0.300899,0.794606);
		rgb(85pt)=(0.157133,0.302865,0.797848);
		rgb(86pt)=(0.15458,0.30483,0.80109);
		rgb(87pt)=(0.151738,0.306858,0.804352);
		rgb(88pt)=(0.148828,0.3089,0.80762);
		rgb(89pt)=(0.145918,0.310942,0.810887);
		rgb(90pt)=(0.143008,0.312984,0.814154);
		rgb(91pt)=(0.139687,0.31514,0.81733);
		rgb(92pt)=(0.136318,0.31731,0.820495);
		rgb(93pt)=(0.132949,0.319479,0.82366);
		rgb(94pt)=(0.129579,0.321649,0.826826);
		rgb(95pt)=(0.125811,0.323918,0.829841);
		rgb(96pt)=(0.122033,0.32619,0.832853);
		rgb(97pt)=(0.118256,0.328462,0.835865);
		rgb(98pt)=(0.114458,0.330737,0.838862);
		rgb(99pt)=(0.110349,0.333059,0.841619);
		rgb(100pt)=(0.106239,0.335382,0.844376);
		rgb(101pt)=(0.102129,0.337705,0.847132);
		rgb(102pt)=(0.0979874,0.340021,0.849835);
		rgb(103pt)=(0.093648,0.342292,0.852209);
		rgb(104pt)=(0.0893087,0.344564,0.854583);
		rgb(105pt)=(0.0849694,0.346836,0.856957);
		rgb(106pt)=(0.08063,0.349091,0.859234);
		rgb(107pt)=(0.0762907,0.351286,0.861174);
		rgb(108pt)=(0.0719514,0.353481,0.863114);
		rgb(109pt)=(0.067612,0.355676,0.865053);
		rgb(110pt)=(0.0633195,0.357817,0.866853);
		rgb(111pt)=(0.0591333,0.359833,0.868333);
		rgb(112pt)=(0.0549471,0.36185,0.869814);
		rgb(113pt)=(0.050761,0.363866,0.871294);
		rgb(114pt)=(0.0466838,0.365823,0.872626);
		rgb(115pt)=(0.0427784,0.367687,0.873724);
		rgb(116pt)=(0.038873,0.36955,0.874821);
		rgb(117pt)=(0.0349676,0.371414,0.875919);
		rgb(118pt)=(0.0315066,0.373217,0.876872);
		rgb(119pt)=(0.0285456,0.374953,0.877664);
		rgb(120pt)=(0.0255847,0.376688,0.878455);
		rgb(121pt)=(0.0226237,0.378424,0.879246);
		rgb(122pt)=(0.0202132,0.380061,0.879868);
		rgb(123pt)=(0.0182477,0.381618,0.880353);
		rgb(124pt)=(0.0162823,0.383175,0.880838);
		rgb(125pt)=(0.0143168,0.384732,0.881323);
		rgb(126pt)=(0.0127892,0.386241,0.881695);
		rgb(127pt)=(0.0115129,0.387721,0.882001);
		rgb(128pt)=(0.0102366,0.389202,0.882307);
		rgb(129pt)=(0.00896036,0.390682,0.882614);
		rgb(130pt)=(0.00812372,0.392089,0.88281);
		rgb(131pt)=(0.00746006,0.393468,0.882963);
		rgb(132pt)=(0.0067964,0.394846,0.883116);
		rgb(133pt)=(0.00613273,0.396224,0.883269);
		rgb(134pt)=(0.00581622,0.397562,0.88332);
		rgb(135pt)=(0.00558649,0.398889,0.883346);
		rgb(136pt)=(0.00535676,0.400217,0.883371);
		rgb(137pt)=(0.00512703,0.401544,0.883397);
		rgb(138pt)=(0.00516757,0.402804,0.883332);
		rgb(139pt)=(0.00524414,0.404054,0.883256);
		rgb(140pt)=(0.00532072,0.405305,0.883179);
		rgb(141pt)=(0.0053973,0.406556,0.883103);
		rgb(142pt)=(0.00572012,0.407757,0.882952);
		rgb(143pt)=(0.00605195,0.408957,0.882799);
		rgb(144pt)=(0.00638378,0.410157,0.882646);
		rgb(145pt)=(0.00672643,0.411355,0.882489);
		rgb(146pt)=(0.00728799,0.412529,0.882259);
		rgb(147pt)=(0.00784955,0.413704,0.88203);
		rgb(148pt)=(0.00841111,0.414878,0.8818);
		rgb(149pt)=(0.00898919,0.416045,0.881564);
		rgb(150pt)=(0.00967838,0.417168,0.881283);
		rgb(151pt)=(0.0103676,0.418292,0.881002);
		rgb(152pt)=(0.0110568,0.419415,0.880721);
		rgb(153pt)=(0.011773,0.420532,0.880435);
		rgb(154pt)=(0.0125898,0.42163,0.880129);
		rgb(155pt)=(0.0134066,0.422728,0.879823);
		rgb(156pt)=(0.0142234,0.423825,0.879516);
		rgb(157pt)=(0.0150703,0.424915,0.879195);
		rgb(158pt)=(0.0159892,0.425987,0.878838);
		rgb(159pt)=(0.0169081,0.427059,0.87848);
		rgb(160pt)=(0.017827,0.428132,0.878123);
		rgb(161pt)=(0.0187748,0.429194,0.877746);
		rgb(162pt)=(0.0197703,0.430241,0.877338);
		rgb(163pt)=(0.0207658,0.431287,0.876929);
		rgb(164pt)=(0.0217613,0.432334,0.876521);
		rgb(165pt)=(0.0227802,0.43338,0.876113);
		rgb(166pt)=(0.0238267,0.434427,0.875704);
		rgb(167pt)=(0.0248733,0.435473,0.875296);
		rgb(168pt)=(0.0259198,0.43652,0.874887);
		rgb(169pt)=(0.0269802,0.437553,0.874451);
		rgb(170pt)=(0.0280523,0.438574,0.873992);
		rgb(171pt)=(0.0291243,0.439595,0.873532);
		rgb(172pt)=(0.0301964,0.440616,0.873073);
		rgb(173pt)=(0.0312844,0.441621,0.872614);
		rgb(174pt)=(0.032382,0.442616,0.872154);
		rgb(175pt)=(0.0334796,0.443612,0.871695);
		rgb(176pt)=(0.0345772,0.444607,0.871235);
		rgb(177pt)=(0.0357108,0.445603,0.870758);
		rgb(178pt)=(0.0368595,0.446598,0.870273);
		rgb(179pt)=(0.0380081,0.447594,0.869788);
		rgb(180pt)=(0.0391568,0.448589,0.869303);
		rgb(181pt)=(0.0402652,0.449565,0.868798);
		rgb(182pt)=(0.0413628,0.450535,0.868287);
		rgb(183pt)=(0.0424604,0.451505,0.867777);
		rgb(184pt)=(0.043558,0.452474,0.867266);
		rgb(185pt)=(0.0445889,0.453444,0.866756);
		rgb(186pt)=(0.0456099,0.454414,0.866245);
		rgb(187pt)=(0.0466309,0.455384,0.865735);
		rgb(188pt)=(0.047652,0.456354,0.865224);
		rgb(189pt)=(0.0486,0.457324,0.864714);
		rgb(190pt)=(0.0495444,0.458294,0.864203);
		rgb(191pt)=(0.0504889,0.459264,0.863692);
		rgb(192pt)=(0.0514315,0.460234,0.863181);
		rgb(193pt)=(0.0523249,0.461204,0.862645);
		rgb(194pt)=(0.0532183,0.462174,0.862109);
		rgb(195pt)=(0.0541117,0.463144,0.861573);
		rgb(196pt)=(0.0549991,0.464111,0.861034);
		rgb(197pt)=(0.0558414,0.465056,0.860472);
		rgb(198pt)=(0.0566838,0.466,0.859911);
		rgb(199pt)=(0.0575261,0.466944,0.859349);
		rgb(200pt)=(0.0583532,0.467889,0.858793);
		rgb(201pt)=(0.0591189,0.468833,0.858257);
		rgb(202pt)=(0.0598847,0.469778,0.857721);
		rgb(203pt)=(0.0606505,0.470722,0.857185);
		rgb(204pt)=(0.0614018,0.471667,0.856641);
		rgb(205pt)=(0.0621165,0.472611,0.85608);
		rgb(206pt)=(0.0628312,0.473556,0.855518);
		rgb(207pt)=(0.0635459,0.4745,0.854957);
		rgb(208pt)=(0.064242,0.475444,0.854405);
		rgb(209pt)=(0.0649057,0.476389,0.853868);
		rgb(210pt)=(0.0655694,0.477333,0.853332);
		rgb(211pt)=(0.066233,0.478278,0.852796);
		rgb(212pt)=(0.0668625,0.479222,0.852249);
		rgb(213pt)=(0.0674495,0.480167,0.851687);
		rgb(214pt)=(0.0680366,0.481111,0.851126);
		rgb(215pt)=(0.0686237,0.482056,0.850564);
		rgb(216pt)=(0.0691838,0.483,0.850003);
		rgb(217pt)=(0.0697198,0.483944,0.849441);
		rgb(218pt)=(0.0702559,0.484889,0.84888);
		rgb(219pt)=(0.0707919,0.485833,0.848318);
		rgb(220pt)=(0.0712967,0.486778,0.847772);
		rgb(221pt)=(0.0717817,0.487722,0.847236);
		rgb(222pt)=(0.0722667,0.488667,0.8467);
		rgb(223pt)=(0.0727517,0.489611,0.846164);
		rgb(224pt)=(0.0732012,0.490573,0.845628);
		rgb(225pt)=(0.0736351,0.491543,0.845092);
		rgb(226pt)=(0.0740691,0.492513,0.844556);
		rgb(227pt)=(0.074503,0.493483,0.84402);
		rgb(228pt)=(0.0748973,0.494433,0.843484);
		rgb(229pt)=(0.0752802,0.495378,0.842948);
		rgb(230pt)=(0.0756631,0.496322,0.842412);
		rgb(231pt)=(0.0760459,0.497267,0.841876);
		rgb(232pt)=(0.0763631,0.498233,0.841362);
		rgb(233pt)=(0.0766694,0.499203,0.840851);
		rgb(234pt)=(0.0769757,0.500173,0.840341);
		rgb(235pt)=(0.077282,0.501143,0.83983);
		rgb(236pt)=(0.0775162,0.502137,0.83932);
		rgb(237pt)=(0.0777459,0.503132,0.838809);
		rgb(238pt)=(0.0779757,0.504128,0.838298);
		rgb(239pt)=(0.0782042,0.505123,0.837789);
		rgb(240pt)=(0.0783829,0.506093,0.837304);
		rgb(241pt)=(0.0785616,0.507063,0.836819);
		rgb(242pt)=(0.0787402,0.508033,0.836334);
		rgb(243pt)=(0.0789135,0.509008,0.835851);
		rgb(244pt)=(0.0790411,0.510029,0.835392);
		rgb(245pt)=(0.0791688,0.51105,0.834932);
		rgb(246pt)=(0.0792964,0.512071,0.834473);
		rgb(247pt)=(0.0794048,0.513092,0.834018);
		rgb(248pt)=(0.0794303,0.514113,0.833584);
		rgb(249pt)=(0.0794559,0.515134,0.83315);
		rgb(250pt)=(0.0794814,0.516155,0.832717);
		rgb(251pt)=(0.0794862,0.517183,0.832289);
		rgb(252pt)=(0.0794351,0.51823,0.831881);
		rgb(253pt)=(0.0793841,0.519276,0.831473);
		rgb(254pt)=(0.079333,0.520323,0.831064);
		rgb(255pt)=(0.079255,0.521369,0.830665);
		rgb(256pt)=(0.0791273,0.522416,0.830282);
		rgb(257pt)=(0.0789997,0.523462,0.829899);
		rgb(258pt)=(0.0788721,0.524509,0.829516);
		rgb(259pt)=(0.0786889,0.525589,0.829156);
		rgb(260pt)=(0.0784336,0.526712,0.828824);
		rgb(261pt)=(0.0781784,0.527835,0.828492);
		rgb(262pt)=(0.0779231,0.528958,0.82816);
		rgb(263pt)=(0.077615,0.530081,0.827868);
		rgb(264pt)=(0.0772577,0.531205,0.827613);
		rgb(265pt)=(0.0769003,0.532328,0.827357);
		rgb(266pt)=(0.0765429,0.533451,0.827102);
		rgb(267pt)=(0.0761243,0.534589,0.826862);
		rgb(268pt)=(0.0756649,0.535738,0.826632);
		rgb(269pt)=(0.0752054,0.536886,0.826403);
		rgb(270pt)=(0.0747459,0.538035,0.826173);
		rgb(271pt)=(0.0742168,0.539219,0.825961);
		rgb(272pt)=(0.0736553,0.540418,0.825756);
		rgb(273pt)=(0.0730937,0.541618,0.825552);
		rgb(274pt)=(0.0725321,0.542818,0.825348);
		rgb(275pt)=(0.0718925,0.544037,0.825183);
		rgb(276pt)=(0.0712288,0.545262,0.82503);
		rgb(277pt)=(0.0705652,0.546487,0.824877);
		rgb(278pt)=(0.0699015,0.547713,0.824723);
		rgb(279pt)=(0.0691514,0.548938,0.824614);
		rgb(280pt)=(0.0683856,0.550163,0.824511);
		rgb(281pt)=(0.0676198,0.551388,0.824409);
		rgb(282pt)=(0.0668541,0.552614,0.824307);
		rgb(283pt)=(0.0660408,0.553886,0.824205);
		rgb(284pt)=(0.065224,0.555162,0.824103);
		rgb(285pt)=(0.0644072,0.556439,0.824001);
		rgb(286pt)=(0.0635892,0.557715,0.823899);
		rgb(287pt)=(0.0626703,0.558991,0.823848);
		rgb(288pt)=(0.0617514,0.560268,0.823797);
		rgb(289pt)=(0.0608324,0.561544,0.823746);
		rgb(290pt)=(0.0599087,0.56282,0.823693);
		rgb(291pt)=(0.0589387,0.564096,0.823616);
		rgb(292pt)=(0.0579688,0.565373,0.82354);
		rgb(293pt)=(0.0569988,0.566649,0.823463);
		rgb(294pt)=(0.0560243,0.567925,0.823386);
		rgb(295pt)=(0.0550288,0.569202,0.82331);
		rgb(296pt)=(0.0540333,0.570478,0.823233);
		rgb(297pt)=(0.0530378,0.571754,0.823157);
		rgb(298pt)=(0.0520423,0.57303,0.82308);
		rgb(299pt)=(0.0510468,0.574307,0.823004);
		rgb(300pt)=(0.0500514,0.575583,0.822927);
		rgb(301pt)=(0.0490559,0.576859,0.82285);
		rgb(302pt)=(0.0480604,0.578127,0.822756);
		rgb(303pt)=(0.0470649,0.579377,0.822629);
		rgb(304pt)=(0.0460694,0.580628,0.822501);
		rgb(305pt)=(0.0450739,0.581879,0.822374);
		rgb(306pt)=(0.0441,0.583119,0.822235);
		rgb(307pt)=(0.0431556,0.584344,0.822082);
		rgb(308pt)=(0.0422111,0.585569,0.821929);
		rgb(309pt)=(0.0412667,0.586795,0.821776);
		rgb(310pt)=(0.0403351,0.58802,0.821597);
		rgb(311pt)=(0.0394162,0.589245,0.821392);
		rgb(312pt)=(0.0384973,0.59047,0.821188);
		rgb(313pt)=(0.0375784,0.591695,0.820984);
		rgb(314pt)=(0.0367495,0.592891,0.820735);
		rgb(315pt)=(0.0359838,0.594065,0.820454);
		rgb(316pt)=(0.035218,0.595239,0.820173);
		rgb(317pt)=(0.0344523,0.596413,0.819892);
		rgb(318pt)=(0.0337721,0.597553,0.819595);
		rgb(319pt)=(0.0331339,0.598676,0.819288);
		rgb(320pt)=(0.0324958,0.599799,0.818982);
		rgb(321pt)=(0.0318577,0.600923,0.818676);
		rgb(322pt)=(0.0312964,0.602026,0.818312);
		rgb(323pt)=(0.0307604,0.603124,0.817929);
		rgb(324pt)=(0.0302243,0.604222,0.817546);
		rgb(325pt)=(0.0296883,0.605319,0.817163);
		rgb(326pt)=(0.0292375,0.606395,0.816738);
		rgb(327pt)=(0.0288036,0.607468,0.816304);
		rgb(328pt)=(0.0283697,0.60854,0.81587);
		rgb(329pt)=(0.0279357,0.609612,0.815436);
		rgb(330pt)=(0.0275721,0.610637,0.814955);
		rgb(331pt)=(0.0272147,0.611658,0.81447);
		rgb(332pt)=(0.0268574,0.612679,0.813985);
		rgb(333pt)=(0.0265,0.6137,0.8135);
		rgb(334pt)=(0.0262447,0.614695,0.812964);
		rgb(335pt)=(0.0259895,0.615691,0.812428);
		rgb(336pt)=(0.0257342,0.616686,0.811892);
		rgb(337pt)=(0.0254853,0.61768,0.811352);
		rgb(338pt)=(0.0253066,0.61865,0.810765);
		rgb(339pt)=(0.0251279,0.61962,0.810177);
		rgb(340pt)=(0.0249492,0.62059,0.80959);
		rgb(341pt)=(0.024779,0.621551,0.808995);
		rgb(342pt)=(0.0246514,0.62247,0.808357);
		rgb(343pt)=(0.0245237,0.623389,0.807719);
		rgb(344pt)=(0.0243961,0.624308,0.80708);
		rgb(345pt)=(0.0242748,0.625221,0.80643);
		rgb(346pt)=(0.0241727,0.626114,0.805741);
		rgb(347pt)=(0.0240706,0.627008,0.805051);
		rgb(348pt)=(0.0239685,0.627901,0.804362);
		rgb(349pt)=(0.0238832,0.628786,0.803656);
		rgb(350pt)=(0.0238321,0.629654,0.802916);
		rgb(351pt)=(0.0237811,0.630522,0.802176);
		rgb(352pt)=(0.02373,0.631389,0.801435);
		rgb(353pt)=(0.023679,0.632247,0.800685);
		rgb(354pt)=(0.0236279,0.633089,0.799919);
		rgb(355pt)=(0.0235769,0.633932,0.799153);
		rgb(356pt)=(0.0235258,0.634774,0.798387);
		rgb(357pt)=(0.0234748,0.635604,0.797596);
		rgb(358pt)=(0.0234237,0.63642,0.79678);
		rgb(359pt)=(0.0233727,0.637237,0.795963);
		rgb(360pt)=(0.0233216,0.638054,0.795146);
		rgb(361pt)=(0.0232706,0.638856,0.794329);
		rgb(362pt)=(0.0232195,0.639647,0.793512);
		rgb(363pt)=(0.0231685,0.640439,0.792695);
		rgb(364pt)=(0.0231174,0.64123,0.791879);
		rgb(365pt)=(0.0230832,0.642005,0.791011);
		rgb(366pt)=(0.0230577,0.64277,0.790118);
		rgb(367pt)=(0.0230321,0.643536,0.789225);
		rgb(368pt)=(0.0230066,0.644302,0.788331);
		rgb(369pt)=(0.0229811,0.645049,0.787438);
		rgb(370pt)=(0.0229556,0.645789,0.786544);
		rgb(371pt)=(0.02293,0.646529,0.785651);
		rgb(372pt)=(0.0229045,0.647269,0.784758);
		rgb(373pt)=(0.022858,0.64801,0.783843);
		rgb(374pt)=(0.0228069,0.64875,0.782924);
		rgb(375pt)=(0.0227559,0.64949,0.782005);
		rgb(376pt)=(0.0227048,0.65023,0.781086);
		rgb(377pt)=(0.0227,0.650947,0.780144);
		rgb(378pt)=(0.0227,0.651662,0.7792);
		rgb(379pt)=(0.0227,0.652377,0.778256);
		rgb(380pt)=(0.0227,0.653092,0.777311);
		rgb(381pt)=(0.0228261,0.653781,0.776341);
		rgb(382pt)=(0.0229538,0.65447,0.775371);
		rgb(383pt)=(0.0230814,0.655159,0.774402);
		rgb(384pt)=(0.0232108,0.655849,0.77343);
		rgb(385pt)=(0.023364,0.656538,0.772434);
		rgb(386pt)=(0.0235171,0.657227,0.771439);
		rgb(387pt)=(0.0236703,0.657916,0.770443);
		rgb(388pt)=(0.0238312,0.658602,0.769444);
		rgb(389pt)=(0.0240354,0.659265,0.768423);
		rgb(390pt)=(0.0242396,0.659929,0.767402);
		rgb(391pt)=(0.0244438,0.660592,0.766381);
		rgb(392pt)=(0.0247021,0.661256,0.765354);
		rgb(393pt)=(0.025136,0.66192,0.764307);
		rgb(394pt)=(0.02557,0.662583,0.763261);
		rgb(395pt)=(0.0260039,0.663247,0.762214);
		rgb(396pt)=(0.0264541,0.663911,0.761168);
		rgb(397pt)=(0.026939,0.664574,0.760121);
		rgb(398pt)=(0.027424,0.665238,0.759074);
		rgb(399pt)=(0.027909,0.665902,0.758028);
		rgb(400pt)=(0.028445,0.666555,0.756971);
		rgb(401pt)=(0.0290577,0.667193,0.755899);
		rgb(402pt)=(0.0296703,0.667832,0.754827);
		rgb(403pt)=(0.0302829,0.66847,0.753755);
		rgb(404pt)=(0.030994,0.669095,0.752683);
		rgb(405pt)=(0.0318108,0.669708,0.751611);
		rgb(406pt)=(0.0326276,0.670321,0.750539);
		rgb(407pt)=(0.0334444,0.670933,0.749467);
		rgb(408pt)=(0.0343045,0.67156,0.748366);
		rgb(409pt)=(0.0351979,0.672198,0.747243);
		rgb(410pt)=(0.0360913,0.672837,0.74612);
		rgb(411pt)=(0.0369847,0.673475,0.744996);
		rgb(412pt)=(0.0380432,0.674096,0.743873);
		rgb(413pt)=(0.0391919,0.674709,0.74275);
		rgb(414pt)=(0.0403405,0.675322,0.741627);
		rgb(415pt)=(0.0414892,0.675934,0.740504);
		rgb(416pt)=(0.0427123,0.676528,0.739381);
		rgb(417pt)=(0.0439631,0.677115,0.738258);
		rgb(418pt)=(0.0452138,0.677702,0.737135);
		rgb(419pt)=(0.0464646,0.678289,0.736011);
		rgb(420pt)=(0.0477153,0.678897,0.734868);
		rgb(421pt)=(0.0489661,0.67951,0.733719);
		rgb(422pt)=(0.0502168,0.680123,0.73257);
		rgb(423pt)=(0.0514676,0.680735,0.731422);
		rgb(424pt)=(0.0529237,0.681325,0.73025);
		rgb(425pt)=(0.0544042,0.681912,0.729076);
		rgb(426pt)=(0.0558847,0.682499,0.727902);
		rgb(427pt)=(0.0573652,0.683086,0.726728);
		rgb(428pt)=(0.0587709,0.683673,0.725553);
		rgb(429pt)=(0.0601748,0.68426,0.724379);
		rgb(430pt)=(0.0615787,0.684847,0.723205);
		rgb(431pt)=(0.0629946,0.685435,0.722028);
		rgb(432pt)=(0.0646027,0.686022,0.720803);
		rgb(433pt)=(0.0662108,0.686609,0.719577);
		rgb(434pt)=(0.0678189,0.687196,0.718352);
		rgb(435pt)=(0.069427,0.687779,0.717131);
		rgb(436pt)=(0.0710351,0.688341,0.715931);
		rgb(437pt)=(0.0726432,0.688902,0.714731);
		rgb(438pt)=(0.0742514,0.689464,0.713532);
		rgb(439pt)=(0.0758709,0.690026,0.712326);
		rgb(440pt)=(0.07753,0.690587,0.711101);
		rgb(441pt)=(0.0791892,0.691149,0.709876);
		rgb(442pt)=(0.0808483,0.69171,0.70865);
		rgb(443pt)=(0.0825387,0.692272,0.707417);
		rgb(444pt)=(0.0843,0.692833,0.706167);
		rgb(445pt)=(0.0860613,0.693395,0.704916);
		rgb(446pt)=(0.0878225,0.693956,0.703665);
		rgb(447pt)=(0.089564,0.694518,0.702405);
		rgb(448pt)=(0.0912742,0.69508,0.701128);
		rgb(449pt)=(0.0929844,0.695641,0.699852);
		rgb(450pt)=(0.0946946,0.696203,0.698576);
		rgb(451pt)=(0.0965009,0.696752,0.697299);
		rgb(452pt)=(0.0984153,0.697288,0.696023);
		rgb(453pt)=(0.10033,0.697824,0.694747);
		rgb(454pt)=(0.102244,0.69836,0.693471);
		rgb(455pt)=(0.10413,0.698896,0.69218);
		rgb(456pt)=(0.105994,0.699432,0.690878);
		rgb(457pt)=(0.107857,0.699968,0.689577);
		rgb(458pt)=(0.10972,0.700505,0.688275);
		rgb(459pt)=(0.111632,0.701041,0.686973);
		rgb(460pt)=(0.113572,0.701577,0.685671);
		rgb(461pt)=(0.115512,0.702113,0.684369);
		rgb(462pt)=(0.117452,0.702649,0.683068);
		rgb(463pt)=(0.119429,0.703185,0.681747);
		rgb(464pt)=(0.12142,0.703721,0.68042);
		rgb(465pt)=(0.123411,0.704257,0.679093);
		rgb(466pt)=(0.125402,0.704793,0.677765);
		rgb(467pt)=(0.127372,0.705308,0.676438);
		rgb(468pt)=(0.129338,0.705819,0.675111);
		rgb(469pt)=(0.131303,0.706329,0.673783);
		rgb(470pt)=(0.133269,0.70684,0.672456);
		rgb(471pt)=(0.135369,0.70735,0.671084);
		rgb(472pt)=(0.137488,0.707861,0.669705);
		rgb(473pt)=(0.139607,0.708371,0.668327);
		rgb(474pt)=(0.141725,0.708882,0.666949);
		rgb(475pt)=(0.143795,0.709392,0.665595);
		rgb(476pt)=(0.145862,0.709903,0.664242);
		rgb(477pt)=(0.14793,0.710414,0.662889);
		rgb(478pt)=(0.150003,0.710924,0.661534);
		rgb(479pt)=(0.152198,0.711435,0.66013);
		rgb(480pt)=(0.154394,0.711945,0.658726);
		rgb(481pt)=(0.156589,0.712456,0.657322);
		rgb(482pt)=(0.158784,0.712963,0.655922);
		rgb(483pt)=(0.160979,0.713448,0.654543);
		rgb(484pt)=(0.163174,0.713933,0.653165);
		rgb(485pt)=(0.16537,0.714418,0.651786);
		rgb(486pt)=(0.16757,0.714908,0.650397);
		rgb(487pt)=(0.169791,0.715419,0.648968);
		rgb(488pt)=(0.172012,0.715929,0.647538);
		rgb(489pt)=(0.174232,0.71644,0.646109);
		rgb(490pt)=(0.176483,0.716935,0.64468);
		rgb(491pt)=(0.178806,0.717395,0.64325);
		rgb(492pt)=(0.181129,0.717854,0.641821);
		rgb(493pt)=(0.183452,0.718314,0.640391);
		rgb(494pt)=(0.185755,0.718783,0.638952);
		rgb(495pt)=(0.188027,0.719268,0.637497);
		rgb(496pt)=(0.190299,0.719753,0.636042);
		rgb(497pt)=(0.192571,0.720238,0.634587);
		rgb(498pt)=(0.194913,0.720711,0.633132);
		rgb(499pt)=(0.197338,0.72117,0.631677);
		rgb(500pt)=(0.199762,0.72163,0.630223);
		rgb(501pt)=(0.202187,0.722089,0.628768);
		rgb(502pt)=(0.204612,0.722549,0.627299);
		rgb(503pt)=(0.207037,0.723008,0.625818);
		rgb(504pt)=(0.209462,0.723468,0.624338);
		rgb(505pt)=(0.211887,0.723927,0.622857);
		rgb(506pt)=(0.214328,0.724386,0.621377);
		rgb(507pt)=(0.216778,0.724846,0.619896);
		rgb(508pt)=(0.219229,0.725305,0.618416);
		rgb(509pt)=(0.221679,0.725765,0.616935);
		rgb(510pt)=(0.224202,0.726188,0.615455);
		rgb(511pt)=(0.226754,0.726597,0.613974);
		rgb(512pt)=(0.229307,0.727005,0.612494);
		rgb(513pt)=(0.231859,0.727414,0.611014);
		rgb(514pt)=(0.234392,0.727842,0.609513);
		rgb(515pt)=(0.236919,0.728276,0.608007);
		rgb(516pt)=(0.239446,0.72871,0.606501);
		rgb(517pt)=(0.241973,0.729144,0.604995);
		rgb(518pt)=(0.244611,0.729556,0.603467);
		rgb(519pt)=(0.247266,0.729964,0.601935);
		rgb(520pt)=(0.24992,0.730372,0.600404);
		rgb(521pt)=(0.252575,0.730781,0.598872);
		rgb(522pt)=(0.25523,0.731189,0.597365);
		rgb(523pt)=(0.257884,0.731598,0.595859);
		rgb(524pt)=(0.260539,0.732006,0.594353);
		rgb(525pt)=(0.263194,0.732414,0.592846);
		rgb(526pt)=(0.265848,0.732796,0.591314);
		rgb(527pt)=(0.268503,0.733179,0.589783);
		rgb(528pt)=(0.271158,0.733562,0.588251);
		rgb(529pt)=(0.27383,0.733945,0.58672);
		rgb(530pt)=(0.276638,0.734328,0.585188);
		rgb(531pt)=(0.279446,0.734711,0.583657);
		rgb(532pt)=(0.282254,0.735094,0.582125);
		rgb(533pt)=(0.285051,0.735471,0.580594);
		rgb(534pt)=(0.287808,0.735829,0.579062);
		rgb(535pt)=(0.290565,0.736186,0.577531);
		rgb(536pt)=(0.293322,0.736544,0.575999);
		rgb(537pt)=(0.2961,0.736894,0.574468);
		rgb(538pt)=(0.298933,0.737226,0.572936);
		rgb(539pt)=(0.301767,0.737557,0.571405);
		rgb(540pt)=(0.3046,0.737889,0.569873);
		rgb(541pt)=(0.307452,0.738221,0.568351);
		rgb(542pt)=(0.310336,0.738553,0.566845);
		rgb(543pt)=(0.313221,0.738885,0.565339);
		rgb(544pt)=(0.316105,0.739217,0.563833);
		rgb(545pt)=(0.318978,0.739537,0.562315);
		rgb(546pt)=(0.321837,0.739843,0.560784);
		rgb(547pt)=(0.324696,0.74015,0.559252);
		rgb(548pt)=(0.327555,0.740456,0.557721);
		rgb(549pt)=(0.330468,0.740749,0.556216);
		rgb(550pt)=(0.333429,0.741029,0.554736);
		rgb(551pt)=(0.336389,0.74131,0.553255);
		rgb(552pt)=(0.33935,0.741591,0.551775);
		rgb(553pt)=(0.342296,0.741872,0.550279);
		rgb(554pt)=(0.345231,0.742153,0.548773);
		rgb(555pt)=(0.348167,0.742433,0.547267);
		rgb(556pt)=(0.351102,0.742714,0.545761);
		rgb(557pt)=(0.354038,0.742977,0.54429);
		rgb(558pt)=(0.356973,0.743232,0.542835);
		rgb(559pt)=(0.359908,0.743488,0.54138);
		rgb(560pt)=(0.362844,0.743743,0.539925);
		rgb(561pt)=(0.365839,0.743959,0.53847);
		rgb(562pt)=(0.368851,0.744163,0.537015);
		rgb(563pt)=(0.371863,0.744367,0.53556);
		rgb(564pt)=(0.374875,0.744571,0.534105);
		rgb(565pt)=(0.377843,0.744775,0.532672);
		rgb(566pt)=(0.380804,0.74498,0.531243);
		rgb(567pt)=(0.383765,0.745184,0.529814);
		rgb(568pt)=(0.386726,0.745388,0.528384);
		rgb(569pt)=(0.389711,0.745568,0.527003);
		rgb(570pt)=(0.392697,0.745747,0.525624);
		rgb(571pt)=(0.395684,0.745926,0.524246);
		rgb(572pt)=(0.39867,0.746104,0.522868);
		rgb(573pt)=(0.401657,0.746257,0.521489);
		rgb(574pt)=(0.404643,0.74641,0.520111);
		rgb(575pt)=(0.40763,0.746563,0.518732);
		rgb(576pt)=(0.410611,0.746716,0.517359);
		rgb(577pt)=(0.413546,0.746869,0.516032);
		rgb(578pt)=(0.416482,0.747023,0.514705);
		rgb(579pt)=(0.419417,0.747176,0.513377);
		rgb(580pt)=(0.422357,0.747319,0.512055);
		rgb(581pt)=(0.425318,0.747421,0.510753);
		rgb(582pt)=(0.428279,0.747523,0.509451);
		rgb(583pt)=(0.43124,0.747626,0.50815);
		rgb(584pt)=(0.43418,0.747735,0.506848);
		rgb(585pt)=(0.437065,0.747862,0.505546);
		rgb(586pt)=(0.439949,0.74799,0.504244);
		rgb(587pt)=(0.442834,0.748117,0.502942);
		rgb(588pt)=(0.445727,0.748227,0.501659);
		rgb(589pt)=(0.448637,0.748304,0.500408);
		rgb(590pt)=(0.451547,0.74838,0.499157);
		rgb(591pt)=(0.454457,0.748457,0.497906);
		rgb(592pt)=(0.457333,0.748522,0.496667);
		rgb(593pt)=(0.460167,0.748573,0.495441);
		rgb(594pt)=(0.463,0.748624,0.494216);
		rgb(595pt)=(0.465833,0.748675,0.492991);
		rgb(596pt)=(0.468667,0.748726,0.491779);
		rgb(597pt)=(0.4715,0.748777,0.490579);
		rgb(598pt)=(0.474333,0.748829,0.48938);
		rgb(599pt)=(0.477167,0.74888,0.48818);
		rgb(600pt)=(0.479969,0.748931,0.486995);
		rgb(601pt)=(0.482752,0.748982,0.485821);
		rgb(602pt)=(0.485534,0.749033,0.484647);
		rgb(603pt)=(0.488316,0.749084,0.483473);
		rgb(604pt)=(0.491081,0.7491,0.482316);
		rgb(605pt)=(0.493838,0.7491,0.481168);
		rgb(606pt)=(0.496595,0.7491,0.480019);
		rgb(607pt)=(0.499351,0.7491,0.47887);
		rgb(608pt)=(0.502069,0.74912,0.477722);
		rgb(609pt)=(0.504775,0.749145,0.476573);
		rgb(610pt)=(0.50748,0.749171,0.475424);
		rgb(611pt)=(0.510186,0.749196,0.474276);
		rgb(612pt)=(0.512892,0.7492,0.47317);
		rgb(613pt)=(0.515598,0.7492,0.472073);
		rgb(614pt)=(0.518303,0.7492,0.470975);
		rgb(615pt)=(0.521009,0.7492,0.469877);
		rgb(616pt)=(0.523644,0.749176,0.46878);
		rgb(617pt)=(0.526273,0.749151,0.467682);
		rgb(618pt)=(0.528902,0.749125,0.466585);
		rgb(619pt)=(0.531531,0.7491,0.465487);
		rgb(620pt)=(0.53416,0.749074,0.464415);
		rgb(621pt)=(0.536789,0.749049,0.463343);
		rgb(622pt)=(0.539418,0.749023,0.462271);
		rgb(623pt)=(0.542043,0.748998,0.461199);
		rgb(624pt)=(0.544621,0.748972,0.460127);
		rgb(625pt)=(0.547199,0.748947,0.459055);
		rgb(626pt)=(0.549777,0.748921,0.457983);
		rgb(627pt)=(0.55235,0.748891,0.45692);
		rgb(628pt)=(0.554903,0.74884,0.455899);
		rgb(629pt)=(0.557456,0.748789,0.454878);
		rgb(630pt)=(0.560008,0.748738,0.453857);
		rgb(631pt)=(0.562554,0.748687,0.452829);
		rgb(632pt)=(0.565081,0.748636,0.451783);
		rgb(633pt)=(0.567608,0.748585,0.450736);
		rgb(634pt)=(0.570135,0.748534,0.449689);
		rgb(635pt)=(0.572653,0.748474,0.44866);
		rgb(636pt)=(0.575155,0.748397,0.447665);
		rgb(637pt)=(0.577656,0.748321,0.446669);
		rgb(638pt)=(0.580158,0.748244,0.445674);
		rgb(639pt)=(0.582649,0.748168,0.444678);
		rgb(640pt)=(0.585125,0.748091,0.443683);
		rgb(641pt)=(0.587601,0.748014,0.442687);
		rgb(642pt)=(0.590077,0.747938,0.441692);
		rgb(643pt)=(0.59254,0.747861,0.440709);
		rgb(644pt)=(0.59499,0.747785,0.439739);
		rgb(645pt)=(0.597441,0.747708,0.438769);
		rgb(646pt)=(0.599891,0.747632,0.437799);
		rgb(647pt)=(0.602311,0.747555,0.436814);
		rgb(648pt)=(0.604711,0.747478,0.435819);
		rgb(649pt)=(0.60711,0.747402,0.434823);
		rgb(650pt)=(0.60951,0.747325,0.433828);
		rgb(651pt)=(0.611909,0.747232,0.432867);
		rgb(652pt)=(0.614308,0.747129,0.431922);
		rgb(653pt)=(0.616708,0.747027,0.430978);
		rgb(654pt)=(0.619107,0.746925,0.430033);
		rgb(655pt)=(0.621487,0.746823,0.429089);
		rgb(656pt)=(0.623861,0.746721,0.428144);
		rgb(657pt)=(0.626235,0.746619,0.4272);
		rgb(658pt)=(0.628609,0.746517,0.426256);
		rgb(659pt)=(0.630962,0.746393,0.425311);
		rgb(660pt)=(0.63331,0.746266,0.424367);
		rgb(661pt)=(0.635658,0.746138,0.423422);
		rgb(662pt)=(0.638007,0.746011,0.422478);
		rgb(663pt)=(0.640332,0.745906,0.421557);
		rgb(664pt)=(0.642654,0.745804,0.420638);
		rgb(665pt)=(0.644977,0.745702,0.419719);
		rgb(666pt)=(0.6473,0.7456,0.4188);
		rgb(667pt)=(0.649623,0.745472,0.417881);
		rgb(668pt)=(0.651946,0.745345,0.416962);
		rgb(669pt)=(0.654268,0.745217,0.416043);
		rgb(670pt)=(0.656587,0.745089,0.415124);
		rgb(671pt)=(0.658859,0.744962,0.414205);
		rgb(672pt)=(0.661131,0.744834,0.413286);
		rgb(673pt)=(0.663402,0.744707,0.412368);
		rgb(674pt)=(0.665674,0.744579,0.411453);
		rgb(675pt)=(0.667946,0.744451,0.410559);
		rgb(676pt)=(0.670218,0.744324,0.409666);
		rgb(677pt)=(0.672489,0.744196,0.408773);
		rgb(678pt)=(0.674755,0.744062,0.407879);
		rgb(679pt)=(0.677001,0.743909,0.406986);
		rgb(680pt)=(0.679247,0.743756,0.406092);
		rgb(681pt)=(0.681494,0.743603,0.405199);
		rgb(682pt)=(0.68374,0.743458,0.404306);
		rgb(683pt)=(0.685986,0.74333,0.403412);
		rgb(684pt)=(0.688232,0.743203,0.402519);
		rgb(685pt)=(0.690479,0.743075,0.401626);
		rgb(686pt)=(0.692704,0.742937,0.400732);
		rgb(687pt)=(0.694899,0.742784,0.399839);
		rgb(688pt)=(0.697094,0.742631,0.398945);
		rgb(689pt)=(0.699289,0.742477,0.398052);
		rgb(690pt)=(0.701497,0.742324,0.397171);
		rgb(691pt)=(0.703718,0.742171,0.396303);
		rgb(692pt)=(0.705939,0.742018,0.395435);
		rgb(693pt)=(0.708159,0.741865,0.394568);
		rgb(694pt)=(0.710351,0.741712,0.3937);
		rgb(695pt)=(0.71252,0.741559,0.392832);
		rgb(696pt)=(0.71469,0.741405,0.391964);
		rgb(697pt)=(0.71686,0.741252,0.391096);
		rgb(698pt)=(0.719029,0.741082,0.390228);
		rgb(699pt)=(0.721199,0.740904,0.38936);
		rgb(700pt)=(0.723369,0.740725,0.388492);
		rgb(701pt)=(0.725538,0.740546,0.387625);
		rgb(702pt)=(0.727708,0.740386,0.386757);
		rgb(703pt)=(0.729878,0.740233,0.385889);
		rgb(704pt)=(0.732047,0.74008,0.385021);
		rgb(705pt)=(0.734217,0.739927,0.384153);
		rgb(706pt)=(0.736366,0.739753,0.383285);
		rgb(707pt)=(0.73851,0.739574,0.382417);
		rgb(708pt)=(0.740654,0.739395,0.38155);
		rgb(709pt)=(0.742798,0.739217,0.380682);
		rgb(710pt)=(0.744919,0.739038,0.379837);
		rgb(711pt)=(0.747038,0.738859,0.378995);
		rgb(712pt)=(0.749156,0.738681,0.378152);
		rgb(713pt)=(0.751275,0.738502,0.37731);
		rgb(714pt)=(0.753394,0.738323,0.376442);
		rgb(715pt)=(0.755512,0.738145,0.375574);
		rgb(716pt)=(0.757631,0.737966,0.374707);
		rgb(717pt)=(0.75975,0.737789,0.373841);
		rgb(718pt)=(0.761868,0.737636,0.372998);
		rgb(719pt)=(0.763987,0.737483,0.372156);
		rgb(720pt)=(0.766105,0.73733,0.371314);
		rgb(721pt)=(0.76822,0.737169,0.370471);
		rgb(722pt)=(0.770313,0.736965,0.369629);
		rgb(723pt)=(0.772406,0.73676,0.368786);
		rgb(724pt)=(0.774499,0.736556,0.367944);
		rgb(725pt)=(0.776592,0.736358,0.367096);
		rgb(726pt)=(0.778686,0.736179,0.366228);
		rgb(727pt)=(0.780779,0.736001,0.36536);
		rgb(728pt)=(0.782872,0.735822,0.364492);
		rgb(729pt)=(0.784957,0.735643,0.363632);
		rgb(730pt)=(0.787024,0.735465,0.36279);
		rgb(731pt)=(0.789092,0.735286,0.361948);
		rgb(732pt)=(0.791159,0.735107,0.361105);
		rgb(733pt)=(0.793227,0.734929,0.360263);
		rgb(734pt)=(0.795295,0.73475,0.359421);
		rgb(735pt)=(0.797362,0.734571,0.358578);
		rgb(736pt)=(0.79943,0.734392,0.357736);
		rgb(737pt)=(0.801485,0.734214,0.356881);
		rgb(738pt)=(0.803527,0.734035,0.356014);
		rgb(739pt)=(0.805569,0.733856,0.355146);
		rgb(740pt)=(0.807611,0.733678,0.354278);
		rgb(741pt)=(0.809668,0.733499,0.353424);
		rgb(742pt)=(0.811735,0.73332,0.352582);
		rgb(743pt)=(0.813803,0.733142,0.35174);
		rgb(744pt)=(0.81587,0.732963,0.350897);
		rgb(745pt)=(0.817921,0.732784,0.350038);
		rgb(746pt)=(0.819963,0.732606,0.349171);
		rgb(747pt)=(0.822005,0.732427,0.348303);
		rgb(748pt)=(0.824047,0.732248,0.347435);
		rgb(749pt)=(0.826071,0.73207,0.346567);
		rgb(750pt)=(0.828087,0.731891,0.345699);
		rgb(751pt)=(0.830104,0.731712,0.344831);
		rgb(752pt)=(0.83212,0.731534,0.343963);
		rgb(753pt)=(0.834158,0.731355,0.343095);
		rgb(754pt)=(0.8362,0.731176,0.342228);
		rgb(755pt)=(0.838242,0.730998,0.34136);
		rgb(756pt)=(0.840284,0.730819,0.340492);
		rgb(757pt)=(0.842303,0.73064,0.339624);
		rgb(758pt)=(0.84432,0.730462,0.338756);
		rgb(759pt)=(0.846336,0.730283,0.337888);
		rgb(760pt)=(0.848353,0.730104,0.33702);
		rgb(761pt)=(0.850369,0.729926,0.336153);
		rgb(762pt)=(0.852386,0.729747,0.335285);
		rgb(763pt)=(0.854402,0.729568,0.334417);
		rgb(764pt)=(0.856419,0.729391,0.333546);
		rgb(765pt)=(0.858435,0.729238,0.332627);
		rgb(766pt)=(0.860452,0.729085,0.331708);
		rgb(767pt)=(0.862468,0.728932,0.330789);
		rgb(768pt)=(0.864481,0.728778,0.329874);
		rgb(769pt)=(0.866472,0.728625,0.32898);
		rgb(770pt)=(0.868463,0.728472,0.328087);
		rgb(771pt)=(0.870454,0.728319,0.327194);
		rgb(772pt)=(0.872445,0.728166,0.326295);
		rgb(773pt)=(0.874436,0.728013,0.325376);
		rgb(774pt)=(0.876427,0.727859,0.324457);
		rgb(775pt)=(0.878418,0.727706,0.323538);
		rgb(776pt)=(0.880417,0.727561,0.322619);
		rgb(777pt)=(0.882433,0.727433,0.3217);
		rgb(778pt)=(0.88445,0.727306,0.320781);
		rgb(779pt)=(0.886466,0.727178,0.319862);
		rgb(780pt)=(0.888463,0.72705,0.318933);
		rgb(781pt)=(0.890429,0.726923,0.317989);
		rgb(782pt)=(0.892394,0.726795,0.317044);
		rgb(783pt)=(0.894359,0.726668,0.3161);
		rgb(784pt)=(0.896337,0.726552,0.315132);
		rgb(785pt)=(0.898328,0.72645,0.314136);
		rgb(786pt)=(0.900319,0.726348,0.313141);
		rgb(787pt)=(0.90231,0.726246,0.312145);
		rgb(788pt)=(0.904301,0.726158,0.31115);
		rgb(789pt)=(0.906292,0.726081,0.310154);
		rgb(790pt)=(0.908283,0.726005,0.309159);
		rgb(791pt)=(0.910274,0.725928,0.308163);
		rgb(792pt)=(0.912249,0.725851,0.307151);
		rgb(793pt)=(0.914214,0.725775,0.30613);
		rgb(794pt)=(0.91618,0.725698,0.305109);
		rgb(795pt)=(0.918145,0.725622,0.304088);
		rgb(796pt)=(0.920111,0.7256,0.303031);
		rgb(797pt)=(0.922076,0.7256,0.301959);
		rgb(798pt)=(0.924041,0.7256,0.300886);
		rgb(799pt)=(0.926007,0.7256,0.299814);
		rgb(800pt)=(0.927972,0.7256,0.298722);
		rgb(801pt)=(0.929938,0.7256,0.297624);
		rgb(802pt)=(0.931903,0.7256,0.296527);
		rgb(803pt)=(0.933869,0.7256,0.295429);
		rgb(804pt)=(0.935812,0.725668,0.294264);
		rgb(805pt)=(0.937752,0.725744,0.29309);
		rgb(806pt)=(0.939692,0.725821,0.291916);
		rgb(807pt)=(0.941632,0.725897,0.290741);
		rgb(808pt)=(0.943571,0.726023,0.289518);
		rgb(809pt)=(0.945511,0.726151,0.288293);
		rgb(810pt)=(0.947451,0.726278,0.287068);
		rgb(811pt)=(0.949389,0.726411,0.285839);
		rgb(812pt)=(0.951278,0.726641,0.284537);
		rgb(813pt)=(0.953167,0.72687,0.283235);
		rgb(814pt)=(0.955056,0.7271,0.281933);
		rgb(815pt)=(0.956938,0.72734,0.280622);
		rgb(816pt)=(0.958776,0.727646,0.279243);
		rgb(817pt)=(0.960614,0.727952,0.277865);
		rgb(818pt)=(0.962451,0.728259,0.276486);
		rgb(819pt)=(0.964273,0.728597,0.275086);
		rgb(820pt)=(0.966034,0.729057,0.273606);
		rgb(821pt)=(0.967795,0.729516,0.272126);
		rgb(822pt)=(0.969557,0.729976,0.270645);
		rgb(823pt)=(0.971288,0.730473,0.269135);
		rgb(824pt)=(0.972947,0.73106,0.267552);
		rgb(825pt)=(0.974606,0.731647,0.265969);
		rgb(826pt)=(0.976265,0.732234,0.264387);
		rgb(827pt)=(0.977857,0.732879,0.262785);
		rgb(828pt)=(0.979338,0.733619,0.261151);
		rgb(829pt)=(0.980818,0.734359,0.259518);
		rgb(830pt)=(0.982299,0.735099,0.257884);
		rgb(831pt)=(0.983697,0.73591,0.256227);
		rgb(832pt)=(0.984999,0.736803,0.254542);
		rgb(833pt)=(0.986301,0.737697,0.252858);
		rgb(834pt)=(0.987603,0.73859,0.251173);
		rgb(835pt)=(0.988753,0.739566,0.249474);
		rgb(836pt)=(0.989774,0.740613,0.247764);
		rgb(837pt)=(0.990795,0.741659,0.246054);
		rgb(838pt)=(0.991816,0.742706,0.244344);
		rgb(839pt)=(0.992677,0.743816,0.242681);
		rgb(840pt)=(0.993443,0.744965,0.241048);
		rgb(841pt)=(0.994209,0.746114,0.239414);
		rgb(842pt)=(0.994975,0.747262,0.23778);
		rgb(843pt)=(0.995578,0.748465,0.236165);
		rgb(844pt)=(0.996114,0.74969,0.234557);
		rgb(845pt)=(0.99665,0.750915,0.232949);
		rgb(846pt)=(0.997186,0.752141,0.231341);
		rgb(847pt)=(0.997562,0.753386,0.229813);
		rgb(848pt)=(0.997893,0.754637,0.228307);
		rgb(849pt)=(0.998225,0.755887,0.226801);
		rgb(850pt)=(0.998557,0.757138,0.225295);
		rgb(851pt)=(0.998711,0.758433,0.223856);
		rgb(852pt)=(0.998839,0.759735,0.222426);
		rgb(853pt)=(0.998966,0.761037,0.220997);
		rgb(854pt)=(0.999094,0.762339,0.219567);
		rgb(855pt)=(0.999076,0.763641,0.218186);
		rgb(856pt)=(0.99905,0.764942,0.216808);
		rgb(857pt)=(0.999025,0.766244,0.21543);
		rgb(858pt)=(0.998995,0.767546,0.214054);
		rgb(859pt)=(0.998868,0.768848,0.212752);
		rgb(860pt)=(0.99874,0.77015,0.21145);
		rgb(861pt)=(0.998613,0.771451,0.210149);
		rgb(862pt)=(0.998473,0.772756,0.208856);
		rgb(863pt)=(0.998243,0.774083,0.207631);
		rgb(864pt)=(0.998014,0.775411,0.206405);
		rgb(865pt)=(0.997784,0.776738,0.20518);
		rgb(866pt)=(0.997539,0.77806,0.20396);
		rgb(867pt)=(0.997232,0.779362,0.20276);
		rgb(868pt)=(0.996926,0.780664,0.201561);
		rgb(869pt)=(0.99662,0.781966,0.200361);
		rgb(870pt)=(0.996299,0.783268,0.199168);
		rgb(871pt)=(0.995942,0.784569,0.197994);
		rgb(872pt)=(0.995584,0.785871,0.19682);
		rgb(873pt)=(0.995227,0.787173,0.195646);
		rgb(874pt)=(0.994842,0.788475,0.19449);
		rgb(875pt)=(0.994408,0.789777,0.193367);
		rgb(876pt)=(0.993974,0.791078,0.192244);
		rgb(877pt)=(0.99354,0.79238,0.191121);
		rgb(878pt)=(0.993083,0.793671,0.190021);
		rgb(879pt)=(0.992598,0.794947,0.188949);
		rgb(880pt)=(0.992113,0.796223,0.187877);
		rgb(881pt)=(0.991628,0.797499,0.186805);
		rgb(882pt)=(0.99113,0.798789,0.185732);
		rgb(883pt)=(0.990619,0.800091,0.18466);
		rgb(884pt)=(0.990109,0.801393,0.183588);
		rgb(885pt)=(0.989598,0.802695,0.182516);
		rgb(886pt)=(0.989072,0.803996,0.18146);
		rgb(887pt)=(0.988536,0.805298,0.180413);
		rgb(888pt)=(0.988,0.8066,0.179367);
		rgb(889pt)=(0.987464,0.807902,0.17832);
		rgb(890pt)=(0.98691,0.809186,0.177291);
		rgb(891pt)=(0.986349,0.810462,0.17627);
		rgb(892pt)=(0.985787,0.811738,0.175249);
		rgb(893pt)=(0.985226,0.813015,0.174228);
		rgb(894pt)=(0.984644,0.814311,0.173207);
		rgb(895pt)=(0.984057,0.815613,0.172186);
		rgb(896pt)=(0.98347,0.816914,0.171165);
		rgb(897pt)=(0.982883,0.818216,0.170144);
		rgb(898pt)=(0.982296,0.819518,0.169145);
		rgb(899pt)=(0.981709,0.82082,0.16815);
		rgb(900pt)=(0.981122,0.822122,0.167154);
		rgb(901pt)=(0.980535,0.823423,0.166159);
		rgb(902pt)=(0.979947,0.824725,0.165163);
		rgb(903pt)=(0.97936,0.826027,0.164168);
		rgb(904pt)=(0.978773,0.827329,0.163172);
		rgb(905pt)=(0.978186,0.828631,0.162177);
		rgb(906pt)=(0.977599,0.829932,0.161181);
		rgb(907pt)=(0.977012,0.831234,0.160186);
		rgb(908pt)=(0.976425,0.832536,0.15919);
		rgb(909pt)=(0.975838,0.833841,0.158195);
		rgb(910pt)=(0.975251,0.835168,0.157199);
		rgb(911pt)=(0.974664,0.836495,0.156204);
		rgb(912pt)=(0.974077,0.837823,0.155208);
		rgb(913pt)=(0.973489,0.83915,0.154213);
		rgb(914pt)=(0.972902,0.840477,0.153217);
		rgb(915pt)=(0.972315,0.841805,0.152222);
		rgb(916pt)=(0.971728,0.843132,0.151226);
		rgb(917pt)=(0.971155,0.844466,0.150224);
		rgb(918pt)=(0.970619,0.845819,0.149203);
		rgb(919pt)=(0.970083,0.847172,0.148182);
		rgb(920pt)=(0.969547,0.848525,0.147161);
		rgb(921pt)=(0.96902,0.849886,0.14614);
		rgb(922pt)=(0.968509,0.851265,0.145119);
		rgb(923pt)=(0.967999,0.852643,0.144098);
		rgb(924pt)=(0.967488,0.854022,0.143077);
		rgb(925pt)=(0.967,0.855411,0.142056);
		rgb(926pt)=(0.966541,0.856815,0.141035);
		rgb(927pt)=(0.966081,0.858219,0.140014);
		rgb(928pt)=(0.965622,0.859623,0.138992);
		rgb(929pt)=(0.965189,0.86104,0.137945);
		rgb(930pt)=(0.96478,0.862469,0.136873);
		rgb(931pt)=(0.964372,0.863899,0.135801);
		rgb(932pt)=(0.963963,0.865328,0.134729);
		rgb(933pt)=(0.96357,0.866773,0.133657);
		rgb(934pt)=(0.963187,0.868228,0.132585);
		rgb(935pt)=(0.962805,0.869683,0.131513);
		rgb(936pt)=(0.962422,0.871138,0.130441);
		rgb(937pt)=(0.962091,0.87261,0.129351);
		rgb(938pt)=(0.961785,0.874091,0.128253);
		rgb(939pt)=(0.961478,0.875571,0.127156);
		rgb(940pt)=(0.961172,0.877052,0.126058);
		rgb(941pt)=(0.960885,0.878571,0.124961);
		rgb(942pt)=(0.960605,0.880103,0.123863);
		rgb(943pt)=(0.960324,0.881634,0.122765);
		rgb(944pt)=(0.960043,0.883166,0.121668);
		rgb(945pt)=(0.959849,0.884719,0.120549);
		rgb(946pt)=(0.95967,0.886276,0.119426);
		rgb(947pt)=(0.959491,0.887833,0.118302);
		rgb(948pt)=(0.959313,0.88939,0.117179);
		rgb(949pt)=(0.959181,0.890995,0.116032);
		rgb(950pt)=(0.959054,0.892603,0.114884);
		rgb(951pt)=(0.958926,0.894211,0.113735);
		rgb(952pt)=(0.958799,0.895819,0.112587);
		rgb(953pt)=(0.958748,0.897453,0.111464);
		rgb(954pt)=(0.958697,0.899086,0.110341);
		rgb(955pt)=(0.958646,0.90072,0.109217);
		rgb(956pt)=(0.958602,0.902359,0.108089);
		rgb(957pt)=(0.958628,0.904043,0.106915);
		rgb(958pt)=(0.958653,0.905728,0.105741);
		rgb(959pt)=(0.958679,0.907413,0.104567);
		rgb(960pt)=(0.958718,0.909102,0.103393);
		rgb(961pt)=(0.95882,0.910812,0.102219);
		rgb(962pt)=(0.958922,0.912522,0.101044);
		rgb(963pt)=(0.959024,0.914232,0.0998703);
		rgb(964pt)=(0.959153,0.915962,0.0986961);
		rgb(965pt)=(0.959357,0.917749,0.0975219);
		rgb(966pt)=(0.959561,0.919536,0.0963477);
		rgb(967pt)=(0.959765,0.921323,0.0951736);
		rgb(968pt)=(0.959996,0.923118,0.0939907);
		rgb(969pt)=(0.960277,0.924931,0.092791);
		rgb(970pt)=(0.960557,0.926743,0.0915913);
		rgb(971pt)=(0.960838,0.928555,0.0903916);
		rgb(972pt)=(0.961151,0.930378,0.0891919);
		rgb(973pt)=(0.961509,0.932216,0.0879922);
		rgb(974pt)=(0.961866,0.934054,0.0867925);
		rgb(975pt)=(0.962223,0.935892,0.0855928);
		rgb(976pt)=(0.96262,0.937768,0.0843802);
		rgb(977pt)=(0.963053,0.939683,0.083155);
		rgb(978pt)=(0.963487,0.941597,0.0819297);
		rgb(979pt)=(0.963921,0.943512,0.0807045);
		rgb(980pt)=(0.964415,0.945426,0.0794643);
		rgb(981pt)=(0.964951,0.947341,0.0782135);
		rgb(982pt)=(0.965487,0.949255,0.0769628);
		rgb(983pt)=(0.966023,0.951169,0.075712);
		rgb(984pt)=(0.966594,0.953118,0.0744441);
		rgb(985pt)=(0.967181,0.955083,0.0731679);
		rgb(986pt)=(0.967768,0.957049,0.0718916);
		rgb(987pt)=(0.968355,0.959014,0.0706153);
		rgb(988pt)=(0.96898,0.960999,0.0693006);
		rgb(989pt)=(0.969619,0.96299,0.0679733);
		rgb(990pt)=(0.970257,0.964981,0.0666459);
		rgb(991pt)=(0.970895,0.966972,0.0653186);
		rgb(992pt)=(0.971554,0.968984,0.0639486);
		rgb(993pt)=(0.972218,0.971001,0.0625703);
		rgb(994pt)=(0.972882,0.973017,0.0611919);
		rgb(995pt)=(0.973545,0.975034,0.0598135);
		rgb(996pt)=(0.974232,0.97705,0.058318);
		rgb(997pt)=(0.974922,0.979067,0.056812);
		rgb(998pt)=(0.975611,0.981083,0.055306);
		rgb(999pt)=(0.9763,0.9831,0.0538)
	}
}
\tikzstyle arrowstyle=[scale=1]
\tikzstyle directed=[postaction={decorate,decoration={markings,
		mark=at position .65 with {\arrow[arrowstyle]{stealth}}}}]
\tikzstyle reverse directed=[postaction={decorate,decoration={markings,
		mark=at position .65 with {\arrowreversed[arrowstyle]{stealth};}}}]
\newcommand{\secref}[1]{Section~\ref{#1}}
\newcommand{\defref}[1]{Definition~\ref{#1}}
\newcommand{\tabref}[1]{Table~\ref{#1}}
\newcommand{\R}{\mathbb{R}}
\newcommand{\Domain}{\ensuremath{\mathcal{X}}} 
\newcommand{\dimension}{\ensuremath{d}} 
\newcommand{\timevar}{\ensuremath{t}} 
\newcommand{\indicator}[1]{\ensuremath{\mathbbm{1}_{#1}}}
\newcommand{\Lp}[1]{\ensuremath{L_{#1}}}
\newcommand{\flux}{\ensuremath{\mathbf{f}}}
\newcommand{\entropy}{\ensuremath{h}} 
\newcommand{\x}{\ensuremath{\bx}}
\newcommand{\y}{\ensuremath{y}}
\newcommand{\dt}{\partial_\timevar}
\newcommand{\dtstepsize}{\ensuremath{\Delta \timevar}}
\newcommand{\ncells}{\ensuremath{N_{\Domain}}}
\newcommand{\timeind}{\ensuremath{n}}
\newcommand{\cellind}{\ensuremath{i}}
\newcommand{\cellindx}{\ensuremath{i}}
\newcommand{\cellindy}{\ensuremath{j}}
\newcommand{\cellindR}{\ensuremath{l}}
\newcommand{\cellindRvar}{\ensuremath{\tilde{l}}}
\newcommand{\cell}[1]{\ensuremath{\Domain_{#1}}}
\newcommand{\cellR}[1]{\ensuremath{\Xi_{#1}}}
\newcommand{\limitervariable}{\ensuremath{\theta}}
\newcommand{\limitertolerance}{\ensuremath{\epsilon}}
\newcommand{\density}{\ensuremath{\rho}}
\newcommand{\energy}{\ensuremath{E}}
\newcommand{\pressure}{\ensuremath{p}}
\newcommand{\momentum}{\ensuremath{\rho v}}
\newcommand{\densityvar}{\ensuremath{\widetilde{\rho}}}
\newcommand{\energyvar}{\ensuremath{\widetilde{E}}}
\newcommand{\momentumvar}{\ensuremath{\widetilde{\rho}\widetilde{v}}}
\newcommand{\uncertainty}{\ensuremath{\xi}}
\newcommand{\xiPDF}{\ensuremath{f_\Xi}}
\newcommand{\xiBasisPoly}[1]{\ensuremath{\phi_{#1}}}
\newcommand{\SGsumIndex}{\ensuremath{k}}
\newcommand{\SGsumIndexvar}{\ensuremath{\tilde{k}}}
\newcommand{\SGeqIndex}{\ensuremath{\tilde{k}}}
\newcommand{\xiPDFdxi}{\ensuremath{\xiPDF \mathrm{d}\uncertainty}}
\newcommand{\intRS}{\ensuremath{\int_{\randomSpace}}}
\newcommand{\MEintcellR}[2]{\ensuremath{\int\limits_{\cellR{#1}} #2\,\locxiPDF{#1} \mathrm{d}\uncertainty}}
\newcommand{\SGtruncorder}{\ensuremath{{K_\randomSpace}}}
\newcommand{\nbxiQuadNodes}{\ensuremath{Q_\randomSpace}}
\newcommand{\realizableSet}{\ensuremath{\mathcal{R}}}
\newcommand{\sampleSpace}{\ensuremath{\Omega}}
\newcommand{\sigmaAlgebra}{\ensuremath{\mathcal{F}}}
\newcommand{\probabilityMeasure}{\ensuremath{\mathbb{P}}}
\newcommand{\randomSpace}{\ensuremath{\Xi}}
\newcommand{\solution}{\ensuremath{\bu}}
\newcommand{\cutfun}[1]{h\left(#1\right)}
\newcommand{\entropicVar}{\ensuremath{\boldsymbol{\lambda}}}
\newcommand{\entropicVarSum}{\ensuremath{\boldsymbol{\Lambda}}}
\newcommand{\InvGradEntropy}{\ensuremath{\left( \nabla_{\!\solution}\entropy\right)^{-1}\!}}
\newcommand{\DiffEntropy}{\ensuremath{ \nabla_{\!\solution}\entropy}}
\newcommand{\DiffSol}{\ensuremath{ \nabla_{\!\!\scalebox{0.6}{\entropicVarSum}\!}\solution}}
\newcommand{\entrcm}[2]{\ensuremath{\bar{\entropicVarSum}^{(#1)}_{#2}}}
\newcommand{\hyperbolic}{admissible}
\newcommand{\hypset}{hyperbolicity set}
\newcommand{\randomElement}[1]{\ensuremath{\Xi_{#1}}}
\newcommand{\indicatorVar}[2]{\ensuremath{\mbox{\large $\chi$}_{#1}(#2)}}
\newcommand{\MEIndex}{\ensuremath{l}}
\newcommand{\MEElements}{\ensuremath{{N_{\randomSpace}}}}
\newcommand{\localxiBasisPoly}[2]{\ensuremath{\phi_{#1,#2} }}
\newcommand{\ME}{multi-element}
\newcommand{\uncertaintyR}{\uncertainty^R}
\newcommand{\uncertaintyL}{\uncertainty^L}
\newcommand{\E}{\ensuremath{\mathbb{E}}}
\DeclareMathOperator{\Var}{\text{Var}}
\newcommand{\nstochdim}{\ensuremath{N}}
\newcommand{\nstochdimidx}{\ensuremath{n}}
\newcommand{\uncertaintydim}[1]{\xi^{#1}}
\newcommand{\locxiPDF}[1]{\ensuremath{f}_{\randomElement{#1}}}
\newcommand{\MExiPDFdxi}{\ensuremath{\locxiPDF{_\MEIndex} \mathrm{d}\uncertainty_\MEIndex}}
\newcommand{\tend}{\ensuremath{T}}
\newcommand{\xiPDFdxiu}{\ensuremath{f_{\Xi_\cellindR}\!(\uncertainty) \mathrm{d}\uncertainty}}
\newcommand{\MEintcellRu}[2]{\ensuremath{\int\limits_{\cellR{#1}} #2\,\locxiPDF{\MEIndex}\!(\uncertainty) \mathrm{d}\uncertainty}}
\newcommand{\lagrange}{\mathcal{L}}
\newcommand{\fluxvec}{\ensuremath{\bF}}
\newcommand{\nbflux}{M}
\newcommand{\fluxidx}{m}
\newcommand{\solutioncoeff}[3]{{\overline{\solution}}_{#1,#2}^{(#3)}}
\newcommand{\solutioncoeffFilter}[3]{{\widehat{\overline{\solution}}}_{#1,#2}^{(#3)}}
\newcommand{\solutioncm}[2]{{\overline{\solution}}_{#2}^{(#1)}}
\newcommand{\solutioncmFilter}[2]{{\widehat{\overline{\solution}}}_{#2}^{(#1)}}
\newcommand{\solutionbar}{{\overline{\solution}}}
\newcommand{\hyperbolLimitm}[1]{\ensuremath{\Lambda\Pi_\SGtruncorder^{#1}}}
\newcommand{\costfun}{J}
\newcommand{\SGsolution}{\solution^{\SGtruncorder}}
\newcommand{\filterSGsolution}{\widehat{\solution}^{\SGtruncorder}}
\newcommand{\filter}{g}
\newcommand{\filterstrength}{\lambda}
\newcommand{\filteroperator}{L}
\newcommand{\indicatorVarl}[1]{\ensuremath{\mbox{\large $\chi$}_{#1}}}
\renewcommand{\comment}[1]{\textcolor{black}{#1}}
\begin{document}

\begin{abstract}
In this article we study intrusive uncertainty quantification schemes for systems of conservation laws with uncertainty. While intrusive methods inherit certain advantages such as adaptivity and an improved accuracy, they suffer from two key issues. First, intrusive methods tend to show oscillations, especially at shock structures and second, standard intrusive methods can lose hyperbolicity. The aim of this work is to tackle these challenges with the help of two different strategies. First, we combine filters with the \ME{} approach for the hyperbolicity--preserving stochastic Galerkin (hSG) scheme. While the limiter used in the hSG scheme ensures hyperbolicity, the filter as well as the \ME{} ansatz mitigate oscillations. Second, we derive a \ME{} approach for the intrusive polynomial moment (IPM) method. Even though the IPM method is inherently hyperbolic, it suffers from oscillations while requiring the solution of an optimization problem in every spatial cell and every time step. The proposed \ME{} IPM method leads to a decoupling of the optimization problem in every multi-element. Thus, we are able to significantly decrease computational costs while improving parallelizability. Both proposed strategies are extended to adaptivity, allowing to adapt the number of basis functions in each multi-element to the smoothness of the solution. We finally evaluate and compare both approaches on various numerical examples such as a NACA airfoil and a nozzle test case for the two-dimensional Euler equations. In our numerical experiments, we observe the mitigation of spurious artifacts. Furthermore, using the multi-element ansatz for IPM significantly reduces computational costs.
\end{abstract}

\begin{keyword}
Uncertainty quantification \sep intrusive \sep polynomial Chaos \sep stochastic Galerkin \sep Filter \sep IPM \sep Hyperbolicity \sep Limiter \sep Multi-Element
\MSC[2010] 35L60 \sep 35Q31 \sep 35Q62\sep 37L65 \sep 65M08 \sep 65M60 
\end{keyword}

\maketitle

\noindent

\section{Introduction}\label{sec:one}
Uncertainties play a key role in modelling hyperbolic systems of conservation laws if 
crucial data or parameters might not be available exactly due to measurement errors. Thus, to achieve an adequate description of reality, one needs to include non-deterministic effects in the approximation of deterministic systems.
In the recent years many  approaches, e.g., methods based on Bayesian inversion, Monte Carlo algorithms or stochastic Galerkin schemes, were proposed to quantify the uncertainties in order to account for them in predictions and simulations \cite{Schillings2017,Abgrall2013,Abgrall2007,Gottlieb2008,Kolb2018,Jin2017c,Bos2018,Yan2019}.  In the context of these Uncertainty Quantification (UQ) methods, we distinguish between so-called non-intrusive and intrusive approaches.

Non-intrusive UQ methods use deterministic solvers of the equations as a black box code in order to solve the model problem by repeated application of this black box
on fixed realizations of the uncertainty. The most widely known non-intrusive UQ method is (Multi-Level) Monte Carlo \cite{lye2016multilevel,Heinrich2001,Giles2008} which is based on statistical sampling methods and can be easily implemented and adopted to any type of uncertain conservation law, but comes with potentially high cost due to repeated application of e.g. finite volume methods (FVM). These so-called MC-FVM schemes have been studied in \cite{Mishra2016,mishra2013} showing a slow convergence rate that is improved by Multi-Level MC-FVM algorithms for conservation laws \cite{Mishra2014,mishra2012multi}. Another non-intrusive UQ scheme is stochastic collocation \cite{Xiu2005} and further developments of this method are described in \cite{Witteveen2009,Ma2009,Nobile2008}.

In this article, we focus on intrusive UQ methods, which aim to increase the overall efficiency but require a modification of the underlying solver for the deterministic problem. The most popular methods are the stochastic Galerkin (SG) and the intrusive polynomial moment (IPM) method \cite{Poette2009}. They both rely on the generalized Polynomial Chaos (gPC) expansion \cite{Xiu2003,Wan2006a,Abgrall2007,Chen2005} which is theoretically based on the Polynomial Chaos expansion from \cite{Wiener1938}. Stochastic Galerkin expands the solution in the conserved variables such that the gPC system results in a weak formulation of the equations with respect to the stochastic variable. IPM expands the stochastic solution in the so-called entropic variables, which results in a hyperbolic gPC system that yields a good approximation quality, however, it is necessary to know a strictly convex entropy of the original system beforehand. 

The biggest challenge of UQ methods for hyperbolic conservation laws lies in the fact that discontinuities in the physical space propagate into the solution manifold \cite{barth2013non} and cause oscillations. These oscillations for example translate into non-physical step-like approximations of the expected value around shocks.
The naive usage of SG for nonlinear hyperbolic problems even typically fails \cite{abgrall2017uncertainty,Poette2009}, since the polynomial expansion of discontinuous data yields huge Gibbs oscillations that result in the loss of hyperbolicity. In order to resolve this problem, we apply the so-called hyperbolicity-preserving limiter to the classical SG approach. This limiter is introduced in \cite{Schlachter2018} and the theory is extended in \cite{Meyer2019a} to high-order Runge-Kutta discontinuous Galerkin schemes as well as the so-called \ME{} approach developed by \cite{Wan2006,Tryoen2010}, where  the random space is divided into disjoint elements in order to define local gPC approximations. The \ME{} ansatz corresponds to an $h$-refinement in the random space. 
Further developments of this method encompass $h$- and $hp$-adaptive refinements in the stochastic space \cite{Wan2005,Wan2009,Tryoen2010}. 
As presented in \cite{Schlachter2019},  a simple example of the uncertain linear transport equation shows that the \ME{} ansatz and the hyperbolicity limiter is still not enough to sufficiently reduce oscillations. Similar situations have been observed in \cite{Poette2009,Barth2012,Ma2009}. Another approach to damp oscillations induced by the Gibbs phenomenon is given in \cite{Kusch2018}, where filters are applied to the gPC coefficients of the SG approximation. Filters are a common technique from kinetic theory \cite{McClarren2010} to reduce oscillations but do not guarantee the hyperbolicity of the gPC system.

On the other hand, the hyperbolicity of the moment system is ensured through the IPM method that has been introduced in \cite{Poette2009}. Expanding the stochastic solution not in the conserved variables but in entropic variables is well known in the radiative transfer community as minimum entropy models \cite{levermore1996moment,hauck2011high,Kusch2018a}. The resulting gPC system has good approximation properties but requires to solve typically expensive optimization problems for nonlinear systems in every space-time cell in order to calculate the entropic variable. IPM may be used to control oscillations of the solution since they are bounded to a certain range though the entropy of the system. Another development of this method is proposed in \cite{Kusch2017}, where a second-order IPM scheme is described which fulfills the maximum principle. More information on IPM can be found in \cite{poette2011treatment,despres2013robust,kusch2020intrusive,Kusch2019,Kusch2018a}. A method similar to IPM is the modal discretization of Roe variables, see e.g. \cite{pettersson2014stochastic,gerster2020entropies} which ensures hyperbolicity. However, similar to IPM and SG, the method experiences non-physical artifacts such as step-like expected value approximations.

Within the scope of this work, we propose two strategies to ensure both, the mitigation of oscillations and the preservation of hyperbolicity. First, we employ the \ME{} ansatz together with the hyperbolicity limiter and filtering to stochastic Galerkin. Though this extension is straight forward, it allows the use of filters in challenging settings, leading to a robust numerical scheme that is able to deal with Gibbs phenomenon. Second, we derive an extension of the IPM method to \ME{} basis functions. Dividing the stochastic domain into \ME{}s promises to reduce oscillations and to decrease computational times within IPM since it decouples and simplifies the optimization problems. In order to further reduce the numerical costs we additionally apply the one-shot implementation for IPM from \cite{kusch2020intrusive}, similar to the idea of one-shot optimization in shape optimization \cite{Hazra2005}. Moreover, each of our numerical schemes is accelerated employing adaptivity, i.e. the truncation order of the gPC polynomials adapts locally to the smoothness of the solution, cf. \comment{\cite{tryoen2012adaptive,buerger2014hybrid,meyer2020posteriori,kusch2020intrusive}}.

The paper is structured as follows. In \secref{sec:two} we describe our problem setting for systems of hyperbolic conservation laws with uncertainty. We then describe in \secref{sec:three} the filtered hyperbolicity-preserving stochastic Galerkin scheme by introducing the \ME{} approach as well as the hyperbolicity limiter and filters for stochastic Galerkin. In \secref{sec:four} we complete the theoretical framework by deriving the \ME{} intrusive polynomial moment method. With these oscillation mitigating intrusive UQ schemes at hand, we present numerical test cases for the one- and two-dimensional Euler equations within \secref{sec:five}. Here, we study the behavior of the proposed methods in terms of errors and computational costs, that verify the reduction of oscillations compared to the standard intrusive schemes. 
Moreover, we are able to significantly reduce the numerical costs by employing the multi-element ansatz in IPM.
\section{Problem Setting}\label{sec:two}
We consider random systems of hyperbolic conservation laws of the form
\begin{subequations}\label{eq:spde}
\begin{equation}\dt \solution(\timevar,\x,\uncertainty) + \nabla_\x\cdot \fluxvec (\solution(\timevar,\x,\uncertainty),\uncertainty) = \bm{0},\end{equation}
for a flux function $\fluxvec=(\flux_1,\ldots,\flux_\nbflux)$, with $\flux_\fluxidx({\solution}): \R^{\dimension} \rightarrow \R^{\dimension}$, $\fluxidx=1,\ldots,\nbflux$, for $\x\in\Domain\subset\R^\nbflux$, and where the solution
$$\solution = \solution(\timevar,\x,\uncertainty):\R_+ \times \R^\nbflux \times \randomSpace \rightarrow \R^{\dimension}$$
is depending on a one-dimensional random variable $\uncertainty$ with probability space $(\sampleSpace,\sigmaAlgebra,\probabilityMeasure)$. Here,
we denote the random space of this uncertainty by $\randomSpace := \uncertainty(\sampleSpace)$ and its probability density function by $\xiPDF(\uncertainty) : \randomSpace \rightarrow \R_+$.
We further assume that the uncertainty can be introduced via the initial conditions, namely
\begin{equation}\solution(\timevar=0,\x,\uncertainty) = \solution^0(\x,\uncertainty).\end{equation}
\end{subequations}
Depending on $\Domain$, additional boundary conditions have to prescribed.
In this article, we consider intrusive uncertainty quantification methods in order to solve \eqref{eq:spde}. 
Therefore, we describe the most commonly used stochastic Galerkin and intrusive polynomial moment method in the following sections, where we modify them according to the theory explained within the introduction. 
The proposed methods are accelerated using techniques discussed in \cite{kusch2020intrusive}. 

\section{Filtered hyperbolicity-preserving stochastic Galerkin scheme}\label{sec:three}
In this section we explain oscillation mitigating approaches for the stochastic Galerkin scheme. As mentioned in \secref{sec:one}, stochastic Galerkin systems are likely to lose hyperbolicity \cite{Poette2009}, since discontinuities that typically arise in hyperbolic equations propagate into the stochastic domain and cause Gibbs oscillations that might yield the loss of hyperbolicity, i.e. the failure of the standard stochastic Galerkin scheme. We therefore apply the \ME{} approach \cite{Wan2006,Tryoen2010} as well as the hyperbolicity-preserving limiter that was introduced in \cite{Schlachter2018}. However, this limiter still does not sufficiently damp Gibbs oscillations as seen in \cite{Schlachter2019}, where an uncertain linear advection example reveals huge overshoots that are not reduced by the hyperbolicity limiter due to the preset hyperbolicity in this scalar equation. 
In this article, we consider another approach that aims on reducing oscillations, i.e. we combine the filtered SG method \cite{Kusch2018} with the hyperbolicity limiter such that the method can be applied on any system of conservation laws. To this end, we filter the gPC coefficients of the SG approximation, which allows a numerically cheap reduction of oscillations. Different filters for SG schemes are derived and described in \cite{Kusch2018}.

\subsection{Multi-Element Stochastic Galerkin}\label{sec:multielement}
We seek for an approximate solution by a finite-term generalized Polynomial Chaos expansion (see e.g. \cite{Gottlieb2008}) \comment{of degree $\SGtruncorder$ which reads}
\begin{equation}\label{SGapproach}
\solution(\timevar,\x,\uncertainty) \approx  \sum\limits_{\SGsumIndex=0}^{\SGtruncorder} \solution_\SGsumIndex(\timevar,\x) \, \xiBasisPoly{\SGsumIndex}(\uncertainty).
\end{equation}
\comment{The polynomials} $\xiBasisPoly{\SGsumIndex}$ of degree $\SGsumIndex$ are supposed to satisfy the orthogonality relation
\begin{equation}\label{eq:orthogonality}
\int\limits_{\randomSpace} \xiBasisPoly{\SGsumIndex}(\uncertainty) \, \xiBasisPoly{\SGeqIndex}(\uncertainty) \, \xiPDF(\uncertainty) \mathrm{d}{\uncertainty} =
\begin{cases}
1, &\quad\text{if }\SGsumIndex=\SGeqIndex\\
0,&\quad\text{else}
\end{cases} \quad \quad \forall \ \SGsumIndex,\SGeqIndex \in \{0, \dots, \SGtruncorder\}.
\end{equation}
Inserting \eqref{SGapproach} into \eqref{eq:spde} and applying a Galerkin projection in the stochastic space leads to the so-called stochastic Galerkin system
\begin{equation}\label{SGsystem}\partial_\timevar \solution_{\SGeqIndex}(\timevar,\x) +  \nabla_\x\cdot \intRS \fluxvec\left(\sum\limits_{\SGsumIndex=0}^{\SGtruncorder} \solution_\SGsumIndex(\timevar,\x) \, \xiBasisPoly{\SGsumIndex}(\uncertainty),\uncertainty\right) \!\xiBasisPoly{\SGeqIndex}(\uncertainty)\xiPDF(\uncertainty)\mathrm{d}\uncertainty = 0, \qquad \SGeqIndex = \bm{0},\ldots,\SGtruncorder.\end{equation}

For discontinuous solutions, the gPC approach may converge slowly or even fail to converge, cf. \cite{Wan2005, Poette2009}.  As presented in \cite{Wan2006,Meyer2019a}, we therefore apply the \ME{} approach, where $\randomSpace$ is divided into disjoint elements with local gPC approximations of  \eqref{eq:spde}. 

We assume that $\randomSpace=(\uncertaintyL,\uncertaintyR)$ and define a decomposition of $\randomSpace$ into $\MEElements$ \ME{}s $\randomElement{\MEIndex} = (\uncertainty_{\MEIndex-\frac12}, \uncertainty_{\MEIndex+\frac12})$, $\cellindR=1,\ldots,\MEElements$, of width $\Delta\uncertainty=\frac{\uncertaintyL-\uncertaintyR}{\MEElements}$.  Moreover, we  introduce an indicator variable $\chi_{\MEIndex}: \Omega \to  \{0,1\}$ on every random element
\begin{equation}\label{def:indicator}
\indicatorVar{\MEIndex}{\uncertainty} :=
\begin{cases}
1&\text{if } \uncertainty \in \randomElement{\MEIndex}, \\
0 &\text{else, }
\end{cases}
\end{equation}
for $\MEIndex=1,\ldots,\MEElements $. 
If we let $\{\localxiBasisPoly{\SGsumIndex}{\MEIndex} (\uncertainty)\}_{\SGsumIndex=0}^\infty$ be  orthonormal polynomials with respect to a conditional probability density function $\locxiPDF{\cellindR}:\cellR{\cellindR}\rightarrow\mathbb{R}$ on the \ME{} $\randomElement{\MEIndex}$, as in \cite{Meyer2019a}, 
the global approximation \eqref{SGapproach} can be written as
\begin{equation}\label{def:globalSG}
\solution(t,\x,\uncertainty) = \sum_{\MEIndex=1}^{\MEElements}  \solution_\MEIndex(t,\x,\uncertainty) \indicatorVar{\MEIndex}{\uncertainty} \approx \sum_{\MEIndex=1}^{\MEElements}  \sum_{\SGsumIndex=0}^{\SGtruncorder}  \solution_{\SGsumIndex, \MEIndex}(\timevar,\x) \localxiBasisPoly{\SGsumIndex}{\MEIndex}(\uncertainty) \indicatorVar{\MEIndex}{\uncertainty}.
\end{equation}
As $\MEElements,\SGtruncorder\to\infty$, the local approximation converges to the global solution in $\Lp{2}(\sampleSpace)$, cf. \cite{Alpert1993}.

\begin{remark}
	For a uniform distribution of the uncertainty, we have $\locxiPDF{\MEIndex}\!(\uncertainty)=\frac{1}{\xi_{\MEIndex+1}-\xi_\MEIndex}\indicatorVar{\cellR{\cellindR}}{\uncertainty}$.
	Moreover, the local probability density function remains a density function of a uniformly distributed random variable. 
	Other distribution types such as the normal distribution require to numerically compute a set of polynomials which are orthogonal with respect to $
	\locxiPDF{\MEIndex}\!(\uncertainty)$, see \cite{Wan2006}. 
\end{remark}

We come up with the following ME stochastic Galerkin system
\begin{equation}\label{SGsystemME}\partial_t \solution_{\SGeqIndex,\MEIndex}(\timevar,\x) +  \nabla_\x\cdot \int\limits_{\cellR{\cellindR}}\! \fluxvec\Big(\!\sum_{\MEIndex=1}^{\MEElements}  \sum_{\SGsumIndex=0}^{\SGtruncorder}  \solution_{\SGsumIndex, \MEIndex}(\timevar,\x) \localxiBasisPoly{\SGsumIndex}{\MEIndex}(\uncertainty),\uncertainty \Big) \xiBasisPoly{\SGeqIndex,\MEIndex}(\uncertainty)\xiPDFdxiu = \bm{0},\end{equation}
for $\SGeqIndex=0,\ldots,\SGtruncorder$ and $\MEIndex=1,\ldots,\MEElements$.

The calculation of the expected value and variance of $\solution$ can be found in \cite{Meyer2019a}, 
\comment{we obtain
\begin{align}
\E(\solution) &\approx  
\sum_{\MEIndex=1}^{\MEElements}  \probabilityMeasure(\indicatorVarl{\MEIndex}=1) \,\solution_{0,\MEIndex}, \label{SGEV}\\[0.2cm]
\Var(\solution) &\approx
\sum_{\MEIndex=1}^{\MEElements} \Big( \Var(\solution_{\MEIndex}) + \big(\solution_{0,\MEIndex} - \mathbb{E}(\solution)\big)^2\Big)\,\probabilityMeasure(\indicatorVarl{\MEIndex}=1),\label{SGSD}
\end{align}
where the local variance $\Var(\solution_{\MEIndex})$ is given by
$$\Var(\solution_{\MEIndex})\approx \int_{\randomElement{\MEIndex}} \! \bigg( \sum_{\SGsumIndex=0}^{\SGtruncorder}  \solution_{\SGsumIndex,\MEIndex}\localxiBasisPoly{\SGsumIndex}{\MEIndex}\bigg)^2  \MExiPDFdxi - \solution_{0,\MEIndex}^2
=\sum_{\SGsumIndex=1}^{\SGtruncorder}  \solution_{\SGsumIndex,\MEIndex}^2.$$}
\renewcommand{\x}{x}
\subsection{Numerical Implementation}\label{sec:numerics}
We now describe our numerical scheme, 
 whereas we consider two-dimensional physical spaces which we require for our numerical examples of the two-dimensional Euler equations. Thus, we divide the spatial domain $\Domain=[\x_L,\x_R]\times[\y_L,\y_R] \subset\R^2$ into a uniform rectangular mesh with cells $\cell{\cellindx,\cellindy} =  [\x_{\cellindx-\frac{1}{2}},\x_{\cellindx+\frac{1}{2}}]\times [\y_{{\cellindy-\frac{1}{2}}},\y_{\cellindy+\frac{1}{2}}]$ where $\x_{\cellindx\pm\frac{1}{2}} = \x_{\cellindx}\pm\frac{\Delta\x }{2}$, $\y_{\cellindy\pm\frac{1}{2}} = \y_{\cellindy}\pm\frac{\Delta\y }{2}$ and $\Delta\x = \frac{\x_R-\x_L}{N_\x}$, $\Delta\y = \frac{\y_R-\x_L}{N_\y}$,
such that $(\cellindx,\cellindy)\in\ncells:=\{(\cellindx,\cellindy)\in\mathbb{N}^2~|~\cellindx\leq N_\x,\,\cellindy\leq N_\y\}$. 

Additionally, we discretize the random space $\randomSpace$  with the \ME{} ansatz from \secref{sec:multielement}, i.e. we divide $\randomSpace$ into $\cellR{\cellindR}$, for $\cellindR=1,\ldots,\MEElements$.
We test \eqref{eq:spde} in each spatial cell $\cell{\cellindx,\cellindy}$ by a test function $v(\x,\y)$ with supp$(v)\subseteq\cell{\cellindx,\cellindy}$ and obtain after one formal integration by parts the following weak formulation 
\begin{subequations}
\begin{align}\partial_\timevar\int\limits_{\cell{\cellindx,\cellindy}}\!\solution(\timevar,\x,\uncertainty) v(\x,\y) \mathrm{d}(\x,\y) &- \int\limits_{\cell{\cellindx,\cellindy}}\!\Big(\flux_1\big(\solution(\timevar,\x,\uncertainty),\uncertainty\big)\partial_\x v(\x,\y) + \flux_2\big(\solution(\timevar,\x,\uncertainty),\uncertainty\big)\partial_\y v(\x,\y)\Big)\, \mathrm{d}(\x,\y)\\
&+ \int\limits_{\y_{\cellindy-\frac{1}{2}}}^{\y_{\cellindy+\frac{1}{2}}}\Big[\flux_1\big(\solution(\timevar,\x,\uncertainty),\uncertainty\big) v(\x,\y) \, \Big]_{\x_{\cellind-\frac{1}{2}}}^{\x_{\cellind+\frac{1}{2}}}\mathrm{d}\y \label{eq:wf2}\\ &+ \int\limits_{\x_{\cellindy-\frac{1}{2}}}^{\x_{\cellindy+\frac{1}{2}}}\Big[\flux_2\big(\solution(\timevar,\x,\uncertainty),\uncertainty\big) v(\x,\y) \, \Big]_{\y_{\cellind-\frac{1}{2}}}^{\y_{\cellind+\frac{1}{2}}}\mathrm{d}\x  =\bm{0}, \label{eq:wf3}
\end{align}
\end{subequations}
for $(\cellindx,\cellindy)\in\ncells$.

Since the solution is discontinuous, we replace the flux at the interfaces $\x_{\cellind\pm\frac{1}{2}}$ with a consistent numerical flux function $\hat{\flux}_\fluxidx$, such that $\hat{\flux}_\fluxidx(\solution,\solution,\uncertainty)=\flux_\fluxidx(\solution,\uncertainty)$ for $\fluxidx=1,2$.
If we use a first order numerical scheme, we choose $v=\frac{1}{\Delta\x\Delta\y}$ and use the midpoint rule to approximate the integrals in \eqref{eq:wf2} and \eqref{eq:wf3}. We then obtain for one time step of the forward Euler scheme 
\begin{subequations}\label{eq:FVschemex}
\begin{align} 
\solutioncm{n+1}{\cellind,\cellindy} = \solutioncm{n}{\cellind,\cellindy} 
&- \frac{\Delta \timevar}{\Delta \x}\Big(\hat{\flux}_1(\solutioncm{n}{\cellind,\cellindy},\solutioncm{n}{\cellind+1,\cellindy},\uncertainty)-\hat{\flux}_1(\solutioncm{n}{\cellind-1,\cellindy},\solutioncm{n}{\cellind,\cellindy},\uncertainty) \Big)\\
&-\frac{\Delta \timevar}{\Delta\y }\Big(\hat{\flux}_2(\solutioncm{n}{\cellind,\cellindy},\solutioncm{n}{\cellind,\cellindy+1},\uncertainty)-\hat{\flux}_2(\solutioncm{n}{\cellind,\cellindy-1},\solutioncm{n}{\cellind,\cellindy},\uncertainty) \Big),
\end{align}
\end{subequations}
where $\solutioncm{n}{\cellind,\cellindy}$, $(\cellindx,\cellindy)\in\ncells$ denotes the spatial cell mean of $\solution$ in $\cell{\cellind,\cellindy}$ at time $\timevar_\timeind$, namely
\begin{equation*}
\solutioncm{\timeind}{\cellind,\cellindy}=\solutioncm{\timeind}{\cellind,\cellindy}(\uncertainty):= \frac{1}{\Delta\x\Delta\y}\int\limits_{\cell{\cellind,\cellindy}}\!\solution(\timevar_\timeind,\x,\y,\uncertainty)\,\mathrm{d}(\x,\y).
\end{equation*}
In our numerical experiments we use the numerical Lax-Friedrichs flux 
\begin{align*}\hat{\flux}_1(\solutionbar_{\cellind,\cellindy},\solutionbar_{\cellind+1,\cellindy},\uncertainty)&=\frac{1}{2}\big(\flux_1(\solutionbar_{\cellind,\cellindy},\uncertainty) + \flux_1(\solutionbar_{\cellind+1,\cellindy},\uncertainty) -  \lambda_1^{\max}(\solutionbar_{\cellind+1,\cellindy} - \solutionbar_{\cellind,\cellindy})\big), \\
\hat{\flux}_2(\solutionbar_{\cellind,\cellindy},\solutionbar_{\cellind,\cellindy+1},\uncertainty)&=\frac{1}{2}\big(\flux_2(\solutionbar_{\cellind,\cellindy},\uncertainty) + \flux_2(\solutionbar_{\cellind,\cellindy+1},\uncertainty) -  \lambda_2^{\max}(\solutionbar_{\cellind,\cellindy+1} - \solutionbar_{\cellind,\cellindy})\big),\end{align*}
where the numerical viscosity constants $\lambda_1^{\max}$ and $\lambda_2^{\max}$ are taken as the global estimates of the absolute value of the largest eigenvalue of $\frac{\partial \flux_1(\solution)}{\partial \solution}$ and $\frac{\partial \flux_2(\solution)}{\partial \solution}$, respectively.
The CFL condition of \eqref{eq:FVschemex} reads
$$\Delta\timevar\leq\frac{\Delta\x}{\lambda_1^{\max}} + \frac{\Delta\y}{\lambda_2^{\max}}.$$

For the coefficients of the gPC polynomial \eqref{def:globalSG} in the multi-element stochastic Galerkin system \eqref{SGsystemME}, the finite volume scheme \eqref{eq:FVschemex} is given by
\begin{subequations}\label{LaxFriedrichs}
\begin{align}\solutioncoeff{\SGsumIndex}{\cellind,\cellindy,\cellindR}{\timeind+1}
= \solutioncoeff{\SGsumIndex}{\cellind,\cellindy,\cellindR}{\timeind}
&- \frac{\dtstepsize}{\Delta\x} \MEintcellR{\cellindR} {\Big(\hat{\flux}_1(\solutioncm{n}{\cellind,\cellindy},\solutioncm{n}{\cellind+1,\cellindy},\uncertainty)-\hat{\flux}_1(\solutioncm{n}{\cellind-1,\cellindy},\solutioncm{n}{\cellind,\cellindy},\uncertainty)\Big)
\xiBasisPoly{\SGsumIndex,\cellindR}}\\
&-\frac{\dtstepsize}{\Delta\y} \MEintcellR{\cellindR} {\Big(\hat{\flux}_2(\solutioncm{n}{\cellind,\cellindy},\solutioncm{n}{\cellind,\cellindy+1},\uncertainty)-\hat{\flux}_2(\solutioncm{n}{\cellind,\cellindy-1},\solutioncm{n}{\cellind,\cellindy},\uncertainty)\Big)
\xiBasisPoly{\SGsumIndex,\cellindR}},
\end{align}
\end{subequations}
for each equation $\SGsumIndex=0,\ldots,\SGtruncorder$, where $\solutioncoeff{\SGsumIndex}{\cellind,\cellindy,\cellindR}{\timeind}$ describes the $\cell{\cellind,\cellindy}\times\cellR{\cellindR}$ cell mean of the $\SGsumIndex$th coefficient in the gPC expansion at time $\timevar_\timeind$, namely
$$ \solutioncm{n}{\SGsumIndex,\cellind,\cellindy,\cellindR}:= \frac{1}{\Delta\x\Delta\y}\int\limits_{\cell{\cellind,\cellindy}}\!\MEintcellRu{\cellindR}{\solution(\timevar_\timeind,\x,\uncertainty)\,\xiBasisPoly{\SGsumIndex,\cellindR}(\uncertainty)}\,\mathrm{d}(\x,\y),$$
for  $\SGsumIndex=0,\ldots,\SGtruncorder$, $(\cellind,\cellindy)\in\ncells$ and $\MEIndex=1,\ldots,\MEElements$.

\begin{remark}\label{rem:kinFlux}
The integrals in \eqref{LaxFriedrichs} are numerically solved using a $\nbxiQuadNodes$-point appropriate quadrature rule on $\randomSpace$. In our numerical examples we pick Gauß-Legendre or Clenshaw-Curtis quadrature rules\comment{, where Clenshaw-Curtis nodes are chosen whenever numerical computations make use of adaptivity, cf. Remark~\ref{rem:acceleration}}. Note that computing moments of an underlying flux is a common numerical flux choice in kinetic theory \cite{deshpande1986kinetic,harten1983upstream,perthame1990boltzmann,perthame1992second} and is frequently used in the context of uncertainty quantification, see e.g. \cite{Schlachter2018,Kusch2019}. A comparison to numerical fluxes working directly on the moments can be found in \cite{debusschere2004numerical} or \cite[Appendix~A]{kusch2020intrusive}.
\end{remark}

\subsection{Hyperbolicity-Preserving Limiter}\label{sec:hypPresLimiter}
Usually, the solution of our system of equations \eqref{eq:spde} has to fulfill certain physical properties. For example, a density should always be nonnegative. For hierarchical models like the Euler equations, the system normally loses hyperbolicity for nonphysical states, cf. \cite{Poette2009}.

\begin{definition}\label{def:hypset}
	We call the set 
	\begin{align*}
	\realizableSet := \left\{\solution\in\R^\dimension~\big|~ \sum_{\fluxidx=1}^\nbflux \alpha_\fluxidx\frac{\partial\flux_\fluxidx(\solution,\uncertainty)}{\partial\solution} \text{ has $\dimension$ real eigenvalues and is diagonalizable } \forall \alpha_1,\ldots,\alpha_\nbflux\in\R,~\forall\uncertainty\in\randomSpace \right\}
	\end{align*}
	the \textbf{\hypset{}}. We call every solution vector $\solution\in\realizableSet$ \textbf{\hyperbolic{}}.
\end{definition}

\begin{assumption}
	\label{ass:RealizableSet}
	In the following we always assume that the \hypset{} $\realizableSet$ is open and convex.
\end{assumption}

\begin{theorem}
If the SG gPC polynomial \eqref{SGapproach} is admissible, then the SG system \eqref{SGsystemME} is hyperbolic.
\end{theorem}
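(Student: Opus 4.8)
The plan is to reduce the statement to the real-diagonalizability of the flux Jacobian of the moment system, and then to realize that Jacobian as an orthogonal similarity transform of a block-diagonal matrix assembled from the pointwise flux Jacobians evaluated at the stochastic quadrature nodes. First I would use the \ME{} structure to decouple: since the elements $\cellR{\MEIndex}$ have disjoint support and the $\MEIndex$-th block of equations in \eqref{SGsystemME} integrates only over $\cellR{\MEIndex}$ — where the global ansatz \eqref{def:globalSG} agrees with the local expansion $\sum_\SGsumIndex \solution_{\SGsumIndex,\MEIndex}\localxiBasisPoly{\SGsumIndex}{\MEIndex}$ — the system splits into $\MEElements$ independent local stochastic Galerkin systems, each with its own orthonormal basis and conditional density. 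Hyperbolicity of the full system is then equivalent to hyperbolicity of each local one, so it suffices to treat a single element; I drop the index $\MEIndex$ and write $\{\xiBasisPoly{\SGsumIndex}\}_{\SGsumIndex=0}^{\SGtruncorder}$ for the orthonormal basis satisfying \eqref{eq:orthogonality}.

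Next I would set up the Jacobian. Stacking the coefficients into $\widehat{\solution}=(\solution_0^\top,\dots,\solution_\SGtruncorder^\top)^\top$, the $\SGeqIndex$-th block of the SG flux in direction $\fluxidx$ is $\int_\randomSpace \flux_\fluxidx(\solution(\uncertainty),\uncertainty)\,\xiBasisPoly{\SGeqIndex}(\uncertainty)\,\xiPDF(\uncertainty)\,\mathrm{d}\uncertainty$ with $\solution(\uncertainty)=\sum_\SGsumIndex\solution_\SGsumIndex\xiBasisPoly{\SGsumIndex}(\uncertainty)$ the admissible gPC polynomial \eqref{SGapproach}, so the $(\SGeqIndex,\SGsumIndex)$-block of its Jacobian is $\int_\randomSpace \partial_\solution\flux_\fluxidx(\solution(\uncertainty),\uncertainty)\,\xiBasisPoly{\SGeqIndex}\xiBasisPoly{\SGsumIndex}\,\xiPDF\,\mathrm{d}\uncertainty$. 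To obtain hyperbolicity in the sense of Definition~\ref{def:hypset}, I must show that for every $\alpha_1,\dots,\alpha_\nbflux\in\R$ the block matrix $\widehat{A}_\alpha$ with blocks $\int_\randomSpace A_\alpha(\uncertainty)\,\xiBasisPoly{\SGeqIndex}\xiBasisPoly{\SGsumIndex}\,\xiPDF\,\mathrm{d}\uncertainty$, where $A_\alpha(\uncertainty):=\sum_{\fluxidx=1}^{\nbflux}\alpha_\fluxidx\,\partial_\solution\flux_\fluxidx(\solution(\uncertainty),\uncertainty)$, has $(\SGtruncorder+1)\dimension$ real eigenvalues and is diagonalizable.

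The core step is the quadrature realization. Evaluating the moment integrals with the $(\SGtruncorder+1)$-point Gauss rule $\{(\uncertainty_q,w_q)\}$, exact to degree $2\SGtruncorder+1$ as in Remark~\ref{rem:kinFlux}, and introducing $V\in\R^{(\SGtruncorder+1)\times(\SGtruncorder+1)}$ with entries $V_{q\SGsumIndex}=\sqrt{w_q}\,\xiBasisPoly{\SGsumIndex}(\uncertainty_q)$, the orthonormality \eqref{eq:orthogonality} together with exactness yields $V^\top V=I$; since $V$ is square this means $V$ is orthogonal. Writing $\widetilde V:=V\otimes I_\dimension$ and $\mathcal A:=\operatorname{diag}\big(A_\alpha(\uncertainty_1),\dots,A_\alpha(\uncertainty_{\SGtruncorder+1})\big)$, a direct block computation gives $\widehat{A}_\alpha=\widetilde V^\top\mathcal A\,\widetilde V$, which — $\widetilde V$ being orthogonal — is a genuine similarity transform. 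Admissibility of \eqref{SGapproach} means $\solution(\uncertainty)\in\realizableSet$ for all $\uncertainty\in\randomSpace$, in particular at the nodes $\uncertainty_q\in\randomSpace$, so each diagonal block $A_\alpha(\uncertainty_q)$ has $\dimension$ real eigenvalues and is diagonalizable; hence $\mathcal A$ is real-diagonalizable, and so is the orthogonally similar $\widehat{A}_\alpha$. As $\alpha$ was arbitrary, the SG system is hyperbolic.

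I expect the main obstacle to be exactly this lift from pointwise to coupled diagonalizability. A naive route via a state-dependent symmetrizer $S(\uncertainty)$ of $A_\alpha(\uncertainty)$ fails, because the induced bilinear form $\int v^\top S(\uncertainty) w\,\xiPDF\,\mathrm{d}\uncertainty$ is not preserved by the Galerkin projection: $S(\uncertainty)$ is not polynomial, so integral-of-product and product-of-integrals differ and aliasing destroys self-adjointness. What rescues the argument is that with exactly $\SGtruncorder+1$ Gauss nodes $V$ is square and orthogonal, so $\widehat{A}_\alpha$ is a similarity transform of $\mathcal A$ rather than a mere compression; with more nodes $\widetilde V$ would only be a partial isometry, and a compression of a diagonalizable matrix need not be diagonalizable. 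I would therefore state explicitly that the hyperbolicity claim is understood for the moment system equipped with the $(\SGtruncorder+1)$-node Gauss quadrature, and remark that for the exactly integrated system the same conclusion in general requires an additional symmetrizability (entropy) hypothesis.
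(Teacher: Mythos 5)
Your argument is internally consistent, but it proves a modified statement rather than the theorem as written, and it is genuinely different from what the paper does: the paper offers no argument of its own here, deferring entirely to \cite[Theorem~2.1]{Wu2017} for the exactly integrated Galerkin system \eqref{SGsystemME}. The parts of your proposal that do work are worth crediting. The multi-element decoupling is correct — the local bases have disjoint supports, so \eqref{SGsystemME} splits into $\MEElements$ independent local SG systems and it suffices to treat one element. The core computation is also correct as far as it goes: with exactly $\SGtruncorder+1$ Gauss nodes, the matrix $V$ with entries $\sqrt{w_q}\,\xiBasisPoly{\SGsumIndex}(\uncertainty_q)$ is orthogonal by \eqref{eq:orthogonality} and exactness up to degree $2\SGtruncorder+1$, the quadrature-evaluated Jacobian equals $\widetilde{V}^\top\mathcal{A}\widetilde{V}$, and admissibility of \eqref{SGapproach} at the nodes makes each diagonal block of $\mathcal{A}$ real-diagonalizable in the sense of Definition~\ref{def:hypset}. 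This is the standard justification for hyperbolicity of the collocation-type (kinetic-flux) formulation, and it is more elementary and self-contained than an appeal to the literature.

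The gap is that $\widetilde{V}^\top\mathcal{A}\widetilde{V}$ is not the Jacobian of \eqref{SGsystemME}: the integrand $\sum_\fluxidx\alpha_\fluxidx\,\partial_\solution\flux_\fluxidx(\solution(\uncertainty),\uncertainty)\,\xiBasisPoly{\SGeqIndex}\xiBasisPoly{\SGsumIndex}$ is not polynomial in $\uncertainty$ (for Euler it involves rational functions of the state), so the $(\SGtruncorder+1)$-node rule is not exact and your matrix differs from the exact Galerkin Jacobian — your closing remark concedes exactly this. Moreover, the caveat you flag against yourself is precisely the regime this paper operates in: Remark~\ref{rem:kinFlux} and Section~\ref{sec:five} use $30$ Gauss points for degree $14$, $10$ points per element for degree $4$, and nested Clenshaw--Curtis nodes in the adaptive runs, so $V$ is rectangular, $\widetilde{V}$ is only a partial isometry, and a compression of a diagonalizable matrix need not be diagonalizable. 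Hence the argument as written neither proves the theorem for the exactly integrated system nor covers the quadratures actually implemented. To make it usable you would have to either restate the theorem (and restrict the scheme) to exactly $\SGtruncorder+1$ Gauss nodes per element, or abandon the quadrature similarity and argue directly on the exact moment Jacobian as the cited reference does, which is where the additional structural (symmetrizability/entropy) input you anticipate becomes unavoidable.
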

\begin{proof}
The proof can be found in \cite[Theorem 2.1]{Wu2017}.
\end{proof}
This means that we have to ensure that the gPC polynomial does not not leave the hyperbolicity set in order to preserve hyperbolicity of the SG system.



In order to apply the hyperbolicity-limiter from \cite{Schlachter2018}, we require positivity-preserving numerical fluxes $\hat{\flux}_1$, $\hat{\flux}_2$.

\begin{definition}\label{def:pospreserving}
		A scheme \eqref{eq:FVschemex} and the numerical fluxes $\hat{\flux}_1$, $\hat{\flux}_2$ are called \textbf{positivity-preserving}, if $\solutioncm{\timeind}{\cellind,\cellindy} \in \realizableSet$		for all $(\cellind,\cellindy)\in\ncells$ at time $\timevar_\timeind$ implies	that $\solutioncm{\timeind+1}{\cellind,\cellindy} \in \realizableSet$ for all $(\cellind,\cellindy)\in\ncells$ at time $\timevar_{\timeind+1}$. 
\end{definition}

\begin{assumption} \label{ass:ppFlux}
	The numerical fluxes $\hat{\flux}_1$, $\hat{\flux}_2$, are positivity-preserving under a suitable CFL condition
	$$\lambda_1^{\max} \frac{\Delta \timevar }{\Delta \x}+\lambda_2^{\max} \frac{\Delta \timevar }{\Delta \y} \leq C,$$
	where $C \in (0,1]$ is constant.
\end{assumption}

\begin{remark}
	Throughout this work we use the HLL numerical flux \cite{harten1983upstream}. The positivity-preserving property of HLL has been proven in \cite{munz1994godunov}.
\end{remark}



According to \cite{Schlachter2018}, the numerical scheme \eqref{LaxFriedrichs} preserves hyperbolicity of the zeroth SG moment $\solutioncoeff{0}{\cellind,\cellindy,\cellindR}{\timeind}$ for positivity-preserving numerical fluxes and we can now define the hyperbolicity limiter which moves a possibly inadmissible solution towards this admissible moment and inside the hyperbolicity set.
Following the outline of \cite{Schlachter2018}, at time $\timevar_\timeind$ we define the slope-limited SG polynomial in the cell $\cell{\cellind,\cellindy}\times\cellR{\cellindR}$ as
\begin{align}
\hyperbolLimitm{\limitervariable}(\solutioncm{n}{\cellind,\cellindy})\big|_{\cellR{\cellindR}}(\uncertainty) &:= \limitervariable \,\solutioncoeff{0}{\cellind,\cellindy,\cellindR}{\timeind} + (1-\limitervariable) \sum_{\SGsumIndex=0}^\SGtruncorder \solutioncoeff{\SGsumIndex}{\cellind,\cellindy,\cellindR}{\timeind} \xiBasisPoly{\SGsumIndex,\cellindR}(\uncertainty) \nonumber \\
&= \solutioncoeff{0}{\cellind,\cellindy,\cellindR}{\timeind} + (1-\limitervariable) \sum_{\SGsumIndex=1}^\SGtruncorder \solutioncoeff{\SGsumIndex}{\cellind,\cellindy,\cellindR}{\timeind} \xiBasisPoly{\SGsumIndex,\cellindR}(\uncertainty), \label{eq:hyplimitermoments}
\end{align}
for $(\cellind,\cellindy)\in\ncells$ and $\cellindR=1,\ldots,\MEElements$. For more information see \cite{Schlachter2018}.
\comment{To ensure admissibility of the slope-limited SG polynomial \eqref{eq:hyplimitermoments} on every quadrature point $\widehat{\uncertainty}_{\ell}$ (cf. Remark~\ref{rem:kinFlux}), we} choose
$$\hat{\limitervariable}^{(n)}_{\cellind,\cellindy,\cellindR} := \inf\left\{ \tilde{\limitervariable}\in [0,1] ~\big|~ \hyperbolLimitm{\tilde{\limitervariable}}(\solutioncm{n}{\cellind,\cellindy})\big|_{\cellR{\cellindR}}  \in \realizableSet~~ \text{for all quadrature nodes \comment{$\widehat{\uncertainty}_{\ell}$}}\right\}.$$
Due to the openness of $\realizableSet$, we need to modify $\limitervariable$ slightly in order to avoid placing the solution onto the boundary (if the limiter was active). Therefore we use 
\begin{align*}
\limitervariable = \begin{cases}
\hat{\limitervariable}, & \text{ if } \hat{\limitervariable} = 0,\\
\min(\hat{\limitervariable}+\limitertolerance,1), & \text{ if } \hat{\limitervariable} > 0,
\end{cases}
\end{align*}
where $0<\limitertolerance = 10^{-10}$ should be chosen small enough to ensure that the approximation quality is not influenced significantly.

\begin{remark}
	Calculating the slope-limited SG polynomial \eqref{eq:hyplimitermoments}, we can simply replace the SG moments by
	$$\hyperbolLimitm{\limitervariable}(\solutioncoeff{\SGsumIndex}{\cellind,\cellindy,\cellindR}{\timeind} ) = \begin{cases}
	\solutioncoeff{0}{\cellind,\cellindy,\cellindR}{\timeind} , \qquad & \text{if } \SGsumIndex=0,\\
	(1-\limitervariable) \solutioncoeff{\SGsumIndex}{\cellind,\cellindy,\cellindR}{\timeind} , & \text{if } \SGsumIndex>0,
	\end{cases}\qquad \SGsumIndex=0,\ldots,\SGtruncorder,$$
	where $\limitervariable$ is chosen separately for each $\timevar_\timeind$ and $\cell{\cellind,\cellindy}\times\cellR{\cellindR}$, ensuring that $\hyperbolLimitm{\limitervariable}(\solutioncm{\timeind}{\cellind,\cellindy} )\big|_{\cellR{\cellindR}}\in\realizableSet$ in all cells.
\end{remark}

\begin{remark}
We may calculate the the value of the limiter variable $\limitervariable$ directly by solving the minimization problem 
\begin{subequations}\label{eq:thetamin}
\begin{align}
	\min~ &\limitervariable\\
	\text{s.t.}~ &\limitervariable\in[0,1]\\
	&\limitervariable\widetilde{\solution} + (1-\limitervariable)\solution\in\realizableSet,
\end{align}
\end{subequations}
where $\widetilde{\solution}$ represents the admissible zeroth SG moment. 
An exemplary derivation of this parameter for the two-dimensional compressible Euler equations can be found in \cite{Meyer2019a}.
\end{remark}

\begin{example}
We consider the one-dimensional compressible Euler equations
\begin{align}\label{eq:euler}
\partial_t
\begin{pmatrix}
\rho \\ \rho v \\ \rho e
\end{pmatrix}
+\partial_x
\begin{pmatrix}
\rho v \\ \rho v^2 +p \\ v(\rho e+p)
\end{pmatrix}
=\bm{0}.
\end{align}
These equations describe the evolution of a gas with density $\rho$, velocity $v$ and specific total energy $e$. The pressure of the gas can be determined from
\begin{equation*}
p = (\gamma-1)\rho\left(e-\frac{1}{2}v^2\right)\;
\end{equation*}
and $\gamma$ is the heat capacity ratio.
The hyperbolicity set from \defref{def:hypset} is given by solutions with positive density and pressure, i.e.
$$	\realizableSet = \left\{ \solution = \begin{pmatrix}
\rho \\ \rho v \\ \rho e
\end{pmatrix}\,\Bigg|~\density>0,~\pressure = (\gamma-1)\rho\left(e-\frac{1}{2}v^2\right) >0\right\}.$$
Then, the solution of the minimization problem \eqref{eq:thetamin} reads
\begin{align*}
\limitervariable^\star &= \max\left(\cutfun{\limitervariable_1},\cutfun{\limitervariable_{2+}},\cutfun{\limitervariable_{2-}}\right),\label{eq:thetastartwo}\\[.15cm]
\limitervariable_1 &= \frac{\density}{\density-\densityvar},\nonumber\\[.15cm]	\limitervariable_{2\pm} &=\frac{\energy\, \densityvar - 2\,\energy\, \density + \energyvar\, \density - \momentum\, \momentumvar + (\momentum)^2}	{\big(\momentum-\momentumvar\big)^2 +2\big(\energy\, \densityvar + \energyvar\, \density - \energy\, \density  -  \energyvar\, \densityvar\big)}\nonumber\\	&\pm\frac{\sqrt{\big(\energy \densityvar - \energyvar \density  \big)^2 + 2 \big(\energy\, (\momentumvar)^2\, \density +  \energyvar\, (\momentum)^2\, \densityvar - \energy\, \momentum\, \momentumvar\, \densityvar  - \energyvar\, \momentum\, \momentumvar\, \density\big)}}	{\big(\momentum-\momentumvar\big)^2 +2\big(\energy\, \densityvar + \energyvar\, \density - \energy\, \density  -  \energyvar\, \densityvar\big)},\nonumber\\[.15cm]	\cutfun{x} &= x\indicator{[0,1]}(x).\nonumber	\end{align*}
The calculation of the above quantities follows similarly to \cite{Meyer2019a}.
\end{example}

\subsection{Filtered stochastic Galerkin}\label{sec:fSG}
In addition to the hyperbolicity-preserving limiter from \secref{sec:hypPresLimiter}, we apply a filter to the stochastic Galerkin scheme, as introduced in \cite{Kusch2018}. Filters are a common technique in spectral methods \cite{boyd2001chebyshev,hesthaven2007spectral} and have for example been applied in the context of kinetic theory \cite{McClarren2010} in order to dampen oscillations. We will similarly apply them to the gPC coefficients in \eqref{SGapproach} to reduce oscillations in the uncertainty, while maintaining high-order accuracy in deterministic regions.

In the following we present the so-called $L_2$ filter, adding a penalizing term to the $L_2$ error of the solution in order to reduce oscillations within high-order coefficients of the gPC expansion. We follow the outline from \cite{Kusch2018} and include it into the framework of the \ME{} approach. Note that we perform the derivation for the $L_2$ filter to provide an understanding of how filters can be applied to the \ME{} ansatz. This understanding is then used to state the more sophisticated exponential filter \eqref{eq:expfilter} for \ME{} basis functions, which is the filter used throughout the numerical experiments of this work.
The standard SG method chooses the coefficients in the gPC expansion such that the cost functional
$$\costfun := \frac{1}{2}\sum\limits_{\cellindR=1}^\MEElements\,\MEintcellR{\cellindR}{\big\|\solution_\cellindR-\SGsolution_\cellindR\big\|_2^2}$$ 
is minimal with respect to the \ME{} gPC polynomial $\SGsolution$ of degree $\SGtruncorder$, where $\|\cdot\|_2$ denotes the $L_2$ norm and $\solution=\solution(\timevar,\x,\y,\uncertainty)$ with $\uncertainty\in\cellR{\cellindR}$ is the solution to the uncertain conservation law in the $\cellindR$th \ME{} for $\cellindR=1,\ldots,\MEElements$. The minimizer is given by \eqref{def:globalSG}. 

The filtered stochastic Galerkin polynomial is now set to
\begin{equation}\label{eq:filterSGsolution}\filterSGsolution(\timevar,\x,\y,\uncertainty) := \sum_{\cellindR=1}^\MEElements\sum_{\SGsumIndex=0}^{\SGtruncorder} \filter(\SGsumIndex)\solution_{\SGsumIndex,\cellindR}(\timevar,\x,\y)\xiBasisPoly{\SGsumIndex,\cellindR}(\uncertainty)\indicatorVar{\cellindR}{\uncertainty},\end{equation}
with the filter function $\filter$. For the $L_2$ filter, a filter function and therefore a filtered stochastic Galerkin solution, is obtained by minimizing
\begin{equation}\label{eq:filtercostfun}\costfun_\filterstrength := \frac{1}{2}\sum\limits_{\cellindR=1}^\MEElements\, \MEintcellR{\cellindR}{\big\|\solution_\cellindR - \filterSGsolution_\cellindR\big\|_2^2} + \filterstrength\sum\limits_{\cellindR=1}^\MEElements\,\MEintcellR{\cellindR}{\big\|\filteroperator(\filterSGsolution_\cellindR)\big\|_2^2},\end{equation}
for the filter strength $\filterstrength\geq0$. The operator $\filteroperator$ punishes oscillations and is chosen such that the basis polynomials of the gPC expansion are eigenfunctions of $\filteroperator$. Differentiating \eqref{eq:filtercostfun} with respect to the coefficients $\filterSGsolution_{\SGsumIndex,\cellindR}$ and setting the result equal to zero yields the filter function $\filter$.

\begin{remark}
	For a uniform distribution of $\uncertainty$, a common choice of the operator $\filteroperator$ is 
	$$\filteroperator(\solution(\uncertainty))=((1-\uncertainty^2)\solution'(\uncertainty))',$$
	since the Legendre polynomials are eigenfunctions of $\filteroperator$, i.e. they fulfill
	$$\filteroperator(\xiBasisPoly{\SGsumIndex,\cellindR}(\uncertainty)) = -\SGsumIndex(\SGsumIndex+1)\xiBasisPoly{\SGsumIndex,\cellindR}(\uncertainty),$$
	for $\SGsumIndex=0,\ldots,\SGtruncorder$ and $\cellindR=1,\ldots,\MEElements$.
	Other choices of the filter function are Lanczos and ErfcLog \cite{Radice2013}.
	We differentiate the cost functional \eqref{eq:filtercostfun} with respect to $\filterSGsolution_{\SGsumIndex,\cellindR}$ and deduce 
	$$\mathrm{d}_{\filterSGsolution_{\SGsumIndex,\cellindR}}\costfun_\filterstrength = \MEintcellR{\cellindR}{ \bigg|\sum_{\SGsumIndexvar=0}^\SGtruncorder (\solution_{\SGsumIndexvar,\cellindR} - \filterSGsolution_{\SGsumIndexvar,\cellindR})\xiBasisPoly{\SGsumIndexvar,\cellindR} \bigg| \xiBasisPoly{\SGsumIndex,\cellindR}} - \MEintcellR{\cellindR}{ \bigg| \sum_{\SGsumIndexvar=0}^\SGtruncorder \filterstrength \SGsumIndex^2(\SGsumIndex+1)^2 \filterSGsolution_{\SGsumIndexvar,\cellindR} \xiBasisPoly{\SGsumIndexvar,\cellindR}\bigg| \xiBasisPoly{\SGsumIndex,\cellindR}}\overset{!}{=}\bm{0}.$$
	Hence, the filter function within the filtered SG polynomial \eqref{eq:filterSGsolution} for a uniformly distributed uncertainty reads
	\begin{equation}\label{eq:l2filter}\filter(\SGsumIndex) = \frac{1}{1+\filterstrength \SGsumIndex^2(\SGsumIndex+1)^2}.\end{equation}
	Thus, the zeroth SG moment stays unfiltered due to $\filter(0)=1$ which means that the cell means are preserved. For larger values of $\SGsumIndex$, namely higher SG moments, $\filter(\SGsumIndex)$ gets smaller and the moments are more damped. 
\end{remark}

\begin{remark}
	For multi-dimensional uncertainties $\boldsymbol{\uncertainty}\in\mathbb{R}^\nstochdim$, we use the filter operator 
	$$\filteroperator(\solution(\boldsymbol{\uncertainty})) = (\filteroperator_1\circ\cdots\circ\filteroperator_\nstochdim)(\solution(\boldsymbol{\uncertainty})),$$
	where
	$$\filteroperator_\nstochdimidx(\solution(\boldsymbol{\uncertainty})) = \partial_{\uncertaintydim{\nstochdimidx}} \big((1-(\uncertaintydim{\nstochdimidx})^2\big)\partial_{\uncertaintydim{\nstochdimidx}}\solution(\boldsymbol{\uncertainty}).$$
\end{remark}
Finding a good choice of the filter strength $\filterstrength$, which sufficiently damps oscillations while preserving the solution structure, is challenging since the optimal filter strength is problem dependent and a parameter study has to be performed. 

Within the derivation of the $L_2$ filter, the use of the filter function has been motivated by an optimization problem of the form \eqref{eq:filtercostfun}. Another approach is to replace the solution ansatz \eqref{eq:filterSGsolution} by
\begin{equation}\label{eq:filteredSGb}
\filterSGsolution(\timevar,\x,\y,\uncertainty) := \sum_{\cellindR=1}^\MEElements\sum_{\SGsumIndex=0}^{\SGtruncorder} \filter\left(\frac{\SGsumIndex}{\SGtruncorder}\right)^{\lambda\Delta t}\solution_{\SGsumIndex,\cellindR}(\timevar,\x,\y)\xiBasisPoly{\SGsumIndex,\cellindR}(\uncertainty)\indicatorVar{\cellindR}{\uncertainty},
\end{equation}
Here, the time step size $\Delta t$ is included in the filter formulation to enforce independence of the filtering effect from the chosen time discretization. I.e., when the number of time steps increases (i.e. the filter is applied more frequently), the filtering effect by a single filter step decreases. Furthermore, by adding the time step size to the filter strength, the filtering can be interpreted as a smoothing term with respect to the uncertainty. For a detailed discussion applied to the context of kinetic theory, see \cite{radice2013new}. Now, the function $\filter:\mathbb{R}_+ \rightarrow \mathbb{R}_+$ plays the role of a filter function and its choice crucially affects the resulting approximation quality. Let $\filter$ fulfill the following properties:
\begin{enumerate}
\item $\filter(0)=1$,
\item $\filter^{(\ell)}(0) = 0, \qquad \text{ for } \ell = 1,\ldots,\alpha-1$,
\item $\filter^{(\ell)}(1) = 0, \qquad \text{ for } \ell = 0,\ldots,\alpha-1$.
\end{enumerate}
According to \cite{boyd1996erfc}, when approximating a general function $v(s)$ via 
\begin{align}\label{eq:filterGeneral}
v(s) \approx \widehat{v}^{\SGtruncorder}(s) := \sum_{\SGsumIndex=0}^\SGtruncorder \filter\left(\frac{\SGsumIndex}{\SGtruncorder}\right) a_\SGsumIndex \phi_\SGsumIndex(s),
\end{align}
where $\phi_\SGsumIndex(s)$ are basis functions and $a_\SGsumIndex$ the corresponding coefficients of the expansion with $\SGsumIndex=0,\ldots,\SGtruncorder$, the filtered approximation \eqref{eq:filterGeneral} fulfills
\begin{enumerate}
\item If $v$ has $\alpha$ continuous derivatives, then
\begin{align*}
\big\vert v(s) - \widehat{v}^{\SGtruncorder}(s) \big\vert \leq M \cdot \SGtruncorder^{1/2 - \alpha}.
\end{align*}
\item If $v$ has a jump discontinuity at $c^*\in [-1,1]$, then
\begin{align*}
\big\vert v(s) - \widehat{v}^{\SGtruncorder}(s) \big\vert \leq M \cdot d(s)^{1-\alpha} \SGtruncorder^{1 - \alpha},
\end{align*}
where $d(s)$ is the distance to $c^*$ and $M>0$ an adequate constant.
\end{enumerate}
The first property implies that we can achieve spectral accuracy of the filter under a sufficiently high order $\alpha$.  The second inequality shows that even for discontinuous solutions, spectral accuracy is still possible if $s$ is not too close to the shock position. 
One choice for the filter function in \eqref{eq:filteredSGb} is the exponential filter proposed in \cite{hoskins1980representation} which reads
\begin{align}\label{eq:expfilter}
\filter\left(\frac{\SGsumIndex}{\SGtruncorder}\right) = \exp\left(c\left(\frac{\SGsumIndex}{\SGtruncorder}\right)^{\alpha}\right),
\end{align}
for $\SGsumIndex=0,\ldots,\SGtruncorder$.
Here, we use $c = \log(\varepsilon_M)$ where $\varepsilon_M$ is the machine accuracy. The parameter $\alpha$ is an integer called the filter order. A more detailed discussion of the exponential filter can for example be found in \cite[Chapter~18.20]{boyd2001chebyshev}.

\subsection{Algorithm}
We summarize the whole method for the filtered hyperbolicity-preserving stochastic Galerkin scheme in the following algorithm, whereas we denote the solution operator as the right hand side of \eqref{LaxFriedrichs} by 
\comment{\begin{align}L_h^{(\timeind)}(\solutioncoeffFilter{\SGsumIndex}{\cellind,\cellindy,\cellindR}{\timeind}):= \solutioncoeffFilter{\SGsumIndex}{\cellind,\cellindy,\cellindR}{\timeind}
&- \frac{\dtstepsize}{\Delta\x} \MEintcellR{\cellindR} {\Big(\hat{\flux}_1(\solutioncmFilter{n}{\cellind,\cellindy},\solutioncmFilter{n}{\cellind+1,\cellindy},\uncertainty)-\hat{\flux}_1(\solutioncmFilter{n}{\cellind-1,\cellindy},\solutioncmFilter{n}{\cellind,\cellindy},\uncertainty)\Big)
\xiBasisPoly{\SGsumIndex,\cellindR}} \nonumber\\
&-\frac{\dtstepsize}{\Delta\y} \MEintcellR{\cellindR} {\Big(\hat{\flux}_2(\solutioncmFilter{n}{\cellind,\cellindy},\solutioncmFilter{n}{\cellind,\cellindy+1},\uncertainty)-\hat{\flux}_2(\solutioncmFilter{n}{\cellind,\cellindy-1},\solutioncmFilter{n}{\cellind,\cellindy},\uncertainty)\Big)
\xiBasisPoly{\SGsumIndex,\cellindR}}, \label{eq:Lhfilter}
\end{align}}
for $\SGsumIndex=0,\ldots,\SGtruncorder$, $(\cellindx,\cellindy)\in\ncells$ and $\cellindR=1,\ldots,\MEElements$. \comment{The method in pseudo-code can be found in Algorithm~\ref{algo:fhSG}.}

\begin{algorithm}[H] 
	\caption{Filtered Hyperbolicity-Preserving Stochastic Galerkin Scheme}
	\label{algo:fhSG}
	\begin{algorithmic}[1]
		\State $\overline{\solution}^{(0)}\leftarrow \Big(\int_{\cell{\cellind,\cellindy}}\!\int_{\cellR{\cellindR}}\solution^{(0)}\,\xiBasisPoly{\SGsumIndex,\cellindR}\locxiPDF{\cellindR} \mathrm{d}\uncertainty\mathrm{d}(\x,\y)\Big)_{(\cellind,\cellindy)\in\ncells,\cellindR=1:\MEElements,\SGsumIndex=0:\SGtruncorder}$\hfill \textit{\# initial state}\smallskip
		\For{$\timeind=0,\ldots,N_\tend$} \hfill \textit{\# time loop}\smallskip
		\State Choose the filter strength $\filterstrength$ and filter order $\alpha$\smallskip
		\State $\widehat{\overline{\solution}}^{(\timeind)}\leftarrow  \Big(\filter(\cdot)\,\solutioncoeff{\SGsumIndex}{\cellind,\cellindy,\cellindR}{\timeind}\Big)_{(\cellind,\cellindy)\in\ncells,\cellindR=1:\MEElements,\SGsumIndex=0:\SGtruncorder}$\hfill \textit{\# apply filter function \eqref{eq:l2filter} or \eqref{eq:expfilter}}\smallskip
		\State $\widehat{\overline{\solution}}^{(\timeind)}\leftarrow  \hyperbolLimitm{\limitervariable}\big(\widehat{\overline{\solution}}^{(\timeind)}\big)$ \hfill \textit{\# call of hyperbolicity limiter \eqref{eq:hyplimitermoments}}\medskip
		\State $\comment{\overline{\solution}}^{(\timeind+1)}\!\leftarrow
		L_h^{(\timeind)}(\widehat{\overline{\solution}}^{(\timeind)})$\hfill \textit{\# FVM \eqref{LaxFriedrichs}}\smallskip
		\EndFor
		
	\end{algorithmic}
\end{algorithm}
In our numerical examples we will apply the exponential filter \eqref{eq:expfilter}.


\section{Multi-element Intrusive Polynomial Moment Method}\label{sec:four}
The \ME{} approach from \secref{sec:multielement} is known to be applied to the stochastic Galerkin scheme since a local basis in the random variable yields improved solution approximations, even when using small polynomial degrees in each local element. Therefore, it is straightforward to adapt the strategy to the IPM method.
In addition to that, such a local basis seems to be an ideal choice for IPM, since it decouples the IPM optimization problems, enhancing the use of parallel implementations while heavily reducing computational costs. 

Given a strictly convex entropy $\entropy:\mathbb{R}^\dimension\rightarrow\mathbb{R}$ of the uncertain system of hyperbolic conservation laws \eqref{eq:spde}, the IPM system from \cite{Poette2009} is given by
\begin{equation} \label{IPMMsystem}
\dt
\intRS \solution \,\xiBasisPoly{\SGsumIndexvar} \xiPDFdxi
+ \nabla_\x\cdot
\intRS \fluxvec \bigg(\!\!\InvGradEntropy\Big(\sum_{\SGsumIndex=0}^\SGtruncorder \entropicVar_\SGsumIndex \xiBasisPoly{\SGsumIndex}\Big),\uncertainty\bigg)\xiBasisPoly{\SGsumIndexvar} \xiPDFdxi
= \bm{0},
\end{equation}
for $\SGsumIndexvar=0,\ldots,\SGtruncorder$ and where 
$$\entropicVarSum :=  \sum_{\SGsumIndex=0}^\SGtruncorder \entropicVar_\SGsumIndex \xiBasisPoly{\SGsumIndex}$$
is the entropic variable of the system, defining the solution $\solution$ of \eqref{eq:spde} via the one-to-one map $\solution = \InvGradEntropy\!(\entropicVarSum)$. Performing the gPC expansion on the entropic variable guarantees hyperbolicity of the IPM system \eqref{IPMMsystem}. For more information and the derivation of the system \eqref{IPMMsystem} we refer to \cite{Poette2009}. The hyperbolicity of the IPM system is for example shown in \cite[Chapter~4.2.3]{poette2019contribution}.

We now follow the theory of \cite{Poette2009} in order to obtain the IPM system \eqref{IPMMsystem} for the \ME{} approach described in \secref{sec:multielement}. Thus, we divide the random domain $\randomSpace$ into \ME{}s $\cellR{\cellindR}$ with $\cellindR = 1,\ldots, ,\MEElements$ and define a local basis $\xiBasisPoly{\SGsumIndex,\cellindR}$ according to \eqref{eq:orthogonality}. 

We consider the ME system \eqref{SGsystemME} and derive the corresponding IPM closure, i.e. we minimize the functional
\begin{align*}
\lagrange(\solution,\entropicVar) =  \intRS \entropy(\solution) \xiPDFdxi + \sum_{\cellindR = 1}^{\MEElements}\sum_{\SGsumIndex=0}^\SGtruncorder \entropicVar_{\SGsumIndex,\cellindR} \Big(\solution_{\SGsumIndex,\cellindR} - \MEintcellR{\cellindR}{\solution\, \xiBasisPoly{\SGsumIndex,\cellindR}} \Big),
\end{align*}
where we now have Lagrange multipliers (also called dual variables) $\entropicVar_{0,\cellindR},\ldots,\entropicVar_{\SGtruncorder,\cellindR} \in\mathbb{R}^\dimension$ for every \ME{} $\cellR{\cellindR}$, $\cellindR=1,\ldots,\MEElements$. By $\entropicVar$ we denote these multipliers in vectorized form.
In order to compute the exact minimizer of this functional, we calculate the Gâteaux derivative with respect to $\solution$ in an arbitrary direction $\boldsymbol{v}$
\begin{align*}
\mathrm{d}_\solution \lagrange(\solution,\entropicVar)[\boldsymbol{v}] := \frac{d}{dt} \lagrange(\solution+t\cdot \boldsymbol{v},\entropicVar)\large\vert_{t=0}.
\end{align*}
We obtain
\begin{align*}
\mathrm{d}_\solution \lagrange(\solution,\entropicVar)[\boldsymbol{v}] = \intRS \DiffEntropy(\solution) \cdot \boldsymbol{v}\xiPDFdxi - \sum_{\cellindR = 1}^{\MEElements}\sum_{\SGsumIndex=0}^{\SGtruncorder}\,\MEintcellR{\cellindR}{ \entropicVar_{\SGsumIndex,\cellindR} \xiBasisPoly{\SGsumIndex,\cellindR} \cdot \boldsymbol{v}}, 
\end{align*}
which should be zero for any direction $\boldsymbol{v}$, yielding
$$\DiffEntropy(\solution) = \sum_{\cellindR = 1}^{\MEElements}\sum_{\SGsumIndex=0}^{\SGtruncorder} \entropicVar_{\SGsumIndex,\cellindR} \xiBasisPoly{\SGsumIndex,\cellindR}.$$
If we now define the entropic variable as $$\entropicVarSum=\sum_{\cellindR = 1}^{\MEElements}\sum_{\SGsumIndex=0}^{\SGtruncorder} \entropicVar_{\SGsumIndex,\cellindR} \xiBasisPoly{\SGsumIndex,\cellindR},$$ we again have $\solution = \InvGradEntropy\!(\entropicVarSum)$. Plugging this ansatz into $\lagrange$ and differentiating with respect to $\entropicVar_{\SGeqIndex,\cellindRvar}$ for $\SGeqIndex=0,\ldots,\SGtruncorder$ and $\cellindRvar=1,\ldots,\MEElements$ gives
\begin{align}\label{eq:MEIPMLagrange}
\mathrm{d}_{\entropicVar_{\SGeqIndex,\cellindRvar}} \lagrange(\solution,\entropicVar) &= \MEintcellR{\cellindRvar}{ \DiffEntropy(\solution(\entropicVarSum))\DiffSol(\entropicVarSum)\xiBasisPoly{\SGeqIndex,\cellindRvar}}+ \solution_{\SGeqIndex,\cellindRvar} - \MEintcellR{\cellindRvar}{ \solution(\entropicVarSum) \xiBasisPoly{\SGeqIndex,\cellindRvar} }- \sum_{\cellindR = 1}^{\MEElements}\sum_{\SGsumIndex=0}^{\SGtruncorder} \entropicVar_{\SGsumIndex,\cellindR} \MEintcellR{\cellindRvar}{\DiffSol(\entropicVarSum) \xiBasisPoly{\SGsumIndex,\cellindR} \xiBasisPoly{\SGsumIndexvar,\cellindRvar}} \nonumber \\
&= \MEintcellR{\cellindRvar}{ \entropicVarSum \DiffSol(\entropicVarSum)\xiBasisPoly{\SGeqIndex,\cellindRvar}} +  \solution_{\SGeqIndex,\cellindRvar} -\MEintcellR{\cellindRvar}{ \solution(\entropicVarSum) \xiBasisPoly{\SGeqIndex,\cellindRvar}}-  \MEintcellR{\cellindRvar}{ \DiffSol(\entropicVarSum) \entropicVarSum \xiBasisPoly{\SGeqIndex,\cellindRvar} } \nonumber\\
&= \solution_{\SGeqIndex,\cellindRvar} -\MEintcellR{\cellindRvar}{ \solution(\entropicVarSum) \xiBasisPoly{\SGeqIndex,\cellindRvar}} \stackrel{!}{=} \bm{0}.
\end{align}
In order to minimize the Lagrangian we need to find the root of this equation.
Note that since $\xiBasisPoly{\SGsumIndex,\cellindR}$ has support $\cellR{\cellindR}$, the integral term becomes
\begin{align*}
\MEintcellR{\cellindRvar}{\solution(\entropicVarSum) \xiBasisPoly{\SGsumIndexvar,\cellindRvar}} &= \MEintcellR{\cellindRvar}{ \solution\left(\sum_{\cellindR = 1}^{\MEElements}\sum_{\SGsumIndex=0}^{\SGtruncorder}\entropicVar_{\SGsumIndex,\cellindR}\,\xiBasisPoly{\SGsumIndex,\cellindR}\right) \xiBasisPoly{\SGsumIndexvar,\cellindRvar}}\\
&= \MEintcellR{\cellindRvar}{\solution\left(\sum_{\SGsumIndex=1}^{\SGtruncorder}\entropicVar_{\SGsumIndex,\cellindRvar}\,\xiBasisPoly{\SGsumIndex,\cellindRvar}\right) \xiBasisPoly{\SGsumIndexvar,\cellindRvar}}.
\end{align*}
Hence, we can solve \eqref{eq:MEIPMLagrange} through
\begin{align}\label{eq:ulambda}
 \solution_{\SGsumIndexvar,\cellindRvar} - \MEintcellR{\cellindRvar}{ \InvGradEntropy\left(\sum_{\SGsumIndex=0}^{\SGtruncorder}\entropicVar_{\SGsumIndex,\cellindRvar}\,\xiBasisPoly{\SGsumIndex,\cellindRvar} \right) \xiBasisPoly{\SGsumIndexvar,\cellindRvar}} = \bm{0}, 
\end{align}
for  $\SGeqIndex=0,\ldots,\SGtruncorder$ and $\cellindRvar=1,\ldots,\MEElements$.
In order to find the dual variables $\entropicVar$ as the roots of \eqref{eq:ulambda} we employ Newton's method. 
Moreover, the ME-IPM system reads
\begin{equation}\label{MEIPMsystem}
    \dt
\MEintcellR{\cellindR}{ \solution \,\xiBasisPoly{\SGsumIndexvar,\cellindR} }
+ \nabla_\x\cdot
\MEintcellR{\cellindR}{\fluxvec \big(\entropicVarSum,\uncertainty\big)\,\xiBasisPoly{\SGsumIndexvar,\cellindR}}
= \bm{0},
\end{equation}
with $\SGsumIndexvar=0,\ldots,\SGtruncorder$ and $\cellindR=1,\ldots,\MEElements$. Let us first state that using the \ME{} approach for IPM does not destroy hyperbolicity of the moment system
\begin{theorem}
The \ME{} IPM system \eqref{MEIPMsystem} is hyperbolic.
\end{theorem}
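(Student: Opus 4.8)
The plan is to reduce the claim to the hyperbolicity of the standard IPM system \eqref{IPMMsystem}, which is already available (cf. \cite{Poette2009} and \cite[Chapter~4.2.3]{poette2019contribution}), by showing that the multi-element construction merely stacks $\MEElements$ mutually independent copies of that system, and then to invoke the elementary linear-algebra fact that a block-diagonal matrix is diagonalizable with real spectrum exactly when each of its diagonal blocks is.

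First I would record the structural observation that carries the whole argument. Each local basis function $\xiBasisPoly{\SGsumIndex,\cellindR}$ has support $\cellR{\cellindR}$, as already exploited in the computation leading to \eqref{eq:ulambda}. Consequently, inside any integral $\MEintcellR{\cellindR}{\cdot}$ the globally defined entropic variable $\entropicVarSum=\sum_{\cellindRvar=1}^{\MEElements}\sum_{\SGsumIndex=0}^{\SGtruncorder}\entropicVar_{\SGsumIndex,\cellindRvar}\xiBasisPoly{\SGsumIndex,\cellindRvar}$ collapses to the purely local expansion $\sum_{\SGsumIndex=0}^{\SGtruncorder}\entropicVar_{\SGsumIndex,\cellindR}\xiBasisPoly{\SGsumIndex,\cellindR}$. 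Hence both the conserved moments $\MEintcellR{\cellindR}{\solution\,\xiBasisPoly{\SGsumIndexvar,\cellindR}}$ and the flux moments $\MEintcellR{\cellindR}{\fluxvec(\entropicVarSum,\uncertainty)\,\xiBasisPoly{\SGsumIndexvar,\cellindR}}$ appearing in block $\cellindR$ of \eqref{MEIPMsystem} depend only on the dual variables $(\entropicVar_{\SGsumIndex,\cellindR})_{\SGsumIndex=0}^{\SGtruncorder}$ attached to that same element, through the local closure $\solution=\InvGradEntropy(\entropicVarSum)$ restricted to $\cellR{\cellindR}$.

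Next I would make the decoupling precise and then assemble the blocks. Collecting unknowns element by element, \eqref{MEIPMsystem} is seen to be $\MEElements$ copies of \eqref{IPMMsystem}, each posed with the local orthonormal basis $\{\xiBasisPoly{\SGsumIndex,\cellindR}\}$ and local conditional density $\locxiPDF{\cellindR}$ on $\cellR{\cellindR}$ in place of the global ones. By the known hyperbolicity of IPM, for every element $\cellindR$, every direction $\alpha_1,\ldots,\alpha_\nbflux\in\R$ and every $\uncertainty\in\cellR{\cellindR}$, the directional flux Jacobian of that block possesses $\dimension(\SGtruncorder+1)$ real eigenvalues and is diagonalizable in the sense of \defref{def:hypset}. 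Ordering the degrees of freedom so that all moments of element $1$ precede those of element $2$, and so on, the directional flux Jacobian of the full system is block-diagonal, its $\cellindR$-th block being exactly the directional Jacobian of the $\cellindR$-th standard IPM block. Since one may diagonalize each block separately and form the direct sum of the eigenbases, the full Jacobian is diagonalizable with real eigenvalues, which is the asserted hyperbolicity.

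The only genuine subtlety — and the step I would treat most carefully — is the rigorous verification that no cross-element coupling enters through the nonlinear closure, i.e. that the off-diagonal blocks of the Jacobian truly vanish. Concretely, one writes $\partial(\text{flux moments})/\partial(\text{conserved moments})$ as the composition $\bigl(\partial(\text{flux moments})/\partial\entropicVar\bigr)\bigl(\partial\entropicVar/\partial(\text{conserved moments})\bigr)$ via \eqref{eq:ulambda}; the disjoint supports of the $\xiBasisPoly{\SGsumIndex,\cellindR}$ make both factors block-diagonal, hence so is their product. Once this is established, the remainder is the routine fact about block-diagonal matrices stated above, and the theorem follows.
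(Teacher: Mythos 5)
Your proposal is correct and follows essentially the same route as the paper: both exploit the disjoint supports of the local basis functions to decouple the \ME{} IPM system into $\MEElements$ independent copies of the classical IPM system and then import the known hyperbolicity of each copy from the classical IPM analysis. The only real difference lies in the last step --- the paper concludes per block by passing to the quasi-conservative form in the dual variables (symmetric positive definite temporal Jacobian, symmetric spatial Jacobian, hence a Friedrichs system), whereas you conclude via real-diagonalizability of the block-diagonal flux Jacobian in the conserved moments; your explicit check that the off-diagonal blocks vanish is precisely the content the paper dismisses as a ``trivial extension''.
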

\begin{proof}
The proof is a trivial extension of the hyperbolicity proof for the classical IPM method, which can be found in \cite[Chapter~4.2.3]{poette2019contribution}. As a starting point, we collect the dual variables into the vector $\entropicVar_{\cellindR} = (\entropicVar_{\cellindR,0}^T,\cdots,\entropicVar_{\cellindR,\comment{\SGtruncorder}}^T)\in\mathbb{R}^{d\cdot (\comment{\SGtruncorder}+1)}$. Then the system \eqref{MEIPMsystem} is brought into quasi-conservative form by applying the chain rule, leading to
\begin{align*}
    \bm{H}_{\cellindR}(\entropicVar_{\cellindR})\partial_t \entropicVar_{\cellindR} + \bm{B}_{\cellindR}(\entropicVar_{\cellindR})\partial_x \entropicVar_{\cellindR} = \bm{0}
\end{align*}
for every \ME{} $\cellindR=1,\ldots,\MEElements$. Following \cite[Chapter~4.2.3]{poette2019contribution}, the temporal Jacobians $\bm{H}_{\cellindR}\in\mathbb{R}^{d\cdot (\comment{\SGtruncorder}+1)\times d\cdot (\comment{\SGtruncorder}+1)}$ are symmetric positive definite and the spatial Jacobians $\bm{B}_{\cellindR}$ are symmetric. Hence, the \ME{} IPM equations can be written as a Friedrichs system, which according to \cite{friedrichs1954symmetric} implies hyperbolicity.
\end{proof}
Furthermore, note that the IPM ansatz (at least for the cases considered in this work) only allows the solution to take admissible values. Hence the solution always lies in the admissible set. Besides hyperbolicity as well as the potential to mitigate oscillations, the \ME{} IPM method has a remarkable computational advantage:
\begin{remark}
The IPM optimization problems decouple and one can solve the $\MEElements$ problems per spatial cell individually. Commonly, the multi-element approach allows using a smaller total degree of polynomials. Therefore, instead of solving one expensive optimization problem, we now solve $\MEElements$ cheaper optimization problems in each cell, which can be distributed to different processors.
\end{remark}

We apply the numerical discretization from \secref{sec:numerics} yielding the following algorithm of the \ME{} intrusive polynomial moment method, whereas we now replace the solution operator \eqref{eq:Lhfilter} by 
\begin{align}
    L_h^{(\timeind)}&\big(\solutioncoeff{\SGsumIndex}{\cellind,\cellindy,\cellindR}{\timeind},\entrcm{\timeind}{\cellind,\cellindy,\cellindR}\big) := \solutioncoeff{\SGsumIndex}{\cellind,\cellindy,\cellindR}{\timeind}\nonumber\\[.2cm]
		&- \frac{\dtstepsize}{\Delta\x} \! \int_{\cellR{\cellindR}}\!\! \Big(\hat{\flux}_1\big(\InvGradEntropy(\entrcm{\timeind}{\cellind,\cellindy,\cellindR}),\InvGradEntropy(\entrcm{\timeind}{\cellind+1,\cellindy,\cellindR}),\uncertainty\big)\!-\!\hat{\flux}_1\big(\InvGradEntropy(\entrcm{\timeind}{\cellind-1,\cellindy,\cellindR}),\InvGradEntropy(\entrcm{\timeind}{\cellind,\cellindy,\cellindR}),\uncertainty\big)\!\Big)
		\xiBasisPoly{\SGsumIndex,\cellindR}\locxiPDF{\cellindR} \mathrm{d}\uncertainty \nonumber\\[.2cm]
		&- \frac{\dtstepsize}{\Delta\y}\! \int_{\cellR{\cellindR}}\!\! \Big(\hat{\flux}_2\big(\InvGradEntropy(\entrcm{\timeind}{\cellind,\cellindy,\cellindR}),\InvGradEntropy(\entrcm{\timeind}{\cellind,\cellindy+1,\cellindR}),\uncertainty\big)\!-\!\hat{\flux}_2\big(\InvGradEntropy(\entrcm{\timeind}{\cellind,\cellindy-1,\cellindR}),\InvGradEntropy(\entrcm{\timeind}{\cellind,\cellindy,\cellindR}),\uncertainty\big)\!\Big)
		\xiBasisPoly{\SGsumIndex,\cellindR}\locxiPDF{\cellindR} \mathrm{d}\uncertainty \label{eq:LhIPM},
\end{align}
for $\SGsumIndex=0,\ldots,\SGtruncorder$,  $(\cellindx,\cellindy)\in\ncells$ and $\cellindR=1,\ldots,\MEElements$, defining one forward Euler time step of the FVM for the ME-IPM system \eqref{MEIPMsystem}. Here, $\entrcm{\timeind}{\cellind,\cellindy,\cellindR}$ describes the cell mean of the entropic variable in cell $\cell{\cellindx,\cellindy}$ and \ME{} $\cellR{\cellindR}$ at $\timevar_\timeind$.

\begin{algorithm}[H] 
	\caption{Multi-Element Intrusive Polynomial Moment Method (ME-IPM)}
	\label{algo:MEIPMM}
	\begin{algorithmic}[1]
		\State $\entrcm{0}{}\leftarrow \Big(\sum_{\SGsumIndex=0}^\SGtruncorder\int_{\cell{\cellind,\cellindy}}\!\int_{\cellR{\cellindR}}\entropy'\big(\solution^{(0)}\big)\,\xiBasisPoly{\SGsumIndex,\cellindR}\locxiPDF{\cellindR} \mathrm{d}\uncertainty\mathrm{d}(\x,\y)\,\xiBasisPoly{\SGsumIndex,\cellindR}\Big)_{(\cellind,\cellindy)\in\ncells,\cellindR=1:\MEElements}$\hfill \textit{\# initial state}\smallskip
		\For{$\timeind=0,\ldots,N_\tend$} \hfill \textit{\# time loop}\smallskip
		\State $\overline{\solution}^{(\timeind)} \leftarrow \Big(\int_{\cell{\cellind,\cellindy}}\!\int_{\cellR{\cellindR}} \InvGradEntropy\!\big( \entrcm{\timeind}{\cellind,\cellindy,\cellindR} \big) \xiBasisPoly{\SGsumIndex} \mathrm{d}\uncertainty\mathrm{d}(\x,\y)\Big)_{(\cellind,\cellindy)\in\ncells,\cellindR=1:\MEElements,\SGsumIndex:\SGtruncorder}$ \hfill \textit{\# variable map}\smallskip
		\State  $\overline{\solution}^{(\timeind+1)}\leftarrow
		L_h^{(\timeind)}(\overline{\solution}^{(\timeind)},\entrcm{\timeind}{})$ \hfill \textit{\# FVM \eqref{eq:LhIPM}}\smallskip
		\State $\entrcm{\timeind+1}{} \leftarrow \text{arg}\,\text{min}_{\entropicVarSum}\lagrange(\overline{\solution}^{(\timeind)},\entrcm{\timeind}{})$
		\hfill \textit{\# minimize Lagrangian using \eqref{eq:MEIPMLagrange}}\medskip
		\EndFor
	\end{algorithmic}
\end{algorithm}

\begin{remark}\label{rem:acceleration}
In addition to the described strategies to reduce computational costs, the steady-state test case in Section~\ref{sec:NACA} makes use of two acceleration techniques taken from \cite{kusch2020intrusive}. 
\begin{itemize}
    \item Note that the minimization of the Lagrangian is usually performed using Newton iterations. The iteration is stopped when the gradient of the Lagrangian is sufficiently close to zero. When solving steady-state problems, the FVM steps can be interpreted as a fixed point iteration, i.e. the time loop is iterated until a steady-state is reached. The idea of one-shot IPM (osIPM) is to perform only a single Newton iteration to minimize the Lagrangian in every iteration of the time loop. In this case, we iterate both, the moments as well as the dual variables to their steady-state solution simultaneously. It can be shown that this iteration converges locally \cite{kusch2020intrusive}. The idea follows the concept of the one-shot approach used in PDE constraint optimization \cite{Hazra2005}. 
    \item We make use of adaptivity in stochastic space, which is one core advantage of intrusive methods \cite{tryoen2012adaptive,buerger2014hybrid,meyer2020posteriori,kusch2020intrusive}. \comment{To reduce computational costs, both the gPC degree $\SGtruncorder$ and the number of quadrature points to compute the integrals in \eqref{LaxFriedrichs} are adapted in time and space according to the smoothness of the solution. To allow for an efficient evaluation of numerical fluxes in \eqref{LaxFriedrichs} when neighboring cells have different refinement levels, Clenshaw-Curtis nodes are chosen. In this case, the nestedness of these nodes allows for a computationally efficient evaluation, since spatial stencils can be evaluated on the same nodes. For a detailed presentation of the adaptivity strategy used in this work, see \cite[Section~5]{kusch2020intrusive}. Note that we do not use adaptivity in the number of multi-elements or the spatial domain.}
    
    \item To ensure that a large number of time iterations is performed on a low refinement level (i.e. on a small number of moments), the maximal truncation order is only allowed to increase once the approximation is sufficiently close to the steady-state solution. Thereby, a large amount of time iterations use the lowest refinement level (i.e. a small number of moments), which can be understood as a preconditioning step. Keeping a low refinement level for a large amount of time steps is called refinement retardation.
\end{itemize}
\end{remark}

\begin{remark}
A common issue of minimal entropy methods such as IPM is the loss of realizability due to errors when numerically solving the IPM optimization problem. This means that moment vectors generated by the numerical method can leave the hyperbolicity set, i.e. they do not belong to an admissible solution. The variable map ensures hyperbolicity as discussed in \cite{Kusch2017}.
\end{remark}

\comment{
\begin{remark}
A parallelization of the proposed methods is performed with MPI. To reduce communication overhead, the computational resources focus on the parallelization of quadrature rules for numerical fluxes as well as IPM optimization problems. For more details on the chosen parallelization strategy, see \cite[Section~6]{kusch2020intrusive}.
\end{remark}}
\section{Numerical Results}\label{sec:five}
In the following section we apply the oscillation mitigation intrusive UQ schemes from \secref{sec:three} and \secref{sec:four} to the one- and two-dimensional Euler equations. In particular, we evaluate and compare the stochastic Galerkin and intrusive polynomial moment method with and without the \ME{} ansatz in terms of their relative error and the computational costs. 

The code that we use to obtain the proceeding numerical results is openly available at \cite{uqcreator} to allow reproducibility. To guarantee admissibility for both, the IPM and hSG solutions, all numerical calculations use the HLL numerical flux \cite{harten1983upstream} which has the property of being positivity--preserving \cite{munz1994godunov}.

\subsection{Sod's shock tube}\label{sec:Sod}
To demonstrate the behavior of the different proposed methods, we first investigate the uncertain Sod's shock tube test case as proposed in \cite{Poette2009}. \comment{The uncertain Sod's shock tube from \cite{Poette2009} extends the original test case \cite{sod1978survey} to an uncertain shock position.} Here, the random system of conservation laws are the one-dimensional Euler equations \eqref{eq:euler}, where the heat capacity ratio $\gamma$ is chosen to be $1.4$. 
Initially, the gas is in a shock state with random position $x_{\text{interface}}(\xi) = x_0+\sigma \xi$, where $\xi$ is uniformly distributed in the interval $[-1,1]$. Hence, we have
\begin{eqnarray*}
\rho_{\text{IC}} &=& \begin{cases} \rho_L &\mbox{if } x < x_{\text{interface}}(\xi) \\
\rho_R & \mbox{else } \end{cases}\;, \\
(\rho v)_{\text{IC}} &=& 0\;, \\
(\rho e)_{\text{IC}} &=& \begin{cases} \rho_L e_L &\mbox{if } x < x_{\text{interface}}(\xi) \\
\rho_R e_R & \mbox{else } \end{cases}\;.
\end{eqnarray*}
The IPM method chooses the entropy
\begin{equation*}
\entropy(\rho,\rho v,\rho e) = -\rho \ln\left(\rho^{-\gamma}\left(\rho e-\frac{(\rho v)^2}{2\rho}\right)\right)\;.
\end{equation*}
Further parameter values are
\begin{table}[h!]
\centering
    \begin{tabular}{ | l | p{5.5cm} |}
    \hline
    $\Domain=[\x_L,\x_R]=[0,1]$ & range of spatial domain \\
    $\tend=0.14$ & end time \\
    $x_0 = 0.5, \sigma = 0.05$ & interface position parameters\\
    $\rho_L = 1.0,e_L = 2.5, \rho_R = 0.125,e_R = 0.25$ & initial states\\
    \hline
    \end{tabular}
    \caption{Parameter values for Sod's shock tube.}
    \label{tab:paramsSod}
\end{table}

Moreover, we divide the spatial domain into $\ncells=2000$ cells.
In a first simulation, we compare the expected gas density as well as its variance when employing stochastic Galerkin with and without the multi-element ansatz. To guarantee an admissible solution, both methods require the hyperbolicity-preserving limiter discussed in \secref{sec:hypPresLimiter}. The classical hSG scheme uses polynomials up to a degree of $14$, which we denote by hSG$_{14}$. For multi-element hSG we decompose the stochastic domain into three multi-elements, each of them with polynomials up to degree $4$. We denote this method by ME-hSG$_{3,4}$. Furthermore, a Gauss-Legendre quadrature is chosen. Here, the chosen number of points is $30$ for the classical hSG method and $10$ points per multi-element when using ME-hSG.

\begin{figure}[h!]
\centering
	\begin{subfigure}{0.49\linewidth}
		\centering
				\includegraphics[width=\linewidth]{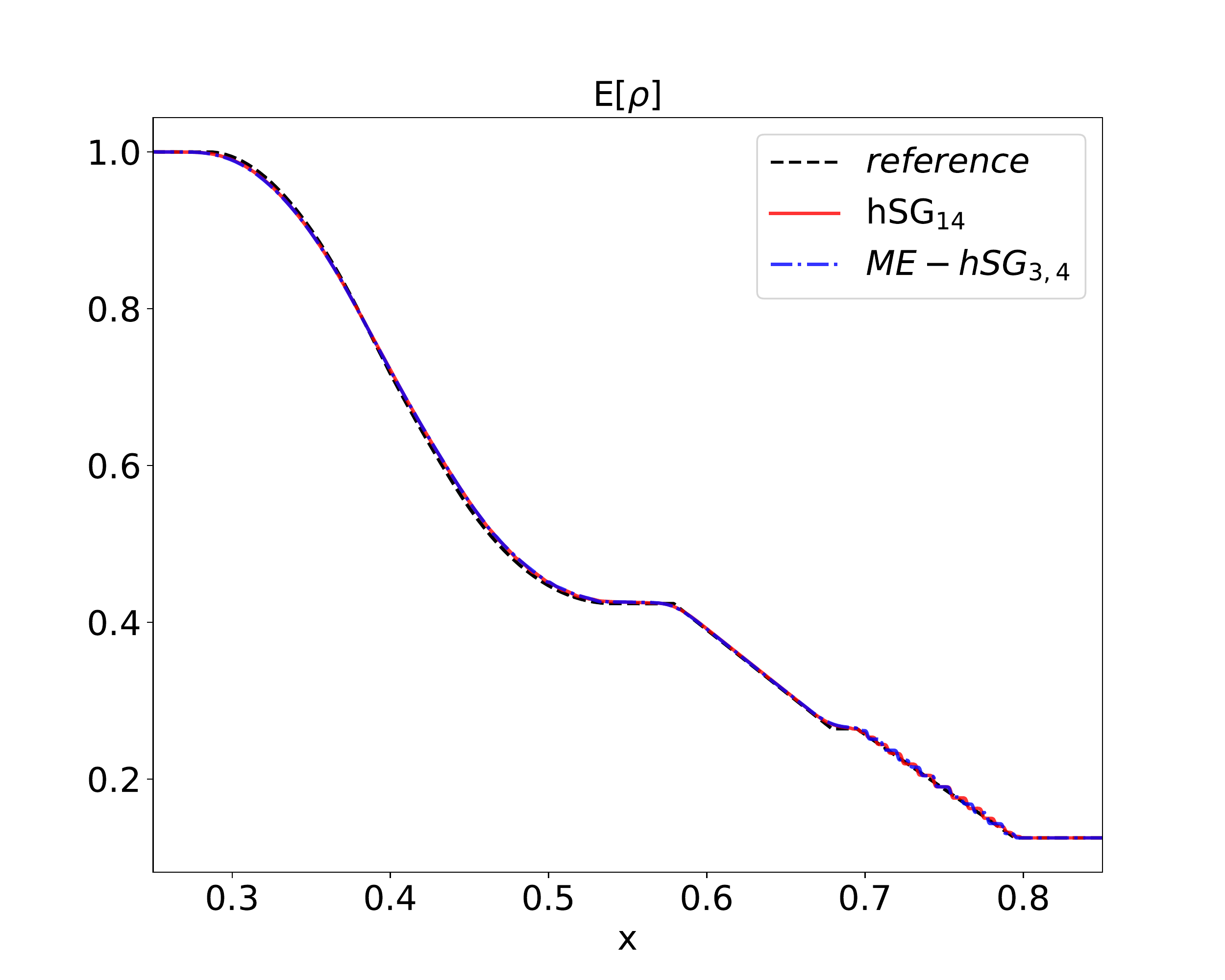}
		\label{fig:referenceSolutionsub1}
	\end{subfigure}
	\hfill
	\begin{subfigure}{0.49\linewidth}
		\centering
				\includegraphics[width=\linewidth]{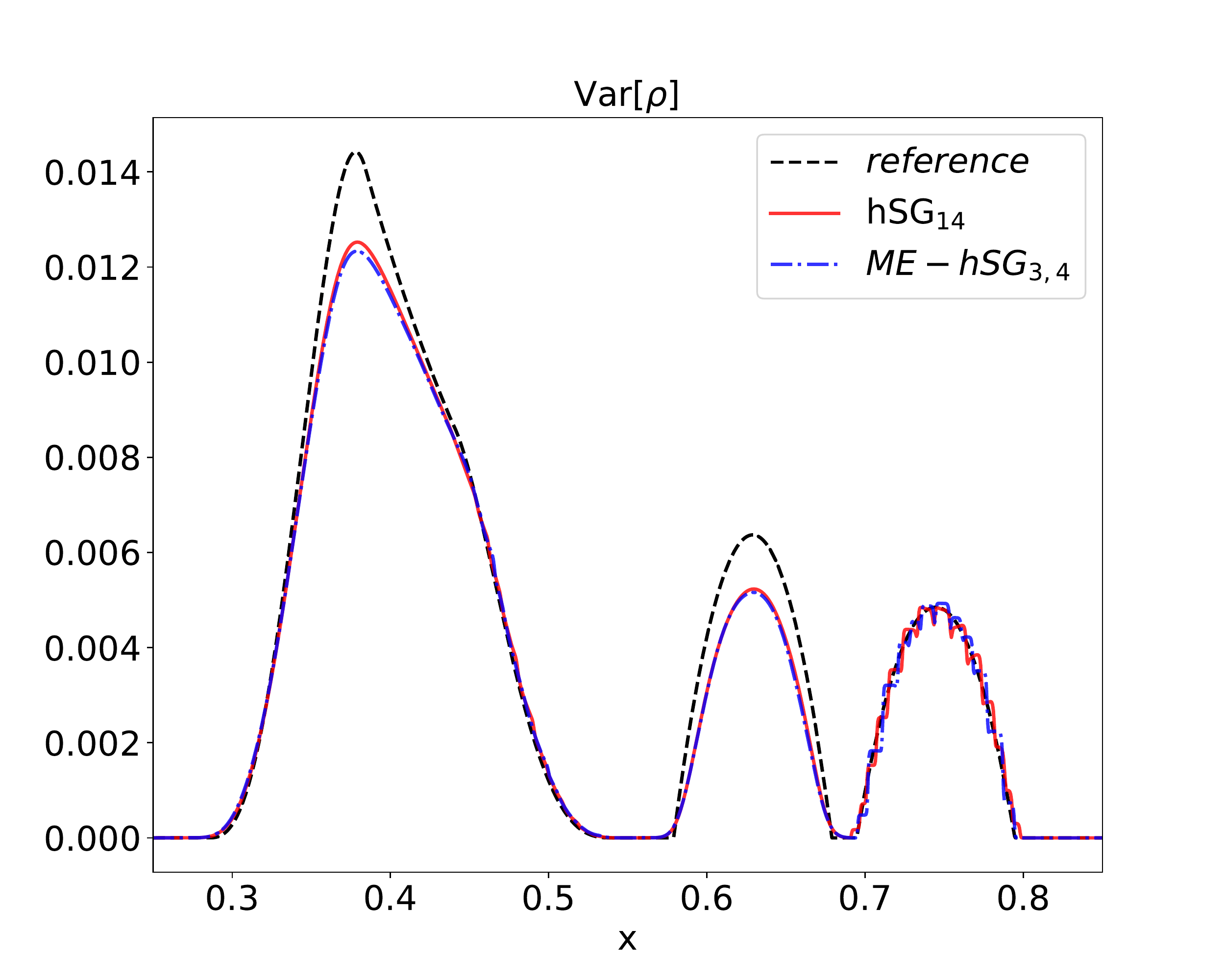}
		\label{fig:referenceSolutionsub2}
	\end{subfigure}
	\caption{Comparison of hSG using $15$ moments and ME-hSG for $3$ multi-elements with $5$ moments. The number of unknowns per spacial cell is identical, which yields the same runtime and similar results.}
	\label{fig:ComparisonSGMESG}
\end{figure}

The results are shown in Figure~\ref{fig:ComparisonSGMESG}, \comment{where we include the analytic reference solution to Sod's shock tube test case \cite{sod1978survey}}. From left to right, we can see the expected value and variance of the three main characteristics of this test case, namely a rarefaction wave, a contact discontinuity and a shock. Since the number of unknowns per spatial cell is identical for both hSG and ME-hSG, the computational costs are the same leading to similar runtimes (24.675 seconds for hSG and 23.761 seconds for ME-hSG). Expected value and variance of both methods look similar. In particular, the solution emits a step-like profile of the expected value and an oscillatory variance at the shock. In this case, the hSG and ME-hSG results almost coincide.

When applying the IPM method with and without multi-elements we observe a clear improvement of the overall runtime. We denote the IPM method using polynomials up to degree $14$ by IPM$_{14}$ and the multi-element IPM method with three multi-elements and gPC degree four by ME-IPM$_{3,4}$, where we use the same quadrature as for hSG and ME-hSG. The numerical results are depicted in Figure~\ref{fig:ComparisonIPMMEIPM}.

\begin{figure}[h!]
\centering
	\begin{subfigure}{0.49\linewidth}
		\centering
				\includegraphics[width=\linewidth]{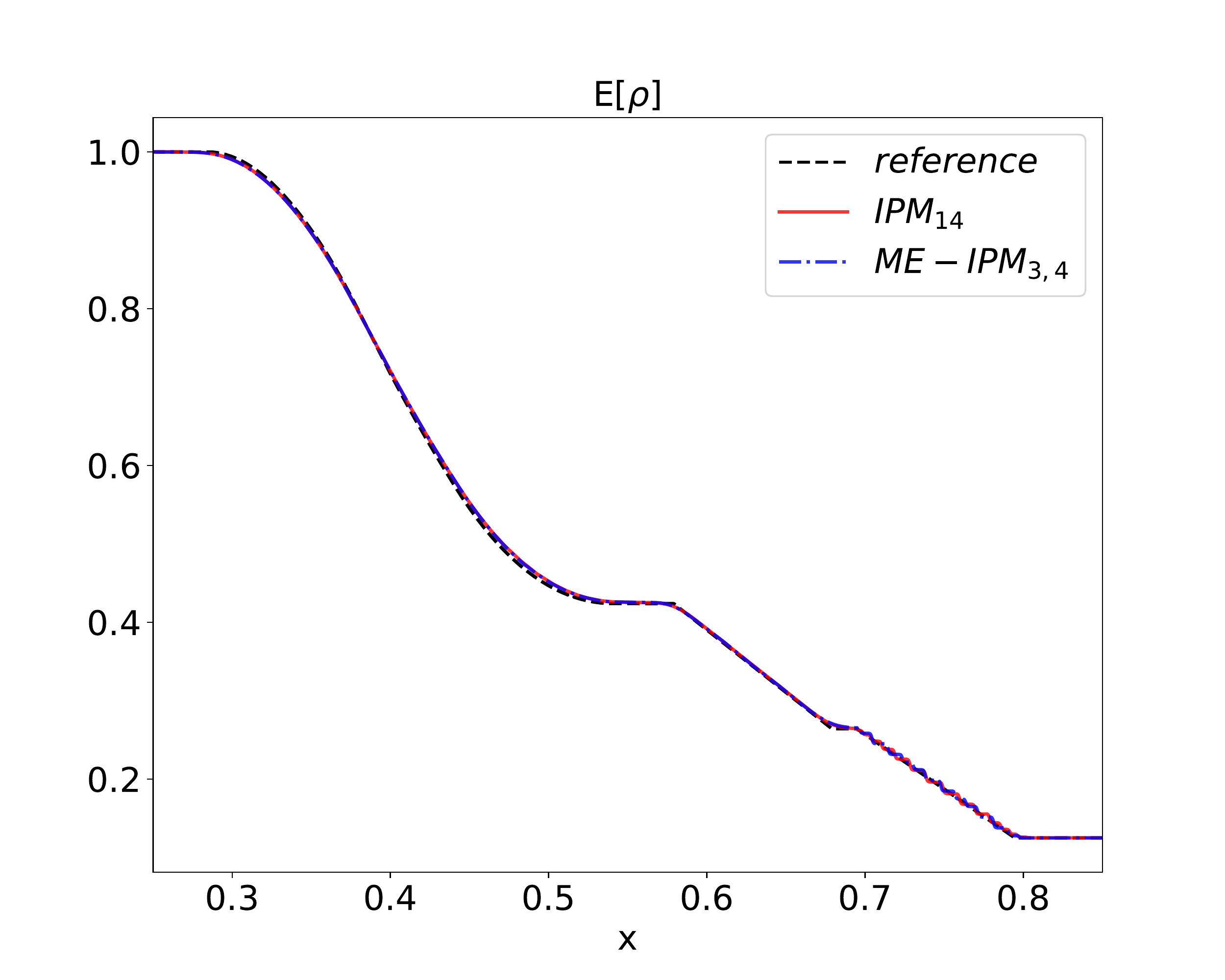}
		\label{fig:referenceSolutionsub1}
	\end{subfigure}
	\hfill
	\begin{subfigure}{0.49\linewidth}
		\centering
				\includegraphics[width=\linewidth]{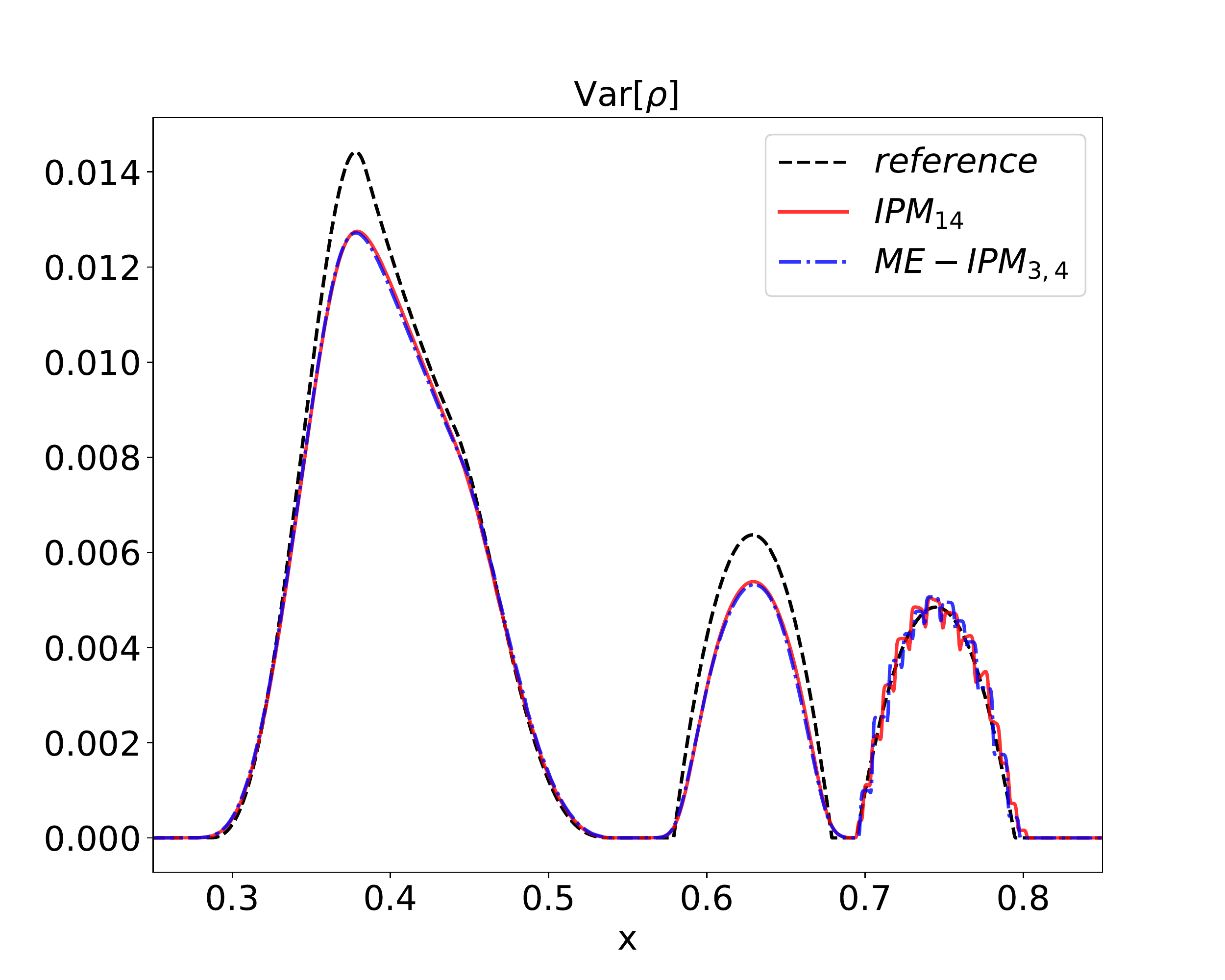}
		\label{fig:referenceSolutionsub2}
	\end{subfigure}
	\caption{Comparison of IPM using $15$ moments and ME-IPM for $3$ multi-elements with $5$ moments. Even though the number of unknowns per spacial cell is identical, the runtime of the multi-element IPM method is reduced by a factor of $7.72$ while yielding a similar result.}
	\label{fig:ComparisonIPMMEIPM}
\end{figure}

First, we notice that the multi-element ansatz heavily reduces the computational costs, since every cell requires solving three decoupled optimization problems with five unknowns respectively. In contrast, the classical IPM method with the same number of unknowns requires solving one optimization problem with $15$ unknowns. This reduction in numerical costs is reflected by the runtime which decreases by a factor of $7.72$ (from $4364$ to $564$ seconds). Note that the IPM results show a slightly improved approximation compared to the hSG results in Figure~\ref{fig:ComparisonSGMESG}. This however does not justify the increased computational costs of both IPM methods, which is why we can conclude that in this test case the hSG method performs better than IPM.

Lastly, we investigate the multi-element ansatz for filtered hSG. The results are depicted in Figure~\ref{fig:ComparisonMESGvsFMESG}, we additionally show a zoomed view of the shock region in Figure~\ref{fig:ComparisonMESGvsFMESGZoom}.

\begin{figure}[h!]
\centering
	\begin{subfigure}{0.49\linewidth}
		\centering
				\includegraphics[width=\linewidth]{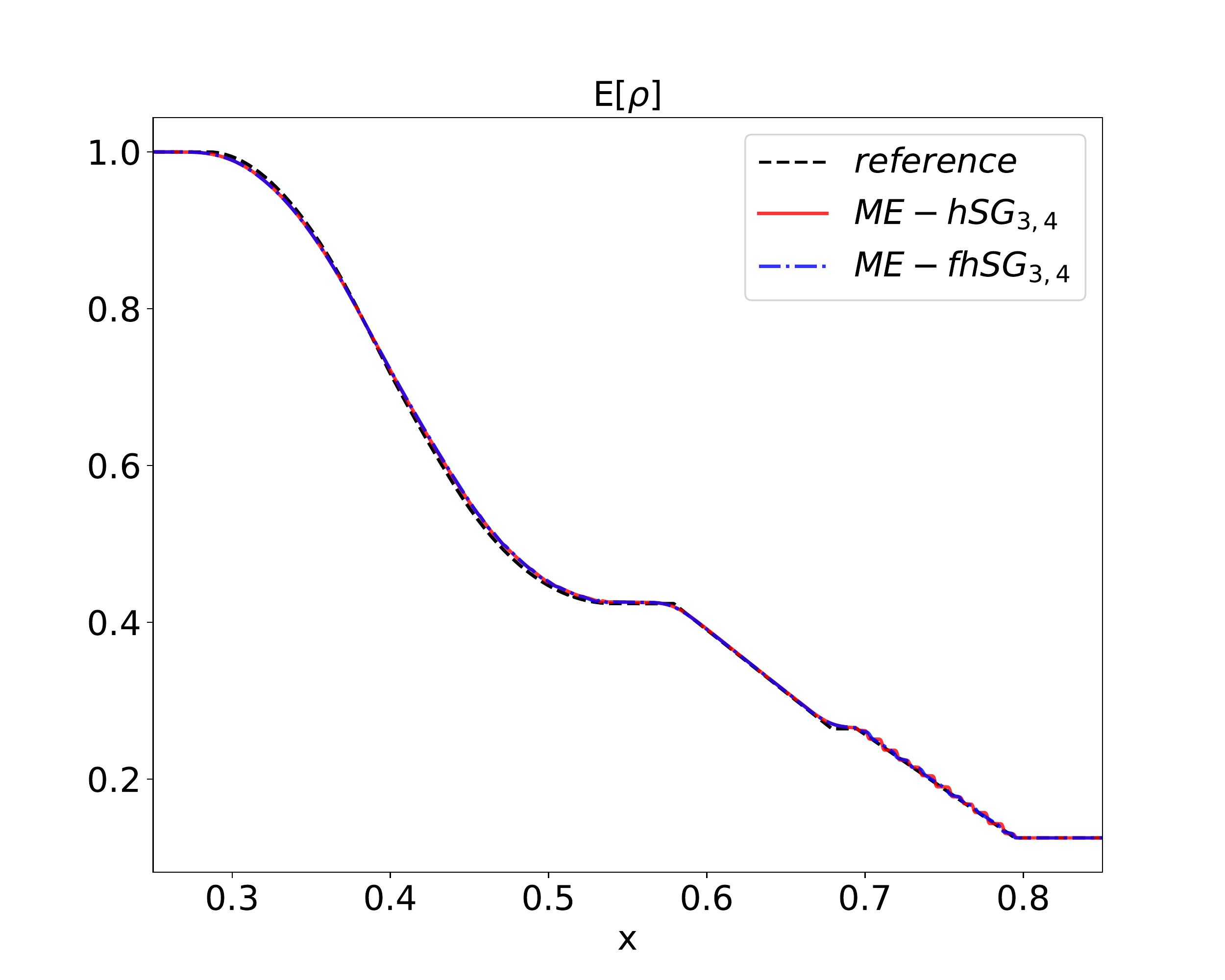}
		\label{fig:referenceSolutionsub1}
	\end{subfigure}
	\hfill
	\begin{subfigure}{0.49\linewidth}
		\centering
				\includegraphics[width=\linewidth]{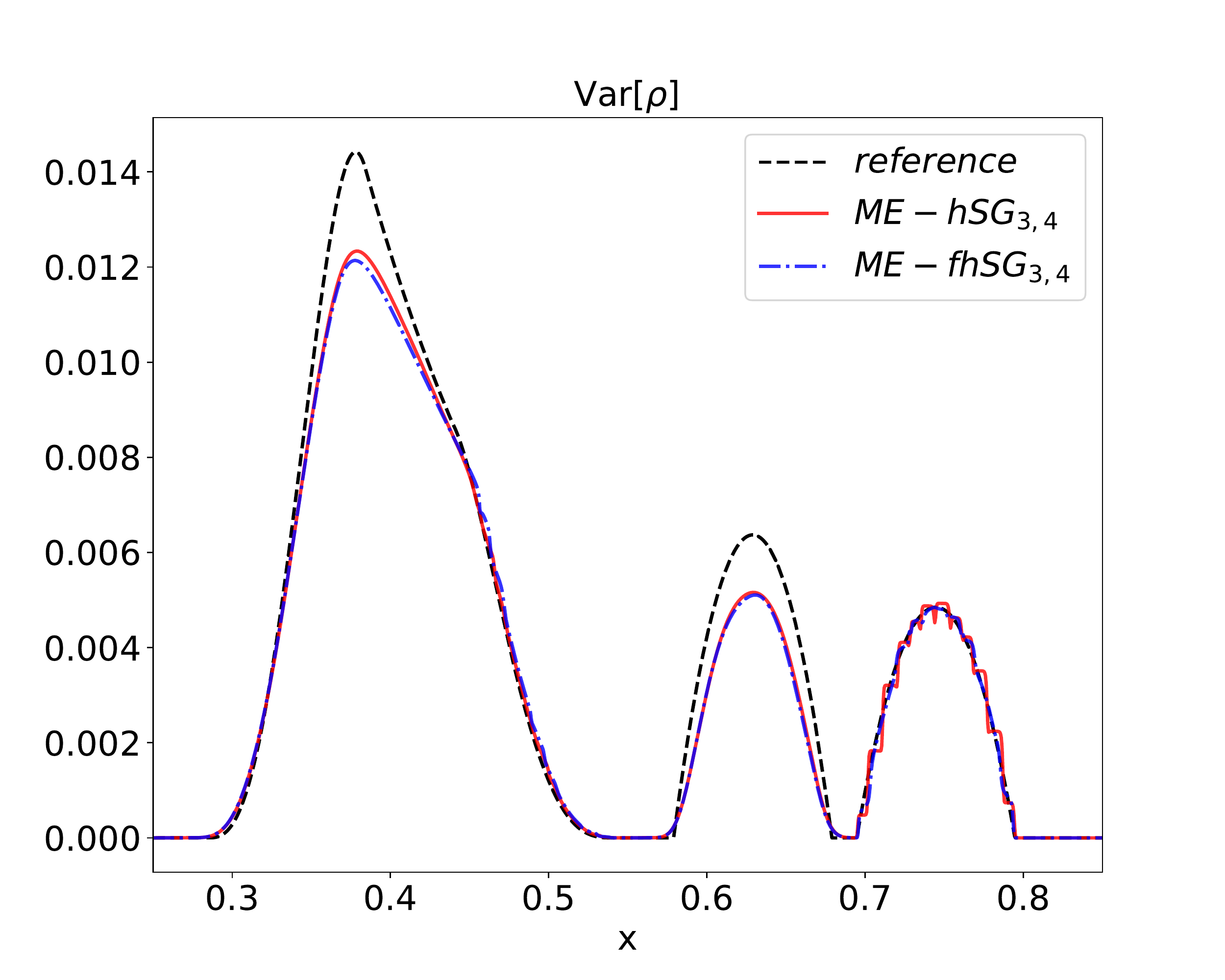}
		\label{fig:referenceSolutionsub2}
	\end{subfigure}
	\caption{Comparison of ME-hSG and ME-fhSG using $3$ multi-elements with $5$ moments. While the filter slightly dampens the variance at the contact discontinuity and the rarefaction wave, the shock approximation shows good agreement with the reference solution. }
	\label{fig:ComparisonMESGvsFMESG}
\end{figure}
\begin{figure}[h!]
\centering
	\begin{subfigure}{0.49\linewidth}
		\centering
				\includegraphics[width=\linewidth]{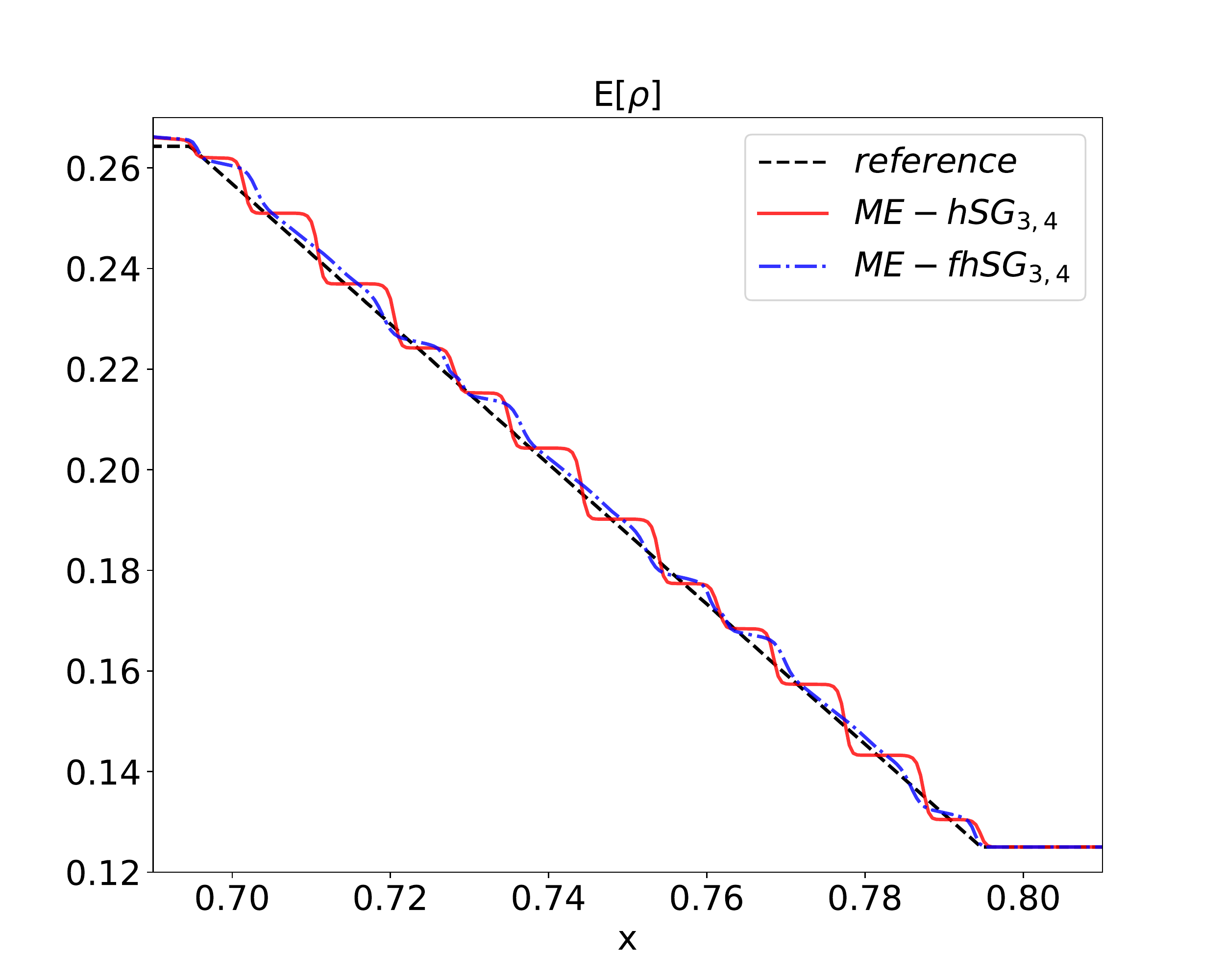}
		\label{fig:referenceSolutionsub1}
	\end{subfigure}
	\hfill
	\begin{subfigure}{0.49\linewidth}
		\centering
				\includegraphics[width=\linewidth]{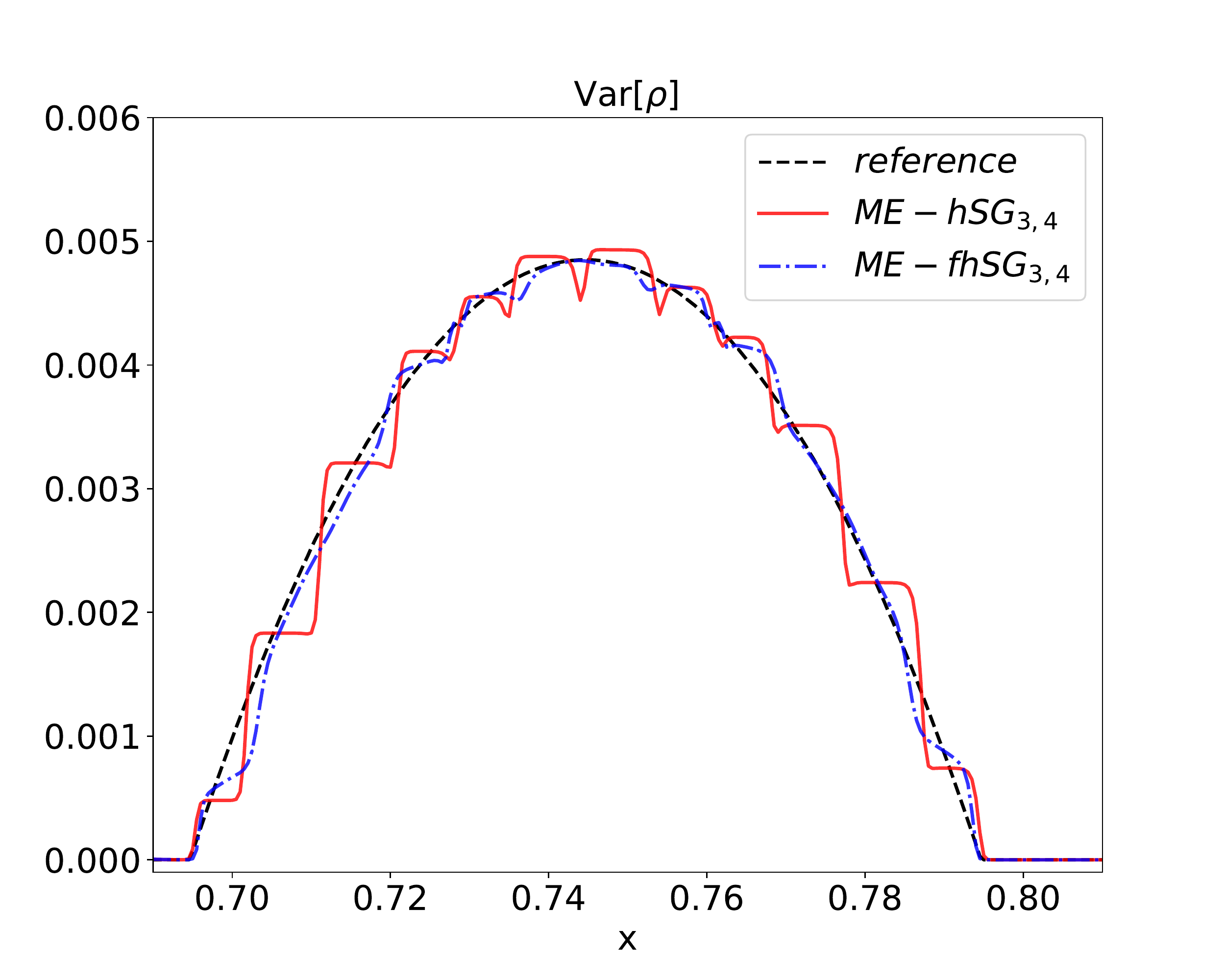}
		\label{fig:referenceSolutionsub2}
	\end{subfigure}
	\caption{Results for ME-hSG and ME-fhSG using $3$ multi-elements with $5$ moments. Zoom on shock. }
	\label{fig:ComparisonMESGvsFMESGZoom}
\end{figure}

The chosen filter is the exponential filter discussed in \secref{sec:fSG}. We pick the filter strength $\lambda=2$ and the order $\alpha=10$. These values have been chosen by a parameter study in which we found that an increased order helps to distinguish between the rarefaction wave, which does not require filtering and the shock position where the filter needs to be active. At a high filter order, the filtering does not heavily effect the rarefaction wave while focusing on the shock. As expected, choosing the filter strength too small will not dampen oscillations efficiently while picking the filter strength too large filters out the uncertainty. At the chosen filter parameters, we observe that the filter significantly improves the expected value and the variance approximation at the shock. Unfortunately, the filter slightly dampens the variance at the rarefaction wave. Overall, the filter yields a satisfactory solution approximation without increasing the computational time.

\subsection{NACA0012 airfoil}\label{sec:NACA}
In the following we consider the uncertain NACA test case from \cite{kusch2020intrusive}. The test case investigates the effects of an uncertain angle of attack $\phi\sim U(0.75,1.75)$ for a NACA0012 airfoil with a length of one meter. The governing equations are the stochastic Euler equations in two dimensions, which read
\begin{align}\label{eq:eulerxy}
\partial_t
\begin{pmatrix}
\rho \\ \rho v_1 \\ \rho v_2 \\ \rho e
\end{pmatrix}
+\partial_{\x}
\begin{pmatrix}
\rho v_1 \\ \rho v_1^2 +p \\ \rho v_1 v_2 \\  v_1 (\rho e+p)
\end{pmatrix}
+\partial_{\y}
\begin{pmatrix}
\rho v_2 \\ \rho v_1 v_2 \\ \rho v_2^2+p \\ v_2 (\rho e+p)
\end{pmatrix}
=\bm{0}.
\end{align}
A closure for the pressure $p$ in two dimensions is given by
\begin{align*}
p = (\gamma-1)\rho\left(e-\frac12(v_1^2+v_2^2)\right).
\end{align*}
The hyperbolicity set from \defref{def:hypset} is therefore given by 
$$	\realizableSet = \left\{ \solution = \begin{pmatrix}
\rho \\ \rho v_1 \\ \rho v_2 \\ \rho e
\end{pmatrix}\,\Bigg|~\density>0,~\pressure = (\gamma-1)\rho\left(e-\frac12(v_1^2+v_2^2)\right) >0\right\}.$$
We again choose the heat capacity ratio $\gamma$ as $1.4$. At the airfoil surface, the Euler slip condition $\bm v^T\bm n = 0$ is used, where $\bm n$ denotes the surface normal. At a distance of $40$ meters from the airfoil, one assumes Dirichlet boundary conditions, i.e., the airfoil does not influence the flow at this boundary. Here, the constant flow field is given by Mach number $Ma = 0.8$, pressure $p = 101\;325$ Pa and a temperature of $273.15$ K. With the specific gas constant $R=287.87\frac{J}{kg\cdot K}$ and a given angle of attack $\phi$ the conserved variables at the far field boundary can be determined uniquely, since
\begin{align*}
\rho &= \frac{p}{R\cdot T}, \\ 
\rho v_1 &= \rho\cdot\text{Ma}\cdot \sqrt{\gamma R T} 
\cos\phi ,\\ 
\rho v_2 &= \rho\cdot\text{Ma}\cdot \sqrt{\gamma R T} 
\sin\phi,\\ 
\rho e =& \frac{p}{\gamma - 1} + \frac{1}{2}\rho (v_1^2 + v_2^2).
\end{align*}
The initial condition is chosen to be equal to the far field boundary values. In this case, the slip condition at the airfoil will correct the flow solution until a steady-state flow is reached after a certain time. The iteration in time is performed until the time residual fulfills the stopping criterion
\begin{align}\label{eq:residualSteady}
\sum_{(\cellindx,\cellindy)\in\ncells}\sum_{\cellindR=1}^\MEElements \Delta \x \Delta\y \Vert \bm{\overline{u}}_{\cellindx,\cellindy,\cellindR}^{(\timeind)} - \bm{\overline{u}}_{\cellindx,\cellindy,\cellindR}^{(\timeind-1)} \Vert \leq \varepsilon.
\end{align}
In all steady-state calculations, we choose $\varepsilon=5\cdot10^{-6}$.

Unlike the Shock tube, this test case does not have an analytical solution. Therefore, we compute a reference solution by stochastic collocation with $100$ Gauss-Legendre quadrature points. The resulting expected value and variance of the density as well as the implemented mesh are depicted in Figure~\ref{fig:referenceSolution}. The mesh is composed of 22361 triangular elements and is used for all computations of the NACA test case.

\begin{remark}
Within the numerical implementation of this test case, we divide the two-dimensional spatial domain into a triangular mesh. The theory of \secref{sec:numerics} for uniform grids can easily be applied to such a discretization.
\end{remark}

\begin{figure}[h!]
\centering
	\begin{subfigure}{0.329\linewidth}
		\centering
				\includegraphics[width=\linewidth]{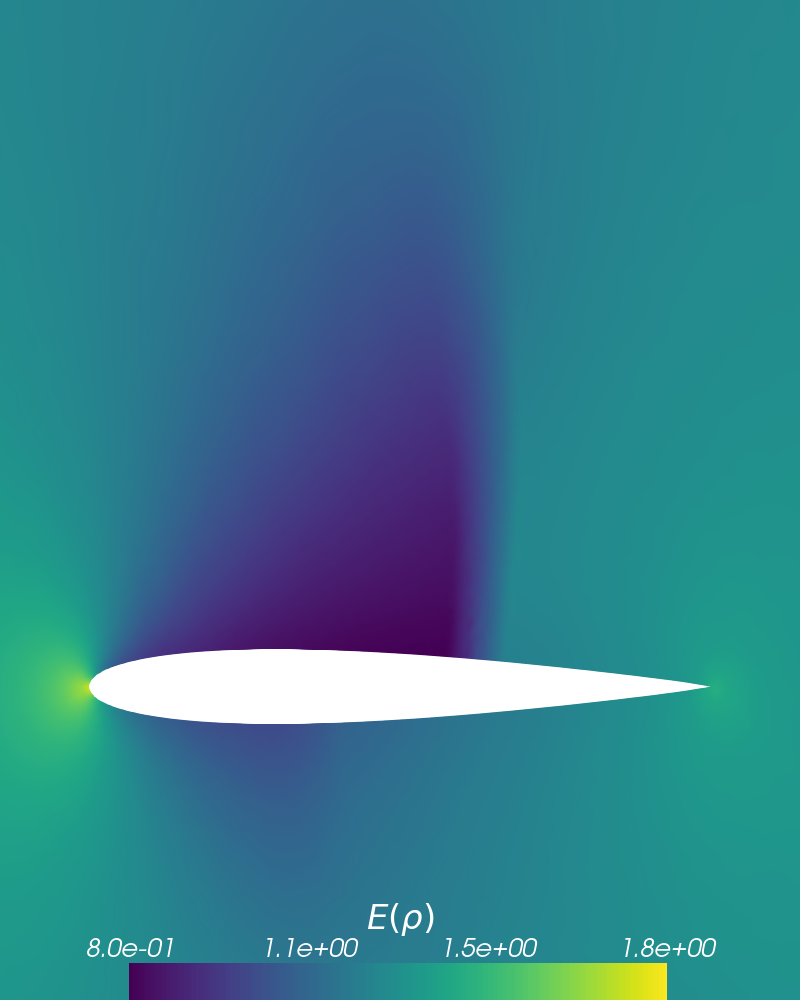}
		\label{fig:referenceSolutionsub1}
	\end{subfigure}
	\hfill
	\begin{subfigure}{0.329\linewidth}
		\centering
				\includegraphics[width=\linewidth]{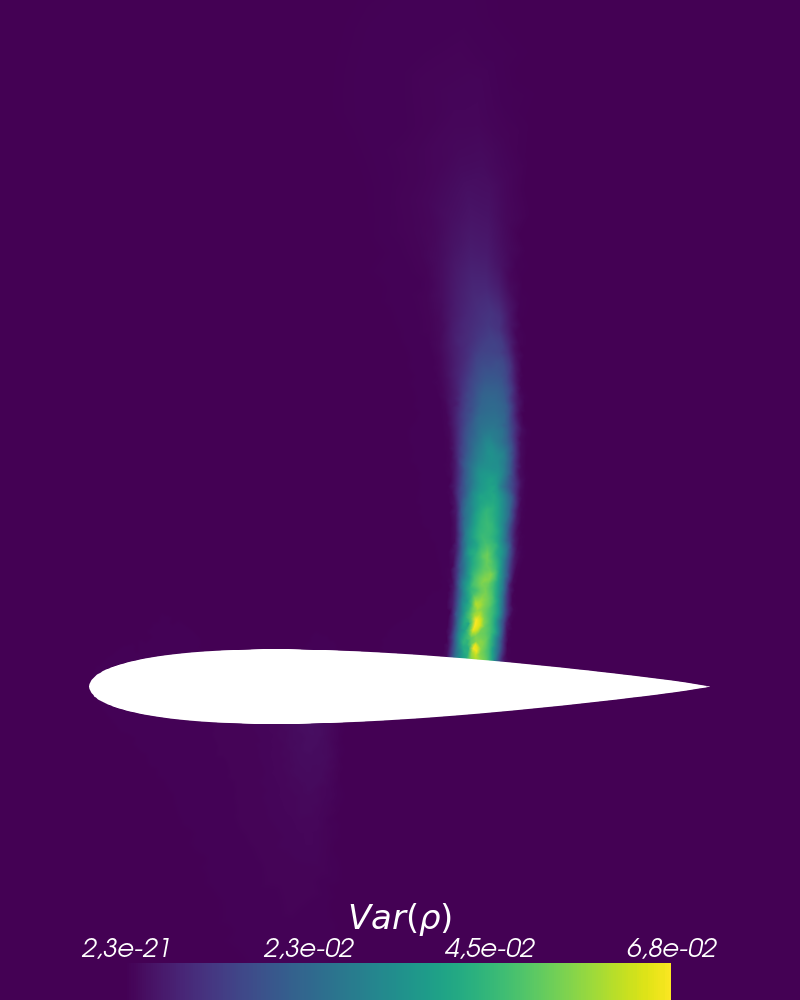}
		\label{fig:referenceSolutionsub2}
	\end{subfigure}
	\hfill
	\begin{subfigure}{0.329\linewidth}
		\centering
				\includegraphics[width=\linewidth]{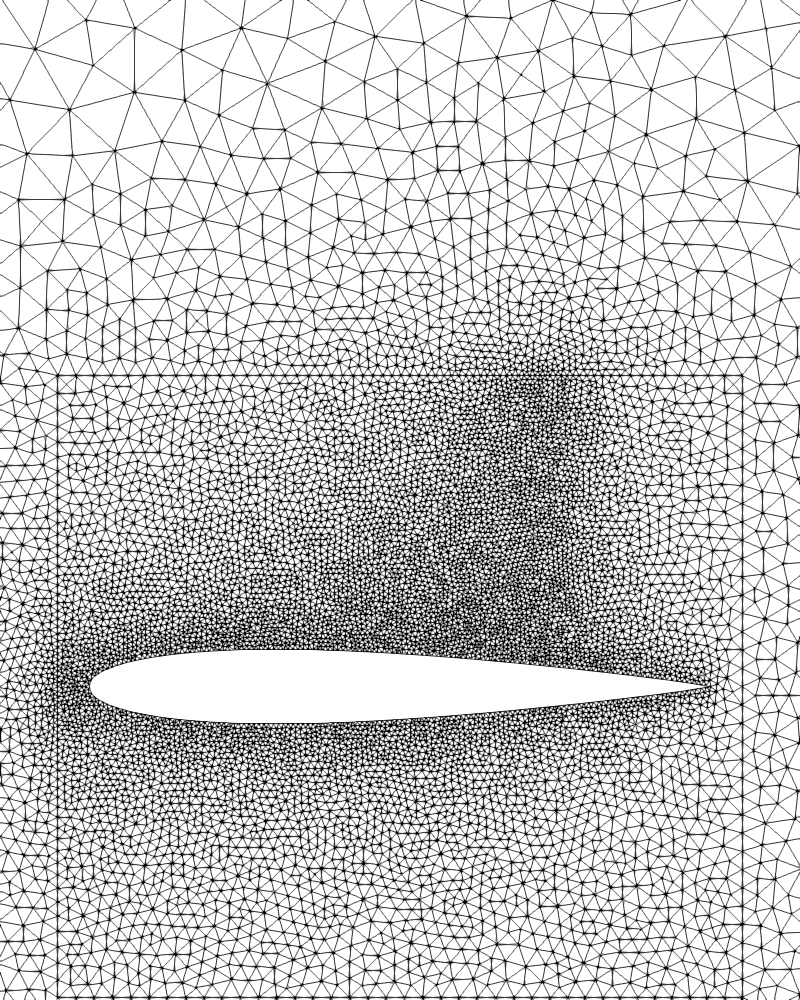}
		\label{fig:referenceSolutionsMesh}
	\end{subfigure}
	\caption{Left: Reference solution E$[\rho]$. Center: Var$[\rho]$. Right: Mesh around the airfoil. The mesh is a circle around the airfoil, consisting of 22361 triangular cells.}
	\label{fig:referenceSolution}
\end{figure}

In the following, we aim at approximating this solution with our oscillation mitigation intrusive UQ methods. To quantify the efficiency of the different approaches, we investigate the relative error of the density's expected value and variance over time. We define the L$_2$-error of a discrete quantity $\bm e_{\Delta}\in\mathbb{R}^{N_\x\times N_\y}$
\begin{align*}
\Vert \bm e_{\Delta} \Vert_{\Delta} := \sqrt{\sum_{(\cellindx,\cellindy)\in\ncells} \Delta \x \Delta\y\, \bm e_{\cellindx,\cellindy}^2}\;.
\end{align*}
Then, if we denote the solution obtained with the numerical method by $\solution_{\Delta}$ and the reference solution by $\widehat{\solution}_{\Delta}$, the relative error of the expected value and variance is
\begin{align}\label{eq:relErrors}
\frac{\Vert \text{E}[\widehat{\solution}_{\Delta}] - \text{E}[\solution_{\Delta}] \Vert_{\Delta}}{\Vert \text{E}[\widehat{\solution}_{\Delta}] \Vert_{\Delta}} \qquad \text{ and }\qquad \frac{\Vert \text{Var}[\widehat{\solution}_{\Delta}] - \text{Var}[\solution_{\Delta}] \Vert_{\Delta}}{\Vert \text{Var}[\widehat{\solution}_{\Delta}] \Vert_{\Delta}}.
\end{align}
Following \cite{kusch2020intrusive}, we record this error inside a box of one meter height and 1.1 meters length around the airfoil. This strategy allows us to exclude fluctuations in the far field solution. The resulting efficiency curves are depicted in Figure~\ref{fig:ConvergenceSGMESGIPMMEIPM}. Here, we plot the relative errors according to \eqref{eq:relErrors} over the computational time. All methods are converged to a time residual of $\varepsilon=5\cdot10^{-6}$, cf. \eqref{eq:residualSteady}. For most methods, a satisfactory solution is however reached at an earlier stage when the efficiency curve in Figure~\ref{fig:ConvergenceSGMESGIPMMEIPM} is saturated. In this case, the error of the steady-state approximation reaches the same error level as the approximation error of the uncertainty.

\begin{figure}[h!]
\centering
	\begin{subfigure}{0.49\linewidth}
		\centering
				\includegraphics[width=\linewidth]{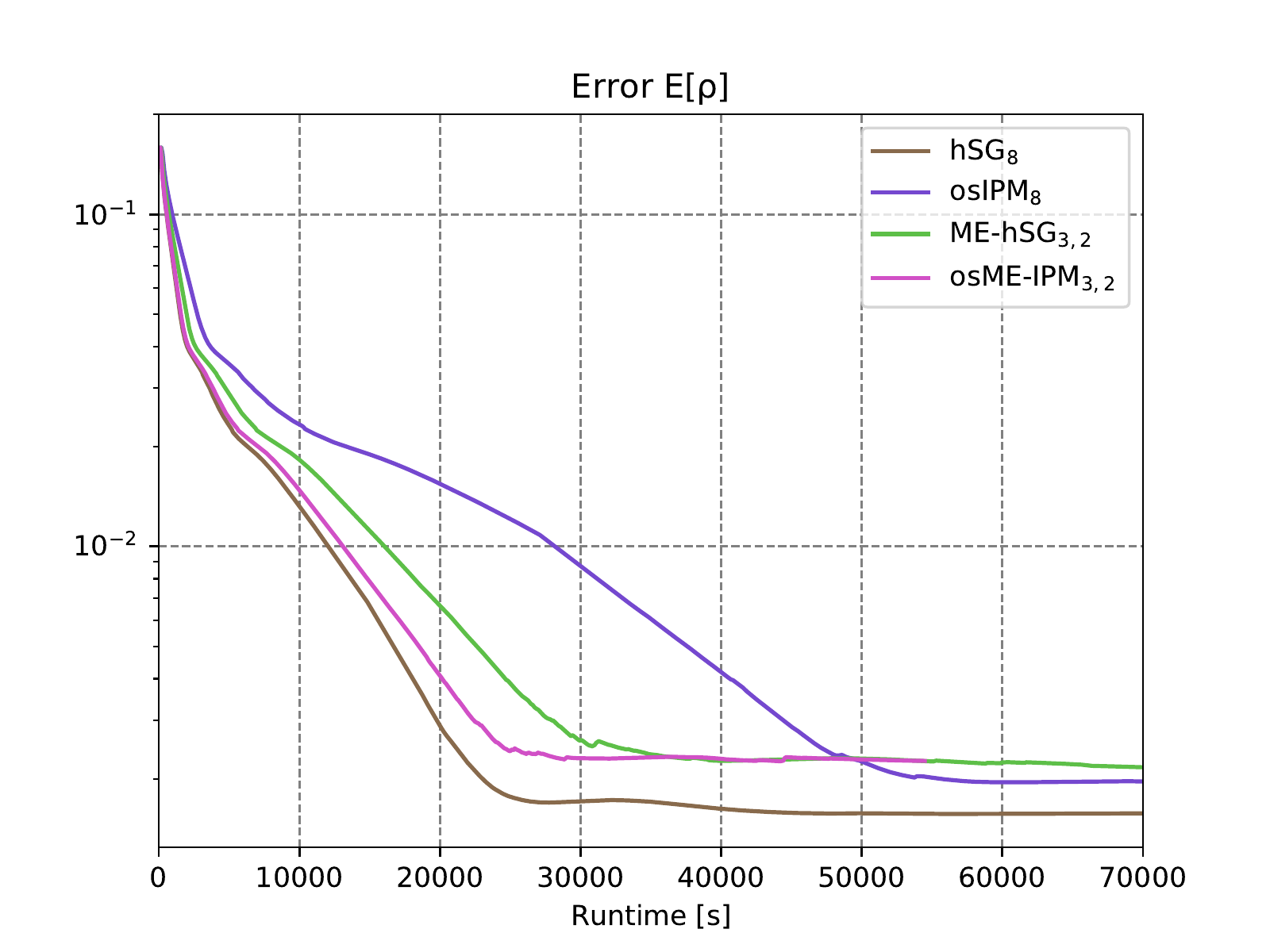}
		\label{fig:referenceSolutionsub1}
	\end{subfigure}
	\hfill
	\begin{subfigure}{0.49\linewidth}
		\centering
				\includegraphics[width=\linewidth]{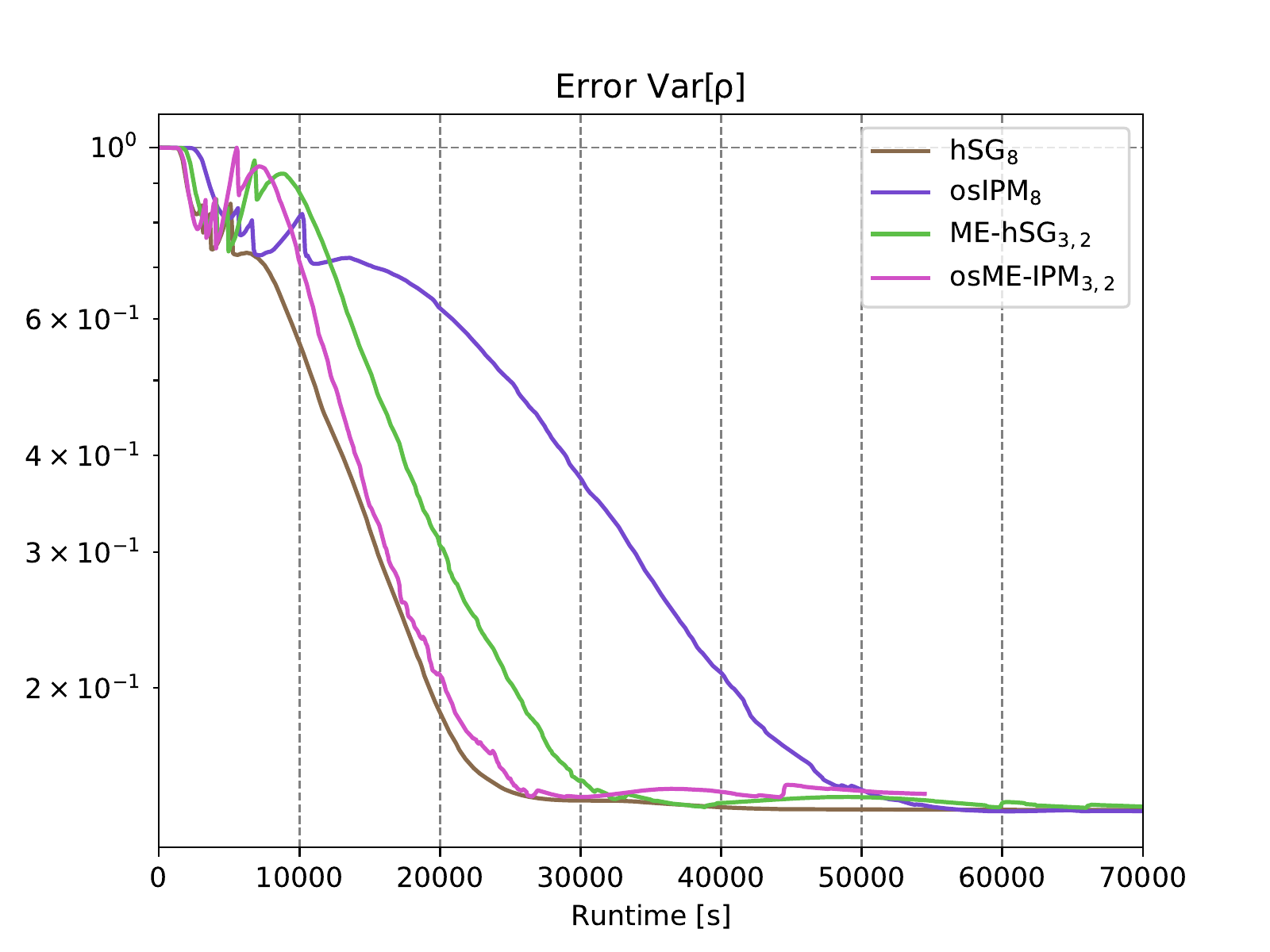}
		\label{fig:referenceSolutionsub2}
	\end{subfigure}
	\caption{Relative errors \eqref{eq:relErrors} computed over runtime. Computations are carried out using 5 MPI threads. As for the one-dimensional shock tube, the multi-element ansatz significantly reduces the overall runtime when using the IPM method. Multi-element IPM beats multi-element hSG, however the fastest method yielding the smallest error is hSG.}
	\label{fig:ConvergenceSGMESGIPMMEIPM}
\end{figure}

All simulations have been computed in parallel on 5 processors. For details on the parallelization of the code, see \cite{kusch2020intrusive}. The stochastic Galerkin calculations again require the hyperbolicity limiter presented Section~\ref{sec:hypPresLimiter}. First, we compare the results obtained with IPM. Here, we make use of one-shot IPM (osIPM) to reduce numerical costs. More information on this method can be found in \cite{kusch2020intrusive}. The number of unknowns per spatial cell for multi-element IPM and IPM is chosen to be $9$: While ME-IPM and ME-SG employ $3$ multi-elements with gPC degree two, the classical SG and IPM methods have gPC degree $8$. We use a Clenshaw-Curtis quadrature of level $4$ ($17$ points) when not using the multi-element ansatz. The multi-element methods use a level $2$ ($5$ points) Clenshaw-Curtis rule per element.
As expected, the multi-element ansatz again heavily decreases the numerical costs, while yielding a similar error level. Here, the error obtained with the standard IPM method is slightly better compared to the multi-element error. Note that ME-IPM beats ME-hSG in terms of efficiency. In this test case the most efficient method is the hyperbolicity-preserving stochastic Galerkin approach.

In order to further decrease the error obtained with the multi-element methods, we make use of adaptivity and refinement retardation, cf. Remark~\ref{rem:acceleration}. 
Here, we fix the choice of $3$ elements, however let the number of polynomials in every element depend on the solution's smoothness. Furthermore, all quadratures are approximated by a Clenshaw-Curtis quadrature rule, which adapts to the moment order. The different gPC degrees and the corresponding quadrature points for each refinement level are given by table \tabref{tab:levels}. We choose the refinement barriers as $\delta_{-} = 2\cdot 10^{-4}$ and $\delta_{+} = 2\cdot 10^{-5}$. For more information on \tabref{tab:levels} and refinement barriers see \cite{kusch2020intrusive}.

\begin{table}[h!]
\centering
    \begin{tabular}{ | l || c | c | c | c | c |}
    \hline
    refinement level & 0 &  1 & 2 & 3 & 4 \\
    degree & 2 &  3 & 4 & 5 & 6 \\
    quadrature points & 5 &  9 & 9 & 9 & 17 \\
    \hline
    \end{tabular}
    \caption{Refinement levels with their moment orders and quadrature points for adaptive refinements in ME-hSG and ME-IPM.}
    \label{tab:levels}
\end{table}

We converge the solution on refinement level zero to a residual of $\varepsilon=4\cdot 10^{-5}$ and then let the refinement level allow to increase up to level $4$ until $\varepsilon=5\cdot10^{-6}$. The resulting efficiency is depicted in Figure~\ref{fig:ConvergenceAdaptivity}. In this case, the ME stochastic Galerkin method is more efficient than ME-IPM. This is most likely caused by the increased number of moments per multi-element, which increases the costs of solving the IPM optimization problem. Thus, the ME-IPM method is most efficient on low polynomial degrees. A refinement strategy, which overcomes this issue would be to increase or decrease the number of multi-elements instead of the number of polynomials. We however leave this idea to future work.

\begin{figure}[h!]
\centering
	\begin{subfigure}{0.49\linewidth}
		\centering
				\includegraphics[width=\linewidth]{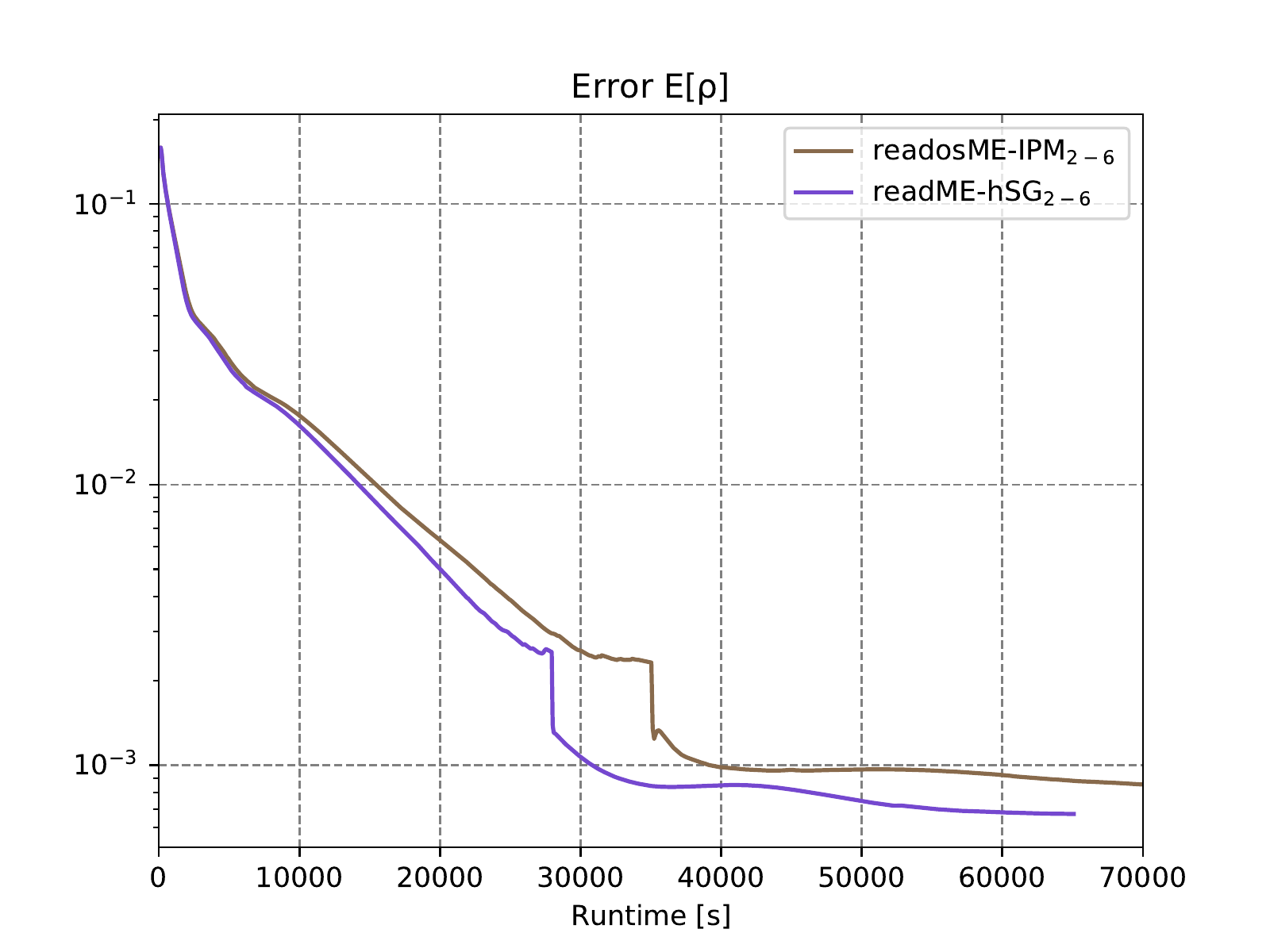}
	\end{subfigure}
	\hfill
	\begin{subfigure}{0.49\linewidth}
		\centering
				\includegraphics[width=\linewidth]{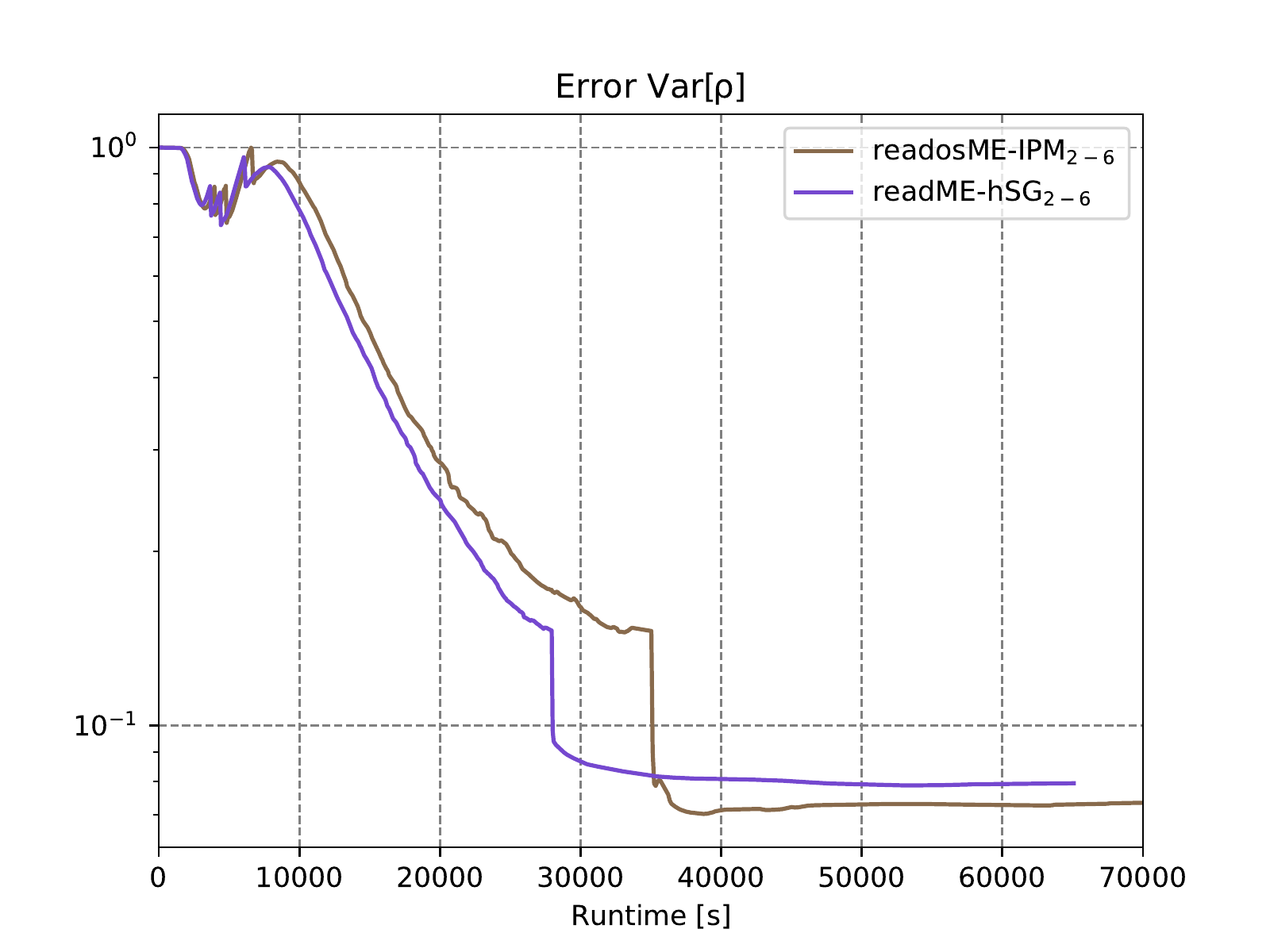}
	\end{subfigure}
	\caption{Relative errors \eqref{eq:relErrors} computed over runtime. Computations are carried out using 5 MPI threads. We use refinement retardation (re) and adaptivity (ad) to accelerate the computation while achieving an improved error level. }
	\label{fig:ConvergenceAdaptivity}
\end{figure}

Figure~\ref{fig:MEhSGNACA} shows the approximated expected value and variance by refinement retardation adaptive ME-hSG (readME-hSG) as well as the corresponding refinement level at the steady-state solution. The results of readosME-IPM look similar and are therefore left out at this point. It can be seen that the approximation does not show oscillatory behaviour, which stems from the sufficiently high number of unknowns chosen by the adaptive scheme on top of the airfoil. The approximation agrees well with the reference solution in Figure~\ref{fig:referenceSolution}. 

\begin{figure}[h!]
\centering
	\begin{subfigure}{0.329\linewidth}
		\centering
				\includegraphics[width=\linewidth]{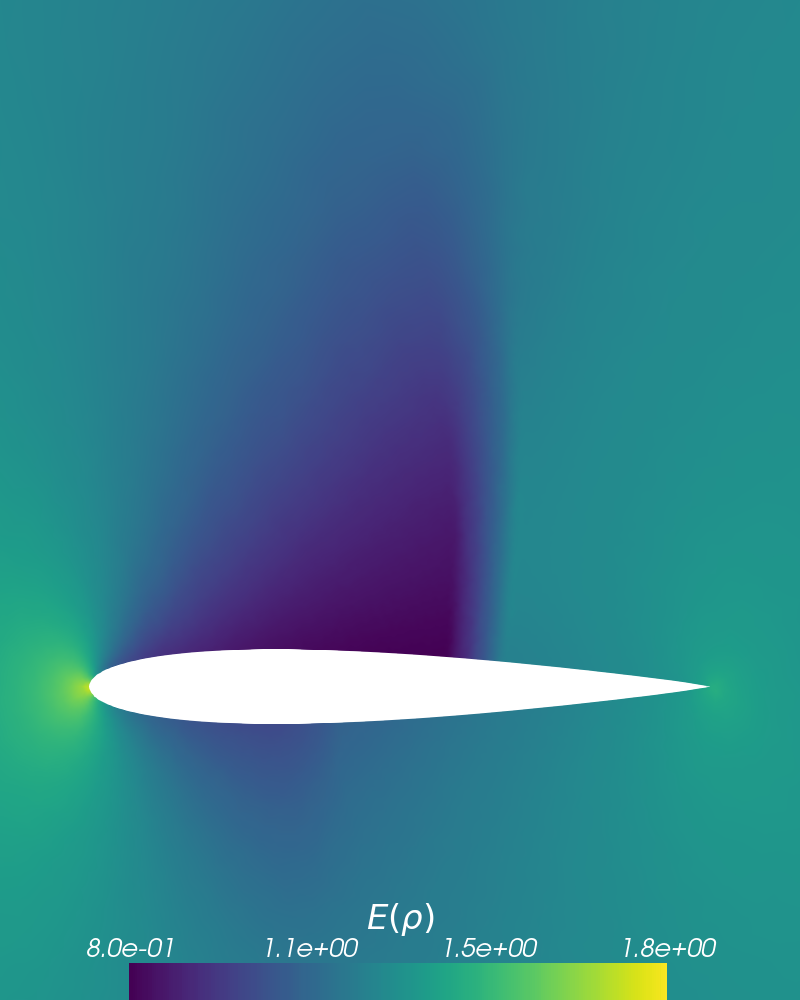}
	\end{subfigure}
	\hfill
	\begin{subfigure}{0.329\linewidth}
		\centering
				\includegraphics[width=\linewidth]{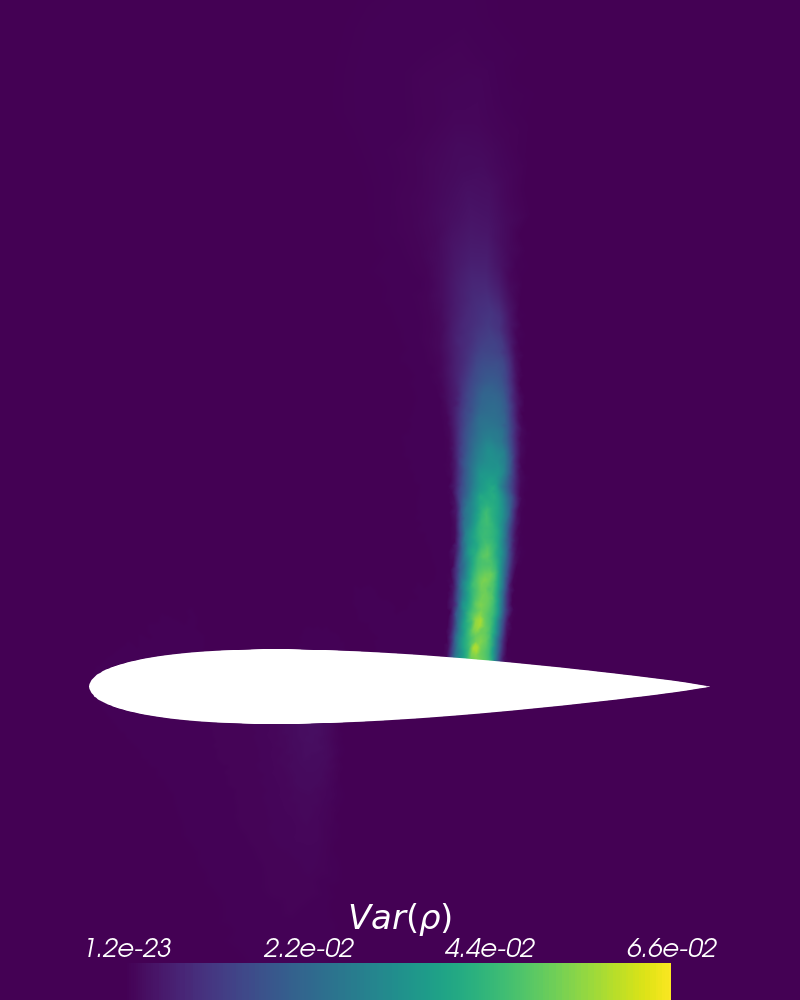}
	\end{subfigure}
	\hfill
	\begin{subfigure}{0.329\linewidth}
		\centering
				\includegraphics[width=\linewidth]{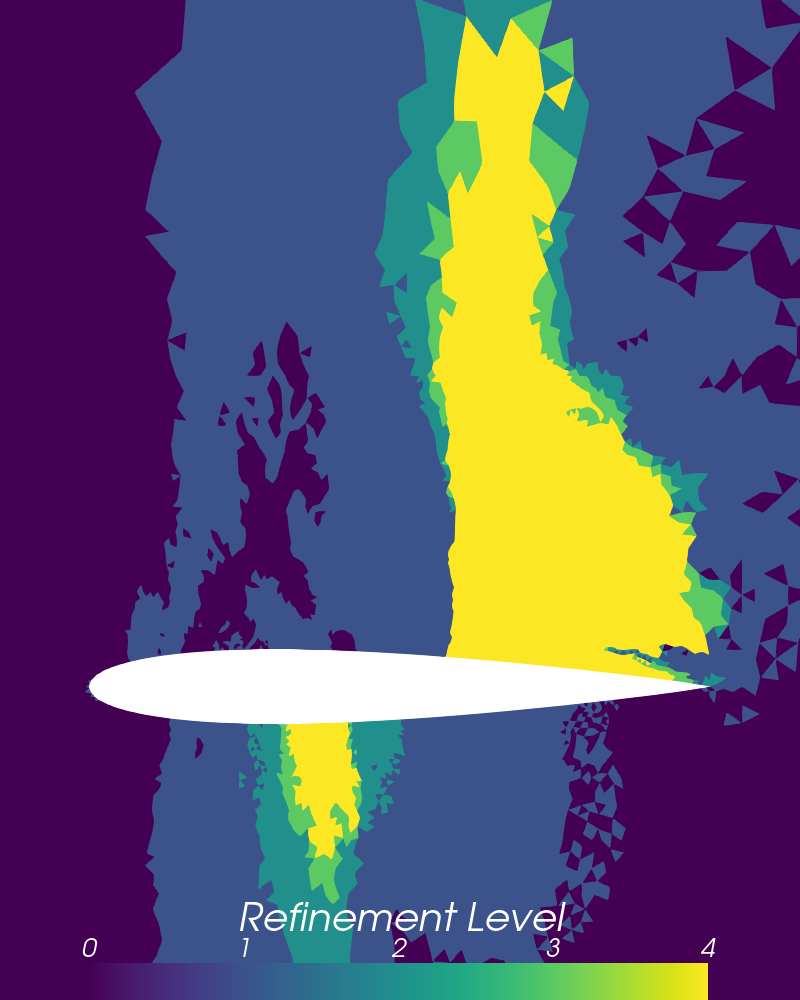}
	\end{subfigure}
	\caption{Expected density (left) and corresponding variance (right) computed with ME-hSG using refinement retardation and adaptivity. Right: Refinement level at final solution.}
	\label{fig:MEhSGNACA}
\end{figure}

We leave the use of filters for steady-state problems to future work. Here, the main issue is that the filter is applied in every pseudo-time step, which commonly leads to a heavy modification of the solution.

\subsection{Nozzle with uncertain shock}
Lastly, we investigate a nozzle with an uncertain shock. Again, the test case relies on the two-dimensional Euler equations \eqref{eq:eulerxy} inside a nozzle geometry (see Figure~\ref{fig:RefNozzle}). The nozzle is composed of a chamber on the left, a throat in the center and the main nozzle part on the right. The shock position is uniformly distributed within the throat. The shock states are identical to the one-dimensional shock tube experiment from Section~\ref{sec:Sod}, namely
\begin{eqnarray*}
\rho_{\text{IC}} &=& \begin{cases} \rho_L &\mbox{if } x < x_{\text{interface}}(\xi) \\
\rho_R & \mbox{else } \end{cases}\;, \\
(\rho v_1)_{\text{IC}} &=& 0\;, \\
(\rho v_2)_{\text{IC}} &=& 0\;, \\
(\rho e)_{\text{IC}} &=& \begin{cases} \rho_L e_L &\mbox{if } x < x_{\text{interface}}(\xi) \\
\rho_R e_R & \mbox{else } \end{cases}\;,
\end{eqnarray*}
with $x_{\text{interface}}(\xi) = x_0+\sigma \xi$, $\uncertainty\sim\mathcal{U}(-1,1)$. We choose $x_0$ to be the x-coordinate of the center of the throat. The throat ranges from $x_0-0.5$ to $x_0+0.5$, i.e., we set $\sigma = 0.5$ to ensure a uniformly distributed shock position in the entire throat section.  
We calculate the solution at an end time of $t = 4$. A reference solution, which has been computed by stochastic collocation with a 100 point Gauss-Legendre quadrature, is shown in Figure~\ref{fig:RefNozzle}. All computations are carried out on a mesh with $\ncells=225758$ triangular cells. Note that similar to Sod's shock tube we observe structures stemming from the rarefaction wave (left), the contact discontinuity (center) and the shock (right).

\begin{figure}[h!]
\centering
	\begin{subfigure}{1.0\linewidth}
		\centering
		\includegraphics[scale=0.32]{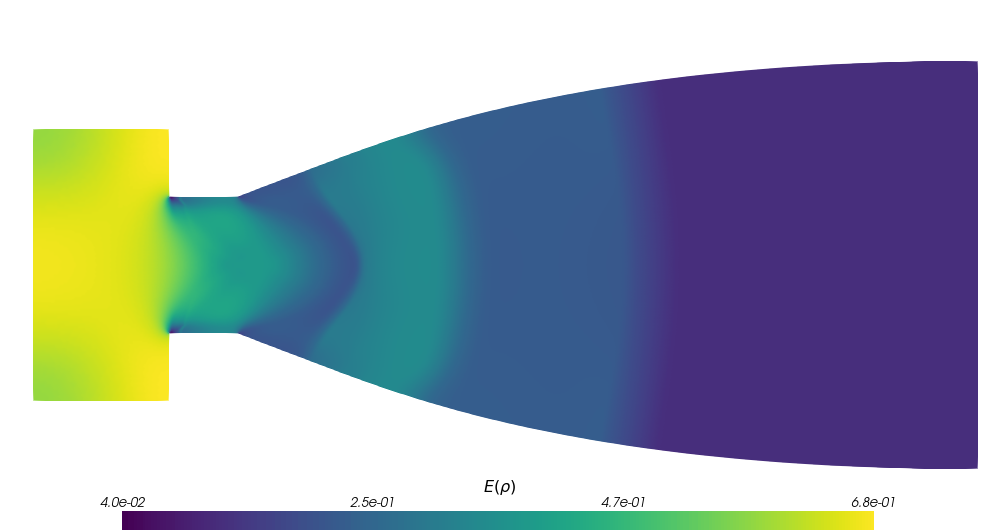}
		
		\label{fig:sub1}
	\end{subfigure}
	\begin{subfigure}{1.0\linewidth}
		\centering
		\includegraphics[scale=0.32]{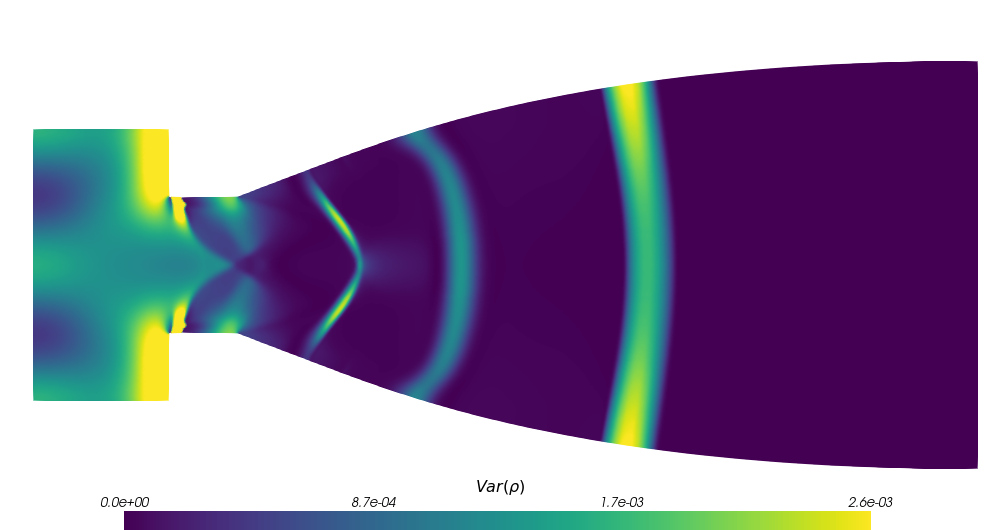}
		
		\label{fig:sub2}
	\end{subfigure}
	\caption{Reference solution for density. Top: Expected value. Bottom: Variance.}
	\label{fig:RefNozzle}
\end{figure}

We first compare the different methods without multi-elements. Here, we use gPC polynomials up to degree 5 as well as a 20 point Gauss-Legendre quadrature rule. We run simulations with hSG, fhSG and IPM. The filtered hSG method is implemented for the exponential filter with order $\alpha = 7$ and strength $\lambda=0.1$. Note that in this case, the filter parameters have again been picked by a brief parameter study.

We find the resulting expected density in Figure~\ref{fig:ExpNozzleSGIPM} and the corresponding variance in Figure~\ref{fig:VarNozzleSGIPM}. Comparing these results with the reference solution in Figure~\ref{fig:RefNozzle}, we observe that hSG as well as IPM yield a step-like expected density and an oscillatory variance. However, combined with a filter, we achieve a remarkably good approximation: While the variance is not dampened significantly at the contact discontinuity and the rarefaction wave, the variance at the shock does not oscillate and agrees well with the reference solution. Furthermore, the expected value of the filtered solution has a linear connection at the shock, which coincides with the expected value of the reference solution. 
All methods reveal errors for the variance in the throat region.

\begin{figure}[h!]
\centering
	\begin{subfigure}{1.0\linewidth}
		\centering
		\includegraphics[scale=0.32]{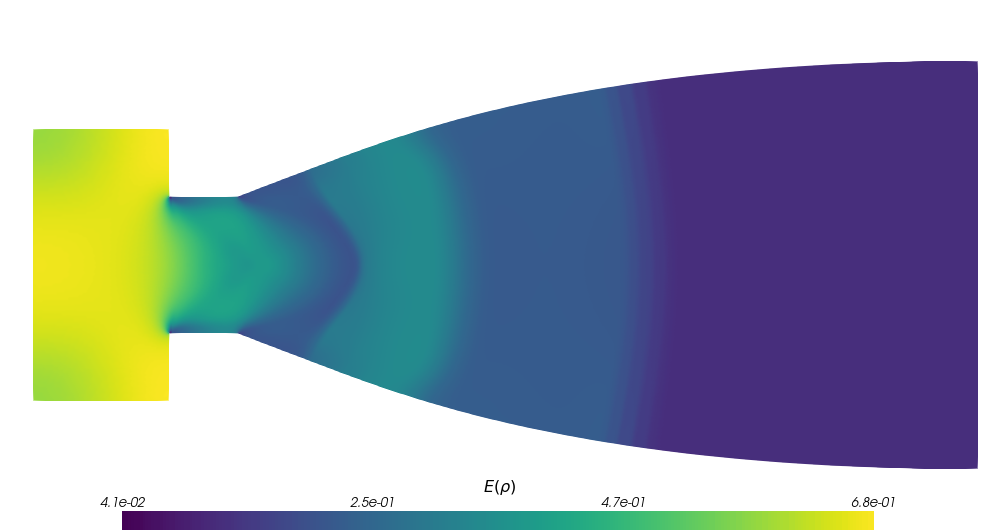}
		\label{fig:sub1}
	\end{subfigure}
	\begin{subfigure}{1.0\linewidth}
		\centering
		\includegraphics[scale=0.32]{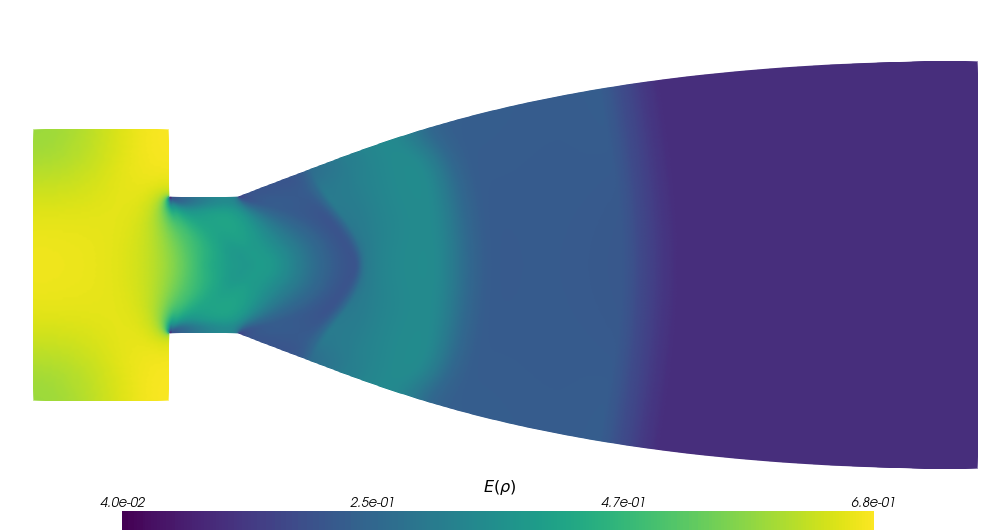}
		\label{fig:sub1}
	\end{subfigure}
	\begin{subfigure}{1.0\linewidth}
		\centering
		\includegraphics[scale=0.32]{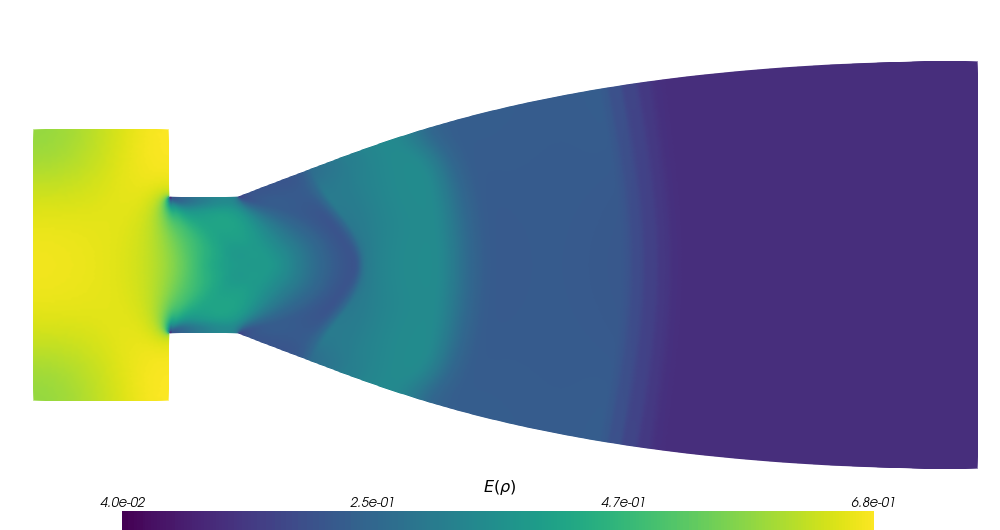}
		\label{fig:sub2}
	\end{subfigure}
	\caption{Expected density with different methods. From top to bottom: hSG, fhSG, IPM.}
	\label{fig:ExpNozzleSGIPM}
\end{figure}
\begin{figure}[h!]
\centering
	\begin{subfigure}{1.0\linewidth}
		\centering
		\includegraphics[scale=0.32]{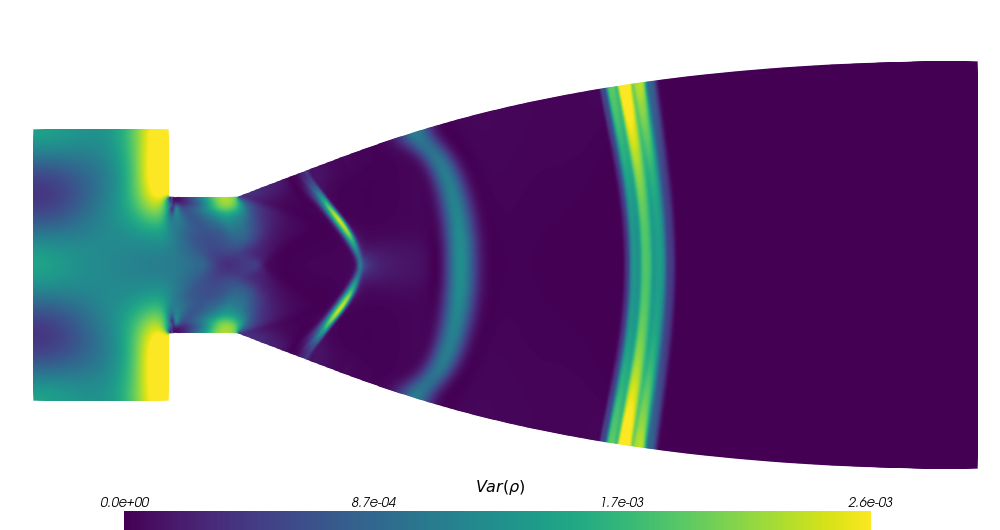}
		\label{fig:sub1}
	\end{subfigure}
	\begin{subfigure}{1.0\linewidth}
		\centering
		\includegraphics[scale=0.32]{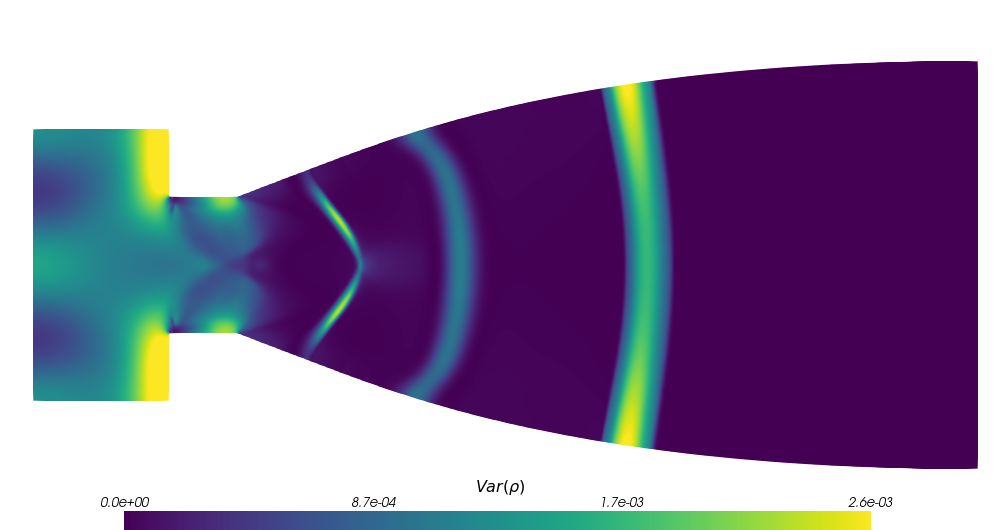}
		\label{fig:sub1}
	\end{subfigure}
	\begin{subfigure}{1.0\linewidth}
		\centering
		\includegraphics[scale=0.32]{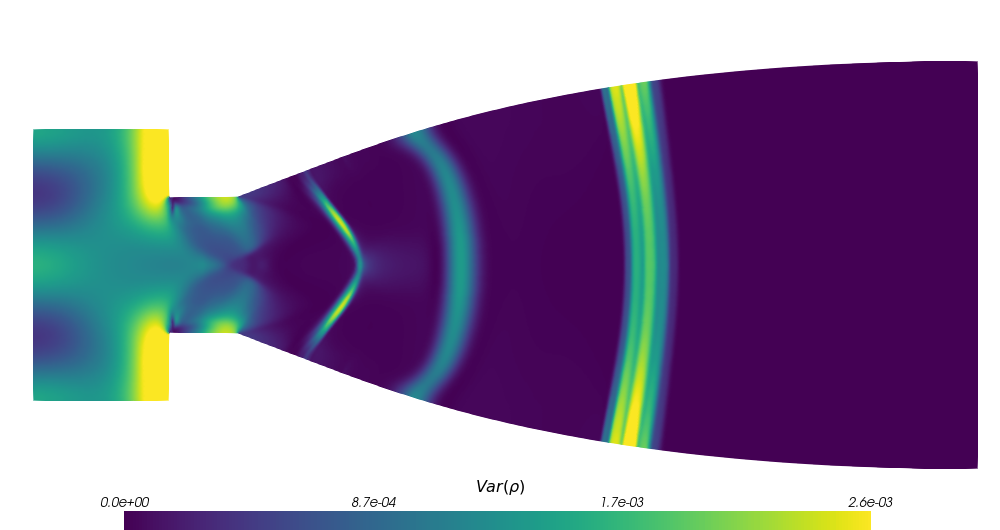}
		\label{fig:sub2}
	\end{subfigure}
	\caption{Variance of the density with different methods. From top to bottom: hSG, fhSG, IPM.}
	\label{fig:VarNozzleSGIPM}
\end{figure}

Let us now turn to the multi-element methods. Here, we employ two multi-elements with polynomials up to degree two, i.e., the number of unknowns equals the previous test case. In each multi-element we have 10 Gauss-Legendre quadrature points. 

The resulting expected values and variances of these methods are depicted in Figures~\ref{fig:ExpNozzleMESGMEIPM} and \ref{fig:VarNozzleMESGMEIPM}. We observe that ME-hSG and ME-IPM lead to good solution approximations compared to their classical counterparts. Unfortunately, we still observe a step-like profile of the expected value as well as oscillations in the variance. However, these spurious artifacts are mitigated by the multi-element ansatz.

As before, we apply the exponential filter with order $\alpha = 7$ and strength $\lambda=0.1$. When using multi-elements, the filter is able to smear out oscillations, leading to an improved approximation of the expected value and the variance. However, the effectiveness of the filter appears to be increased when applying it to the standard hSG method. Furthermore, note that all multi-element methods yield a better resolution of the variance inside the throat region.

\begin{figure}[h!]
\centering
	\begin{subfigure}{1.0\linewidth}
		\centering
		\includegraphics[scale=0.32]{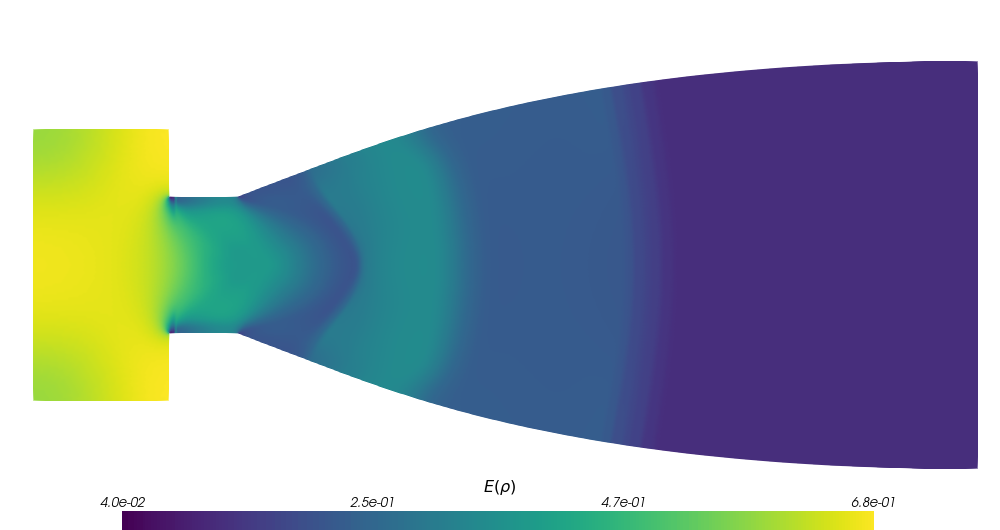}
		\label{fig:sub1}
	\end{subfigure}
	\begin{subfigure}{1.0\linewidth}
		\centering
		\includegraphics[scale=0.32]{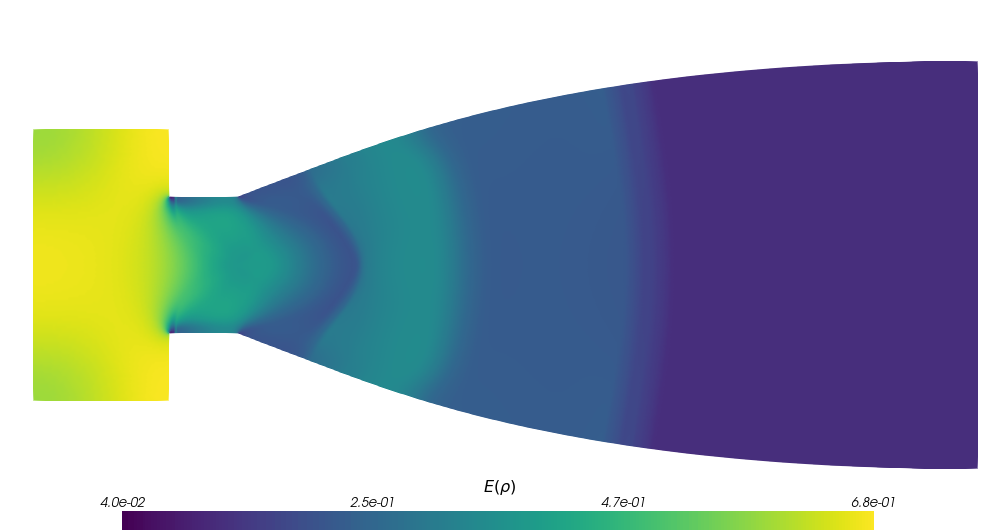}
		\label{fig:sub1}
	\end{subfigure}
	\begin{subfigure}{1.0\linewidth}
		\centering
		\includegraphics[scale=0.32]{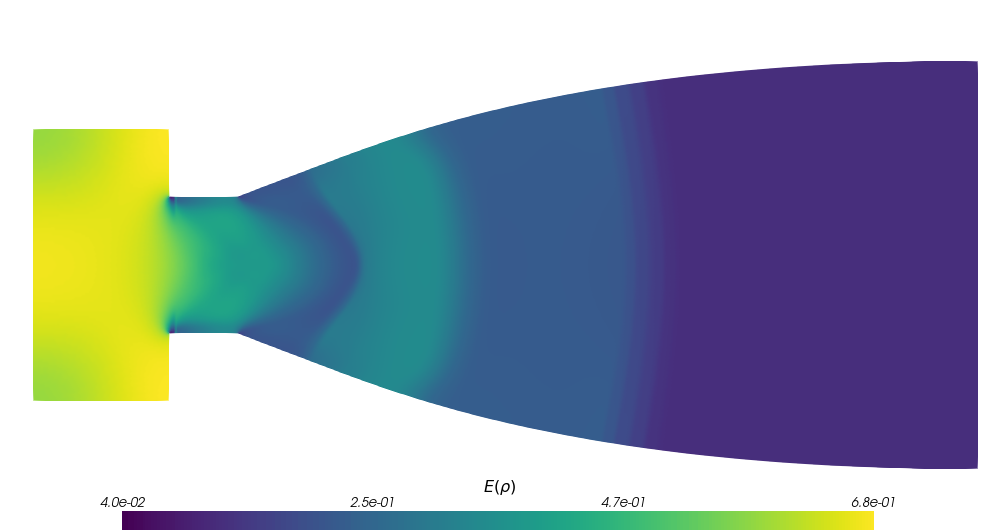}
		\label{fig:sub2}
	\end{subfigure}
	\caption{Expected density with different methods. From top to bottom: ME-hSG, ME-fhSG, ME-IPM.}
	\label{fig:ExpNozzleMESGMEIPM}
\end{figure}

\begin{figure}[h!]
\centering
	\begin{subfigure}{1.0\linewidth}
		\centering
		\includegraphics[scale=0.32]{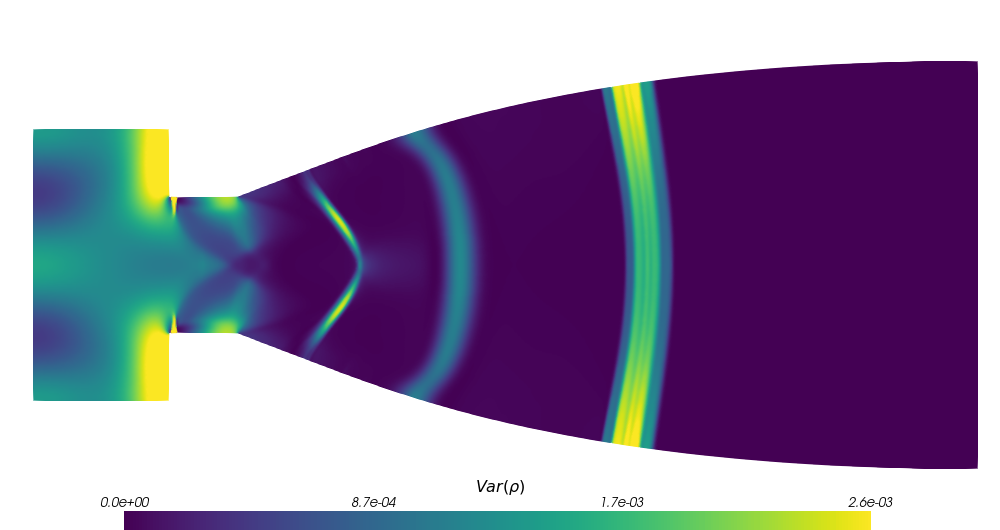}
		\label{fig:sub1}
	\end{subfigure}
	\begin{subfigure}{1.0\linewidth}
		\centering
		\includegraphics[scale=0.32]{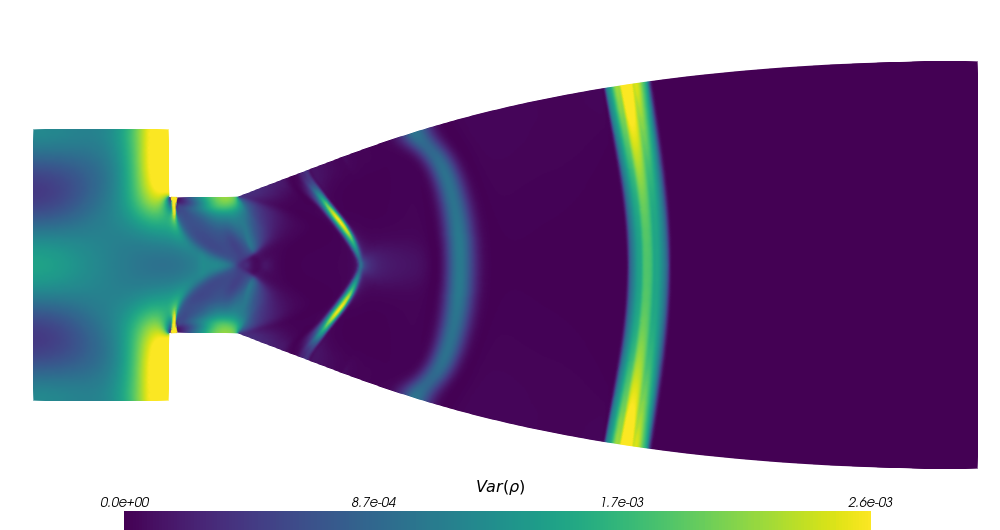}
		\label{fig:sub1}
	\end{subfigure}
	\begin{subfigure}{1.0\linewidth}
		\centering
		\includegraphics[scale=0.32]{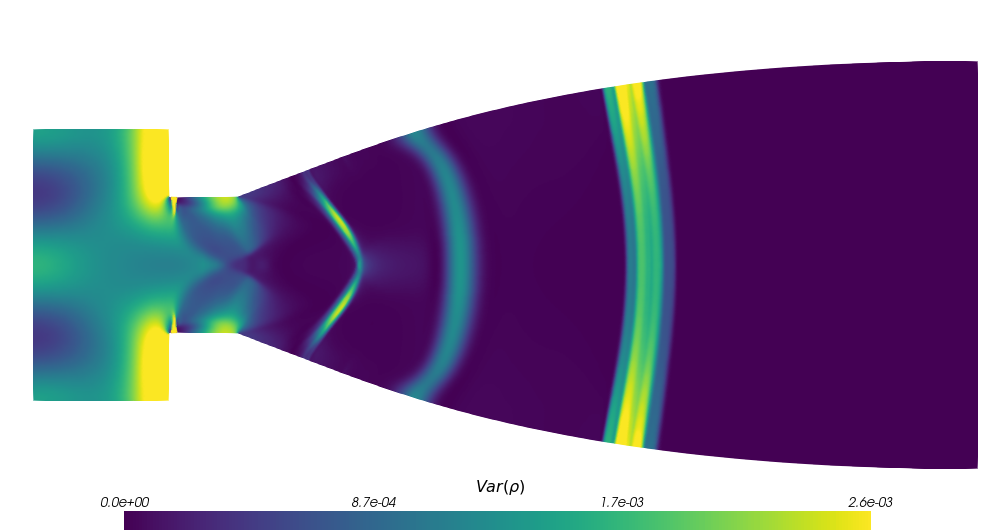}
		\label{fig:sub2}
	\end{subfigure}
	\caption{Variance of the density with different methods. From top to bottom: ME-hSG, ME-fhSG, ME-IPM.}
	\label{fig:VarNozzleMESGMEIPM}
\end{figure}
\clearpage
\section{Conclusions and Outlook}
In this work, we combined different methods to mitigate oscillations and spurious artifacts that arise from intrusive UQ methods while maintaining hyperbolicity of the moment system. To mitigate oscillations, we made use of filtering as well as a multi-element ansatz. Furthermore, hyperbolicity is ensured by using a hyperbolicity limiter as well as the IPM method.
First, we combined filtering with the hyperbolicity-preserving limiter and then extended the resulting method with the multi-element ansatz. Second, the multi-element ansatz has been applied to the IPM method. Both strategies dampen oscillations which leads to an improved approximation of expected values and variances. In addition to that, the multi-element approach in combination with IPM allows heavily reducing the required runtime to achieve a certain error level. When comparing the different methods, we observe that the optimal method choice is problem dependent. In the case of Sod's shock tube, filtering in combination with the multi-element ansatz in hSG yields the best approximation. For the nozzle test case, applying the filter to the original hSG method appears to be the method of choice as it closely captures the reference solution. 
A solely application of the (necessary) hyperbolicity limiter or the IPM closure to this test case resulted in oscillatory solution approximations. However, either the use of the multi-element ansatz or the filter significantly improves the resolution quality.
The presented NACA test case underlines the efficiency of the multi-element approach in combination with IPM. 

The different methods are so far applied for first-order numerical schemes and it would be interesting to be combined with WENO reconstructions in order to obtain high-order approximations and to further reduce the impact of Gibbs phenomenon.  Moreover, we considered numerical test cases with only one-dimensional uncertainties, however, intrusive gPC approaches are mostly used in low-dimensional random spaces and non-intrusive schemes such as stochastic Collocation are preferred otherwise.
In addition to that, the filters that we presented within this article can be used together with IPM in order to filter the coefficients of the entropic variable. This might yield a further reduction of oscillations within IPM. Since it is not clear how to achieve hyperbolicity of filtered moments, we leave this task to future work. Furthermore, the applicability of filters for steady problems should be investigated to allow the use of filters in steady state applications.

\section*{Acknowledgments}
Funding by the Deutsche Forschungsgemeinschaft (DFG) within the RTG GrK 1932 ``Stochastic Models for Innovations in the Engineering Science'' is gratefully acknowledged. Jonas Kusch has been supported by the DFG under grant FR 2841/6-1.

\bibliographystyle{siam}
\bibliography{library,bibliography}
\end{document}